\tikzstyle{vertex} = [fill,shape=circle,node distance=80pt]
\tikzstyle{edge} = [fill,opacity=.5,fill opacity=.5,line cap=round, line join=round, line width=50pt]
\tikzstyle{elabel} =  [fill,shape=circle,node distance=30pt]
\numberwithin{equation}{section}
\newtheorem{thm}[equation]{Theorem}
\newtheorem{lemma}[equation]{Lemma}
\newtheorem{prop}[equation]{Proposition}
\newtheorem{cor}[equation]{Corollary}
\theoremstyle{definition}
\newtheorem{defn}[equation]{Definition}
\newtheorem{examplex}[equation]{Example}
\newenvironment{example}
  {\pushQED{\qed}\examplex}
  {\popQED\endexamplex}
\newtheorem{rmkx}[equation]{Remark}
\newenvironment{rmk}
  {\pushQED{\qed}\rmkx}
  {\popQED\endrmkx}
\DeclareSymbolFontAlphabet{\mathbb}{AMSb}
\def\m{\mathfrak{m}}
\newcommand{\dual}{\smallsmile}
\newcommand{\cl}{\rm cl}
\newcommand{\ri}{\rm i}
\newcommand{\cM}{\mathcal{M}}
\newcommand{\cP}{\mathcal{P}}
\newcommand{\cA}{\mathfrak{A}}
\newcommand{\ca}{\mathfrak{a}}
\newcommand{\cm}{\mathfrak{m}}
\newcommand{\cB}{\mathfrak{B}}
\newcommand{\mbf}{\cm \textrm{bf}}
\newcommand{\mbe}{\cm \textrm{be}}
\newcommand\rihull[3][\ri]{#1\textrm{-}{\rm hull}\mathnormal{^{#2}(#3)}}
\newcommand{\clcore}[3][\cl]{#1 \textrm{-}{{\rm core}}\mathnormal{_#2(#3)}}
\newcommand{\clprehull}[3][\cl]{#1\textrm{-}{\rm prehull}\mathnormal{_#2(#3)}}
\newcommand{\ripostcore}[3][\ri]{#1\textrm{-}{\rm postcore}\mathnormal{^{#2}(#3)}}
\newcommand{\clprecore}[3][\cl]{#1\textrm{-}{\rm precore}\mathnormal{_#2(#3)}}
\newcommand\riposthull[3][\ri]{#1\textrm{-}{\rm posthull}\mathnormal{^{#2}(#3)}}
\DeclareMathOperator{\core}{-core}
\title{{\cl}-prereductions, {\ri}-postexpansions, and related structures \\ }
\author[Poiani]{Sarah Poiani}
\author[Vassilev]{Janet Vassilev}
\email{spoiani@unm.edu, jvassil@unm.edu }
\address{Department of Mathematics and Statistics \\ University of New Mexico \\ Albuquerque, NM}
\begin{document}

\maketitle
\begin{abstract}
    Expanding on the work of Kemp, Ratliff and Shah \cite{KRS-prered}, for any closure $\cl$ defined on a class of modules over a Noetherian ring, we develop the theory of $\cl$-prereductions of submodules. For any interior $\ri$ on a class of $R$-modules, we also develop the theory of $\ri$-postexpansions.  Using the duality of Epstein, R.G. and Vassilev \cite{ERGV-nonres}, we show that if $\ri$ is the interior dual to $\cl$, then these notions are in fact dual to each other.   We consider the $\cl$-precore ($\ri$-postcore), the intersection of all $\cl$-prereductions ($\ri$-postexpansions) of a submodule and the $\cl$-prehull ($\ri$-posthull), the sum of all $\cl$-prereductions ($\ri$-postexpansions) of a submodule and give comparisons with the $\cl$-core ($\ri$-hull).  We further give a classification of $\cl$-prereductions of $\cl$-closed ideals of a Noetherian ring where $\cl$ is a closure with a special part.
\end{abstract}

\section{Introduction}
 
 Northcott and Rees were the first to define and study the reductions of an ideal in \cite{NR-red}.  A reduction of an ideal $I$ is an ideal $J \subseteq I$ which shares the same integral closure as $I$.  Rees further generalized the notion of integral closure and reductions to the setting of submodules of a module in \cite{reesintegralclosure}.  Generally, a submodule can have a multitude of reductions, even minimal reductions (minimal among the set of all reductions).  The core of a submodule $N$ of $M$, is the intersection of all reductions of $N$ in $M$ and is in some sense a measure of the reductions of $N$.  If $N$ is basic, i.e. $N$ is its only reduction, then ${\rm core}_M(N)=N$.  However, when $N$ has more reductions, the core is never a reduction of $N$.  In a recent work of Kemp, Ratliff and Shah \cite{KRS-prered}, the authors took a different tack by looking for the ideals contained in an ideal $I$ which does not have the same integral closure as $I$.  The maximal elements of this set, they termed {\em prereductions}.  As long as $I$ is not basic, then the core of $I$ will be contained in some prereduction of $I$.    In some sense, this is the starting place of this work.   

Moreover, as reductions have been generalized to different closure operations by Epstein in \cite{eps-spread} and \cite{nme-sp}, we will generalize prereductions to other closures as well.  We can also generalize the notion of $\cl$-core discussed in papers by Fouli and Vassilev and Vraciu \cite{FV-clcore} and \cite{FVV-tightcore} and Epstein, R.G. and Vassilev \cite{ERGV-corehull}, \cite{ERGV-nonres} to that of the $\cl$-precore of a submodule, the intersection of all $\cl$-prereductions.  One might expect that $\cl$-precore of a submodule is contained in its $\cl$-core, but this is not always the case.  We will examine some conditions on the submodule which will ensure the containment to be true as well as exhibit some counterexamples.  As computing the $\cl$-core of a submodule can be difficult and formulas for the $\cl$-core are only known in certain settings (see \cite{HS-core}, \cite{R-regloccore}, \cite{CPU-structurecore}, \cite{CPU-residualcore}, \cite{CPU-modulecore}, \cite{PB-coreformula},  \cite{HT-core},  \cite{FPU-core}, \cite{FV-clcore}, \cite{FVV-tightcore}, \cite{ERGV-nonres} and \cite{CFH}), we hope that in some instances the $\cl$-precore may be a helpful tool to give insight into computing the $\cl$-cores of submodules.

Following in the footsteps of Epstein, R.G. and Vassilev \cite{ERG-duality} \cite{ERGV-corehull}, \cite{ERGV-nonres} and \cite{ERGVresher}, we define the dual notion, $\ri$-postexpansion for an interior operation $\ri$ on a class of modules of a Noetherian local ring $(R, \m)$ and also the $\ri$-posthull.   But we also note that we can define some other interesting submodules the $\cl$-prehull and $\ri$-postcore which are sometimes comparable to the $\cl$-core and $\ri$-hull, respectively.   

 In certain cases, we can determine exactly the form of all the $\cl$-prereductions of a submodule and the form of all the $\ri$-postexpansions of a submodule.  In particular, if $N$  is a finitely generated $\cl$-basic submodule of $M$, then every $\cl$-prereduction can be determined in terms of the minimal generating sets of $N$. Similarly if $N$ is a finite length submodule in an Artinian module $M$ and $N$ is its only $\ri$-postexpansion in $M$, then the $\ri$-postexpansions can be determined in terms of the minimal cogenerating sets of $M/N$.

The paper is organized as follows. In Section 2, we recall the definitions of pair operations, closure operations, and interior operations. We also introduce reductions, expansions, cores, and hulls, including some key examples of these operations which we will make use of in this paper. In Section 3, we discuss and prove properties about $\cl$-prereductions and in Section 4 we analyze $\ri$-postexpansions. Section 5 deals with comparing $\cl$-prereductions for comparable closures and $\ri$-postexpansions for comparable interiors.  The duality between pair operations and the correspondence between $\cl$-prereductions and $\ri$-postexpansions is shown in Section 6. In Section 7, we discuss the notion of a cover of a submodule with the viewpoint of $\cl$-reductions/prereductions and $\ri$-expansions/postexpansions and determine the structure of $\cl$-prereductions  of a $\cl$-basic submodule $N$ in terms of minimal generating sets of $N$. Similarly we classify the $\ri$-postexpansions of a submodule $N$ whose only expansion is itself in terms of cogenerating sets of $M/N$. Section 8  introduces and explores the concepts of cl-prehull,  cl-precore, i-postcore, and i-posthull.  In Section 9, we exhibit that the structure of $\cl$-prereductions for $\cl$-closed ideals can also be determined when $\cl$ is a closure with special part and all strongly $\cl$-independent ideals have a special part decomposition.

\section{Background}

We start by defining pair operations; closure and interior operations are specific examples of pair operations. The notion of pair operations allows us to use a common framework for these operations as well as other operations defined on $R$-modules.

\begin{defn}
\cite[Definition 2.1]{ERGV-nonres} Let $R$ be an associative ring, not necessarily commutative. Let $\cM$ be a class of (left) $R$-modules that is closed under taking submodules and quotient modules. Let $\cP$ be the class of pairs $(N,M)$ of $R$-modules with $N,M \in \cM$ and  $N\subseteq M$.
\end{defn}

\begin{defn}
\cite[Definition 2.2]{ERGV-nonres} Let $\cP$ be a collection of pairs $(L,M)$ with $L\subseteq M$ such that whenever $\varphi:M\rightarrow M'$ is an isomorphism and $(L,M)\in\cP$ then $(\varphi(L),M')\in\cP$. A \textit{pair operation} is a function $p$ that sends each pair $(L,M)\in\cP$ to a submodule $p(L,M)$ of $M$, in such a way that whenever $\varphi:M\rightarrow M'$ is an $R$-module isomorphism and $(L,M)\in\cP$, then $\varphi(p(L,M))=p(\varphi(L),M')$. When $(L,M)\in\cP$, we say that $p$ is 
\begin{itemize}
    \item \textit{idempotent} if whenever $(L,M)\in\cP$ and $(p(L,M),M)\in\cP$, we always have $p(p(L,M),M)=p(L,M)$.
    \item \textit{order-preserving on submodules} if whenever $L\subseteq N \subseteq M$ such that when $(L,M), (N,M)\in\cP$, we have $p(L,N)\subseteq p(N,M)$.
    \item \textit{extensive} if we always have $L\subseteq p(L,M)$.
    \item \textit{intensive} if we always have $p(L,M)\subseteq L$.
    \item a \textit{closure operation} if it is extensive, idempotent, and order-preserving on submodules.  
    \item an \textit{interior operation} if it is intensive, idempotent, and order-preserving on submodules.  
\end{itemize}
\end{defn}

\begin{rmkx}
    If $p$ is a closure operation, we will denote $p(L,M)=L_M^{\cl}$ for $(L,M) \in \cP$ and refer to $p$ as $\cl$.   If $p$ is an interior operation, we will denote $p(L,M)=L^M_{\ri}$ for $(L,M) \in \cP$ and refer to $p$ as $\ri$.
We say $N$ is \textit{$\cl$-closed } in $M$ if $N=N_M^{\cl}$. We say $A$ is \textit{$\ri$-open} in $B$ if $A=A_{\ri}^B$
\end{rmkx}

Some common closure operations in commutative algebra are integral closure, tight closure and basically full closure which we will define now for ideals of a commutative Noetherian ring to use later when we present examples.  All of these closures are defined on the class of finitely generated modules over Noetherian rings.

\begin{defn}
Let $R$ be a commutative ring and $I$ an ideal of $R$.  The integral closure of $I$ is: 
\[
I^{-}:=\{x \in R \mid x^n+a_1x^{n-1}+\cdots + a_{n-1}x+a_n=0 \text{ for some } a_i \in I^i \}.
\]
\end{defn}

Note that we use $I^{-}$ instead of $\overline{I}$ or $I_a$ since we represent all closures as superscripts.  

In the late 80's, Hochster and Huneke \cite{HH1} introduced tight closure, a closure operation in equicharacteristic rings.  Here we will stick to the positive characteristic version.

\begin{defn}
Let $R$ be a Noetherian ring of characteristic $p>0$ and $I \subseteq R$ an ideal.  Set  $I^{[p^e]}=(x^{p^e} \mid x \in I)$ and $R^o=R \setminus \bigcup \{P \mid P \text{ a minimal prime of }R\}$. The tight closure of $I$ is
\[
I^*:=\{x \in R \mid cx^{p^e} \in I^{[p^e]} \text{ for some } c \in R^o \text{ and all } e>>0\}.
\]
\end{defn}

Basically full closure was introduced by Heinzer, Ratliff and Rush in \cite{HRR-bf} as a closure operation on $\m$-primary ideals.  However, the operation is also a closure operation on the set of all ideals of a ring.

\begin{defn}
Let $R$ be a commutative ring and $I$ an ideal of $R$.  The basically full closure of $I$ is $I^{\m {\rm bf}}:=(\m I: \m)$.
\end{defn}

Note that $I^* \subseteq I^-$ by \cite{HH1} and $I^{\m {\rm bf}} \subseteq I^-$ by \cite{HRR-bf}; however, tight closure and basically full closure are not comparable for general Noetherian rings.  

\begin{examplex}
Let $k$ be  a field of characteristic $p>0$.  If $R=k[[x,y]]$, $I^*=I$ for all ideals $I \subseteq R$ since $R$ is a regular ring.  However, $(x^3, y^3)^{\m {\rm bf}}=(\m (x^3,y^3): \m)=(x^3,x^2y^2,y^3)$.  So in $R$ we have $I^*=I \subseteq I^{\m {\rm bf}}$ for all ideals $I$.   

However, for $S=k[[x^2,x^5]]$, 
\[(x^4)^{\m {\rm bf}}=(\m (x^4): \m)=(x^6,x^9):(x^2,x^5)=(x^4,x^7) \subsetneq (x^4,x^5)=(x^4)^*=(x^4)^{-}\] where the last equality follows from \cite[Corollary 5.8]{HH1}.  In particular, in $S$ we have $I^{\m {\rm bf}} \subseteq I^*$ for all ideals $I$.  

Consider the ring $T=k[[x^2,x^5, y, xy]]$.  Note that $(x^4)^*=(x^4)^-=(x^4,x^5)$ by \cite[Corollary 5.8]{HH1}.  However, similar to the computation in $S$, $(x^4)^{\m {\rm bf}}=(x^4,x^7)$ and 
\[(x^4)^{\m {\rm bf}} \subseteq (x^4)^*=(x^4)^-.\]  We have included Figure~\ref{fig:latticeprin} to illustrate the monomials in $(x^4)$ which are shaded in red, the monomials in $T\setminus (x^4)$ which are shaded in blue.  The two circled lattice points on the $x$-axis indicate the monomials which are in $(x^4)^* \setminus (x^4)$ where the darker blue lattice point is a monomial in $(x^4)^{\m {\rm bf}}$ not in $(x^4)$.

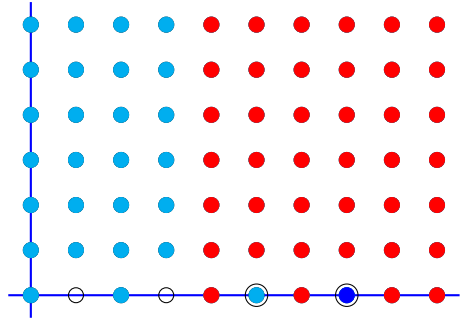
\begin{figure}
\centering
\begin{tikzpicture}[scale=0.6]
\draw[blue, thick] (-.5,0) -- (9.5,0);
\draw[blue, thick] (0,-.5) -- (0,6.5);
\foreach \x in {0,1,...,9}{
     \foreach \y in {0,1,...,6}{
       \node[draw,circle,inner sep=2pt] at (\x,\y) {};}}
\foreach \x in {1,2,...,9}{
     \foreach \y in {1,2,...,6}{
       \node[draw,circle,inner sep=2pt,cyan,fill] at (\x,\y) {};}}
 \foreach \y in {0,1,...,6}{
 \node[draw,circle,inner sep=2pt,cyan,fill] at (0,\y) {};}
\foreach \x in {2,4,5,...,9}{
 \node[draw,circle,inner sep=2pt,cyan,fill] at (\x,0) {};}
\foreach \x in {4,5,...,9}{
     \foreach \y in {1,2,...,6}{
       \node[draw,circle,inner sep=2pt,red,fill] at (\x,\y) {};}}
\foreach \x in {4,6,8,9}{
 \node[draw,circle,inner sep=2pt,red,fill] at (\x,0) {};}
\foreach \x in {5,7}{
 \node[draw,circle,inner sep=3pt] at (\x,0) {};}
\foreach \x in {7}{
 \node[draw,circle,inner sep=2pt,blue,fill] at (\x,0) {};}
\end{tikzpicture}
\caption{\ Lattice of monomials in $(x^4) \subseteq (x^4)^{\m {\rm bf}} \subseteq (x^4)^* \subseteq T$.}
\label{fig:latticeprin}
\end{figure}

Whereas, for the ideal $J=(x^4,x^5,y^2, xy^2)$: we will see that
$J^*=J\subseteq J^{\m {\rm bf}}=J+(x^2y,x^3y)$.  The first equality holds by \cite[Lemma 4.11]{HH1}, since $T \subseteq k[[x,y]]$ which is a regular ring and the preimage of $(x^4,y^2)$ in $T$ is precisely $J$.
Figure \ref{fig:latticeJ}, helps us to understand how we obtain the $\m$-basically full closure $J$.  Note the red lattice points indicate monomials in $J$ and the blue lattice points indicate the monomials in $T \setminus J$.  The lattice points at $(1,0)$ and $(3,0)$ are not colored because the monomials $x$ and $x^3$ are not in $T$. The circled red lattice points are those in $\m J$. Note that every monomial in the maximal ideal multiplies the two monomials represented by the darker blue lattice points ($x^2y$ and $x^3y$) whereas for each of the remaining monomials represented by the light blue lattice points, there is at least one element of the maximal ideal that does not multiply the monomial into $\m J$.  Hence $J^{\m {\rm bf}}=J+(x^2y,x^3y)$.

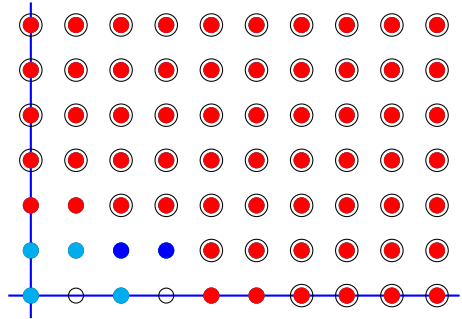
\begin{figure}[h]
\centering
\begin{tikzpicture}[scale=0.6]
\draw[blue, thick] (-.5,0) -- (9.5,0);
\draw[blue, thick] (0,-.5) -- (0,6.5);
\foreach \x in {0,1,...,9}{
     \foreach \y in {0,1,...,6}{
       \node[draw,circle,inner sep=2pt] at (\x,\y) {};}}
 \foreach \x in {2,3}{
       \node[draw,circle,inner sep=3pt] at (\x,2) {};}
\foreach \x in {4,5,...,9}{
     \foreach \y in {1,2,...,6}{
       \node[draw,circle,inner sep=3pt] at (\x,\y) {};}}
\foreach \x in {0,1,...,3}{
     \foreach \y in {3,4,...,6}{
       \node[draw,circle,inner sep=3pt] at (\x,\y) {};}}
\foreach \x in {1,2,...,9}{
     \foreach \y in {1,2,...,6}{
       \node[draw,circle,inner sep=2pt,cyan,fill] at (\x,\y) {};}}
 \foreach \y in {0,1,...,6}{
 \node[draw,circle,inner sep=2pt,cyan,fill] at (0,\y) {};}
\foreach \x in {2,4,5,...,9}{
 \node[draw,circle,inner sep=2pt,cyan,fill] at (\x,0) {};}
\foreach \x in {6,7,...,9}{
 \node[draw,circle,inner sep=3pt] at (\x,0) {};}
\foreach \x in {4,5,...,9}{
     \foreach \y in {1,2,...,6}{
       \node[draw,circle,inner sep=2pt,red,fill] at (\x,\y) {};}}
\foreach \x in {1,2,3}{
     \foreach \y in {2,3,...,6}{
       \node[draw,circle,inner sep=2pt,red,fill] at (\x,\y) {};}}
\foreach \y in {2,3,...,6}{
 \node[draw,circle,inner sep=2pt,red,fill] at (0,\y) {};}
\foreach \x in {4,5,...,9}{
 \node[draw,circle,inner sep=2pt,red,fill] at (\x,0) {};}
\foreach \x in {2,3}{
 \node[draw,circle,inner sep=2pt,blue,fill] at (\x,1) {};}
\end{tikzpicture}
\caption{\ Lattice of monomials in $J \subseteq J^{\m {\rm bf}} \subseteq T$.}
\label{fig:latticeJ}
\end{figure}

Since we have obtained two ideals $I$ in $T$ one with $I^{\m {\rm bf}} \subseteq I^*$ and the other with $I^* \subseteq I^{\m {\rm bf}}$, we see that tight closure and $\m$-basically full closure are not comparable in $T$.\qed
\end{examplex}

Northcott and Rees  \cite{NR-red} defined an ideal $J \subseteq I$ to be a reduction of $I$ if there exists some non-negative integer $n$ with $JI^n=I^{n+1}$ and they showed that $J$ is a reduction of $I$ if and only if $J^{-}=I^{-}$.  Rees generalized the notion of reduction for submodules of modules in \cite{reesintegralclosure}.  Then Epstein generalized reductions of ideals of a commutative Notherian ring and submodules of finitely generated modules for closure operations $\cl$ in   \cite{eps-spread} and \cite{nme-sp}.  We include the definition below in the language of pair operations as well as the related notion of $\cl\core$ which was originally introduced for integral closure by Rees and Sally in \cite{RS-reductions} and then by Fouli and Vassilev in \cite{FV-clcore} for more general closure operations $\cl$.

\begin{defn}
\cite[Definition 2.10]{ERGV-nonres} Let $R$ be a Noetherian ring, $\cM$ be a class of $R$-modules and $\cl$ be a closure operation defined on $\cP$ of pairs of modules $(L,M)$ with $L \subseteq M$ in $\cM$.  Suppose $(L,M), (N,M) \in \cP$.
\begin{enumerate}
\item   We say that $L$ is a \textit{$\cl$-reduction of $N$ in $M$}  if $L\subseteq N \subseteq L_M^{\cl}$. 
\item If $L$ is a $\cl$-reduction of $N$ in $M$ and there is no submodule $K\subseteq L$, with $(K,M) \in \cP$ such that $K_M^{\cl}=N_M^{\cl}$ then we say that $L$ is a \textit{minimal} $\cl$-reduction of $N$ in $M$.
\item We define ${\cl\core}_M(N)= \bigcap \{ L \mid L \subseteq N \subseteq L_M^{\cl} \text{ and } (L,M) \in \cP\}$.
\end{enumerate}
\end{defn}

Although we defined minimal $\cl$-reductions of a submodule $N$ of $M$ above, we do not in general know that minimal $\cl$-reductions exist.  If they do, then  
\[{\cl\core}_M(N)=\bigcap \{ L \mid L \text{ a minimal {\cl}-reduction of } N \text{ and } (L,M) \in \cP\}.\]

\begin{defn}
   \cite[Definition 1.2]{eps-spread} Let $(R, \m)$ be a Noetherian local ring and $\cl$ be a closure operation on the class of pairs $\cP$ of finitely generated $R$-modules. We say that $\cl$ is a \textit{Nakayama closure} if for $L \subseteq N \subseteq M$ finitely generated $R$-modules, if $L \subseteq N \subseteq (L +\m N)^{\cl}_M$ then $L^{\cl}_M=N^{\cl}_M$.
\end{defn}

If $\cl$ is a Nakayama closure on the class of finitely generated modules, then Epstein showed that minimal $\cl$-reductions exist first for ideals in \cite[Lemma 2.2]{eps-spread} and then noted that they also exist for submodules of finitely generated modules \cite[Section 1]{nme-sp}.

In a recent paper of Kemp, Ratliff and Shah \cite{KRS-prered}, the authors define an ideal $J \subseteq I$ to be a prereduction if $J$ is not an (integral) reduction of $I$ however for all $K$ with $J \subseteq K \subseteq I$, $K$ is an (integral) reduction of $I$.  They also consider the set of ideals ${\bf I}'(I)=\{J\subseteq I  \mid J \text{ is not an (integral) reduction of } I\}$.  They note that if ${\bf I}'(I)$ is non-empty then the maximal elements of ${\bf I}'(I)$ are prereductions of $I$.  Minimal reductions and the analytic spread of an ideal are important to their work.  The analytic spread of an ideal $I$ is the maximal number of algebraically independent elements in $I$.    The following is a generalization of algebraic independence inspired by Vraciu's work on special tight closure and $*$-independence in \cite{Vr-*ind} given by Epstein \cite{eps-spread} and \cite{nme-sp}.

\begin{defn}
Let $R$ be a Noetherian ring and $\cl$ be a closure operation defined on $R$-modules.  We say that $f_1, \ldots, f_r \in M$ are $\cl$-independent if $f_i \notin (f_1R+ \cdots + \hat{f}_i R+ \cdots+ f_rR)^{\cl}$.  We say a submodule  $N \subseteq M$ is {\it strongly $\cl$-independent} if every minimal set of generators of $N$ is $\cl$-independent.
\end{defn}

Much of Kemp, Ratliff and Shah's work is over a Noetherian local ring and integral closure is a Nakayama closure.
Epstein proved that when $\cl$ is a Nakayama closure an ideal $L \subseteq N$ is a reduction of $N$ in $M$, then $L$ is minimal $\cl$-reduction of $N$ if and only if $L$ is a strongly $\cl$ independent.  He further generalized the notion of analytic spread to Nakayama closures $\cl$.

\begin{defn} (See \cite{eps-spread}) Let $(R,\m)$ be a Noetherian local ring and $\cl$ a Nakayama closure defined on $R$-modules.  We say $N$ has {\it $\cl$-spread} if the cardinality of any minimal generating set for any minimal reduction of $N$ is the same.
\end{defn}

\section{$\cl$-prereductions}

For any Nakayama closure and any submodule $N \subseteq M$ we can clearly define a set of submodules of $M$ whose closure is properly contained in the closure of $N$. As Kemp, Ratliff and Shah denoted such a set ${\bf I}'(I)$ for integral closure for ideals $I$ of $R$, we will modify their notation including the closure operation $\cl$ in the subscript and the pair of modules $(N,M) \in \cP$ to define \[{\bf I}'_{\cl}(N,M):=\{L \subseteq N \mid L \text{ is not a } {\cl} \text{-reduction of } N\}.\]  Note that for any $L \subseteq N$, then  $L \notin {\bf I}'_{\cl}(N,M)$ if and only if $L$ is a $\cl$-reduction of $N$ in $M$.  If $M=R$, we will omit $R$ and denote ${\bf I}'_{\cl}(I,R)={\bf I}'_{{\cl}}(I)$.

It may be the case that ${\bf I}'_{\cl}(N,M)$ is empty.  For example if $N=(0) \subseteq M$, then $(0)$ is a $\cl$-reduction of itself and contains no proper submodules so ${\bf I}'_{\cl}((0),M)=\emptyset$.   In \cite[Remark 3.5.1]{KRS-prered}, Kemp, Ratliff and Shah note that  ${\bf I}'(I)$ is nonempty if and only if $I$ is not a nilpotent ideal.  However, unlike in the case for integral closure of ideals in a ring,  ${\bf I}'_{\cl}(I)$ may not be empty when $I$ is a nilpotent ideal.  For example:

\begin{examplex}
\label{nonemptyI}{\rm
Let   $R=k[[x,y]]/(x^2y^2)$ and $I=(xy)$.  Note that $(xy)$ is the nilradical of $R$ and a nilpotent ideal.  Note that $(0)^{\m {\rm bf}}=((0):\m)=(0)$ and \[(xy)^{\m {\rm bf}}=(\m (xy):\m)=(x^2y,xy^2):(x,y)=(xy).\]  We see that ${\bf I}'_{\m {\rm bf}}(xy) \neq \emptyset$ even though it is a nilpotent ideal.
}\end{examplex}

The order operation {\rm ord}, defined by ${\rm ord}(I)=\m^{r}$ if $I \subseteq \m^r$ but $I \nsubseteq \m^n$ for any $n>r$ and ${\rm ord}(I)=\bigcap\limits_{r \in \mathbb{N}} \m^r$ if $I \subseteq \m^r$ for all $r \in \mathbb{N}$ as discussed in \cite{Va_naksem} is also a Nakayama closure with $(0) \in {\bf I}'_{\rm ord}(xy)$ in $R$ as in Example \ref{nonemptyI} as ord$(xy)=\m^2$ and $(0)={\rm ord}(0)$.  

Although, we have given examples above of Nakayama closure operations where there are nilpotent ideals $I$ with ${\bf I}'_{\cl}(I) \neq \emptyset$, tight closure behaves more like integral closure for ideals in the sense that ${\bf I}'_{*}(I)=\emptyset$ when $I$ is nilpotent.

\begin{prop} \label{prop:niltight}
Let $(R,\m)$ be a Noetherian local ring of characteristic $p>0$, then  for any nilpotent ideal $I$,  ${\bf I}'_{*}(I)=\emptyset$.
\end{prop}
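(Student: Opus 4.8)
The plan is to show that every proper subideal of a nilpotent ideal $I$ is in fact a $*$-reduction of $I$, so that the set ${\bf I}'_*(I)$ has no elements. Suppose $I$ is nilpotent, say $I^n = 0$, and let $J \subsetneq I$ be any ideal. I want to prove $J^* = I^*$; since $J \subseteq I$ always gives $J^* \subseteq I^*$, the content is the reverse containment, i.e. that $I \subseteq J^*$. First I would observe that because $R$ is Noetherian local with $I^n=0$, the nilradical of $R$ contains $I$, hence $I \subseteq \m$ and in fact $I$ is contained in every minimal prime; this is what makes the characteristic-$p$ machinery apply cleanly — any nonzerodivisor, indeed any element of $R^\circ$, is available as a test element candidate.

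The key step is a Frobenius estimate. For $x \in I$, I want to find $c \in R^\circ$ with $c x^{p^e} \in J^{[p^e]}$ for all $e \gg 0$. The natural move: since $I^n = 0$, we have $x^n = 0$ (indeed $x^n \in I^n = 0$), so $x$ itself is nilpotent, and more is true — for $p^e \geq n$ we get $x^{p^e} = 0 \in J^{[p^e]}$ automatically, with any $c \in R^\circ$ working (e.g. $c = 1$). So the tight closure condition "$c x^{p^e} \in J^{[p^e]}$ for all $e \gg 0$" is satisfied vacuously once $p^e \geq n$. Hence $x \in J^*$ for every $x \in I$, giving $I \subseteq J^*$, and therefore $I^* \subseteq (J^*)^* = J^*$ by idempotence of tight closure. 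Combined with $J^* \subseteq I^*$ this yields $J^* = I^*$, so $J \subseteq I \subseteq I^* = J^*$, which says precisely that $J$ is a $*$-reduction of $I$ in $R$.

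Since this holds for every proper subideal $J$ of $I$, and $I$ itself is trivially a $*$-reduction of $I$, no ideal contained in $I$ fails to be a $*$-reduction; that is, ${\bf I}'_*(I) = \emptyset$, as claimed. I do not expect a genuine obstacle here — the argument is essentially the observation that nilpotence kills high Frobenius powers outright, so tight closure of a nilpotent ideal swallows everything beneath it. The only point requiring a line of care is confirming $R^\circ \neq \emptyset$ so that the definition of $*$ is non-degenerate, but $1 \in R^\circ$ always (the minimal primes are proper), so even this is immediate. One could alternatively phrase the whole thing as: $0 = I^n \subseteq I$ shows $0^* \supseteq \cdots$, but the direct element-wise Frobenius-vanishing argument above is the cleanest route and is what I would write out.
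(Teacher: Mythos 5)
Your proof is correct, and it takes a somewhat different route from the paper's. The paper simply cites Hochster--Huneke's Proposition 4.1(i) (that $(0)^* = N$, the nilradical), sandwiches $(0) \subseteq J \subseteq I \subseteq N$, and uses order-preservation plus idempotence of tight closure to conclude $J^* = I^* = N$ for every $J \subseteq I$. You instead unpack the cited result: for $x \in I$ with $I^n = 0$, you note $x^n = 0$, so $x^{p^e} = 0$ once $p^e \geq n$, and hence $1 \cdot x^{p^e} \in J^{[p^e]}$ trivially for all $e \gg 0$, putting $x \in J^*$ directly. This is exactly the elementary computation underlying the Hochster--Huneke fact; your version has the advantage of being self-contained and exhibiting precisely why nilpotence trivializes the Frobenius condition, at the cost of re-deriving a standard lemma rather than citing it. One small tidying point: the aside about $I$ being contained in ``every minimal prime'' and $R^\circ$ supplying ``test element candidates'' is tangential --- all you need is $1 \in R^\circ$, which you correctly note, so the rest of that sentence could be dropped without loss.
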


\begin{proof}  Denote the nilradical by $N$. For any nilpotent ideal $I$ we have $(0) \subseteq I \subseteq N$.   By \cite[Proposition 4.1(i)]{HH1}, $(0)^*=N=I^*$.  
For any ideal $J \subseteq I$ we have  $(0) \subseteq J \subseteq I$; hence it is clear that $I$ has no ideals $J$ with $J \subseteq I$ and $J^* \subsetneq I^*$.
\end{proof}

Because multiplication of elements in a module is not defined unless we extend multiplication through the tensor product; thus, we do not usually discuss nilpotent submodules of an $R$-module.

Due to the above examples and comment, we see that not all properties that Kemp, Ratliff and Shah obtained for ${\bf I}'(I)$ in \cite{KRS-prered} generalize for ${\bf I}_{\cl}'(N,M)$ for a general  Nakayama closure $\cl$. In addition to defining the set ${\bf I}'_{\cl}(N,M)$, we can also define the notion of \cl-prereductions for general Nakayama closure operations $\cl$.

\begin{defn}
Let $(R,\m)$ be a Noetherian local ring.  We say that $L$ is a \textit{$\cl$-prereduction} of $N$ if $L\subseteq N$, $L$ is not a $\cl$-reduction of $N$ and for all $K$ with $L \subseteq K \subseteq N$, $K$ is a $\cl$-reduction of $N$.
\end{defn}

 If ${\bf I}'_{\cl}(N,M) \neq \emptyset$, the maximal elements of ${\bf I}'_{\cl}(N,M)$ are $\cl$-prereductions.  We will denote the set of $\cl$-prereductions by ${\bf P}_{\cl}(N,M)$.
Note that if ${\bf I}'_{\cl}(N,M)$ is the set of all ideals properly contained in $N$, then $N$ has no $\cl$-reductions.  We usually call such a submodule $N$ of $M$ which only has  itself as a $\cl$-reduction {\it $\cl$-basic}.  

Instead of the  non-nilpotence assumption as Kemp, Ratliff and Shah do for \cite[Proposition 3.7]{KRS-prered}, we will require ${\bf I}'_{\cl}(N,M)$ to be nonempty for our generalization.

\begin{prop} \label{pr:nonreds}
Let $(R,\cm)$ be a Noetherian local ring and {\cl} a Nakayama closure on $\cP$, the class of finitely generated $R$-modules. Suppose $(N,M)\in \cP$ such that ${\bf I}_{\cl}^\prime(N,M)\neq\emptyset$ then the following hold
\begin{enumerate}
    \item \label{it:1nonred} Suppose $K\in {\bf I}_{\cl}^\prime(N,M)$ and $L \subseteq K$ then $L\in {\bf I}_{\cl}^\prime(N,M)$.
    \item \label{it:2nonred} Let $L\in {\bf I}_{\cl}^\prime(N,M)$. There exists a submodule $\cA\in {\bf I}_{\cl}^\prime(N,M)$ which is maximal in ${\bf I}_{\cl}^\prime(N,M)$ and $\cA\supseteq L$.
    \item \label{it:3nonred} Suppose that $\cA_1$ and $\cA_2$ are both maximal submodules in ${\bf I}_{\cl}^\prime(N,M)$. Then $\cA_1+\cA_2 \notin {\bf I}_{\cl}^\prime(N,M)$ and $\cA_1+\cA_2$ is a {\cl}-reduction of $N$ in $M$.
    \item \label{it:4nonred} If $L,K\in {\bf I}_{\cl}^\prime(N,M)$ then either $L+K \in {\bf I}_{\cl}^\prime(N,M)$ or $L+K$ is a {\cl}-reduction of $N$ in $M$.
    \item \label{it:5nonred} If $L\in {\bf I}_{\cl}^\prime(N,M)$ then $L+\cm N\in {\bf I}_{\cl}^\prime(N,M)$.
    \item \label{it:6nonred} If $L\in {\bf I}_{\cl}^\prime(N,M)$ then $L_M^{\cl}\cap N\in {\bf I}_{\cl}^\prime(N,M)$.
    \item \label{it:7nonred} If $\cA$ is maximal in ${\bf I}_{\cl}^\prime(N,M)$ then $(\cA_M^{\cl}+(\cm N)_M^{\cl})_M^{\cl}\cap N=\cA$.
\end{enumerate}
\end{prop}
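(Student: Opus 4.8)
The plan is to prove the seven items essentially in order, since each later claim leans on the earlier ones, and the workhorses are the Nakayama property of $\cl$ together with the characterization of elements of ${\bf I}'_{\cl}(N,M)$ as the non-$\cl$-reductions of $N$. For \eqref{it:1nonred}, I would argue contrapositively: if $L \subseteq K \subseteq N$ and $L \notin {\bf I}'_{\cl}(N,M)$, then $L$ is a $\cl$-reduction, so $L \subseteq N \subseteq L_M^{\cl}$; since $\cl$ is order-preserving on submodules, $L_M^{\cl} \subseteq K_M^{\cl}$ (using $L\subseteq K$), hence $K \subseteq N \subseteq K_M^{\cl}$ and $K$ is a $\cl$-reduction, i.e. $K \notin {\bf I}'_{\cl}(N,M)$. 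Taking the contrapositive gives \eqref{it:1nonred}. For \eqref{it:2nonred}, I would invoke a Zorn's lemma / Noetherian argument on the poset of submodules of $N$ lying between $L$ and $N$ that remain in ${\bf I}'_{\cl}(N,M)$: every chain has an upper bound (in a Noetherian module an ascending chain stabilizes, and by \eqref{it:1nonred} downward-closedness is automatic, so the union is a submodule still failing to be a $\cl$-reduction because its $\cl$-closure equals that of one of the chain members), so a maximal element $\cA \supseteq L$ exists.

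For \eqref{it:3nonred}, suppose $\cA_1, \cA_2$ are both maximal in ${\bf I}'_{\cl}(N,M)$ but $\cA_1 + \cA_2 \in {\bf I}'_{\cl}(N,M)$; then $\cA_1 + \cA_2 \supseteq \cA_1$, and if the containment is strict this contradicts maximality of $\cA_1$, while if $\cA_1 + \cA_2 = \cA_1$ then $\cA_2 \subseteq \cA_1$, forcing $\cA_1 = \cA_2$ by symmetry and maximality, and then $\cA_1 + \cA_2 = \cA_1 \notin {\bf I}'_{\cl}(N,M)$ only if $\cA_1$ is itself a $\cl$-reduction — but $\cA_1$ is maximal in a nonempty ${\bf I}'_{\cl}(N,M)$, so this is fine only in the degenerate case; the cleaner route is: maximality of $\cA_1$ forces $\cA_1+\cA_2 \notin {\bf I}'_{\cl}(N,M)$ unless $\cA_1 + \cA_2 = \cA_1$, and one checks the equality case separately (it gives $\cA_1 = \cA_2$, and then we must show $2\cA_1 = \cA_1$ is a $\cl$-reduction, which need not hold, so in fact distinct maximal elements is the relevant hypothesis). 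Either way $\cA_1 + \cA_2 \notin {\bf I}'_{\cl}(N,M)$, and since ${\bf I}'_{\cl}(N,M)$ is by definition the complement (within submodules of $N$) of the set of $\cl$-reductions, $\cA_1 + \cA_2$ is a $\cl$-reduction of $N$ in $M$. Item \eqref{it:4nonred} is then immediate from the dichotomy: $L+K$ is a submodule of $N$, so it is either in ${\bf I}'_{\cl}(N,M)$ or it is a $\cl$-reduction of $N$, there being no third option.

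Items \eqref{it:5nonred} and \eqref{it:6nonred} are where the Nakayama property and idempotence enter, and I expect \eqref{it:7nonred} to be the main obstacle since it combines both. For \eqref{it:5nonred}: if $L + \cm N \notin {\bf I}'_{\cl}(N,M)$, then $L + \cm N$ is a $\cl$-reduction, so $L + \cm N \subseteq N \subseteq (L+\cm N)_M^{\cl}$; by the Nakayama closure definition applied to $L \subseteq N \subseteq (L + \cm N)_M^{\cl}$ we get $L_M^{\cl} = N_M^{\cl}$, so $L \subseteq N \subseteq L_M^{\cl}$, making $L$ a $\cl$-reduction, contradiction. For \eqref{it:6nonred}: if $L_M^{\cl} \cap N \notin {\bf I}'_{\cl}(N,M)$ it is a $\cl$-reduction, so $L_M^{\cl} \cap N \subseteq N \subseteq (L_M^{\cl} \cap N)_M^{\cl} \subseteq (L_M^{\cl})_M^{\cl} = L_M^{\cl}$ (using order-preservation on submodules and idempotence of $\cl$), hence $N \subseteq L_M^{\cl}$ and $L$ is a $\cl$-reduction, contradiction. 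Finally for \eqref{it:7nonred}: write $A = (\cA_M^{\cl} + (\cm N)_M^{\cl})_M^{\cl} \cap N$. One inclusion $\cA \subseteq A$ is clear since $\cA \subseteq \cA_M^{\cl} \subseteq (\cA_M^{\cl} + (\cm N)_M^{\cl})_M^{\cl}$ and $\cA \subseteq N$. For the reverse, note $\cA \subseteq A \subseteq N$, so by maximality of $\cA$ it suffices to show $A \in {\bf I}'_{\cl}(N,M)$, equivalently that $A$ is not a $\cl$-reduction of $N$; I would compute $A_M^{\cl}$: since $\cA_M^{\cl} + (\cm N)_M^{\cl} \subseteq (\cA + \cm N)_M^{\cl}$ (order-preservation, sums), taking $\cl$ and using idempotence gives $(\cA_M^{\cl} + (\cm N)_M^{\cl})_M^{\cl} \subseteq (\cA + \cm N)_M^{\cl}$, and combined with \eqref{it:5nonred} (which says $\cA + \cm N \in {\bf I}'_{\cl}(N,M)$, so $(\cA + \cm N)_M^{\cl} \not\supseteq N$, in fact $(\cA + \cm N)_M^{\cl} = \cA_M^{\cl}$ by the Nakayama property run in reverse) we get $(\cA_M^{\cl} + (\cm N)_M^{\cl})_M^{\cl} = \cA_M^{\cl}$, whence $A = \cA_M^{\cl} \cap N$, and by \eqref{it:6nonred} applied to the maximal element $\cA$, $\cA_M^{\cl} \cap N \in {\bf I}'_{\cl}(N,M)$; maximality then gives $\cA_M^{\cl} \cap N = \cA$, i.e. $A = \cA$. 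The delicate point to get right is the chain of closure manipulations $\cA_M^{\cl} + (\cm N)_M^{\cl} \subseteq (\cA + \cm N)_M^{\cl}$ and the reverse $(\cA+\cm N)_M^{\cl} = \cA_M^{\cl}$, which is exactly the content of the Nakayama condition, so the argument hinges on phrasing that correctly.
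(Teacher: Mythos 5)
Your proposal tracks the paper's proof item by item, and items (1), (2), (4), (5), (6) are essentially identical to the paper's argument. Your aside in (3) that the conclusion fails when $\cA_1=\cA_2$ is a genuine observation (the paper's one-line proof tacitly assumes $\cA_1\neq\cA_2$; otherwise $\cA_1+\cA_2=\cA_1\in{\bf I}_{\cl}'(N,M)$).

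There is, however, a flaw in your argument for (7). You assert that $(\cA+\cm N)_M^{\cl}=\cA_M^{\cl}$ ``by the Nakayama property run in reverse,'' but the Nakayama condition does not say that. It says: if $L\subseteq N\subseteq (L+\cm N)_M^{\cl}$ then $L_M^{\cl}=N_M^{\cl}$; there is no general consequence of the form $(L+\cm N)_M^{\cl}=L_M^{\cl}$ for an arbitrary $L\subseteq N$ (take $L=0$ with $(\cm N)_M^{\cl}\neq 0_M^{\cl}$). The correct route, and the one the paper takes, uses maximality earlier than you do: by (5), $\cA+\cm N\in{\bf I}_{\cl}'(N,M)$, and since $\cA\subseteq\cA+\cm N$ with $\cA$ maximal in ${\bf I}_{\cl}'(N,M)$, we get $\cA=\cA+\cm N$, from which $(\cA+\cm N)_M^{\cl}=\cA_M^{\cl}$ is trivial. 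Then the sandwich $\cA\subseteq\cA_M^{\cl}+(\cm N)_M^{\cl}\subseteq(\cA+\cm N)_M^{\cl}=\cA_M^{\cl}$ gives $(\cA_M^{\cl}+(\cm N)_M^{\cl})_M^{\cl}=\cA_M^{\cl}$, and from there your use of (6) and a second appeal to maximality finishes as you describe. You have all the ingredients in hand; the error is purely in attributing the middle equality to the Nakayama property rather than to maximality via (5).
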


\begin{proof}

 \eqref{it:1nonred} If $K\in {\bf I}_{\cl}^\prime(N,M)$, then $K$ is not a {\cl}-reduction of $N$. Since $L\subseteq K$ and $L_M^{\cl} \subseteq K_M^{\cl} \subsetneq N_M^{\cl}$ then $L$ is also not a {\cl}-reduction of $N$ in $M$.
 
    \eqref{it:2nonred} Since $L\in {\bf I}_{\cl}^\prime(N,M)$ and $R$ Noetherian, then there exists an element $\cA\in {\bf I}_{\cl}^\prime(N,M)$ which is maximal in ${\bf I}_{\cl}^\prime(N,M)$ and $\cA\supseteq L$.
    
    \eqref{it:3nonred} Since $\cA_1$ and $\cA_2$ are both maximal in ${\bf I}_{\cl}^\prime(N,M)$ and $\cA_1,\cA_2\subseteq \cA_1+\cA_2 \subseteq N$, then $\cA_1+\cA_2$ is a {\cl}-reduction of $N$ in $M$.
    
    \eqref{it:4nonred} Since $L,K\subseteq N$ then $L+K\subseteq N$.  If $L+K \in {\bf I}_{\cl}^\prime(N,M)$, we are done. If $L+K \notin {\bf I}_{\cl}^\prime(N,M)$, then by definition $L+K$ is a {\cl}-reduction of $N$ in $M$.
    
    \eqref{it:5nonred} Suppose $L+\cm N\notin {\bf I}_{\cl}^\prime(N,M)$. So $L+\cm N$ is a {\cl}-reduction of $N$ in $M$. Then $L\subseteq N\subseteq (L+\cm N)_M^{\cl}$. Because {\cl} is a Nakayama closure, $L_M^{\cl}=N_M^{\cl}$ and $L$ is a {\cl}-reduction of $N$ in $M$ which contradicts our assumption. So $L+\cm N\in {\bf I}_{\cl}^\prime(N,M)$.
    
    \eqref{it:6nonred} Suppose that $L_M^{\cl}\cap N\notin {\bf I}_{\cl}^\prime(N,M)$. So $L_M^{\cl}\cap N$ is a {\cl}-reduction of $N$ in $M$. Then $L_M^{\cl}\subseteq N_M^{\cl} = (L_M^{\cl}\cap N_M^{\cl})_M^{\cl} \subseteq (L_M^{\cl})_M^{\cl}=L_M^{\cl}$ and $L$ is a {\cl}-reduction of $N$ in $M$ which contradicts $L\in {\bf I}_{\cl}^\prime(N,M)$.
    
    \eqref{it:7nonred} Since $(\cA+\cm N)\subseteq (\cA_M^{\cl}+(\cm N)_M^{\cl})\subseteq (\cA+\cm N)_M^{\cl}$, we have $(\cA_M^{\cl}+(\cm N)_M^{\cl})_M^{\cl}=(\cA+\cm N)_M^{\cl}$. By (5), we know that $\cA+\cm N\in {\bf I}_{\cl}^\prime(N,M)$. Since $\cA\subseteq \cA+\cm N$ and $\cA$ maximal, then $\cA=\cA+\cm N$ and hence $\cA_M^{\cl}=(\cA_M^{\cl}+(\cm N)_M^{\cl})_M^{\cl}$. By (6), we know that $(\cA_M^{\cl}+(\cm N)_M^{\cl})_M^{\cl}\cap N \in {\bf I}_{\cl}^\prime(N,M)$ and by the maximality of $\cA$ we have $\cA=(\cA_M^{\cl}+(\cm N)_M^{\cl})_M^{\cl}\cap N$.
\end{proof}

The following proposition gives us some nice properties that always hold for $\cl$-prereductions.

\begin{prop} \label{pr:prered}
Let $(R,\cm)$ be a Noetherian local ring and $\cl$ a Nakayama closure on $\cP$. Then
\begin{enumerate}
    \item \label{it:1prered} Every submodule $L\subseteq N$ which is not a $\cl$-reduction of $N$ in $M$ is contained in a $\cl$-prereduction of $N$ in $M$.
    \item  If $\cA$ is a $\cl$-prereduction of $N$ in $M$. Then 
    \begin{enumerate}
        \item \label{it:2aprered} $\cm N \subseteq \cA$.
        \item \label{it:2bprered} $\cA$ is $\cl$-closed in $N$ or $\cA_M^{\cl}\cap N=\cA$.
    \end{enumerate}
\end{enumerate}
\end{prop}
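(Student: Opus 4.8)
The plan is to obtain all three assertions as short consequences of Proposition~\ref{pr:nonreds}, using the fact recorded just after the definition of $\cl$-prereduction that the $\cl$-prereductions of $N$ in $M$ are exactly the maximal elements of ${\bf I}'_{\cl}(N,M)$ (a $\cl$-prereduction lies in ${\bf I}'_{\cl}(N,M)$ by definition, and every $K$ with $\cA\subsetneq K\subseteq N$ being a $\cl$-reduction says precisely that $\cA$ is maximal there).

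For \eqref{it:1prered}, suppose $L\subseteq N$ is not a $\cl$-reduction of $N$ in $M$. Then $L\in{\bf I}'_{\cl}(N,M)$, so this set is nonempty, and Proposition~\ref{pr:nonreds}\eqref{it:2nonred} furnishes a submodule $\cA\supseteq L$ that is maximal in ${\bf I}'_{\cl}(N,M)$. Such an $\cA$ is a $\cl$-prereduction of $N$ in $M$: it is not a $\cl$-reduction since it lies in ${\bf I}'_{\cl}(N,M)$, and for every $K$ with $\cA\subsetneq K\subseteq N$, maximality forces $K\notin{\bf I}'_{\cl}(N,M)$, i.e. $K$ is a $\cl$-reduction of $N$ in $M$. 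Hence $\cA$ is the desired $\cl$-prereduction containing $L$.

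For \eqref{it:2aprered}, let $\cA$ be a $\cl$-prereduction of $N$ in $M$, so $\cA\in{\bf I}'_{\cl}(N,M)$. By Proposition~\ref{pr:nonreds}\eqref{it:5nonred}, $\cA+\cm N\in{\bf I}'_{\cl}(N,M)$, so $\cA+\cm N$ is not a $\cl$-reduction of $N$ in $M$. Since $\cm N\subseteq N$ we have $\cA\subseteq\cA+\cm N\subseteq N$; were the first inclusion strict, the defining property of a $\cl$-prereduction would make $\cA+\cm N$ a $\cl$-reduction of $N$ in $M$, a contradiction. Therefore $\cA+\cm N=\cA$, that is, $\cm N\subseteq\cA$. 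Part \eqref{it:2bprered} is proved the same way: by extensivity of $\cl$ and $\cA\subseteq N$ we have $\cA\subseteq\cA_M^{\cl}\cap N\subseteq N$, Proposition~\ref{pr:nonreds}\eqref{it:6nonred} gives $\cA_M^{\cl}\cap N\in{\bf I}'_{\cl}(N,M)$, and strictness of $\cA\subsetneq\cA_M^{\cl}\cap N$ would again contradict $\cA$ being a $\cl$-prereduction; hence $\cA_M^{\cl}\cap N=\cA$, which is the asserted alternative (and whenever $\cl$ restricts so that $\cA_N^{\cl}\subseteq\cA_M^{\cl}$, this upgrades to $\cA$ being $\cl$-closed in $N$, since then $\cA_N^{\cl}\subseteq\cA_M^{\cl}\cap N=\cA\subseteq\cA_N^{\cl}$).

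I do not expect a genuine obstacle here: the combinatorial content is already carried out in Proposition~\ref{pr:nonreds}, and what remains is routine bookkeeping with inclusions (and the trivial observation that all the pairs appearing, such as $(\cA+\cm N,M)$ and $(\cA_M^{\cl}\cap N,M)$, lie in $\cP$ because $\cM$ is closed under submodules). The only point requiring care is that the clause "for all $K$ with $L\subseteq K\subseteq N$, $K$ is a $\cl$-reduction of $N$" in the definition of $\cl$-prereduction must be read with $L\subsetneq K$ — otherwise $K=L$ would violate it — which is exactly what identifies maximal elements of ${\bf I}'_{\cl}(N,M)$ with $\cl$-prereductions and drives every step above.
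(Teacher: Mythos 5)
Your proof is correct and follows essentially the same route as the paper: part \eqref{it:1prered} via Proposition~\ref{pr:nonreds}\eqref{it:2nonred}, and parts \eqref{it:2aprered}--\eqref{it:2bprered} via the maximality of a $\cl$-prereduction in ${\bf I}'_{\cl}(N,M)$ combined with Proposition~\ref{pr:nonreds}\eqref{it:5nonred} and \eqref{it:6nonred}. The paper instead cites the intermediate equalities already established within the proof of Proposition~\ref{pr:nonreds}\eqref{it:7nonred}, but those are exactly the same two steps you redo inline.
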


\begin{proof}
\begin{enumerate}
    \item By Proposition \ref{pr:nonreds}\eqref{it:2nonred}, there is some maximal element $\cA$ of ${\bf I}_{\cl}^\prime(N,M)$ which contains $L$. Such an $\cA$ must be a $\cl$-prereduction since any submodule containing it must be a $\cl$-reduction of $N$ in $M$.
    \item \begin{enumerate}
        \item In the proof of Proposition \ref{pr:nonreds}\eqref{it:7nonred}, we saw $\cA=\cA+\cm N$. This implies $\cm N \subseteq\cA$.
        \item We also saw in the proof of Proposition \ref{pr:nonreds}\eqref{it:7nonred} that $\cA_M^{\cl}\cap N= (\cA+\cm N)_M^{\cl} \cap N =\cA$.
    \end{enumerate}
\end{enumerate}
\end{proof}

In particular, if a submodule is $\cl$-closed in $M$ then the $\cl$-prereductions will also be $\cl$-closed.

\begin{cor}\label{co:closedprered}
 Let $(R,\cm)$ be a Noetherian local ring and $\cl$ a Nakayama closure on $\cP$. For every $\cl$-prereduction $\cA$ of $N_M^{\cl}$, $\cm N_M^{\cl}\subseteq \cA$ and $\cA=\cA_M^{\cl}$.
\end{cor}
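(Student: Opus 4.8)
The plan is to obtain this as a direct specialization of Proposition~\ref{pr:prered}, taking the $\cl$-closed submodule $N_M^{\cl}$ in the role of $N$, so that the only real work is to check that this substitution is legitimate and then to read off the two conclusions.

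First I would observe that $(N_M^{\cl},M)\in\cP$: since $\cM$ is closed under submodules and $N_M^{\cl}\subseteq M$, we have $N_M^{\cl}\in\cM$, and by idempotency of $\cl$ we have $(N_M^{\cl})_M^{\cl}=N_M^{\cl}$, i.e.\ $N_M^{\cl}$ really is $\cl$-closed in $M$. Now fix a $\cl$-prereduction $\cA$ of $N_M^{\cl}$ in $M$. Applying Proposition~\ref{pr:prered}\eqref{it:2aprered} with $N_M^{\cl}$ in place of $N$ gives $\cm N_M^{\cl}\subseteq\cA$ at once, which is the first assertion. For the second, I would invoke Proposition~\ref{pr:prered}\eqref{it:2bprered} (again with $N_M^{\cl}$ in place of $N$), whose proof in fact establishes the equality $\cA_M^{\cl}\cap N_M^{\cl}=\cA$; since $\cA\subseteq N_M^{\cl}$, the monotonicity of $\cl$ on submodules of the fixed module $M$ (the same property invoked in the proof of Proposition~\ref{pr:nonreds}\eqref{it:1nonred}) together with idempotency gives $\cA_M^{\cl}\subseteq(N_M^{\cl})_M^{\cl}=N_M^{\cl}$, hence $\cA_M^{\cl}\cap N_M^{\cl}=\cA_M^{\cl}$, and combining the two equalities yields $\cA_M^{\cl}=\cA$.

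Alternatively, one can sidestep Proposition~\ref{pr:prered} and argue straight from Proposition~\ref{pr:nonreds}: a $\cl$-prereduction of $N_M^{\cl}$ is precisely a maximal element of ${\bf I}_{\cl}'(N_M^{\cl},M)$, so parts \eqref{it:5nonred} and \eqref{it:6nonred} of that proposition force $\cA=\cA+\cm N_M^{\cl}$ and $\cA=\cA_M^{\cl}\cap N_M^{\cl}$ respectively, after which the same monotonicity remark gives $\cA=\cA_M^{\cl}$. I do not anticipate a genuine obstacle; the one point needing care is simply that it is $N_M^{\cl}$, not $N$, that plays the role of the ambient submodule in Proposition~\ref{pr:prered}, so that one must check $(N_M^{\cl},M)\in\cP$ and that $N_M^{\cl}$ is genuinely $\cl$-closed, and then notice that the intersection with $N_M^{\cl}$ appearing in part~\eqref{it:2bprered} is redundant here because $\cA_M^{\cl}$ is already contained in $N_M^{\cl}$.
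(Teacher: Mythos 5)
Your proposal is correct and follows essentially the same route as the paper: apply Proposition~\ref{pr:prered}\eqref{it:2aprered} and \eqref{it:2bprered} with $N_M^{\cl}$ in the role of $N$, then observe that $\cA\subseteq N_M^{\cl}$ forces $\cA_M^{\cl}\subseteq N_M^{\cl}$ by order-preservation and idempotency, so the intersection in \eqref{it:2bprered} collapses and $\cA_M^{\cl}=\cA$. If anything your write-up is a bit cleaner than the paper's, which opens with an unnecessary (and somewhat garbled) sentence arguing that $\cA$ is ``also'' a $\cl$-prereduction of $N_M^{\cl}$ — a step that is vacuous if one reads the hypothesis at face value, as you do.
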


\begin{proof}
Note that $\cA$ is also a $\cl$-prereduction of $N_M^{\cl}$ since $\cA\in N_M^{\cl}$ and every submodule $L$ with $\cA\subseteq L\subseteq N$, $L$ is a $\cl$-prereduction of $N$ and hence a $\cl$-reduction of $N_M^{\cl}$. By Proposition \ref{pr:prered}\eqref{it:2aprered}, $\cm N_M^{\cl}\subseteq \cA$ and by Proposition \ref{pr:prered}\eqref{it:2bprered}, $\cA_M^{\cl}=\cA_M^{\cl}\cap N_M^{\cl}=\cA$.
\end{proof}

\begin{prop} \label{pr:preredcompare}
 Let $(R,\cm)$ be a Noetherian local ring and $\cl$ a Nakayama closure on the modules of $R$. Let $K\subseteq N \subseteq M$ be submodules of $R$ with $K$ a $\cl$-reduction of $N$ in $M$. Then
\begin{enumerate}
    \item \label{it:preredcomp1} ${\bf I}_{\cl}^\prime(K,M)\subseteq {\bf I}_{\cl}^\prime(N,M)$.
    \item \label{it:preredcomp2} For each $L\in {\bf I}_{\cl}^\prime(N,M)$, $L\cap K \in {\bf I}_{\cl}^\prime(K,M)$.
    \item \label{it:preredcomp3} For each maximal element of $\cA$ of ${\bf I}_{\cl}^\prime(K,M)$ there exists a maximal element $\cB$ of ${\bf I}_{\cl}^\prime(N,M)$ such that $\cB\cap K=\cA$.
\end{enumerate}
\end{prop}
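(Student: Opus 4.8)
The plan is to exploit the key relation that, since $K$ is a $\cl$-reduction of $N$ in $M$, one has $K^{\cl}_M = N^{\cl}_M$, so "being a $\cl$-reduction of $K$" and "being a $\cl$-reduction of $N$" are governed by the same closure. For \eqref{it:preredcomp1}, suppose $L \in {\bf I}_{\cl}^\prime(K,M)$, i.e.\ $L \subseteq K$ and $L^{\cl}_M \subsetneq K^{\cl}_M$. Then $L \subseteq K \subseteq N$, and $L^{\cl}_M \subsetneq K^{\cl}_M = N^{\cl}_M$, so $L$ is not a $\cl$-reduction of $N$ in $M$; hence $L \in {\bf I}_{\cl}^\prime(N,M)$. (One should also note $(L,M) \in \cP$ is inherited, as all modules here are finitely generated over Noetherian $R$.)

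For \eqref{it:preredcomp2}, take $L \in {\bf I}_{\cl}^\prime(N,M)$, so $L^{\cl}_M \subsetneq N^{\cl}_M$. Clearly $L \cap K \subseteq K$. I claim $(L\cap K)^{\cl}_M \subsetneq K^{\cl}_M$. If instead $(L\cap K)^{\cl}_M = K^{\cl}_M$, then since $(L\cap K)^{\cl}_M \subseteq L^{\cl}_M$, we would get $K^{\cl}_M \subseteq L^{\cl}_M$, hence $N^{\cl}_M = K^{\cl}_M \subseteq L^{\cl}_M \subseteq N^{\cl}_M$, forcing $L^{\cl}_M = N^{\cl}_M$, a contradiction. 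So $L\cap K$ is not a $\cl$-reduction of $K$ in $M$, i.e.\ $L\cap K \in {\bf I}_{\cl}^\prime(K,M)$. This is the mechanism I expect to reuse: intersecting with $K$ cannot collapse the closure all the way up to $K^{\cl}_M$ unless it already agreed with $N^{\cl}_M$.

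For \eqref{it:preredcomp3}, let $\cA$ be maximal in ${\bf I}_{\cl}^\prime(K,M)$. By \eqref{it:preredcomp1}, $\cA \in {\bf I}_{\cl}^\prime(N,M)$, so by Proposition \ref{pr:nonreds}\eqref{it:2nonred} there is a maximal element $\cB$ of ${\bf I}_{\cl}^\prime(N,M)$ with $\cB \supseteq \cA$. Then $\cB \cap K \in {\bf I}_{\cl}^\prime(K,M)$ by \eqref{it:preredcomp2}, and $\cB \cap K \supseteq \cA$ (since $\cA \subseteq \cB$ and $\cA \subseteq K$). By maximality of $\cA$ in ${\bf I}_{\cl}^\prime(K,M)$ we conclude $\cB \cap K = \cA$.

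The only genuine subtlety — the step I would flag as the main obstacle — is the chain of closure manipulations in \eqref{it:preredcomp2}: one must be careful that $\cl$ is only assumed order-preserving on submodules and idempotent, not sub-additive, so the argument has to go through the inclusion $(L\cap K)^{\cl}_M \subseteq L^{\cl}_M \cap K^{\cl}_M$ and a proof by contradiction rather than a direct computation of $(L\cap K)^{\cl}_M$. I should also double-check throughout that the pairs involved lie in $\cP$ and that "$\cl$-reduction" is being used in the sense $L \subseteq N \subseteq L^{\cl}_M$ from the earlier definition, so that failing to be a $\cl$-reduction is exactly the strict inclusion $L^{\cl}_M \subsetneq N^{\cl}_M$ (using $L \subseteq N$).
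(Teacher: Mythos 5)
Your proof is correct and follows essentially the same route as the paper's: part (1) uses $L^{\cl}_M \subsetneq K^{\cl}_M \subseteq N^{\cl}_M$, part (2) is the same contradiction argument via $(L\cap K)^{\cl}_M \subseteq L^{\cl}_M$ (the paper writes the intermediate $L^{\cl}_M \cap K^{\cl}_M$ but uses it identically), and part (3) invokes (1), Proposition \ref{pr:nonreds}\eqref{it:2nonred}, and (2) together with maximality, just as the paper does. Your closing remark about using only order-preservation and idempotence, not sub-additivity, accurately reflects the care required in (2).
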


\begin{proof}

    \eqref{it:preredcomp1} Let $L\in {\bf I}_{\cl}^\prime(K,M)$. Since $K\subseteq N$, then $L\subseteq K \subseteq N$ and $L_M^{\cl} \subsetneq K_M^{\cl} \subseteq N_M^{\cl}$. Since $L$ is not a $\cl$-reduction of $K$, it cannot be a $\cl$-reduction of the larger module $N$. Thus $L\in {\bf I}_{\cl}^\prime(N,M)$.
    
    \eqref{it:preredcomp2} If $L\in {\bf I}_{\cl}^\prime(N,M)$, then $L\subseteq N$ and $L$ is not a $\cl$-reduction of $N$. Note that $L\cap K \subseteq K$. To see that $L\cap K\in {\bf I}_{\cl}^\prime(K,M)$, it is enough to see that $L\cap K$ is not a $\cl$-reduction of $K$. Suppose that $L\cap K$ is a $\cl$-reduction of $K$. Then $(L\cap K)_M^{\cl}=K_M^{\cl}$. Note that $(L\cap K)_M^{\cl} \subseteq L_M^{\cl} \cap K_M^{\cl}$. Since $K$ is a $\cl$-reduction of $N$, then $N_M^{\cl}=K_M^{\cl}\subseteq L_M^{\cl}\subseteq N_M^{\cl}$ which gives a contradiction to $L\in {\bf I}_{\cl}^\prime(N,M)$. Hence, $L\cap K \in {\bf I}_{\cl}^\prime(K,M)$.
    
    \eqref{it:preredcomp3} Let $\cA$ be a maximal element of ${\bf I}_{\cl}^\prime(K,M)$. By (1), ${\bf I}_{\cl}^\prime(K,M) \subseteq {\bf I}_{\cl}^\prime(N,M)$. Thus $\cA\in {\bf I}_{\cl}^\prime(N,M)$ and there must exist a maximal element $\cB\in {\bf I}_{\cl}^\prime(N,M)$ with $\cA\subseteq\cB$. By (2), $\cB\cap K \in {\bf I}_{\cl}^\prime(K,M)$. Since $\cA\subseteq\cB\cap K$ and $\cA$ is maximal, we get $\cA=\cB\cap K$.

\end{proof}

\begin{cor}\label{cor:lyingoverprered}
Let $(R,\m)$ be a Noetherian local ring and $\cl$ a Nakayama closure on $\cP$, pairs of finite $R$-modules.  Let $L\subseteq N$ be submodules of $M$ with $L$ a $\cl$-reduction of $N$ in $M$.  If $\cA$ is a $\cl$-prereduction of $L$ then there exists a $\cl$-prereduction $\cB$ of $N$ with $\cB \cap L=\cA$.
\end{cor}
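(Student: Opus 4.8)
The plan is to observe that Corollary~\ref{cor:lyingoverprered} is essentially a restatement of Proposition~\ref{pr:preredcompare}\eqref{it:preredcomp3} once one unwinds the definition of $\cl$-prereduction in terms of maximal elements of the sets ${\bf I}'_{\cl}(-,M)$. First I would record the elementary equivalence: for a pair $(N,M)\in\cP$, a submodule $\cA\subseteq N$ is a $\cl$-prereduction of $N$ in $M$ if and only if $\cA$ is a maximal element of ${\bf I}'_{\cl}(N,M)$. Indeed, $\cA$ being a $\cl$-prereduction says exactly that $\cA\in {\bf I}'_{\cl}(N,M)$ while every $K$ with $\cA\subsetneq K\subseteq N$ is a $\cl$-reduction of $N$, i.e. $K\notin {\bf I}'_{\cl}(N,M)$; this is precisely maximality of $\cA$ in ${\bf I}'_{\cl}(N,M)$ (and nonemptiness of ${\bf I}'_{\cl}(N,M)$ is automatic once it has a maximal element).

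Next I would apply Proposition~\ref{pr:preredcompare} with its ``$K$'' taken to be our $L$; this is legitimate since $L\subseteq N\subseteq M$ and $L$ is a $\cl$-reduction of $N$ in $M$. Since $\cA$ is a $\cl$-prereduction of $L$, by the first step $\cA$ is a maximal element of ${\bf I}'_{\cl}(L,M)$; in particular ${\bf I}'_{\cl}(L,M)\neq\emptyset$, and by part~\eqref{it:preredcomp1} of the proposition $\cA\in {\bf I}'_{\cl}(N,M)$, so ${\bf I}'_{\cl}(N,M)\neq\emptyset$ as well. Now part~\eqref{it:preredcomp3} produces a maximal element $\cB$ of ${\bf I}'_{\cl}(N,M)$ with $\cB\cap L=\cA$. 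By the equivalence from the first step, $\cB$ is a $\cl$-prereduction of $N$ in $M$, which is exactly the conclusion sought.

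Since the argument is a direct citation, there is no genuine obstacle here; the only care needed is bookkeeping: ensuring that the ``$K$'' in Proposition~\ref{pr:preredcompare} is matched with $L$ (and not the other way around), and checking that the relevant ${\bf I}'_{\cl}$ sets are nonempty so that ``maximal element'' and ``$\cl$-prereduction'' genuinely coincide. Alternatively, if a self-contained proof is preferred over a bare citation, one would simply reproduce the short argument of Proposition~\ref{pr:preredcompare}\eqref{it:preredcomp3}: choose a maximal element $\cB$ of ${\bf I}'_{\cl}(N,M)$ with $\cA\subseteq\cB$ (which exists by Proposition~\ref{pr:nonreds}\eqref{it:2nonred} applied to $\cA\in {\bf I}'_{\cl}(N,M)$), invoke Proposition~\ref{pr:preredcompare}\eqref{it:preredcomp2} to get $\cB\cap L\in {\bf I}'_{\cl}(L,M)$, and conclude $\cA=\cB\cap L$ from $\cA\subseteq\cB\cap L$ together with the maximality of $\cA$ in ${\bf I}'_{\cl}(L,M)$.
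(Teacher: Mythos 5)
Your proof is correct and takes the same route as the paper: the paper's own proof also cites Proposition~\ref{pr:preredcompare}\eqref{it:preredcomp3} together with the observation that maximal elements of ${\bf I}'_{\cl}(-,M)$ are precisely the $\cl$-prereductions. Your write-up just makes the bookkeeping (matching $K$ with $L$, and the prereduction/maximal-element equivalence) explicit, which is a fine amount of extra detail.
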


\begin{proof}
This is a direct consequence of Propostion \ref{pr:preredcompare}\eqref{it:preredcomp3} and the fact that maximal elements of ${\bf I}'_{\cl}(N,M)$ are $\cl$-prereductions of $N$ for any submodule $N \subseteq M$.
\end{proof}

\begin{prop} \label{pr:preredcontainment}
 Let $(R,\cm)$ be a Noetherian local ring and $\cl$ a Nakayama closure on the modules of $R$. If $\cA$ is a $\cl$-prereduction of $N$ in $M$ and $\cA\subseteq K \subseteq N$, then $\cA$ is a $\cl$-prereduction of $K$ in $M$.
\end{prop}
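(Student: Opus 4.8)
The plan is simply to unwind the definition of ``$\cl$-prereduction'' into its three defining conditions and check each of them for the pair $(\cA,K)$, using what is already known about the pair $(\cA,N)$. The one observation that makes everything go through is that $K$ itself, sitting in the sandwich $\cA\subseteq K\subseteq N$, is one of the submodules that the prereduction hypothesis on $\cA$ forces to be a $\cl$-reduction of $N$; consequently $K_M^{\cl}=N_M^{\cl}$, and $K$ and $N$ have the same closure in $M$. Once this is in hand, the prereduction conditions for $(\cA,N)$ transfer verbatim to $(\cA,K)$.

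Concretely, I would argue as follows. First, $\cA\subseteq K$ holds by hypothesis. Second, to see that $\cA$ is not a $\cl$-reduction of $K$ in $M$: since $\cA$ is a $\cl$-prereduction of $N$, we have the strict containment $\cA_M^{\cl}\subsetneq N_M^{\cl}$, and combining this with $N_M^{\cl}=K_M^{\cl}$ yields $\cA_M^{\cl}\subsetneq K_M^{\cl}$, so indeed $\cA$ is not a $\cl$-reduction of $K$. Third, let $K'$ be any submodule with $\cA\subseteq K'\subseteq K$ (and $(K',M)\in\cP$); then $\cA\subseteq K'\subseteq N$, so by the prereduction property of $\cA$ in $N$ the submodule $K'$ is a $\cl$-reduction of $N$, i.e. $(K')_M^{\cl}=N_M^{\cl}=K_M^{\cl}$, which says precisely that $K'$ is a $\cl$-reduction of $K$ in $M$. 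These three checks are exactly the definition of $\cA$ being a $\cl$-prereduction of $K$ in $M$.

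I expect no genuine obstacle here: the statement is a matter of chasing definitions once the equality $K_M^{\cl}=N_M^{\cl}$ is noted. The only point requiring a word of care is keeping track of membership in $\cP$, but since $\cP$ is closed under passing to submodules and every module appearing ($\cA$, $K'$, $K$, $N$) is a submodule of $M$, all the relevant pairs lie in $\cP$ and the closure operation is defined on them. One could also phrase the third step via Proposition~\ref{pr:preredcompare}\eqref{it:preredcomp1} applied to the $\cl$-reduction $K\subseteq N$, but the direct argument above is shorter and self-contained.
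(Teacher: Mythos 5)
Your proof is correct, and it is genuinely more direct than the paper's. The paper proceeds by contradiction: it first places $\cA$ in ${\bf I}_{\cl}^\prime(K,M)$ via Proposition~\ref{pr:preredcompare}\eqref{it:preredcomp2}, then supposes $\cA$ is not maximal there, and uses Proposition~\ref{pr:preredcompare}\eqref{it:preredcomp3} to lift a larger element of ${\bf I}_{\cl}^\prime(K,M)$ to a $\cl$-prereduction of $N$ strictly containing $\cA$, contradicting maximality of $\cA$ in ${\bf I}_{\cl}^\prime(N,M)$. You instead verify the three defining clauses of ``$\cl$-prereduction'' for the pair $(\cA,K)$ head-on, the only leverage being the observation that $K$ itself is forced to be a $\cl$-reduction of $N$ and hence $K_M^{\cl}=N_M^{\cl}$; after that, both remaining clauses transfer by substitution. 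Your route avoids the auxiliary comparison proposition entirely and reads more cleanly, at the (small) cost of not illustrating how the ${\bf I}_{\cl}^\prime$-machinery interacts with reductions --- which is presumably why the authors chose the other path, having just proved Proposition~\ref{pr:preredcompare}. One small point to tighten: to rule out vacuity you should take $\cA\subsetneq K$, and in the third clause restrict to $K'$ with $\cA\subsetneq K'\subseteq K$, matching the intended reading of the definition of prereduction (a prereduction is by definition \emph{not} a reduction of itself).
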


\begin{proof}
Since $\cA$ is a $\cl$-prereduction of $N$ in $M$ and $\cA\subseteq K \subseteq N$, then $K$ is a $\cl$-reduction of $N$ in $M$. Also $\cA\in {\bf I}_{\cl}^\prime(N,M)$. By Proposition \ref{pr:preredcompare}\eqref{it:preredcomp2}, $\cA\cap K=\cA\in {\bf I}_{\cl}^\prime(N,M)$.

Suppose that $\cA$ is not a $\cl$-prereduction of $K$ in $M$. Then there exists a maximal $\cB\in {\bf I}_{\cl}^\prime(K,M)$ with $\cA\subsetneq\cB$ and $\cB$ is a $\cl$-prereduction of $K$ in $M$. Then by Proposition \ref{pr:preredcompare}\eqref{it:preredcomp3} and since maixmal elements of ${\bf I}_{\cl}^\prime(N,M)$ are $\cl$-prereductions of $N$ in $M$ for any module $N$, there exists a $\cl$-prereduction $\mathfrak{C}$ of $N$ such that $\mathfrak{C}\supseteq \mathfrak{C}\cap K = \cB\supsetneq \cA$. This contradicts the maximality of $\cA$ in ${\bf I}_{\cl}^\prime(N,M)$. Hence, $\cA$ is a $\cl$-prereduction of $K$ in $M$.
\end{proof}

For submodules which do not have cyclic $\cl$-prereductions, we obtain the following which is similar to \cite[Proposition 3.13]{KRS-prered} for ideals with no principal prereductions.

\begin{prop}\label{prop:unionprered}
Let $(R,\m)$ be a Noetherian local ring and $\cl$ a Nakayama closure on the modules of $R$.  Let $N \subseteq M$, be a submodule. Then $N = \cup \{\ca \mid \ca \text{ a } $\cl$ \text{-prereduction of } N\}$ if and only if $N$ has no cyclic $\cl$-reductions in $M$.
\end{prop}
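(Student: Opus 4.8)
The plan is to deduce both directions directly from the definitions, the only substantive ingredient being Proposition~\ref{pr:prered}\eqref{it:1prered}. I will use repeatedly that a cyclic submodule of $M$ has the form $Rx$ with $x\in M$ and is finitely generated, so $(Rx,M)\in\cP$, and that any cyclic $\cl$-reduction of $N$ in $M$ is necessarily $Rx$ for some $x\in N$, since every $\cl$-reduction of $N$ in $M$ is contained in $N$.

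For the forward implication (if $N$ is the union of its $\cl$-prereductions then $N$ has no cyclic $\cl$-reductions) I would argue by contraposition: assuming $N$ has a cyclic $\cl$-reduction $Rx$ in $M$ (so $x\in N$ and $(Rx)_M^{\cl}=N_M^{\cl}$), I would show $x$ lies in no $\cl$-prereduction of $N$ in $M$, whence $x\in N\setminus\bigcup\{\ca\mid\ca\text{ a }\cl\text{-prereduction of }N\}$ and the two sides are unequal. Indeed, if $\ca$ were such a prereduction with $x\in\ca$, then $Rx\subseteq\ca\subseteq N$ would give $N_M^{\cl}=(Rx)_M^{\cl}\subseteq\ca_M^{\cl}\subseteq N_M^{\cl}$, so $\ca_M^{\cl}=N_M^{\cl}$ and $\ca$ would be a $\cl$-reduction of $N$ in $M$ — impossible for a $\cl$-prereduction.

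For the converse, assuming $N$ has no cyclic $\cl$-reductions in $M$, the inclusion $\bigcup\{\ca\}\subseteq N$ is immediate since each $\cl$-prereduction of $N$ is a submodule of $N$. For the reverse inclusion I would take any $x\in N$: then $Rx\subseteq N$, and by hypothesis $Rx$ is not a $\cl$-reduction of $N$ in $M$, i.e. $Rx\in{\bf I}'_{\cl}(N,M)$. Proposition~\ref{pr:prered}\eqref{it:1prered} then places $Rx$ inside some $\cl$-prereduction $\ca$ of $N$ in $M$, so $x\in Rx\subseteq\ca\subseteq\bigcup\{\ca\}$. Thus $N\subseteq\bigcup\{\ca\}$ and equality holds.

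I do not expect a genuine obstacle here: the argument just repackages Proposition~\ref{pr:prered}\eqref{it:1prered} together with the monotonicity $Rx\subseteq\ca\Rightarrow(Rx)_M^{\cl}\subseteq\ca_M^{\cl}$ used already in the proof of Proposition~\ref{pr:nonreds}\eqref{it:1nonred}. The one subtle point is the degenerate case $N=0$: there $0=R\cdot0$ is a cyclic $\cl$-reduction of $N$ while ${\bf I}'_{\cl}(0,M)=\emptyset$, so $N$ has no $\cl$-prereductions; both sides of the equivalence then fail, in agreement with the statement. (Equivalently, the hypothesis of the converse already forces $N\neq0$.)
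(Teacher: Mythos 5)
Your proof is correct and follows essentially the same route as the paper's: both directions hinge on the observation that for $x\in N$, the cyclic submodule $Rx$ is contained in some $\cl$-prereduction of $N$ if and only if $Rx$ is not a $\cl$-reduction of $N$, with Proposition~\ref{pr:prered}\eqref{it:1prered} (equivalently Proposition~\ref{pr:nonreds}\eqref{it:2nonred}) supplying the existence of a maximal element of ${\bf I}'_{\cl}(N,M)$ above $Rx$. Your added remark on the degenerate case $N=0$ is a sound sanity check not spelled out in the paper, but it does not change the argument.
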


\begin{proof}
Note that $N=\cup \{xR  \mid x \in N\}$ as sets.  Note that if $N$ has no cyclic $\cl$-reductions then for any $x \in N$, $xR \in {\bf I}'_{\cl}(N)$ and there is a maximal element $\ca_{x}$ of ${\bf I}'_{\cl}(N)$ containing $xR$.  Since $$N=\cup \{xR \mid x \in N\} \subseteq  \cup \{\ca_x \mid x \in N\}=\cup\{\ca \mid \ca \text{ a } {\cl} \text{-prereduction of } N\} \subseteq N$$
we see that $N=\cup \{\ca \mid \ca \text{ a } \cl \text{-prereduction of } N\}$ if $N$ has no cyclic $\cl$-reductions in $M$.

Suppose now that $N$ has a cyclic $\cl$-reduction $xR$.  Then for any submodule $L$ of $M$ with $xR \subseteq L \subseteq N$, $L$ is a $\cl$-reduction of $N$.  Thus no $\cl$-prereduction of $N$ contains $xR$, thus $$\cup\{\ca \mid \ca \text{ a } {\cl} \text{-prereduction of } N\} \subsetneq N.$$
\end{proof}

Numerical semigroup rings gives a nice source of examples where we can easily exhibit $\cl$-prereductions for various closures.  

\begin{example} \label{ex:prered}
Let $R=k[[x^2,x^5]]$, $\cm=(x^2,x^5)$ and $k$ a field of any characteristic. We can find the $\cm$bf-closures for some of the non-zero non-unital ideals of $R$.

$(x^2,x^5)_R^{\cm bf}= (x^2,x^5)=\cm$.\\
For $n\neq 3$, $$(x^n)_R^{\cm bf}=((x^{n+2},x^{n+5}):_R (x^2,x^5))=(x^n,x^{n+3}).$$\\
For $n\geq 4$, $$(x^n,x^{n+1})_R^{\cm bf}=((x^{n+2},x^{n+3)}:_R (x^2,x^5))=(x^n,x^{n+1}).$$\\
For $n=2$ and $n\geq 4$, $$(x^n,x^{n+3})_R^{\cm bf}=((x^{n+2},x^{n+5}):_R (x^2,x^5))=(x^n,x^{n+3}).$$\\

Let $I_n=(x^n,x^{n+1})$ for $n \geq 4$. Then $I_n^{-}=(x^n,x^{n+1})=I$ and $I_n^{\mbf}=I_n$.
$(x^n,x^{n+3})$ is a $\mbf$-prereduction of $I_n$.
$(x^{n+1},x^{n+2})$ is both an integral prereduction of $I_n$ and a $\mbf$-prereduction of $I_n$.
\end{example}

\section{$\ri$-expansions and $\ri$-postexpansions}

The second author along with Epstein and R.G. defined the dual notions to $\cl$-reduction and $\cl$-core, $\ri$-expansion and $\ri$-hull in \cite{ERGV-nonres} which we state below.

\begin{defn}
\cite[Definition 2.13]{ERGV-nonres} Let $R$ be a Noetherian ring, $\cM$ be a class of $R$-modules and $\ri$ be an interior operation defined on $\cP$ of pairs of modules $(A,B)$ with $A \subseteq B$ in $\cM$.   Suppose $(A,B), (C,B) \in \cP$.
\begin{enumerate}
\item We say that $C$ is an \textit{$\ri$-expansion of $A$ in $B$}  if $C^B_{\ri}\subseteq A \subseteq C$. 
\item We say that a submodule $C \subseteq B$ is \textit{$\ri$-cobasic} if $C_{\ri}^B=C$.
\item If $C$ is an $\ri$-expansion of $A$ in $B$ and there is no submodule $D\subseteq B$ such that $D^B_{\ri}=A^B_{\ri}$ then we say that $C$ is a \textit{maximal} $\ri$-expansion of $A$ in $B$.
\item We define the \textit{$\ri$-hull} by ${\rihull{B}{A}}= \sum \{ C \mid C^B_{\ri} \subseteq A \subseteq C \text{ and } (C,B) \in \cP\}$.
\end{enumerate}
\end{defn}

\begin{defn}
Let $(R, \m )$ be a local ring and $\ri$ an interior operation on Artinian R-modules. We say that $\ri$ is a \textit{Nakayama interior} if for any Artinian R-modules $A \subseteq C \subseteq B$, if $(A :_C\m )^B_{\ri} \subseteq A$, then $A^B_{\ri}=C^B_{\ri}$.
\end{defn}

It is known that maximal $\ri$-expansions exist for a submodule $A$ of $B$ in the following cases: if $(R, \m)$ is a complete local ring, $\cM$ is the class of Artinian $R$-modules and $\cl$ is a Nakayama interior \cite[Proposition 6.4]{ERGV-corehull} or when $R$ is an associative ring and if there exists an $\ri$-expansion $C$ of $A$ such that $B/C$ is Noetherian \cite[Proposition 6.5]{ERGV-corehull}.  When maximal $\ri$-expansions of $A$ exist in $B$, then 
\[{\rihull{B}{A}}=\sum \{ C \mid C \text{ a maximal {\ri}-expansion of } A \text{ and } (C,B) \in \cP\}.\]

We will now switch our focus to interior operations $\ri$.  As Kemp, Ratliff and Shah defined the set of ideals ${\bf I}'(I)$ to be the ideals contained in $I$ which are not (integral) reductions of $I$, we can dually defined the set 
\[
{\bf C}_{\ri}'(A,B)=\{A \subseteq C \subseteq B \mid C \text{ not an {\ri}-expansion of } A  \text{ in } B\}.
\]

\begin{defn}
We say $C$ is an \textit{{\ri}-postexpansion of $A$ in $B$} if $A\subseteq C \subseteq B$, $C$ not an {\ri}-expansion of $A$ in $B$, and for all submodules $D$ such that $A\subseteq D \subseteq C \subseteq B$, $D$ is an {\ri}-expansion of $A$ in $B$.
\end{defn}

Note that the maximal elements of ${\bf I}_{\cl}^\prime(N,M)$ are {\cl}-prereductions and the minimal elements of ${\bf C}_{\ri}^\prime (A,B)$ are {\ri}-postexpansions.  The following properties hold for ${\bf C}_{\ri}'(A,B)$:

\begin{prop} \label{pr:nonexp}
 Let $(R,\cm)$ be an Noetherian local ring and $\cP$ be Artinian $R$-modules. Let {\ri} a Nakayama interior on $\cP$. Let $A\subseteq B$ be R-modules such that ${\bf C}_{\ri}^\prime (A,B) \neq \emptyset$. Then the following hold:
\begin{enumerate}
    \item \label{it:nonexp1} Suppose $D \in {\bf C}_{\ri}^\prime (A,B)$ and $(D,C)\in\cP$ submodules of $B$. Then $C \in {\bf C}_{\ri}^\prime (A,B)$.
    \item \label{it:nonexp2} Let $C \in {\bf C}_{\ri}^\prime (A,B)$. Then there exists a element $\cA \in {\bf C}_{\ri}^\prime(A,B)$ minimal in ${\bf C}_{\ri}^\prime (A,B)$ with $\cA \subseteq C$.
    \item \label{it:nonexp3} Suppose that $\cA_1$ and $\cA_1$ are both minimal submodules in ${\bf C}_{\ri}^\prime(A,B)$. Then $\cA_1\cap \cA_2 \notin {\bf C}_{\ri}^\prime(A,B)$ and $\cA_1\cap \cA_2$ is an {\ri}-expansion of $A$ in $B$.
    \item \label{it:nonexp4} If $C,D\in {\bf C}_{\ri}^\prime(A,B)$ then either $C\cap D \in {\bf C}_{\ri}^\prime(A,B)$ or $C\cap D$ is an {\ri}-expansion of $A$ in $B$.
    \item \label{it:nonexp5} If $C\in {\bf C}_{\ri}^\prime(A,B)$ then $(A:_C\cm)\in {\bf C}_{\ri}^\prime(A,B)$.
    \item \label{it:nonexp6}  If $C\in {\bf C}_{\ri}^\prime(A,B)$ then $C_{\ri}^B + A\in {\bf C}_{\ri}^\prime(A,B)$.
    \item \label{it:nonexp7} If $\cA$ is minimal in ${\bf C}_{\ri}^\prime(A,B)$ then $(\cA_{\ri}^B\cap(A:_C\cm)_{\ri}^B)_{\ri}^B+A=\cA$ for any $C \in {\bf C}_{\ri}^\prime(A,B)$.
\end{enumerate}
\end{prop}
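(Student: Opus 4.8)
## Proof Proposal

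The plan is to prove Proposition~\ref{pr:nonexp} by dualizing the proof of Proposition~\ref{pr:nonreds} item-by-item, replacing sums with intersections, intersections with sums, $\cl$-closures $(-)^{\cl}_M$ with $\ri$-interiors $(-)^B_{\ri}$, the colon quotient $L + \cm N$ with the colon $(A :_C \cm)$, and ``maximal'' with ``minimal.'' The underlying dictionary is exactly the duality of Epstein, R.G. and Vassilev \cite{ERGV-nonres}, under which $\cl$-reductions correspond to $\ri$-expansions, the set ${\bf I}'_{\cl}(N,M)$ corresponds to ${\bf C}'_{\ri}(A,B)$, and the Nakayama closure hypothesis corresponds to the Nakayama interior hypothesis. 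So the structure of the argument will mirror \eqref{it:1nonred}--\eqref{it:7nonred} in order.

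For \eqref{it:nonexp1}: if $D \in {\bf C}'_{\ri}(A,B)$ and $D \subseteq C \subseteq B$, then since $\ri$ is order-preserving and intensive, $C^B_{\ri} \supseteq D^B_{\ri}$; because $D$ is not an $\ri$-expansion of $A$ we have $D^B_{\ri} \supsetneq A^B_{\ri}$ (using intensivity to control containment the right way), hence $C^B_{\ri} \supsetneq A^B_{\ri}$ and $C$ is not an $\ri$-expansion either. Item \eqref{it:nonexp2} is the dual of \eqref{it:2nonred}: since $\cP$ consists of Artinian modules, any nonempty subposet of submodules of $B$ has a minimal element, so a minimal element $\cA$ of ${\bf C}'_{\ri}(A,B)$ below $C$ exists. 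Items \eqref{it:nonexp3} and \eqref{it:nonexp4} follow formally: $\cA_1 \cap \cA_2 \subseteq \cA_i$ forces it out of ${\bf C}'_{\ri}(A,B)$ by minimality (when the $\cA_i$ are distinct minimal elements), so by definition it is an $\ri$-expansion; and for general $C, D$ the dichotomy is just ``either in the set or, by definition of the set, an $\ri$-expansion.'' Item \eqref{it:nonexp5} is the crux: if $(A :_C \cm) \notin {\bf C}'_{\ri}(A,B)$ then $(A :_C \cm)$ is an $\ri$-expansion of $A$, i.e. $(A :_C \cm)^B_{\ri} \subseteq A \subseteq (A :_C \cm)$; since $A \subseteq C \subseteq (A:_C \cm)$... here I need the containment $C \subseteq (A:_C\m)$ to run the Nakayama interior hypothesis, which gives $A^B_{\ri} = C^B_{\ri}$, contradicting $C \in {\bf C}'_{\ri}(A,B)$. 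I anticipate this is where care is needed — making sure the inclusions among $A$, $C$, $(A:_C\cm)$ are set up so that the Nakayama interior axiom, stated for $A \subseteq C \subseteq B$ with $(A:_C\m)^B_{\ri}\subseteq A$, applies verbatim.

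For \eqref{it:nonexp6}: if $C^B_{\ri} + A \notin {\bf C}'_{\ri}(A,B)$ then it is an $\ri$-expansion of $A$, so $(C^B_{\ri}+A)^B_{\ri} \subseteq A$; but $A \subseteq C^B_{\ri}+A$ and, using $C^B_{\ri} \subseteq C^B_{\ri}+A \subseteq C$ together with idempotence and order-preservation, $(C^B_{\ri}+A)^B_{\ri} = C^B_{\ri}$, giving $C^B_{\ri} \subseteq A$, so $C^B_{\ri} \subseteq A^B_{\ri}$ after applying $\ri$ (and the reverse by intensivity composed with order-preservation), whence $C^B_{\ri} = A^B_{\ri}$ — contradicting $C \in {\bf C}'_{\ri}(A,B)$. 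Finally \eqref{it:nonexp7} assembles the pieces exactly as \eqref{it:7nonred} did: start from $\cA$ minimal; from \eqref{it:nonexp5} with $C = \cA$ (or the relevant $C$) conclude $(A :_{\cA} \cm) \in {\bf C}'_{\ri}(A,B)$, and by minimality $\cA = (A:_{\cA}\cm)$, hence $\cA^B_{\ri}$ is unchanged under the colon-then-interior operation; apply \eqref{it:nonexp6} to see $\cA^B_{\ri} + A \in {\bf C}'_{\ri}(A,B)$, and minimality forces equality with $\cA$ after intersecting appropriately; then sandwich $(\cA^B_{\ri} \cap (A:_C\cm)^B_{\ri})$ between $A$ and $\cA$ using idempotence of $\ri$ to collapse the expression. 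The main obstacle I expect is bookkeeping in \eqref{it:nonexp5} and \eqref{it:nonexp7}: getting every containment and every application of intensivity/idempotence/order-preservation pointed the correct way under dualization, since the direction of inclusions flips and it is easy to invoke an axiom in the wrong order. Everything else is a mechanical transcription of the already-proven Proposition~\ref{pr:nonreds}.
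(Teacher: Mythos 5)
Your dualization of Proposition~\ref{pr:nonreds} item-by-item is the approach the paper takes, and items \eqref{it:nonexp1}--\eqref{it:nonexp6} match the paper's proof essentially verbatim. Two small remarks. In \eqref{it:nonexp5}, the worry that you ``need the containment $C \subseteq (A:_C\m)$'' is unfounded: the Nakayama interior axiom as defined in the paper is stated for \emph{any} chain $A\subseteq C\subseteq B$ of Artinian modules and asks only that $(A:_C\m)^B_{\ri}\subseteq A$ imply $A^B_{\ri}=C^B_{\ri}$; no inclusion of $C$ into the colon module is involved, so the hypothesis applies as soon as $(A:_C\m)$ is assumed to be an $\ri$-expansion. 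In \eqref{it:nonexp7}, your instinct to apply \eqref{it:nonexp5} with $C=\cA$ (giving $(A:_\cA\m)=\cA\cap(A:_B\m)\in{\bf C}'_{\ri}(A,B)$ and then $\cA\cap(A:_B\m)=\cA$ by minimality) is actually cleaner than the paper's appeal to the dichotomy in \eqref{it:nonexp4}; note in this connection that the free $C$ in the statement of \eqref{it:nonexp7} should really be the ambient $B$ (as the proof of Proposition~\ref{pr:postexp}\eqref{it:postexp2a} confirms by quoting the identity $\cA=\cA\cap(A:_B\m)$), since for $C$ another minimal element the intersection $\cA\cap(A:_C\m)$ drops out of ${\bf C}'_{\ri}(A,B)$ by \eqref{it:nonexp3}.
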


\begin{proof}

    \eqref{it:nonexp1} If $D\in {\bf C}_{\ri}^\prime(A,B)$, then $D$ is not an {\ri}-expansion of $A$. Since $D\subseteq C$ and $D_B^{\ri} \subseteq C_B^{\ri} \subsetneq A_B^{\ri}$ then $D$ is also not an {\ri}-expansion of $A$ in $B$.
    
    \eqref{it:nonexp2} Since $C\in {\bf C}_{\ri}^\prime(A,B)$ and R Artinian, then there exists an element $\cA\in {\bf C}_{\ri}^\prime(A,B)$ which is minimal in ${\bf C}_{\ri}^\prime(A,B)$ and $\cA\subseteq C$.
    
    \eqref{it:nonexp3} Since $\cA_1$ and $\cA_2$ are both minimal in ${\bf C}_{\ri}^\prime(A,B)$ and $A\subseteq \cA_1\cap \cA_2\subseteq \cA_1,\cA_2 \subseteq B$, then $\cA_1\cap \cA_2$ is an {\ri}-expansion of $A$ in $B$.
    
    \eqref{it:nonexp4} Since $A \subseteq C,D\subseteq B$ then $A \subseteq C\cap D\subseteq B$.  If $C\cap D \in {\bf C}_{\ri}^\prime(A,B)$, we are done. If $C\cap D \notin {\bf C}_{\ri}^\prime(A,B)$, then by definition $C\cap D$ is an {\ri}-expansion of $A$ in $B$.
    
    \eqref{it:nonexp5} Suppose $(A:_C\cm)\notin {\bf C}_{\ri}^\prime(A,B)$. So $(A:_C\cm)$ is an {\ri}-expansion of $A$ in $B$. Then 
    \[(A:_C\cm)_{\ri}^B\subseteq A \subseteq (A:_C\cm).\] Because {\ri} is a Nakayama interior, $A_{\ri}^B= C_{\ri}^B$ and $C$ is an {\ri}-expansion of $A$ in $B$ which contradicts our assumption. So $(A:_C\cm)\in {\bf C}_{\ri}^\prime(A,B)$.
    
    \eqref{it:nonexp6} Suppose that $ C_{\ri}^B +A\notin {\bf C}_{\ri}^\prime(A,B)$. Then $ C_{\ri}^B+A$ is an {\ri}-expansion of $A$ in $B$ and $({C}_{\ri}^B+A)_{\ri}^B=A_{\ri}^B$. Then ${C}_{\ri}^B\subseteq ({C}_{\ri}^B+A)_{\ri}^B=A_{\ri}^B \subseteq {C}_{\ri}^B$. This is the case because $\ri$ is intensive and order preserving and ${C}_{\ri}^B\subseteq {C}_{\ri}^B+A$ and $A\subseteq C$. Hence, $C$ is an {\ri}-expansion of $A$ in $B$ which contradicts $C\in {\bf C}_{\ri}^\prime(A,B)$.
    
    \eqref{it:nonexp7} Since $(\cA \cap (A:_C\cm))_{\ri}^B \subseteq \cA_{\ri}^B\cap(A:_C\cm)_{\ri}^B \subseteq \cA \cap (A:_C\cm)$, we have \[((\cA \cap (A:_C\cm))_{\ri}^B)_{\ri}^B \subseteq (\cA_{\ri}^B\cap(A:_C\cm)_{\ri}^B)_{\ri}^B \subseteq (\cA \cap (A:_C\cm))_{\ri}^B.\] So $(\cA_{\ri}^B\cap(A:_C\cm)_{\ri}^B)_{\ri}^B = (\cA \cap (A:_C\cm))_{\ri}^B$. By (5), $(A:_C\cm) \in {\bf C}_{\ri}^\prime(A,B)$ and by (4) $\cA\cap(A:_C\cm)\in {\bf C}_{\ri}^\prime(A,B)$. Since $\cA \cap(A:_C\cm) \subseteq \cA$ and $\cA$ is minimal, $\cA\cap (A:_C\cm)=\cA$ and $(\cA_{\ri}^B\cap((A:_C\cm))_{\ri}^B)_{\ri}^B = (\cA \cap (A:_C\cm))_{\ri}^B=\cA_{\ri}^B$. By (6), $(\cA_{\ri}^B\cap((A:_C\cm))_{\ri}^B)_{\ri}^B+A\in {\bf C}_{\ri}^\prime(A,B)$. Hence, by the minimality of $\cA$, \[\cA=(\cA_{\ri}^B\cap((A:_C\cm))_{\ri}^B)_{\ri}^B+A.\]
\end{proof}

\begin{prop} \label{pr:postexp}
 Let $(R,\cm)$ be a Noetherian ring and $\cP$ be Artinian $R$-modules. Let $\ri$ be a Nakayama interior on $\cP$. Then
\begin{enumerate}
    \item \label{it:postexp1} Every submodule $C\subseteq B$ which is not an $\ri$-expansion of $A$ in $B$ contains an $\ri$-postexpansion of $A$ in $B$.
    \item If $\cA$ is an $\ri$-postexpansion of $A$ in $B$ then
    \begin{enumerate}
        \item \label{it:postexp2a} $(A:_B\cm)\supseteq \cA$.
        \item \label{it:postexp2b} $\cA$ is $\ri$-open in $B$ or $\cA_{\ri}^B+A=\cA$.
        \end{enumerate}
\end{enumerate}
\end{prop}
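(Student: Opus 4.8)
The plan is to dualize the proof of Proposition~\ref{pr:prered} line for line, using Proposition~\ref{pr:nonexp} in place of Proposition~\ref{pr:nonreds} and reversing every inclusion. The Nakayama hypothesis, which in the closure setting entered through the submodule $L+\cm N$, now enters through the colon module $(A:_B\cm)$, and the role of ``maximal element of ${\bf I}_{\cl}^\prime(N,M)$'' is played by ``minimal element of ${\bf C}_{\ri}^\prime(A,B)$''.

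For \eqref{it:postexp1}: if $C\subseteq B$ is not an $\ri$-expansion of $A$ in $B$, then $C\in{\bf C}_{\ri}^\prime(A,B)$, so this set is nonempty, and Proposition~\ref{pr:nonexp}\eqref{it:nonexp2} yields a minimal element $\cA$ of ${\bf C}_{\ri}^\prime(A,B)$ with $A\subseteq\cA\subseteq C$. If $D$ satisfies $A\subseteq D\subsetneq\cA$ and $D$ were not an $\ri$-expansion of $A$ in $B$, then $D\in{\bf C}_{\ri}^\prime(A,B)$ would violate the minimality of $\cA$; hence every such $D$ is an $\ri$-expansion, so $\cA$ is an $\ri$-postexpansion of $A$ in $B$ contained in $C$.

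For part (2), I would read off both assertions from the proof of Proposition~\ref{pr:nonexp}\eqref{it:nonexp7}, exactly as part (2) of Proposition~\ref{pr:prered} was harvested from the proof of Proposition~\ref{pr:nonreds}\eqref{it:7nonred}. Since $\cA$ is an $\ri$-postexpansion it is a minimal element of the nonempty set ${\bf C}_{\ri}^\prime(A,B)$, and Proposition~\ref{pr:nonexp}\eqref{it:nonexp1} applied with $D=\cA\subseteq B$ gives $B\in{\bf C}_{\ri}^\prime(A,B)$, so we may take $C=B$ in Proposition~\ref{pr:nonexp}\eqref{it:nonexp7}. That proof produces the equality $\cA\cap(A:_B\cm)=\cA$, i.e. the containment $\cA\subseteq(A:_B\cm)$ of \eqref{it:postexp2a}. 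Feeding this back, $\cA_{\ri}^B\subseteq(A:_B\cm)_{\ri}^B$ by monotonicity of $\ri$, so $\cA_{\ri}^B\cap(A:_B\cm)_{\ri}^B=\cA_{\ri}^B$, and the conclusion $(\cA_{\ri}^B\cap(A:_B\cm)_{\ri}^B)_{\ri}^B+A=\cA$ of Proposition~\ref{pr:nonexp}\eqref{it:nonexp7} collapses to $(\cA_{\ri}^B)_{\ri}^B+A=\cA$; by idempotence of $\ri$ this is $\cA_{\ri}^B+A=\cA$, which establishes \eqref{it:postexp2b} (and in the special case $A=(0)$ it reads $\cA=\cA_{\ri}^B$, i.e. $\cA$ is $\ri$-open in $B$).

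I do not anticipate a genuine obstacle: once Proposition~\ref{pr:nonexp} is available the argument is purely formal. The only points requiring a moment's care are the bookkeeping of reversed inclusions relative to the $\cl$-prereduction case, and the legitimacy of choosing $C=B$ in Proposition~\ref{pr:nonexp}\eqref{it:nonexp7} — which is precisely where the upward-closure property \eqref{it:nonexp1} of ${\bf C}_{\ri}^\prime(A,B)$ is invoked.
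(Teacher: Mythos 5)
Your proof is correct and follows the same approach as the paper: part (1) by extracting a minimal element of ${\bf C}_{\ri}^\prime(A,B)$ via Proposition~\ref{pr:nonexp}\eqref{it:nonexp2}, and part (2) by reading off $\cA=\cA\cap(A:_B\cm)$ and $\cA_{\ri}^B+A=\cA$ from the proof of Proposition~\ref{pr:nonexp}\eqref{it:nonexp7} with $C=B$. You are in fact slightly more careful than the paper in explicitly invoking \eqref{it:nonexp1} to justify that $B\in{\bf C}_{\ri}^\prime(A,B)$ before specializing $C=B$.
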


\begin{proof}

    \eqref{it:postexp1} By Proposition \ref{pr:nonexp}\eqref{it:nonexp2}, there is some minimal element $\cA$ of ${\bf C}_{\ri}^\prime(A,B)$ with $\cA\subseteq C$. Such an $\cA$ must be an $\ri$-postexpansion since any submodule it contains must be an $\ri$-expansion of $A$ in $B$.
    \item \begin{enumerate}
        \item In the proof of Proposition \ref{pr:nonexp}\eqref{it:nonexp7}, we saw $\cA=\cA\cap (A:_B \cm)$. This implies $(A:_B\cm)\supseteq \cA$.
        \item We also saw in the proof of Proposition \ref{pr:nonexp}\eqref{it:nonexp7} that $\cA_{\ri}^B +A = (\cA\cap (A:_B \cm))_{\ri}^B +A =\cA$.
    \end{enumerate}

\end{proof}

\begin{cor} \label{co:postexpop}
 Let $(R,\cm)$ be a Noetherian ring and $\cP$ be Artinian $R$-modules and $\ri$ a Nakayama interior on the submodules of $R$. For every $\ri$-postexpansion  $\cA$ of ${\bf C}_{\ri}^B$ of $A$ in $B$, $(A:_C \cm) \supseteq \cA$ and $\cA=\cA_{\ri}^B$.
\end{cor}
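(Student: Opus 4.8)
The plan is to read Corollary~\ref{co:postexpop} as the Matlis dual of Corollary~\ref{co:closedprered} and to obtain it by applying Proposition~\ref{pr:postexp} to the pair $(A_{\ri}^{B},B)$, exactly as Corollary~\ref{co:closedprered} applies Proposition~\ref{pr:prered} to $(N_{M}^{\cl},M)$. There are two assertions to prove: $(A_{\ri}^{B}:_{B}\cm)\supseteq\cA$, which is the dual of $\cm N_{M}^{\cl}\subseteq\cA$, and $\cA=\cA_{\ri}^{B}$, which is the dual of $\cA=\cA_{M}^{\cl}$.

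Since $\cA$ is an $\ri$-postexpansion of $A_{\ri}^{B}$ in $B$ and $(A_{\ri}^{B},B)\in\cP$, Proposition~\ref{pr:postexp} applies verbatim with $A$ replaced by $A_{\ri}^{B}$. Part~\eqref{it:postexp2a} gives $(A_{\ri}^{B}:_{B}\cm)\supseteq\cA$ at once. Part~\eqref{it:postexp2b} gives two cases. If $\cA$ is $\ri$-open in $B$ then $\cA=\cA_{\ri}^{B}$ by definition and we are done. Otherwise $\cA_{\ri}^{B}+A_{\ri}^{B}=\cA$, and here I would use that $A_{\ri}^{B}\subseteq\cA$ together with idempotence and order-preservation of $\ri$ to deduce $A_{\ri}^{B}=(A_{\ri}^{B})_{\ri}^{B}\subseteq\cA_{\ri}^{B}$, so that $\cA_{\ri}^{B}+A_{\ri}^{B}=\cA_{\ri}^{B}$ and hence $\cA=\cA_{\ri}^{B}$ again. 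In both cases both assertions hold, which is exactly the interior-side counterpart of Corollary~\ref{co:closedprered}.

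The only step that is not a pure translation is the inclusion $A_{\ri}^{B}\subseteq\cA_{\ri}^{B}$ invoked in the second case; it is the interior analogue of the inclusion $\cA_{M}^{\cl}\subseteq N_{M}^{\cl}$ used without comment in the proof of Corollary~\ref{co:closedprered}. Working strictly from the stated axioms I would obtain it by chaining $(A_{\ri}^{B})_{\ri}^{B}\subseteq(A_{\ri}^{B})_{\ri}^{\cA}\subseteq\cA_{\ri}^{B}$: the first containment is the standard fact that the interior of a fixed submodule does not grow when the ambient module is enlarged, the second is ``order-preserving on submodules'' applied to $A_{\ri}^{B}\subseteq\cA\subseteq B$, and then idempotence gives $(A_{\ri}^{B})_{\ri}^{B}=A_{\ri}^{B}$. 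I expect this small piece of monotonicity bookkeeping to be the only real obstacle; the rest is mechanical dualization of the closure-side argument.
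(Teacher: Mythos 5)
Your reading of the (notationally garbled) statement is the right one, and your proof follows essentially the same route as the paper's: observe that $\cA$ is an $\ri$-postexpansion of $A_{\ri}^B$, then feed the pair $(A_{\ri}^B,B)$ into Proposition~\ref{pr:postexp}. Part~\eqref{it:postexp2a} gives the colon inclusion, and part~\eqref{it:postexp2b} gives $\cA = \cA_{\ri}^B$ once one knows $A_{\ri}^B \subseteq \cA_{\ri}^B$. The paper's own proof compresses the second step to ``$\cA_{\ri}^B = \cA_{\ri}^B + A = \cA$'' without spelling out either the case split in~\eqref{it:postexp2b} or the inclusion $A_{\ri}^B \subseteq \cA_{\ri}^B$; you handle both explicitly, which is a genuine improvement in rigor.

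The one place you overcomplicate is the ``monotonicity bookkeeping.'' You do not need the intermediate module $\cA$ or any assertion about interiors shrinking as the ambient grows. Since $A_{\ri}^B \subseteq \cA$ and both $(A_{\ri}^B,B),(\cA,B)\in\cP$, order-preservation (monotonicity in the first argument with the ambient fixed at $B$) gives $(A_{\ri}^B)_{\ri}^B \subseteq \cA_{\ri}^B$, and idempotence turns the left side into $A_{\ri}^B$. That is the whole argument; the detour through $(A_{\ri}^B)_{\ri}^{\cA}$ invokes an ambient-change property that is neither needed nor among the stated axioms. (The definition of ``order-preserving on submodules'' in the paper reads $p(L,N)\subseteq p(N,M)$, but this is a typographical slip for $p(L,M)\subseteq p(N,M)$, as the hypotheses $(L,M),(N,M)\in\cP$ and the usage throughout the paper make clear.)
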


\begin{proof}
Note that $\cA$ is an $\ri$-postexpansion of ${\bf C}_{\ri}^B$ since $\cA\in {\bf C}_{\ri}^B$ and every submodule $D$ with $C\subseteq D \subseteq \cA$, $D$ is an $\ri$-postexpansion of $C$ and hence an $\ri$-expansion of ${\bf C}_{\ri}^B$. By Proposition \ref{pr:postexp}\eqref{it:postexp2a},  $(A:_C \cm)\supseteq \cA$ and by Proposition \ref{pr:postexp}\eqref{it:postexp2b}, $\cA_{\ri}^B=\cA_{\ri}^B+A=\cA$.
\end{proof}

\begin{prop} \label{pr:postexpcompare}
 Let $(R,\cm)$ be a Noetherian local ring and $\ri$ a Nakayama interior on the modules of $R$. Let $A\subseteq C\subseteq B$ be submodules of $R$ with $C$ an $\ri$-expansion of $A$ in $B$. Then
\begin{enumerate}
    \item \label{it:postexpcomp1} ${\bf C}_{\ri}^\prime(C,B)\subseteq {\bf C}_{\ri}^\prime(A,B)$.
    \item \label{it:postexpcomp2} For each $D\in {\bf C}_{\ri}^\prime(A,B)$, $D+C \in {\bf C}_{\ri}^\prime(C,B)$.
    \item\label{it:postexpcomp3}  For each minimal element $\cA$ of ${\bf C}_{\ri}^\prime(C,B)$ there exists a minimal element $\cB$ of ${\bf C}_{\ri}^\prime(A,B)$ such that $\cA+C= \cB$.
\end{enumerate}
\end{prop}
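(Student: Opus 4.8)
The plan is to dualize the proof of Proposition~\ref{pr:preredcompare} essentially verbatim, interchanging the roles of closure and interior: where the closure argument intersects with $K$ I will instead add $C$, ``maximal'' becomes ``minimal'', $\cl$-reductions become $\ri$-expansions, and the relevant containments reverse. The one fact I would record at the outset is the interior analogue of ``a $\cl$-reduction has the same closure'': if $C$ is an $\ri$-expansion of $A$ in $B$, then $C_{\ri}^B=A_{\ri}^B$. Indeed $A\subseteq C$ gives $A_{\ri}^B\subseteq C_{\ri}^B$ by order-preservation, while $C_{\ri}^B\subseteq A$ together with idempotence gives $C_{\ri}^B=(C_{\ri}^B)_{\ri}^B\subseteq A_{\ri}^B$; applied to the pair $(C,D+C)$ the same reasoning shows $(D+C)_{\ri}^B\subseteq C$ forces $(D+C)_{\ri}^B=C_{\ri}^B$, which I will use in part~(2).

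For \eqref{it:postexpcomp1} I would take $D\in{\bf C}_{\ri}^\prime(C,B)$. Since $A\subseteq C\subseteq D\subseteq B$, the module $D$ lies between $A$ and $B$, and because $D$ is not an $\ri$-expansion of $C$ we have $D_{\ri}^B\not\subseteq C$; as $A\subseteq C$ this forces $D_{\ri}^B\not\subseteq A$, so $D$ is not an $\ri$-expansion of the smaller module $A$ either, i.e.\ $D\in{\bf C}_{\ri}^\prime(A,B)$. For \eqref{it:postexpcomp2} I would take $D\in{\bf C}_{\ri}^\prime(A,B)$, so $A\subseteq D\subseteq B$ and $D$ is not an $\ri$-expansion of $A$; then $C\subseteq D+C\subseteq B$, so it suffices to show $D+C$ is not an $\ri$-expansion of $C$. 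Assuming otherwise, $(D+C)_{\ri}^B=C_{\ri}^B$ by the preliminary remark; order-preservation gives $D_{\ri}^B\subseteq(D+C)_{\ri}^B=C_{\ri}^B=A_{\ri}^B$, and $A\subseteq D$ gives $A_{\ri}^B\subseteq D_{\ri}^B$, so $D_{\ri}^B=A_{\ri}^B\subseteq A$ (using intensivity). Then $D_{\ri}^B\subseteq A\subseteq D$ says $D$ is an $\ri$-expansion of $A$, contradicting $D\in{\bf C}_{\ri}^\prime(A,B)$; hence $D+C\in{\bf C}_{\ri}^\prime(C,B)$.

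For \eqref{it:postexpcomp3}, given a minimal element $\cA$ of ${\bf C}_{\ri}^\prime(C,B)$, part \eqref{it:postexpcomp1} places $\cA\in{\bf C}_{\ri}^\prime(A,B)$, so that set is nonempty and Proposition~\ref{pr:nonexp}\eqref{it:nonexp2} yields a minimal element $\cB$ of ${\bf C}_{\ri}^\prime(A,B)$ with $\cB\subseteq\cA$. By part \eqref{it:postexpcomp2}, $\cB+C\in{\bf C}_{\ri}^\prime(C,B)$; since $C\subseteq\cA$ (as $\cA\in{\bf C}_{\ri}^\prime(C,B)$) and $\cB\subseteq\cA$ we get $\cB+C\subseteq\cA$, and minimality of $\cA$ forces $\cB+C=\cA$.

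I do not expect a substantive obstacle: the argument is a faithful dual of Proposition~\ref{pr:preredcompare}. The points to be careful about are bookkeeping ones — keeping the hypotheses of Proposition~\ref{pr:nonexp} in force where it is invoked (Artinian modules, $\ri$ a Nakayama interior, and ${\bf C}_{\ri}^\prime(A,B)\neq\emptyset$, the last being automatic once $\cA$ is shown to lie in it), tracking which way the containments go after dualization, and not inadvertently invoking a strict containment ``$\subsetneq$'' where only the failure of the $\ri$-expansion condition is actually needed.
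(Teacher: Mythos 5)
Your proof is correct and follows the same dualization strategy as the paper, but two points are worth flagging. In part~(3) you establish $\cB+C=\cA$, whereas the proposition as printed asserts $\cA+C=\cB$; since $\cA\in{\bf C}_{\ri}^\prime(C,B)$ forces $C\subseteq\cA$ and hence $\cA+C=\cA$, the printed equality would collapse to $\cA=\cB$, which is not what the argument (yours or the paper's) actually shows. Your $\cB+C=\cA$ is the correct dual of Proposition~\ref{pr:preredcompare}\eqref{it:preredcomp3} (whose conclusion reads $\cB\cap K=\cA$) and matches what the paper's own proof is doing, so the statement in the paper carries a typo. Second, your treatment of part~(2) is cleaner than the published one: the paper's proof asserts $(D+C)_{\ri}^B\subseteq D_{\ri}^B+C_{\ri}^B$, which is the wrong direction for an interior operation (order-preservation gives only the reverse), and then writes the chain $A_{\ri}^B=C_{\ri}^B\subseteq D_{\ri}^B\subseteq A_{\ri}^B$ whose middle inclusion is not justified as stated. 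Your route — from $D\subseteq D+C$ deduce $D_{\ri}^B\subseteq(D+C)_{\ri}^B=C_{\ri}^B=A_{\ri}^B$, while $A\subseteq D$ gives the reverse containment, whence $D_{\ri}^B=A_{\ri}^B\subseteq A$ — is the sound version of what the paper intends. Your handling of the hypotheses for Proposition~\ref{pr:nonexp}\eqref{it:nonexp2} (nonemptiness via $\cA\in{\bf C}_{\ri}^\prime(A,B)$) is also correct.
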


\begin{proof}

    \eqref{it:postexpcomp1} Let $D\in {\bf C}_{\ri}^\prime(C,B)$. Since $A\subseteq C$, then $A\subseteq C \subseteq D$ and $A_{\ri}^B \subseteq {\bf C}_{\ri}^B \subsetneq D_{\ri}^B$. Since $D$ is not an $\ri$-expansion of $C$, it cannot be an $\ri$-expansion of the smaller module $A$. Thus $C\in {\bf C}_{\ri}^\prime(A,B)$.
    
    \eqref{it:postexpcomp1} If $D\in {\bf C}_{\ri}^\prime(A,B)$, then $A\subseteq D$ and $D$ is not an $\ri$-expansion of $A$. Note that $D\subseteq D+C$. To see that $D+C\in {\bf C}_{\ri}^\prime(C,B)$, it is enough to see that $D+C$ is not an $\ri$-expansion of $C$. Suppose that $D+C$ is an $\ri$-expansion of $C$. Then $(D+C)_{\ri}^B={\bf C}_{\ri}^B$. Note that $(D+C)_{\ri}^B\subseteq D_{\ri}^B +{\bf C}_{\ri}^B$. Since $C$ is an $\ri$-expansion of $A$, $A_{\ri}^B={\bf C}_{\ri}^B\subseteq D_{\ri}^B \subseteq A_{\ri}^B$  which gives a contradition to $D\in {\bf C}_{\ri}^\prime(A,B)$. Hence $D+C \in {\bf C}_{\ri}^\prime(C,B)$.
    
    \eqref{it:postexpcomp1} Let $\cA$ be a minimal element of ${\bf C}_{\ri}^\prime(C,B)$. By (1), ${\bf C}_{\ri}^\prime(C,B) \subseteq {\bf C}_{\ri}^\prime(A,B)$. Thus $\cA\in {\bf C}_{\ri}^\prime(A,B)$ and there must be a minimal element $\cB\in {\bf C}_{\ri}^\prime(A,B)$ with $\cB\subseteq \cA$. By (2), $\cA+C\in {\bf C}_{\ri}^\prime(C,B)$. Since $\cB\subseteq\cA +C$ and $\cB$ is minimal, we get $\cA+C=\cB$.

\end{proof}

\begin{prop} \label{pr:postexpcontain}
 Let $(R,\cm)$ be a Noetherian local ring and $\ri$ a Nakayama interior on the modules of $R$. If $\cA$ is an $\ri$-postexpansion of $A$ in $B$ and $A\subseteq C \subseteq \cA$, then $\cA$ is an $\ri$-postexpansion of $C$ in $B$.
\end{prop}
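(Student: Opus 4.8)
The plan is to dualize the proof of Proposition~\ref{pr:preredcontainment} under the usual dictionary: wherever that argument uses maximal elements of ${\bf I}_{\cl}^\prime$, intersection with a submodule, and the comparison statements of Proposition~\ref{pr:preredcompare}, I will instead use minimal elements of ${\bf C}_{\ri}^\prime$, sum with a submodule, and the comparison statements of Proposition~\ref{pr:postexpcompare}. Throughout I use that the $\ri$-postexpansions of a submodule are exactly the minimal elements of the corresponding set ${\bf C}_{\ri}^\prime$, and that a member of ${\bf C}_{\ri}^\prime(C,B)$ contains $C$ by definition.

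First I would dispose of the trivial case $C=\cA$, and otherwise note that since $\cA$ is an $\ri$-postexpansion of $A$ in $B$ and $A\subseteq C\subsetneq \cA$, the submodule $C$ is an $\ri$-expansion of $A$ in $B$; this is exactly what lets me invoke Proposition~\ref{pr:postexpcompare} with $A$ playing the role of the smaller module and $C$ playing the role of its $\ri$-expansion. Since $\cA$ is an $\ri$-postexpansion of $A$ it lies in ${\bf C}_{\ri}^\prime(A,B)$, so Proposition~\ref{pr:postexpcompare}\eqref{it:postexpcomp2} applied with $D=\cA$ gives $\cA+C\in{\bf C}_{\ri}^\prime(C,B)$, i.e.\ $\cA\in{\bf C}_{\ri}^\prime(C,B)$ because $C\subseteq\cA$. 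Thus $\cA$ is not an $\ri$-expansion of $C$ in $B$, and it remains only to show $\cA$ is minimal in ${\bf C}_{\ri}^\prime(C,B)$.

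For the minimality I argue by contradiction. If $\cA$ were not minimal in ${\bf C}_{\ri}^\prime(C,B)$, then by Proposition~\ref{pr:nonexp}\eqref{it:nonexp2} there is a minimal element $\cB\in{\bf C}_{\ri}^\prime(C,B)$ with $\cB\subsetneq\cA$. Applying Proposition~\ref{pr:postexpcompare}\eqref{it:postexpcomp3} to this minimal element $\cB$ produces a minimal element $\mathfrak{C}$ of ${\bf C}_{\ri}^\prime(A,B)$ with $\cB+C=\mathfrak{C}$; but $\cB\in{\bf C}_{\ri}^\prime(C,B)$ already contains $C$, so $\cB+C=\cB$ and hence $\mathfrak{C}=\cB$. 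Therefore $\cB$ is a minimal element of ${\bf C}_{\ri}^\prime(A,B)$ strictly contained in $\cA$, contradicting the fact that $\cA$, being an $\ri$-postexpansion of $A$ in $B$, is minimal in ${\bf C}_{\ri}^\prime(A,B)$. This contradiction forces $\cA$ to be minimal in ${\bf C}_{\ri}^\prime(C,B)$, i.e.\ an $\ri$-postexpansion of $C$ in $B$.

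I do not anticipate a genuine obstacle; the only point requiring care is the bookkeeping of which submodule plays which role in Proposition~\ref{pr:postexpcompare}, together with the small observation that any minimal element of ${\bf C}_{\ri}^\prime(C,B)$ automatically contains $C$, so that forming the sum with $C$ in \eqref{it:postexpcomp3} leaves it unchanged --- this is precisely what preserves the strict inclusion $\cB\subsetneq\cA$ and yields the contradiction.
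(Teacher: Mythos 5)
Your proof is correct and follows essentially the same route as the paper's: establish $\cA\in{\bf C}_{\ri}^\prime(C,B)$ via Proposition~\ref{pr:postexpcompare}\eqref{it:postexpcomp2}, then argue minimality by contradiction using Proposition~\ref{pr:postexpcompare}\eqref{it:postexpcomp3}. Your explicit remark that $\cB\supseteq C$ forces $\cB+C=\cB$, so the produced minimal element of ${\bf C}_{\ri}^\prime(A,B)$ equals $\cB$ and stays strictly inside $\cA$, makes the final contradiction slightly cleaner than the paper's phrasing.
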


\begin{proof}
Since $\cA$ is an $\ri$-postexpansion of $A$ in $B$ and $A\subseteq C\subseteq \cA$, then $C$ is an $\ri$-expansion of $A$ in $B$. Also, $\cA\in {\bf C}_{\ri}^\prime(A,B)$. By Proposition \ref{pr:postexpcompare}\eqref{it:postexpcomp2}, $\cA+C=\cA\in {\bf C}_{\ri}^\prime(A,B)$.\\
Suppose that $\cA$ is not an $\ri$-postexpansion of $C$ in $B$. Then there exists a minimal $\cB\in {\bf C}_{\ri}^\prime(C,B)$ with $\cB\subsetneq\cA$ and $\cB$ an $\ri$-postexpansion of $C$ in $B$. Then by Proposition \ref{pr:postexpcompare}\eqref{it:postexpcomp3} and since minimal elements of ${\bf C}_{\ri}^\prime(A,B)$ are $\ri$-postexpansions of $A$ in $B$, there exists an $\ri$-postexpansion $\mathfrak{C}$ of $A$ in $B$ such that $\mathfrak{C}\subseteq \mathfrak{C}+C=\cB\subsetneq\cA$. This contradicts the minimality of $\cA$ in ${\bf C}_{\ri}^\prime(A,B)$. Hence, $\cA$ is an $\ri$-postexpansion of $C$ in $B$.
\end{proof}

\begin{example} \label{ex:postexp}
Let $R=k[[x^2,x^5]]$, $\cm=(x^2,x^5)$ and $k$ a field of any characteristic. We can find the $\cm$be-interiors for some of the non-zero non-unital ideals of $R$.
$$(x^2,x^5)_{\cm be}^R = (x^2,x^5) = \cm$$
$$(x^n)_{\mbe}^R=(x^{n+2},x^{n+5})$$
$$(x^n,x^{n+1})_{\cm be}^R = \begin{cases}
(x^4,x^7) & \textrm{if } n=4\\
(x^6,x^7) & \textrm{if } n=5\\
(x^n,x^{n+1}) & \textrm{if } n\geq 6\\
\end{cases}$$
$$(x^n,x^{n+3})_{\cm be}^R= \begin{cases}
(x^4,x^7) & \textrm{if } n=4\\
(x^7,x^8) & \textrm{if } n=5\\
(x^n,x^{n+3}) & \textrm{if } n\geq 6\\
\end{cases}$$

Let $I=(x^4,x^7)$. Then $(x^4,x^5)$ is a $\mbe$-expansion of $I$ since $(x^4,x^5)_{\mbe}^R=I_{\mbe}^R=I$ and $I\subseteq (x^4,x^5)$.\\
Since $(x^4)_{\mbe}^R=(x^6,x^9) \subseteq I_{\mbe}^R=I$ and $I_{\mbe}^R\not\subseteq (x^4)$, $I$ is a $\m$be-postexpansion of $(x^4)$. 
\end{example}

\section{Comparing $\cl$-prereductions and $\ri$-postexpansions for different closures and interiors}

There are many different closure and interior operations so it would be beneficial to know when closure operations are comparable and when interior operations are comparable. Knowing when they are comparable allows us to compare $\cl$-reductions, $\cl$-prereductions, $\ri$-expansions, and $\ri$-postexpansions.

\begin{defn}
Let $p_1$ and $p_2$ be pair operations defined on $\cP$ a collection of pairs of modules in $R$. We say $p_1 \leq p_2$ if $p_1(N,M)\subseteq p_2(N,M)$ for all $(N,M)\in \cP$. We say $p_1$ and $p_2$ are \textit{comparable} if $p_1\leq p_2$ or $p_2\leq p_1$.
\end{defn}

\begin{rmk}
Let $\cl_1$ and $\cl_2$ be closure operations defined on the submodules of $R$. We say $\cl_1\leq \cl_2$ if $N_M^{\cl_1}\subseteq N_M^{\cl_2}$ for all $(N,M)\in\cP$. So $\cl_1$ and $\cl_2$ are \textit{comparable} if $\cl_1\leq \cl_2$ or $\cl_2\leq \cl_1$. 

Let $\ri_1$ and $\ri_2$ be interior operations defined on the submodules of $R$. We say $\ri_1\leq \ri_2$ if $A_{\ri_1}^B \subseteq A_{\ri_2}^B$ for all $(A,B)\in\mathcal{P}$. So $\ri_1$ and $\ri_2$ are \textit{comparable} if $\ri_1\leq \ri_2$ or $\ri_2 \leq \ri_1$.
\end{rmk}

We first explore relationships between ${\cl}_i$-prereductions for $i=1,2$, when $\cl_1 \leq \cl_2$.

\begin{prop} \label{pr:clintfacts}
Let $R$ be a Noetherian ring.
\begin{enumerate}
    \item Let $\cM$ be the category of finitely $R$-modules and $\cP$ the class of pairs $(N,M)$, with $N \subseteq M$ in $\cM$.  Suppose $\cl_1\leq\cl_2$  are Nakayama closure operations.  Then we have
    \begin{enumerate}
        \item \label{it:clint1a} $(N_M^{\cl_1})_M^{\cl_2}=N_M^{\cl_2}=(N_M^{\cl_2})_M^{\cl_1}$, and
        \item \label{it:clint1b} $N_M^{\cl_1}$ is a $\cl_2$-reduction of $N_M^{\cl_2}$.
    \end{enumerate}
    \item   Let $\cM$ be the category of Artinian $R$-modules and $\cP$ the class of pairs $(C,B)$, with $C \subseteq B$ in $\cM$.  Suppose $\ri_1\leq\ri_2$ are  Nakayama interior operations.  Then we have
    \begin{enumerate}
        \item \label{it:clint2a} $(A_{\ri_1}^B)_{\ri_2}^B=A_{\ri_1}^B=(A_{\ri_2}^B)_{\ri_1}^B$, and
        \item \label{it:clint2b} $A_{\ri_2}^B$ is an $\ri_1$-expansion of $A_{\ri_1}^B$.
    \end{enumerate}
\end{enumerate}
\end{prop}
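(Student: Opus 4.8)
The plan is to prove part (1), the closure side, directly from the defining properties of a closure operation together with the hypothesis $\cl_1 \le \cl_2$, and then obtain part (2) either by a symmetric argument (reversing inclusions and replacing sums/intersections appropriately) or by citing the duality that interiors satisfy as order-reversing analogues of closures.

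\emph{Part (1a).} First I would show the two ``nested closure'' identities. Since $N \subseteq N_M^{\cl_1} \subseteq N_M^{\cl_2}$, applying $\cl_2$ (which is order-preserving on submodules and idempotent) gives $N_M^{\cl_2} \subseteq (N_M^{\cl_1})_M^{\cl_2} \subseteq (N_M^{\cl_2})_M^{\cl_2} = N_M^{\cl_2}$, hence $(N_M^{\cl_1})_M^{\cl_2} = N_M^{\cl_2}$. For the other equality, from $\cl_1 \le \cl_2$ we get $(N_M^{\cl_2})_M^{\cl_1} \subseteq (N_M^{\cl_2})_M^{\cl_2} = N_M^{\cl_2}$, while extensivity of $\cl_1$ gives $N_M^{\cl_2} \subseteq (N_M^{\cl_2})_M^{\cl_1}$; together these yield $(N_M^{\cl_2})_M^{\cl_1} = N_M^{\cl_2}$. (A subtle point to watch: one must know these iterated pairs all lie in $\cP$ so the operations are defined; this is fine since $\cM$ is closed under submodules and quotients and all objects here are finitely generated.)

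\emph{Part (1b).} By the definition of a $\cl_2$-reduction, I must check $N_M^{\cl_1} \subseteq N_M^{\cl_2} \subseteq (N_M^{\cl_1})_M^{\cl_2}$. The first inclusion is $\cl_1 \le \cl_2$ applied to $N$ together with idempotence arguments, or simply $N \subseteq N_M^{\cl_1}$ and $N_M^{\cl_1}$ being between $N$ and $N_M^{\cl_2}$; the second is exactly the identity $(N_M^{\cl_1})_M^{\cl_2} = N_M^{\cl_2}$ proved in (1a). So (1b) is immediate once (1a) is in hand.

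\emph{Part (2).} For the interior statements I would run the order-reversed version: from $A_{\ri_2}^B \subseteq A_{\ri_1}^B \subseteq A$ and intensivity/idempotence of $\ri_1$ one gets $A_{\ri_1}^B = (A_{\ri_1}^B)_{\ri_1}^B \subseteq (A_{\ri_1}^B)_{\ri_2}^B$? — no: here I must be careful, since $\ri_2 \ge \ri_1$ means $\ri_2$ is the \emph{larger} interior, so $(A_{\ri_1}^B)_{\ri_2}^B \supseteq (A_{\ri_1}^B)_{\ri_1}^B = A_{\ri_1}^B$, while intensivity of $\ri_2$ gives $(A_{\ri_1}^B)_{\ri_2}^B \subseteq A_{\ri_1}^B$, so $(A_{\ri_1}^B)_{\ri_2}^B = A_{\ri_1}^B$; and $(A_{\ri_2}^B)_{\ri_1}^B \subseteq (A_{\ri_2}^B)_{\ri_1}^B$ combined with $A_{\ri_2}^B \subseteq A_{\ri_1}^B$ and $\ri_1 \le \ri_2$ gives the chain collapsing to $A_{\ri_1}^B$, giving (2a); then (2b) follows since an $\ri_1$-expansion of $A_{\ri_1}^B$ requires $(A_{\ri_2}^B)_{\ri_1}^B \subseteq A_{\ri_1}^B \subseteq A_{\ri_2}^B$, both of which we now have. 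The main obstacle is purely bookkeeping: keeping the direction of each inclusion straight when the ``larger'' operation is an interior (so ``larger'' still means larger submodule, but intensivity pushes downward), and confirming all intermediate modules lie in $\cP$; there is no deep content beyond the closure/interior axioms and the comparison hypothesis.
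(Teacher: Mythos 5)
Your proof takes essentially the same route as the paper: sandwich each composite closure (resp.\ interior) between $N_M^{\cl_2}$ and itself (resp.\ between $A_{\ri_1}^B$ and itself) using the comparison hypothesis and idempotence, then read off (1b) and (2b) from the definitions. One bookkeeping slip to fix in part (2): you twice write $A_{\ri_2}^B \subseteq A_{\ri_1}^B$, but $\ri_1 \le \ri_2$ means $A_{\ri_1}^B \subseteq A_{\ri_2}^B \subseteq A$; applying $\ri_1$ to this chain gives $A_{\ri_1}^B = (A_{\ri_1}^B)_{\ri_1}^B \subseteq (A_{\ri_2}^B)_{\ri_1}^B \subseteq A_{\ri_1}^B$, which is the collapse your prose is gesturing at.
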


\begin{proof}

    \eqref{it:clint1a} Since $N\subseteq N_M^{\cl_1}\subseteq N_M^{\cl_2}$, we have \[N_M^{\cl_2}\subseteq (N_M^{\cl_1})_M^{\cl_2}\subseteq (N_M^{\cl_2})_M^{\cl_2}=N_M^{\cl_2}\] which implies $(N_M^{\cl_1})_M^{\cl_2}=N_M^{\cl_2}$. Also, note that 
    \[N_M^{\cl_2}\subseteq (N_M^{\cl_2})_M^{\cl_1}\subseteq (N_M^{\cl_2})_M^{\cl_2}=N_M^{\cl_2}\] yields $N_M^{\cl_2}=(N_M^{\cl_2})_M^{\cl_1}$.
        
    \eqref{it:clint1b} By \eqref{it:clint1a}, we have $(N_M^{\cl_1})_M^{\cl_2}=N_M^{\cl_2}$ and by definition we get that $N_M^{\cl_1}$ is a $\cl_2$-reduction of $N_M^{\cl_2}$.

    \eqref{it:clint2a} Since $A_{\ri_1}^B \subseteq A_{\ri_2}^B \subseteq A$, we have 
    \[A_{\ri_1}^B = (A_{\ri_1}^B)_{\ri_1}^B \subseteq (A_{\ri_2}^B)_{\ri_1}^B \subseteq A_{\ri_1}^B\] 
    which implies $A_{\ri_1}^B=(A_{\ri_2}^B)_{\ri_1}^B$. Also, note that \[A_{\ri_1}^B=(A_{\ri_1}^B)_{\ri_1}^B \subseteq (A_{\ri_1}^B)_{\ri_2}^B \subseteq A_{\ri_1}^B\] yields $(A_{\ri_1}^B)_{\ri_2}^B=A_{\ri_1}^B$.
        
    \eqref{it:clint2b} By \eqref{it:clint2a}, we have $A_{\ri_1}^B=(A_{\ri_2}^B)_{\ri_1}^B$ and by definition we get that $A_{\ri_2}^B$ is an $\ri_1$-expansion of $A_{\ri_1}^B$ in $B$.
\end{proof}

\begin{prop} \label{pr:compclsets}
Let $(R,\cm)$ be a Noetherian local ring and $\cl_1\leq\cl_2$ be Nakayama closures on $\cP$.
\begin{enumerate}
    \item \label{it:compclsets1} ${\bf I}_{\cl_2}^\prime(N,M) \subseteq {\bf I}_{\cl_1}^\prime(N,M)$ for all $(N,M)\in\cP$.
    \item  \label{it:compclsets2} If ${\bf I}_{\cl_2}^\prime(N,M) \neq \emptyset$, then ${\bf I}_{\cl_1}^\prime(N,M) \neq\emptyset$.
    \item  \label{it:compclsets3} Suppose $K\in {\bf I}_{\cl_1}^\prime(N,M)$. Then $K\in {\bf I}_{\cl_2}^\prime(N,M)$ if and only if $K$ is not a $\cl_2$-reduction of $N$ in $M$.
    \item  \label{it:compclsets4} If $L\in {\bf I}_{\cl_1}^\prime(N,M)$, then $L_M^{\cl_1}\cap N \in {\bf I}_{\cl_2}^\prime(N,M)$ if and only if $L\in {\bf I}_{\cl_2}^\prime(N,M)$.
    \item  \label{it:compclsets5} If $L\in {\bf I}_{\cl_1}^\prime(N,M)$, then $L+\cm N \in {\bf I}_{\cl_2}^\prime(N,M)$ if and only if $L\in {\bf I}_{\cl_2}^\prime(N,M)$.
\end{enumerate}
\end{prop}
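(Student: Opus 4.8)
The plan is to run everything through the single observation that, for $L\subseteq N$, $L$ is a $\cl$-reduction of $N$ in $M$ exactly when $L_M^{\cl}=N_M^{\cl}$, and to lean on Proposition~\ref{pr:clintfacts}, which records the clean interaction $(L_M^{\cl_1})_M^{\cl_2}=L_M^{\cl_2}$ between the two closures. For \eqref{it:compclsets1}, I take $L\in{\bf I}_{\cl_2}^\prime(N,M)$, so $L\subseteq N$ and $L_M^{\cl_2}\subsetneq N_M^{\cl_2}$; if $L$ were a $\cl_1$-reduction of $N$ then $L_M^{\cl_1}=N_M^{\cl_1}$, and applying $\cl_2$ together with Proposition~\ref{pr:clintfacts}\eqref{it:clint1a} would give $L_M^{\cl_2}=(L_M^{\cl_1})_M^{\cl_2}=(N_M^{\cl_1})_M^{\cl_2}=N_M^{\cl_2}$, a contradiction, so $L\in{\bf I}_{\cl_1}^\prime(N,M)$. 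Part \eqref{it:compclsets2} is then immediate. Part \eqref{it:compclsets3} is just unpacking the definition: since $K\in{\bf I}_{\cl_1}^\prime(N,M)$ already supplies $K\subseteq N$, membership of $K$ in ${\bf I}_{\cl_2}^\prime(N,M)$ is equivalent to $K$ not being a $\cl_2$-reduction of $N$ (and, incidentally, uses no hypothesis on the closures).

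For \eqref{it:compclsets4}, Proposition~\ref{pr:nonreds}\eqref{it:6nonred} puts $L_M^{\cl_1}\cap N$ in ${\bf I}_{\cl_1}^\prime(N,M)$, so in particular $L_M^{\cl_1}\cap N\subseteq N$ and, by \eqref{it:compclsets3}, it belongs to ${\bf I}_{\cl_2}^\prime(N,M)$ iff it is not a $\cl_2$-reduction of $N$. The key computation is $(L_M^{\cl_1}\cap N)_M^{\cl_2}=L_M^{\cl_2}$: the inclusion $\supseteq$ follows from $L\subseteq L_M^{\cl_1}\cap N$, and $\subseteq$ from $L_M^{\cl_1}\cap N\subseteq L_M^{\cl_1}$ via Proposition~\ref{pr:clintfacts}\eqref{it:clint1a}. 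Hence $L_M^{\cl_1}\cap N$ is a $\cl_2$-reduction of $N$ iff $L_M^{\cl_2}=N_M^{\cl_2}$ iff $L$ is a $\cl_2$-reduction of $N$, and invoking \eqref{it:compclsets3} once more (with $K=L$) gives the stated equivalence. Part \eqref{it:compclsets5} runs the same way: Proposition~\ref{pr:nonreds}\eqref{it:5nonred} puts $L+\cm N$ in ${\bf I}_{\cl_1}^\prime(N,M)$, so by \eqref{it:compclsets3} it suffices to see that $L+\cm N$ is a $\cl_2$-reduction of $N$ iff $L$ is; the ``if'' direction uses only $L\subseteq L+\cm N\subseteq N$ and monotonicity of $\cl_2$, while the ``only if'' direction is precisely where the Nakayama property of $\cl_2$ enters, since $N\subseteq(L+\cm N)_M^{\cl_2}$ forces $L_M^{\cl_2}=N_M^{\cl_2}$.

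I expect the only mild obstacle to be the identity $(L_M^{\cl_1}\cap N)_M^{\cl_2}=L_M^{\cl_2}$ in \eqref{it:compclsets4}; everything else is a direct appeal to Proposition~\ref{pr:clintfacts} or Proposition~\ref{pr:nonreds}, or a routine unwinding of the reduction/non-reduction dichotomy. The one bookkeeping point to keep in mind throughout is that membership in any ${\bf I}_{\cl}^\prime(N,M)$ already carries the containment ``$\subseteq N$,'' so that each ``if and only if'' is genuinely a statement about the reduction condition alone.
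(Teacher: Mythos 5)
Your proposal is correct, and the underlying ingredients (Proposition~\ref{pr:clintfacts}\eqref{it:clint1a}, Proposition~\ref{pr:nonreds}\eqref{it:5nonred}\eqref{it:6nonred}, and the Nakayama property) are the same ones the paper uses. Where you genuinely diverge is in the organization of \eqref{it:compclsets4} and \eqref{it:compclsets5}: you first observe, correctly, that \eqref{it:compclsets3} is a tautological unpacking of the definition of ${\bf I}_{\cl}'$, and then use it as a pivot so that both \eqref{it:compclsets4} and \eqref{it:compclsets5} reduce to a single clean equivalence at the level of closures --- namely $(L_M^{\cl_1}\cap N)_M^{\cl_2}=L_M^{\cl_2}$ in the one case, and ``$L+\cm N$ is a $\cl_2$-reduction iff $L$ is'' via Nakayama in the other. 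The paper instead argues the two implications of \eqref{it:compclsets4} and \eqref{it:compclsets5} separately and somewhat ad hoc; its treatment of \eqref{it:compclsets4} contains what appears to be a misreference (to part \eqref{it:compclsets1} of the proposition itself, where Proposition~\ref{pr:nonreds}\eqref{it:1nonred} is evidently meant), and its argument for \eqref{it:compclsets5} as printed is circular (it begins and ends with ``$L\notin{\bf I}_{\cl_2}'(N,M)$''). Your reorganization around \eqref{it:compclsets3} sidesteps both of these issues and is arguably the cleaner way to present the proof; it buys you a single reusable identity instead of two one-off case analyses, at the small cost of having to verify $(L_M^{\cl_1}\cap N)_M^{\cl_2}=L_M^{\cl_2}$ explicitly, which you do correctly from $L\subseteq L_M^{\cl_1}\cap N\subseteq L_M^{\cl_1}$ and \eqref{it:clint1a}.
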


\begin{proof}

    \eqref{it:compclsets1} Suppose $L\in {\bf I}_{\cl_2}^\prime(N,M)$. Then $L$ is not a $\cl_2$-reduction of $N$ in $M$. So $L\subseteq N$ and $L_M^{\cl_2}\subsetneq N_M^{\cl_2}$. If $L$ were a $\cl_1$-reduction of $N$ in $M$, then $N_M^{\cl_1}=L_M^{\cl_1}\subseteq L_M^{\cl_2}$ which implies $N_M^{\cl_2}=(N_M^{\cl_1})_M^{\cl_2} \subseteq L_M^{\cl_2} \subsetneq N_M^{\cl_2}$ which is a contradiction. Thus $L$ is not a $\cl_1$-reduction of $N$ in $M$ and so $L\in {\bf I}_{\cl_1}^\prime(N,M)$.
    
    \eqref{it:compclsets2} Since ${\bf I}_{\cl_2}^\prime(N,M)\neq \emptyset$ and ${\bf I}_{\cl_2}^\prime(N,M)\subseteq {\bf I}_{\cl_1}^\prime(N,M)$, then ${\bf I}_{\cl_1}^\prime(N,M)\neq \emptyset$.
    
    \eqref{it:compclsets3} If $K\in {\bf I}_{\cl_1}^\prime(N,M)$, then $K_M^{\cl_1}\subsetneq N_M^{\cl_1}$. Since $K_M^{\cl_2}=(K_M^{\cl_1})_M^{\cl_2}\subseteq (N_M^{\cl_1})_M^{\cl_2}=N_M^{\cl_2}$, then $K\in {\bf I}_{\cl_2}^\prime(N,M)$ if and only if $K_M^{\cl_2}\neq N_M^{\cl_2}$.
    
    \eqref{it:compclsets4} Suppose $L\in {\bf I}_{\cl_2}^\prime(N,M)$, then since $L_M^{\cl_1}\subseteq L_M^{\cl_2}$ and by Proposition \ref{pr:nonreds}\eqref{it:6nonred} $L_M^{\cl_2}\cap N \in {\bf I}_{\cl_2}^\prime(N,M)$, we see that by Proposition \ref{pr:compclsets}\eqref{it:compclsets1}, $L_M^{\cl_1}\cap N\in {\bf I}_{\cl_2}^\prime(N,M)$. \\
    Suppose $L\in {\bf I}_{\cl_1}^\prime(N,M) \backslash {\bf I}_{\cl_2}^\prime(N,M)$. Then $L_M^{\cl_1}\subsetneq N_M^{\cl_1}\subseteq N_M^{\cl_2}$. Since $L_M^{\cl_2}=(L_M^{\cl_1})_M^{\cl_2}=N_M^{\cl_2}$, we see $L_M^{\cl_1}$ is a $\cl_2$-reduction of $N$ in $M$ and $L\subseteq L_M^{\cl_1}\cap N\subseteq L_M^{\cl_2}=N_M^{\cl_2}$. Applying $\cl_2$ to this chain, we see that $L_M^{\cl_2}\subseteq (L_M^{\cl_1}\cap N)_M^{\cl_2}\subseteq (L_M^{\cl_2})_M^{\cl_2}=L_M^{\cl_2}$. Thus $L_M^{\cl_1}\cap N$ is a $\cl_2$-reduction of $N$ in $M$ and $L_M^{\cl_1}\cap N\notin {\bf I}_{\cl_2}^\prime(N,M)$.
    
    \eqref{it:compclsets5} If $L\in {\bf I}_{\cl_2}^\prime(N,M)$, then $L+\cm N \in {\bf I}_{\cl_2}^\prime(N,M)$ by Proposition \ref{pr:nonreds}\eqref{it:5nonred}. Suppose that $L\notin {\bf I}_{\cl_2}^\prime(N,M)$. Then $L+\cm N$ is either a $\cl_2$-reduction of $N$ or $N_M^{\cl_2}=(L+\cm N)_M^{\cl_2}$. Since $\cl_2$ is a Nakayama closure, then $N_M^{\cl_2}=L_M^{\cl_2}$. Thus $L\notin {\bf I}_{\cl_2}^\prime(N,M)$.
\end{proof}

\begin{prop} \label{pr:comparepre}
 Let $(R,\cm)$ be a Noetherian local ring and $\cl_1\leq\cl_2$ Nakayama closures defined on $\cP$.
\begin{enumerate}
    \item \label{it:comparepre1} For every $\cl_2$-prereduction $\cA$ of $N$ in $M$, there exists a $\cl_1$-prereduction $\cB$ with $\cA\subseteq\cB$.
    \item \label{it:comparepre2} If $N=N_M^{\cl_2}$ and $\cA$ is a $\cl_2$-prereduction of $N$ in $M$, then $\cA=\cA_M^{\cl_1}=\cA_M^{\cl_2}$.
    \item \label{it:comparepre3} If $N=N_M^{\cl_1}$ and $\cA$ is both a $\cl_1$- and $\cl_2$- prereduction of $N$ in $M$, then $\cA=\cA_M^{\cl_1}=\cA_M^{cl_2}\cap N$.
    \item \label{it:comparepre4} Suppose $\cA$ is a $\cl_1$-prereduction of $N$ in $M$ and $\cA=\cA_M^{\cl_2}$. Then $\cA$ is a $\cl_2$-prereduction of $N$ in $M$.
\end{enumerate}
\end{prop}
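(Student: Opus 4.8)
The plan is to treat the four items one at a time, in each case reducing to facts already established for a single Nakayama closure --- Propositions \ref{pr:nonreds} and \ref{pr:prered} and Corollary \ref{co:closedprered} --- together with the comparison Proposition \ref{pr:compclsets}; the hypothesis $\cl_1 \leq \cl_2$ will enter only through the inclusions $L_M^{\cl_1} \subseteq L_M^{\cl_2}$ and the identities of Proposition \ref{pr:clintfacts}. For \eqref{it:comparepre1}, I would observe that a $\cl_2$-prereduction $\cA$ of $N$ in $M$ is a maximal element of ${\bf I}_{\cl_2}^\prime(N,M)$, so $\cA \in {\bf I}_{\cl_2}^\prime(N,M) \subseteq {\bf I}_{\cl_1}^\prime(N,M)$ by Proposition \ref{pr:compclsets}\eqref{it:compclsets1}; then Proposition \ref{pr:prered}\eqref{it:1prered} places $\cA$ inside some $\cl_1$-prereduction $\cB$, which is exactly the claim. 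For \eqref{it:comparepre2}, since $N = N_M^{\cl_2}$ the $\cl_2$-prereduction $\cA$ of $N$ is a $\cl_2$-prereduction of $N_M^{\cl_2}$, so Corollary \ref{co:closedprered} applied to $\cl_2$ gives $\cA = \cA_M^{\cl_2}$; squeezing, $\cA \subseteq \cA_M^{\cl_1} \subseteq \cA_M^{\cl_2} = \cA$, so $\cA = \cA_M^{\cl_1}$ as well.

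For \eqref{it:comparepre3}, I would apply Corollary \ref{co:closedprered} to $\cl_1$ (using $N = N_M^{\cl_1}$) to get $\cA = \cA_M^{\cl_1}$, and for the remaining equality use that $\cA$, being a maximal element of ${\bf I}_{\cl_2}^\prime(N,M)$, satisfies $\cA_M^{\cl_2} \cap N \in {\bf I}_{\cl_2}^\prime(N,M)$ by Proposition \ref{pr:nonreds}\eqref{it:6nonred}; since this submodule contains $\cA$, maximality yields $\cA = \cA_M^{\cl_2} \cap N$. For \eqref{it:comparepre4}, I would verify the three defining conditions for $\cA$ to be a $\cl_2$-prereduction of $N$: being a $\cl_1$-prereduction, $\cA$ is not a $\cl_1$-reduction of $N$, hence $\cA \neq N$, so $\cA \subsetneq N$; since $\cA = \cA_M^{\cl_2}$, this forces $N \not\subseteq \cA_M^{\cl_2}$, so $\cA$ is not a $\cl_2$-reduction of $N$ (while still $\cA \subseteq N$); and for any $K$ with $\cA \subsetneq K \subseteq N$, $K$ is a $\cl_1$-reduction of $N$, so $N \subseteq K_M^{\cl_1} \subseteq K_M^{\cl_2}$ and $K$ is a $\cl_2$-reduction of $N$.

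All four arguments are short. The one point that needs care is \eqref{it:comparepre4}: the hypothesis $\cA = \cA_M^{\cl_2}$ is used precisely to exclude the possibility that $\cA$ is itself a $\cl_2$-reduction of $N$ --- without it, $\cA_M^{\cl_1} \subsetneq N_M^{\cl_1}$ does not prevent $N \subseteq \cA_M^{\cl_2}$, and the conclusion fails --- so I would flag this as the spot where the extra assumption genuinely does work. I would also be careful in \eqref{it:comparepre3} to produce $\cA_M^{\cl_2} \cap N = \cA$ rather than merely "$\cA$ is $\cl_2$-closed in $N$," which is why I route through Proposition \ref{pr:nonreds}\eqref{it:6nonred} and maximality instead of quoting Proposition \ref{pr:prered}\eqref{it:2bprered} verbatim.
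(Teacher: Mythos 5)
Your proof is correct and follows essentially the same route as the paper's, with items \eqref{it:comparepre1}--\eqref{it:comparepre3} matching the paper's argument (you cite Proposition~\ref{pr:nonreds}\eqref{it:2nonred} and \eqref{it:6nonred} directly where the paper routes through Proposition~\ref{pr:prered}, but these are the same facts). Your direct verification of the definition in \eqref{it:comparepre4} is a small improvement over the paper's argument by contradiction, which tacitly assumes $\cA\in {\bf I}'_{\cl_2}(N,M)$ before extracting a strictly larger element of that set; your explicit observation that $\cA=\cA_M^{\cl_2}$ together with $\cA\subsetneq N$ forces $\cA$ not to be a $\cl_2$-reduction is precisely the step the paper leaves unstated, and it is where the hypothesis $\cA=\cA_M^{\cl_2}$ genuinely does its work.
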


\begin{proof}

    \eqref{it:comparepre1} Since $\cA$ is a $\cl_2$-prereduction of $N$ in $M$, then it is a maximal element of ${\bf I}_{\cl_2}^\prime(N,M)$. By Proposition \ref{pr:compclsets}\eqref{it:compclsets1}, $\cA\in {\bf I}_{\cl_1}^\prime(N,M)$. By Proposition \ref{pr:nonreds}\eqref{it:2nonred} there then exists a maximal element $\cB\in {\bf I}_{\cl_1}^\prime(N,M)$ with $\cA\subseteq\cB$.
    
    \eqref{it:comparepre2} By Corollary \ref{co:closedprered} we know that $\cA=\cA_M^{\cl_2}$. Since $\cA\subseteq\cA_M^{\cl_2}\subseteq\cA_M^{\cl_2}$, we can conclude that $\cA=\cA_M^{\cl_1}=\cA_M^{\cl_2}$.
    
    \eqref{it:comparepre3} By Corollary \ref{co:closedprered} we know that $\cA=\cA_M^{\cl_1}$. By Proposition \ref{pr:prered}\eqref{it:2bprered} $\cA=\cA_M^{\cl_2}\cap N$. Combining the equalities gives the result.
    
    \eqref{it:comparepre4} Suppose $\cA$ is not a $\cl_2$-prereduction of $N$ in $M$. Then there exists a $\cB\in {\bf I}_{\cl_2}^\prime(N,M)$ with $\cA\subsetneq\cB$. Since ${\bf I}_{\cl_2}^\prime(N,M)\subseteq {\bf I}_{\cl_1}^\prime(N,M)$, then $\cB\in {\bf I}_{\cl_1}^\prime(N,M)$. Since $\cA$ is a $\cl_1$-prereduction of $N$ in $M$ and $\cA\subsetneq\cB$, $\cB$ must be a $\cl_1$-reduction of $N$ in $M$ which contradicts $\cB\in {\bf I}_{\cl_1}^\prime(N,M)$. Thus $\cA$ must be a $\cl_2$-prereduction of $N$ in $M$.
    \end{proof}

Now we move on to comparisons of ${\ri}_j$-postexpansions, when $j=1,2$ and $\ri_1 \leq \ri_2$.

\begin{prop} \label{pr:comparerisets}
 Let $(R,\cm)$ be a Noetherian ring and $\cP$ be Artinian $R$-modules and $\ri_1\leq\ri_2$ be Nakayama interiors on $\cP$.
\begin{enumerate}
   \item \label{it:compareriset1} ${\bf C}_{\ri_1}^\prime(A,B) \subseteq {\bf C}_{\ri_2}^\prime(A,B)$ for all $(A,B)\in\cP$.
    \item \label{it:compareriset2} If ${\bf C}_{\ri_1}^\prime(A,B) \neq \emptyset$, then ${\bf C}_{\ri_2}^\prime(A,B) \neq\emptyset$.
    \item \label{it:compareriset3} Suppose $C\in {\bf C}_{\ri_2}^\prime(A,B)$. Then $C\in {\bf C}_{\ri_1}^\prime(A,B)$ if and only if $C$ is not an $\ri_1$-expansion of $A$ in $B$.
    \item \label{it:compareriset4}  If $C\in {\bf C}_{\ri_2}^\prime(A,B)$, then $C_{\ri_2}^B+A \in {\bf C}_{\ri_1}^\prime(A,B)$ if and only if $C\in {\bf C}_{\ri_1}^\prime(A,B)$.
    \item \label{it:compareriset5} If $C\in {\bf C}_{\ri_2}^\prime(A,B)$, then $(A:_C\cm) \in {\bf C}_{\ri_1}^\prime(A,B)$ if and only if $C\in {\bf C}_{\ri_1}^\prime(A,B)$.
\end{enumerate}
\end{prop}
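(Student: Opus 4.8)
The plan is to prove Proposition~\ref{pr:comparerisets} as the formal dual of Proposition~\ref{pr:compclsets}: each step transfers by reversing inclusions, replacing $\cl_i$ by $\ri_i$, replacing intersections $\,\cdot\cap N$ by sums $\,\cdot+A$, and replacing the submodules $L+\cm N$ by the colon submodules $(A:_C\cm)$. The inputs are Proposition~\ref{pr:clintfacts}(2) on composing comparable interiors, Proposition~\ref{pr:nonexp} (the interior analogues of Proposition~\ref{pr:nonreds}), the fact that interiors are intensive and order-preserving, and the Nakayama interior property. Throughout I will use that membership $C\in{\bf C}_{\ri}'(A,B)$ already encodes $A\subseteq C\subseteq B$, so the only content of ``$C$ is not an $\ri$-expansion of $A$ in $B$'' is the failure of $C_{\ri}^B\subseteq A$.

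For \eqref{it:compareriset1}, I take $C\in{\bf C}_{\ri_1}'(A,B)$. If $C$ were an $\ri_2$-expansion of $A$ in $B$, then $C_{\ri_2}^B\subseteq A$, and since $\ri_1\le\ri_2$ we would get $C_{\ri_1}^B\subseteq C_{\ri_2}^B\subseteq A$ (equivalently, apply $\ri_1$ and use $(C_{\ri_2}^B)_{\ri_1}^B=C_{\ri_1}^B$ from Proposition~\ref{pr:clintfacts}\eqref{it:clint2a}), contradicting the assumption that $C$ is not an $\ri_1$-expansion of $A$ in $B$; hence $C\in{\bf C}_{\ri_2}'(A,B)$. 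Part \eqref{it:compareriset2} is then immediate, and part \eqref{it:compareriset3} follows because, for $C\in{\bf C}_{\ri_2}'(A,B)$, the containments $A\subseteq C\subseteq B$ are automatic, so $C\in{\bf C}_{\ri_1}'(A,B)$ is literally the statement that $C$ is not an $\ri_1$-expansion of $A$ in $B$.

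For \eqref{it:compareriset4}: if $C\in{\bf C}_{\ri_1}'(A,B)$ then $C_{\ri_1}^B+A\in{\bf C}_{\ri_1}'(A,B)$ by Proposition~\ref{pr:nonexp}\eqref{it:nonexp6}, and since $C_{\ri_1}^B+A\subseteq C_{\ri_2}^B+A\subseteq B$, Proposition~\ref{pr:nonexp}\eqref{it:nonexp1} upgrades this to $C_{\ri_2}^B+A\in{\bf C}_{\ri_1}'(A,B)$. For the reverse implication I argue the contrapositive: if $C\notin{\bf C}_{\ri_1}'(A,B)$, i.e. $C_{\ri_1}^B\subseteq A\subseteq C$, then $C_{\ri_2}^B+A\subseteq C$ forces $(C_{\ri_2}^B+A)_{\ri_1}^B\subseteq C_{\ri_1}^B\subseteq A\subseteq C_{\ri_2}^B+A$, so $C_{\ri_2}^B+A$ is an $\ri_1$-expansion of $A$ in $B$ and hence not in ${\bf C}_{\ri_1}'(A,B)$. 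Part \eqref{it:compareriset5} runs on the same pattern: one direction is Proposition~\ref{pr:nonexp}\eqref{it:nonexp5} (whose proof is where the Nakayama property of $\ri_1$ enters), and the other follows from $(A:_C\cm)\subseteq C$, which gives $(A:_C\cm)_{\ri_1}^B\subseteq C_{\ri_1}^B\subseteq A\subseteq(A:_C\cm)$ as soon as $C$ is an $\ri_1$-expansion of $A$ in $B$.

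I expect no genuine obstacle beyond careful bookkeeping. The one point I will double-check is the direction of the duality in the hypotheses: $\ri_1\le\ri_2$ forces ${\bf C}_{\ri_1}'(A,B)\subseteq{\bf C}_{\ri_2}'(A,B)$, so the hypothesis $C\in{\bf C}_{\ri_2}'(A,B)$ in \eqref{it:compareriset3}--\eqref{it:compareriset5} places $C$ in the \emph{larger} of the two sets, mirroring Proposition~\ref{pr:compclsets}, where $\cl_1\le\cl_2$ forces ${\bf I}_{\cl_2}'(N,M)\subseteq{\bf I}_{\cl_1}'(N,M)$ and the hypothesis $K\in{\bf I}_{\cl_1}'(N,M)$ again singles out the larger set. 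This flip in which operation ``wins'' is the only place the dualization could be misapplied.
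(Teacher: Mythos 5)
Your proposal is correct and follows essentially the same route as the paper: parts \eqref{it:compareriset1}--\eqref{it:compareriset3} by comparing $C_{\ri_1}^B$ and $C_{\ri_2}^B$ via $\ri_1\le\ri_2$ and Proposition~\ref{pr:clintfacts}\eqref{it:clint2a}, and parts \eqref{it:compareriset4}--\eqref{it:compareriset5} by the forward direction from Proposition~\ref{pr:nonexp}\eqref{it:nonexp1},\eqref{it:nonexp5},\eqref{it:nonexp6} and the reverse by the contrapositive using $C_{\ri_2}^B+A\subseteq C$ and $(A:_C\cm)\subseteq C$. Your formulation is slightly cleaner in that you consistently work with the characterization ``$C$ is an $\ri$-expansion of $A$ iff $C_{\ri}^B\subseteq A\subseteq C$,'' which makes explicit (correctly) that \eqref{it:compareriset3} is essentially a restatement of the definition and that \eqref{it:compareriset5}'s reverse direction needs no Nakayama input.
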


\begin{proof}

    \eqref{it:compareriset1} Suppose $C\in {\bf C}_{\ri_1}^\prime(A,B)$. Then $C$ is not an $\ri_1$-expansion of $A$ in $B$. So $A\subseteq C\subseteq B$ and $A_{\ri_1}^B\subsetneq C_{\ri_1}^B$. If $C$ was an $\ri_2$-expansion of $A$ in $B$, then $A_{\ri_2}^B=C_{\ri_2}^B\supseteq C_{\ri_1}^B$ which implies $A_{\ri_1}^B=(A_{\ri_2}^B)_{\ri_1}^B\supseteq (C_{\ri_2}^B)_{\ri_1}^B \supsetneq (C_{\ri_1}^B)_{\ri_1}^B= C_{\ri_1}^B$ which is a contradiction. Thus $C$ is not an $\ri_2$-expansion of $A$ in $B$ and so $C\in {\bf C}_{\ri_2}^\prime(A,B)$.
    
    \eqref{it:compareriset2} Since ${\bf C}_{\ri_1}^\prime(A,B)\neq\emptyset$ and ${\bf C}_{\ri_1}^\prime(A,B)\subseteq {\bf C}_{\ri_2}^\prime(A,B)$, then ${\bf C}_{\ri_2}^\prime(A,B)\neq\emptyset$.
    
    \eqref{it:compareriset3} If $C\in {\bf C}_{\ri_2}^\prime(A,B)$, then $A_{\ri_2}^B\subsetneq C_{\ri_2}^B$. Since $A_{\ri_1}^B= (A_{\ri_2}^B)_{\ri_1}^B \subseteq (C_{\ri_2}^B)_{\ri_1}^B = C_{\ri_1}^B$, then $C\in {\bf C}_{\ri_1}^\prime(A,B)$ if and only if $A_{\ri_1}^B\neq C_{\ri_1}^B$.
    
    \eqref{it:compareriset4} If $C\in {\bf C}_{\ri_1}^\prime(A,B)$, then since $C_{\ri_1}^B\subseteq C_{\ri_2}^B$ and by Proposition \ref{pr:nonexp}\eqref{it:nonexp6}, $C_{\ri_1}^B+A\in {\bf C}_{\ri_1}^\prime(A,B)$. We see that by Proposition \ref{pr:nonexp}\eqref{it:nonexp1}, $C_{\ri_2}^B+A\in {\bf C}_{\ri_1}^\prime(A,B)$. Suppose $C\in {\bf C}_{\ri_1}^\prime(A,B)\backslash {\bf C}_{\ri_1}^\prime(A,B)$. Then $A_{\ri_1}^B \subseteq A_{\ri_2}^B \subsetneq C_{\ri_2}^B$. Since $A_{\ri_1}^B = (C_{\ri_2}^B)_{\ri_1}^B=C_{\ri_1}^B$, we see $C_{\ri_2}^B$ is an $\ri_1$-expansions of $A$ in $B$ and $A_{\ri_1}^B = C_{\ri_1}^B \subseteq C_{\ri_1}^B+A \subseteq C_{\ri_2}^B +A \subseteq C$. Applying $\ri_1$ to this chain, we see that $A_{\ri_1}^B \subseteq C_{\ri_1}^B \subseteq (C_{\ri_2}^B+A)_{\ri_1}^B\subseteq C_{\ri_1}^B$. Thus $C_{\ri_2}^B+A$ is an $\ri_1$-expansion of $A$ in $B$ and $C_{\ri_2}^B+A\notin {\bf C}_{\ri_1}^\prime(A,B)$.
    
    \eqref{it:compareriset5} If $C\in {\bf C}_{\ri_1}^\prime(A,B)$, then $(A:_C \cm)\in {\bf C}_{\ri_1}^\prime(A,B)$ by Proposition \ref{pr:nonexp}\eqref{it:nonexp5}.
    Suppose that $C\notin {\bf C}_{\ri_1}^\prime(A,B)$. Then $(A:_C \cm)$ is either an $\ri_1$-expansion of $A$ in $B$ or $A_{\ri_1}^B=(A:_C\cm)_{\ri_1}^B$. Since $\ri_1$ is a Nakayama interior, then $A_{\ri_1}^B=C_{\ri_1}^B$. Thus $C\notin {\bf C}_{\ri_1}^\prime(A,B)$.
\end{proof}

\begin{prop} \label{pr:comparepost}
 Let $(R,\cm)$ be a Noetherian ring and $\cP$ be Artinian $R$-modules and $\ri_1\leq\ri_2$ be Nakayama interiors on the submodules of $R$. Then 
\begin{enumerate}
    \item \label{it:comparepost1} For every $\ri_1$-postexpansion $\cA$ of $A$ in $B$, there exists an $\ri_2$-postexpansion $\cB$ with $\cB\subseteq\cA$.
    \item \label{it:comparepost2} If $A=A_{\ri_1}^B$ and $\cA$ is an $\ri_1$-postexpansion of $A$ in $B$, then $\cA=\cA_{\ri_1}^B=\cA_{\ri_2}^B$.
    \item  \label{it:comparepost3} If $A=A_{\ri_2}^B$ and $\cA$ is both an $\ri_1$- and $\ri_2$- postexpansion on $A$ in $B$, then $\cA=\cA_{\ri_2}^B=\cA_{\ri_1}^B+A$.
    \item \label{it:comparepost4} Suppose $\cA$ is an $\ri_2$-postexpansion of $A$ in $B$ and $\cA=\cA_{\ri_1}^B$. Then $\cA$ is an $\ri_1$-postexpanion of $A$ in $B$.
\end{enumerate}
\end{prop}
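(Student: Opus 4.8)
The plan is to transcribe the proof of Proposition~\ref{pr:comparepre} into the interior setting, replacing each closure-theoretic input by its dual: Proposition~\ref{pr:compclsets} by Proposition~\ref{pr:comparerisets}, Proposition~\ref{pr:nonreds} by Proposition~\ref{pr:nonexp}, Corollary~\ref{co:closedprered} by Corollary~\ref{co:postexpop}, and Proposition~\ref{pr:prered} by Proposition~\ref{pr:postexp}. Throughout one must keep in mind that $\ri_1\le\ri_2$ reverses the containments one is accustomed to for closures, so $\ri_1$-open and $\ri_2$-open submodules have to be kept apart; apart from that the argument is mechanical.

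For part~\eqref{it:comparepost1}, recall that an $\ri_1$-postexpansion $\cA$ of $A$ in $B$ is a minimal element of ${\bf C}_{\ri_1}^\prime(A,B)$. By Proposition~\ref{pr:comparerisets}\eqref{it:compareriset1} we have ${\bf C}_{\ri_1}^\prime(A,B)\subseteq{\bf C}_{\ri_2}^\prime(A,B)$, so $\cA\in{\bf C}_{\ri_2}^\prime(A,B)$ and this set is nonempty; Proposition~\ref{pr:nonexp}\eqref{it:nonexp2} then supplies a minimal element $\cB$ of ${\bf C}_{\ri_2}^\prime(A,B)$ with $\cB\subseteq\cA$, which is the desired $\ri_2$-postexpansion. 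For part~\eqref{it:comparepost2}, the hypothesis $A=A_{\ri_1}^B$ lets Corollary~\ref{co:postexpop} (applied to $\ri_1$) give $\cA=\cA_{\ri_1}^B$; since $\ri_1\le\ri_2$ and $\ri_2$ is intensive, $\cA_{\ri_1}^B\subseteq\cA_{\ri_2}^B\subseteq\cA$, forcing all three to coincide.

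For part~\eqref{it:comparepost3}, the hypothesis $A=A_{\ri_2}^B$ and Corollary~\ref{co:postexpop} applied to $\ri_2$ give $\cA=\cA_{\ri_2}^B$, while Proposition~\ref{pr:postexp}\eqref{it:postexp2b} applied to $\ri_1$ gives that either $\cA$ is $\ri_1$-open in $B$ or $\cA_{\ri_1}^B+A=\cA$; in the first case $\cA=\cA_{\ri_1}^B$, and since $A\subseteq\cA$ we again get $\cA_{\ri_1}^B+A=\cA$, so in either case $\cA=\cA_{\ri_2}^B=\cA_{\ri_1}^B+A$. For part~\eqref{it:comparepost4}, first observe that $A$ is always an $\ri_2$-expansion of itself, so $A\notin{\bf C}_{\ri_2}^\prime(A,B)$ while $\cA$ is, whence $A\subsetneq\cA$; combined with $\cA=\cA_{\ri_1}^B$ this gives $\cA_{\ri_1}^B\not\subseteq A$, i.e.\ $\cA\in{\bf C}_{\ri_1}^\prime(A,B)$. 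If $\cA$ failed to be an $\ri_1$-postexpansion there would be some $\cB\in{\bf C}_{\ri_1}^\prime(A,B)$ with $\cB\subsetneq\cA$, and by Proposition~\ref{pr:comparerisets}\eqref{it:compareriset1} this $\cB$ would lie in ${\bf C}_{\ri_2}^\prime(A,B)$, contradicting the minimality of $\cA$ there.

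I expect the only place requiring genuine care to be part~\eqref{it:comparepost3}: one has to split on whether $\cA$ is $\ri_1$-open and verify both branches land on $\cA=\cA_{\ri_1}^B+A$, and one should double-check that Corollary~\ref{co:postexpop} is invoked with the correct interior in each of \eqref{it:comparepost2} and \eqref{it:comparepost3}. Everything else is a routine dualization of the already-established argument for $\cl$-prereductions.
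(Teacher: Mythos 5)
Your proof is correct and follows essentially the same route as the paper's, namely dualizing the argument for Proposition~\ref{pr:comparepre} via Propositions~\ref{pr:comparerisets}, \ref{pr:nonexp}, \ref{pr:postexp} and Corollary~\ref{co:postexpop}. Parts~\eqref{it:comparepost1} and \eqref{it:comparepost2} match the paper verbatim in substance. In part~\eqref{it:comparepost3} you apply Proposition~\ref{pr:postexp}\eqref{it:postexp2b} with the interior $\ri_1$ to the $\ri_1$-postexpansion $\cA$, which is the correct invocation; the paper's printed proof applies it with $\ri_2$ and thereby records the tautology $\cA=\cA_{\ri_2}^B+A$ rather than the needed $\cA=\cA_{\ri_1}^B+A$, so your version actually fixes an apparent slip. (Your case split on $\ri_1$-openness is harmless but unnecessary: the proof of Proposition~\ref{pr:postexp}\eqref{it:postexp2b} shows $\cA_{\ri}^B+A=\cA$ always holds for a postexpansion, so the ``or'' there is not a genuine dichotomy.) In part~\eqref{it:comparepost4} you add a preliminary step verifying $\cA\in {\bf C}_{\ri_1}^\prime(A,B)$ from the hypothesis $\cA=\cA_{\ri_1}^B$ and $A\subsetneq\cA$ before invoking non-minimality; the paper's proof begins directly with ``there exists $\cB\in {\bf C}_{\ri_1}^\prime(A,B)$ with $\cB\subsetneq\cA$,'' which tacitly presupposes $\cA\in{\bf C}_{\ri_1}^\prime(A,B)$. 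Your extra step is exactly where the hypothesis $\cA=\cA_{\ri_1}^B$ enters, so it is a genuine improvement in rigor, not padding.
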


\begin{proof}

    \eqref{it:comparepost1} Since $\cA$ is an $\ri_1$-postexpansion of $A$ in $B$, then it is a minimal element of ${\bf C}_{\ri_1}^\prime(A,B)$. By Proposition \ref{pr:comparerisets}\eqref{it:compareriset1} $\cA\in {\bf C}_{\ri_2}^\prime(A,B)$. By Proposition \ref{pr:nonexp}\eqref{it:nonexp2} there then exists a minimal element $\cB\in {\bf C}_{\ri_2}^\prime(A,B)$ with $\cB\subseteq\cA$.
    
    \eqref{it:comparepost2} By Corollary \ref{co:postexpop}, we know that $\cA=\cA_{\ri_1}^B$. Since $\cA_{\ri_1}^B \subseteq \cA_{\ri_2}^B \subseteq \cA$, we can conclude that $\cA=\cA_{\ri_1}^B=\cA_{\ri_2}^B$.
    
    \eqref{it:comparepost3} By Corollary \ref{co:postexpop}, we know that $\cA=\cA_{\ri_2}^B$. By Proposition \ref{pr:postexp}\eqref{it:postexp2b}, $\cA=\cA_{\ri_2}^B+A$. Combining the equalities gives the result.
    
    \eqref{it:comparepost4} Suppose $\cA$ is not an $\ri_1$-postexpansion of $A$ in $B$. Then there exists a $\cB\in {\bf C}_{\ri_1}^\prime(A,B)$ with $\cB\subsetneq\cA$. Since ${\bf C}_{\ri_1}^\prime(A,B)\subseteq {\bf C}_{\ri_2}^\prime(A,B)$, then $\cB\in {\bf C}_{\ri_2}^\prime(A,B)$. Since $\cA$ is an $\ri_2$-postexpansion of $A$ in $B$ and $\cB\subsetneq\cA$, $\cB$ must be an $\ri_2$-expansion of $A$ in $B$ which contradicts $\cB\in {\bf C}_{\ri_2}^\prime(A,B)$. Thus $\cA$ must be an $\ri_1$-postexpansion of $A$ in $B$.
\end{proof}

We provide a few examples illustrating the above Propositions.  For the first example we use the fact that the $\m$-basically full closure of an ideal is always contained in its integral closure.  In the second example we use the fact that the tight closure of an ideal is always contained in its integral closure.

\begin{examplex}

Let $R=k[[x^2,x^5]]$, $\m =(x^2,x^5)$ and $k$ is a field of any characteristic. Consider the ideal $I=(x^6, x^7)$.  Note that $I^{-}=(x^6,x^7)$ and $I^{\m {\rm bf}}=(\m I :_R\m)=(x^6,x^7)$.  Note that $(x^6,x^9)$ is an integral reduction of $I$ but not a basically full reduction of $I$ since $(x^6,x^9)^{\m {\rm bf}}=(x^6,x^9)$.  Since $I/(x^6,x^9) \cong k$, then $I$ is a non basically full cover of $(x^6,x^9)$.  Now by Remark \ref{rmk:basic}, we see that $(x^6,x^9)$ is a $\cm$ basically full prereduction of $I$ which is not an integral prereduction of $I$.  

However, the ideal $(x^7,x^8)$ is neither an integral reduction nor a basically full reduction of $I$ and $I/(x^7,x^8) \cong k$ implies that $I$ is a non-integral cover and a non-basically full cover of $(x^7,x^8)$.  Again we use Remark \ref{rmk:basic} to see that $(x^7,x^8)$ is both an integral prereduction of $I$ and a $\cm$ basically full prereduction of $I$.

In fact, since the integrally closed ideals in $R$ have the form $(x^n)k[[x]] \cap R$, then $(x^7,x^8)$ is the unique integral prereduction of $I$.  Whereas, $I$ has multiple basically full prereductions.   

This example also nicely illustrates Proposition \ref{prop:unionprered}.  $I$ has principal integral reductions $(f)$ where $f=\sum\limits_{n=6}^{\infty} a_nx^n$ and $a_6\neq 0$, and $I \neq (x^7,x^8)$ which is the union of its integral prereductions.  Whereas $(f)$ is not a basically full reduction of $I$ since $(f)^{\m {\rm bf}}=(a_6x^6+a_7x^7,x^9) \neq I$.  So $I$ has no principal basically full reductions and $I$ is clearly seen to be the union of its basically full prereductions. \qed
\end{examplex}

\begin{examplex}

Let $R=k[[x,y,z]]/(x^2-y^3-z^6)$, $\m=(x,y,z)$ and $k$ is a field of characteristic $p>3$.  $R$ has test ideal $\m$.  Note that $\m^2$ is both integrally closed and tightly closed.  $(y^2,z^2)$ is a minimal integral reduction of $\m$ which is not a minimal $*$-reduction of $\m^2$.  This is because $R$ is a Gorenstein ring and in this case $(y^2,z^2)^*=(y^2,z^2):_R \m=(xyz, y^2,z^2)$.  A minimal $*$-reduction of $\m^2$ is $(y^2,yz,z^2)$.  Since \[(y^2,yz,z^2)^*=((y,z^2) \cap (y^2,z))^*=(y,z^2)^* \cap (y^2,z)^*=(xz,y,z^2) \cap (xy,y^2,z)=\m^2\] by \cite[Proposition 2.4]{Va-mfull}.  
So although $(y^2,z^2)$ is an integral reduction of $\m^2$, $(y^2,z^2) \in {\bf I}'_{*}(\m^2)$.  Note that $(y^2,z^2)$ is not a $*$-prereduction of $\m^2$ because $(y^2,z^2) \subsetneq (y^2,z^2,xy,xz)^*=(y^2,z^2,xy,xz)$ by \cite[Theorem 2.2]{Vr-chains}.  In fact, we will see in the next section that $(y^2,z^2,xy,xz)$ is a $*$-prereduction of $\m^2$.  An example of an integral prereduction of $\m^2$ is $J=(xy,xz,y^3,yz,z^2)$; this is the case since $\m^2/J \cong k$ and for all $f \in \m^2 \setminus J$, $J+(f)=\m^2$ implying that $J$ is an integral prereduction by Proposition \ref{prop:preredcov}.
\qed    
\end{examplex}

\section{Duality}

In this section, we extend the work of \cite{ERG-duality} and \cite{ERGV-nonres} to show duality between $\cl$-prereductions and $\ri$-postexpansions. This will also help when discussing precores and posthulls in Section 8.

We will use $^\vee$ to denote the Matlis duality operation, $\textrm{Hom}_R(\_,E)$. If $\cP$ is the class of Matlis-dualizable $R$-modules, then for all $M\in\cP$, $M^{\vee\vee}\cong M$. In this section, $R$ is a complete local ring with maximal ideal $\cm$, residue field $k$, and $E:=E_R(k)$ the injective hull.

\begin{defn}
\cite[Definition 3.1]{ERGV-nonres} Let $R$ be a complete local ring. Let $p$ be a pair operation on a class of pairs of Matlis-dualizable $R$-modules $\cP$. For any pair of $R$-modules $(A,B)\in \cP^{\vee}$, set $$\cP^{\vee}:=\{(A,B)\mid ((B/A)^{\vee},B^{\vee})\in\cP\},$$ and define the dual of $p$ by 
$$p^{\dual}(A,B):=\left(\frac{B^{\vee}}{p((B/A)^{\vee},B^{\vee})}\right)^{\vee}.$$
\end{defn}

\begin{lemma} \label{lem:duality}
\cite[Lemma 3.3]{ERGV-nonres} Let $R$ be a complete local ring and $p$ a pair operation on a class of pairs of $R$-modules $\cP$. For any $(A,B)\in\cP$,
$$\left(\frac{B}{p(A,B)}\right)^{\vee}=p^{\dual}((B/A)^{\vee},B^{\vee}).$$
In particular, if $\cl$ is a closure operation then $((B/A)^{\vee})_{\cl^{\dual}}^{B^{\vee}}=(B/A_B^{\cl})^{\vee}$, and if $\ri$ is an interior operation, then $((B/A)^{\vee})_{B^{\vee}}^{\ri^{\dual}}=(B/A_{\ri}^B)^{\vee}$.
\end{lemma}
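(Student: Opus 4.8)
The plan is to unwind the definition of $p^{\dual}$ applied to the pair $((B/A)^{\vee},B^{\vee})$ and then recognize each ingredient using the standard properties of Matlis duality over a complete local ring: the contravariant exactness of $(\_)^{\vee}=\Hom_R(\_,E)$ (since $E$ is injective), and reflexivity, so that the biduality map $\epsilon_B\colon B\to (B^{\vee})^{\vee}$, given by $\epsilon_B(b)(\varphi)=\varphi(b)$, is an isomorphism for every $B$ in the class $\cP$.

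First I would substitute $A\mapsto (B/A)^{\vee}$ and $B\mapsto B^{\vee}$ into the defining formula for $p^{\dual}$, obtaining $p^{\dual}((B/A)^{\vee},B^{\vee})=\bigl((B^{\vee})^{\vee}/p\bigl((B^{\vee}/(B/A)^{\vee})^{\vee},(B^{\vee})^{\vee}\bigr)\bigr)^{\vee}$, where $(B/A)^{\vee}$ is viewed inside $B^{\vee}$ via the dual of $B\twoheadrightarrow B/A$. Dualizing the short exact sequence $0\to A\to B\to B/A\to 0$ gives $0\to (B/A)^{\vee}\to B^{\vee}\to A^{\vee}\to 0$, so $B^{\vee}/(B/A)^{\vee}\cong A^{\vee}$, and dualizing once more, $(B^{\vee}/(B/A)^{\vee})^{\vee}\cong A$. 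The step that needs care is to see that this identification is the canonical one at the level of submodules of $(B^{\vee})^{\vee}$: this rests on the naturality identity $(\iota^{\vee})^{\vee}\circ\epsilon_B=\epsilon_{B/A}\circ q$, where $q\colon B\twoheadrightarrow B/A$ is the quotient map and $\iota\colon (B/A)^{\vee}\hookrightarrow B^{\vee}$ is the dual of $q$; taking kernels shows that $\epsilon_B$ carries the submodule $A\subseteq B$ exactly onto the submodule $(B^{\vee}/(B/A)^{\vee})^{\vee}\subseteq (B^{\vee})^{\vee}$.

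Because $p$ is a pair operation it commutes with the isomorphism $\epsilon_B$, so $\epsilon_B\bigl(p(A,B)\bigr)=p\bigl(\epsilon_B(A),(B^{\vee})^{\vee}\bigr)=p\bigl((B^{\vee}/(B/A)^{\vee})^{\vee},(B^{\vee})^{\vee}\bigr)$. Hence $\epsilon_B$ induces an isomorphism $B/p(A,B)\cong (B^{\vee})^{\vee}/p\bigl((B^{\vee}/(B/A)^{\vee})^{\vee},(B^{\vee})^{\vee}\bigr)$; applying $(\_)^{\vee}$ to it and comparing with the formula in the previous paragraph yields $\bigl(B/p(A,B)\bigr)^{\vee}=p^{\dual}((B/A)^{\vee},B^{\vee})$, which is the main assertion.

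For the ``in particular'' statements I would specialize $p$. Taking $p=\cl$ gives $\bigl(B/A_B^{\cl}\bigr)^{\vee}=\cl^{\dual}((B/A)^{\vee},B^{\vee})$; since $\cl^{\dual}$ is an interior operation (by \cite{ERGV-nonres}) this is precisely $((B/A)^{\vee})_{\cl^{\dual}}^{B^{\vee}}$, and likewise taking $p=\ri$ and using that $\ri^{\dual}$ is a closure operation gives $\bigl(B/A_{\ri}^{B}\bigr)^{\vee}=((B/A)^{\vee})_{B^{\vee}}^{\ri^{\dual}}$. I expect the only genuine obstacle to be the naturality bookkeeping in the second paragraph: one must make sure the biduality isomorphism really matches the submodule $A$ with $(B^{\vee}/(B/A)^{\vee})^{\vee}$ so that invariance of $p$ under isomorphism can legitimately be invoked, but this reduces to a routine diagram chase with the short exact sequences above.
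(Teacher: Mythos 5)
The paper cites this lemma from \cite{ERGV-nonres} without reproducing a proof, so there is no in-paper argument to compare against. Your proof is correct and is the natural one: unwinding the definition of $p^{\dual}$, using the naturality square for the biduality map $\epsilon$ to identify $\epsilon_B(A)$ with $\ker\bigl((q^{\vee})^{\vee}\bigr)=(B^{\vee}/(B/A)^{\vee})^{\vee}$ inside $(B^{\vee})^{\vee}$, and then invoking the isomorphism-invariance of $p$ is exactly the right sequence of moves, with the closure and interior cases following by specialization.
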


\begin{thm} \label{thm:clrediexp}
\cite[Theorem 6.2]{ERGV-nonres} Let $R$ be a Noetherian complete local ring. Let $\ri$ be a relative interior operation on pairs $A\subseteq B$ of $R$-modules that are Noetherian or Artinian, and let $\cl:=\ri^{\dual}$ be its dual closure operation. There exists an order reversing one-to-one correspondence between the poset of $\ri$-expansions of $A$ in $B$ and the poset of $\cl$-reductions of $(B/A)^{\vee}$ in $B^{\vee}$. Under this correspondence, an $\ri$-expansion $C$ of $A$ in $B$ maps to $(B/C)^{\vee}$, a $\cl$-reduction of $(B/A)^{\vee}$ in $B^{\vee}$
\end{thm}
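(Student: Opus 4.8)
\section*{Proof proposal}

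The plan is to read the correspondence off from Matlis duality together with Lemma \ref{lem:duality}, essentially by dualizing every inclusion in sight. First I would recall that over a complete local ring $R$ the functor $(-)^{\vee}=\Hom_R(-,E)$ is exact and contravariant, and that on the class of $R$-modules that are Noetherian or Artinian it is a duality: $M^{\vee\vee}\cong M$ naturally, Noetherian modules go to Artinian ones and conversely, and submodule inclusions are reversed. Concretely, applying $(-)^{\vee}$ to $0\to C\to B\to B/C\to 0$ realizes $(B/C)^{\vee}$ as an honest submodule of $B^{\vee}$ (with quotient $B^{\vee}/(B/C)^{\vee}\cong C^{\vee}$), and $N\mapsto (B/N)^{\vee}$ is an order-reversing bijection from the lattice of submodules of $B$ onto the lattice of submodules of $B^{\vee}$; restricting to the interval $\{C : A\subseteq C\subseteq B\}$ gives an order-reversing bijection onto $\{D : D\subseteq (B/A)^{\vee}\}$. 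I would also check, using $B^{\vee}/(B/C)^{\vee}\cong C^{\vee}$ and $C^{\vee\vee}\cong C$, that $((B/C)^{\vee},B^{\vee})\in\cP^{\vee}$ whenever $(C,B)\in\cP$, so that the closure $\cl=\ri^{\dual}$ is actually defined on these pairs and ``$\cl$-reduction of $(B/A)^{\vee}$ in $B^{\vee}$'' makes sense.

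With that bijection in hand, the rest is a translation of the two defining inclusions of an $\ri$-expansion. Fix $C$ with $A\subseteq C\subseteq B$. The inclusion $A\subseteq C$ dualizes to $(B/C)^{\vee}\subseteq (B/A)^{\vee}$, and this is an equivalence by the order-reversing bijection. For the other inclusion: $C_{\ri}^{B}\subseteq A$ is equivalent, again by the order-reversing bijection applied to the submodules $C_{\ri}^{B}$ and $A$ of $B$, to $(B/A)^{\vee}\subseteq (B/C_{\ri}^{B})^{\vee}$; and Lemma \ref{lem:duality} in the interior case, with $\ri^{\dual}=\cl$, identifies $(B/C_{\ri}^{B})^{\vee}$ with $((B/C)^{\vee})_{B^{\vee}}^{\cl}$. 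Hence $C$ is an $\ri$-expansion of $A$ in $B$, i.e.\ $C_{\ri}^{B}\subseteq A\subseteq C$, if and only if $(B/C)^{\vee}\subseteq (B/A)^{\vee}\subseteq ((B/C)^{\vee})_{B^{\vee}}^{\cl}$, i.e.\ if and only if $(B/C)^{\vee}$ is a $\cl$-reduction of $(B/A)^{\vee}$ in $B^{\vee}$. Since every submodule $D\subseteq (B/A)^{\vee}$ has the form $(B/C)^{\vee}$ for a unique $C$ with $A\subseteq C\subseteq B$, restricting the bijection of the first step to $\ri$-expansions yields the claimed order-reversing one-to-one correspondence onto the $\cl$-reductions of $(B/A)^{\vee}$ in $B^{\vee}$, with $C\mapsto (B/C)^{\vee}$.

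I expect the only real obstacle to be bookkeeping rather than mathematics: one must be scrupulous that every module appearing ($A$, $C$, $B/C$, $C^{\vee}$, $(B/C)^{\vee}$, and so on) lies in the class on which $\ri$ and $\cl=\ri^{\dual}$ are defined, so that Lemma \ref{lem:duality} and the biduality isomorphism are legitimate at each step, and that the several ways of viewing $(B/C)^{\vee}$ as a submodule of $B^{\vee}$ yield one fixed chain of honest submodules $(B/C)^{\vee}\subseteq (B/A)^{\vee}\subseteq B^{\vee}$, not merely isomorphic copies. Once this compatibility is pinned down, the equivalence of the two chains of inclusions, and hence the correspondence, follows purely formally from exactness of $(-)^{\vee}$, the natural isomorphism $(-)^{\vee\vee}\cong\id$, and Lemma \ref{lem:duality}.
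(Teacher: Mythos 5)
This theorem is quoted from \cite[Theorem 6.2]{ERGV-nonres} and the present paper gives no proof of its own, so there is nothing in the text to compare against directly; but your argument is correct and is exactly the proof the framework is built to produce. The three ingredients — that $N\mapsto (B/N)^{\vee}$ is an order-reversing bijection on submodules over a complete local ring, that this restricts to an interval bijection $\{C: A\subseteq C\subseteq B\}\leftrightarrow\{D: D\subseteq (B/A)^{\vee}\}$, and that Lemma~\ref{lem:duality} (interior case) identifies $(B/C_{\ri}^{B})^{\vee}$ with $((B/C)^{\vee})_{B^{\vee}}^{\cl}$ — are assembled precisely as one would expect, and they convert the two inclusions defining an $\ri$-expansion into the two inclusions defining a $\cl$-reduction; this also matches the structure of the paper's own proof of the downstream Theorem~\ref{thm:clpreredipostexp}, which is carried out with the same tools.
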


\begin{thm} \label{thm:clpreredipostexp}
Let $R$ be a Noetherian complete local ring. Let $\ri$ be a relative interior operation on pairs $A\subseteq B$ of $R$-modules that are Noetherian or Artinian, and let $\cl:=\ri^{\dual}$ be its dual closure operation. There exists a one-to-one correspondence between the set of $\ri$-postexpansions of $A$ in $B$ and the set of $\cl$-prereductions of $(B/A)^{\vee}$ in $B^{\vee}$. Under this correspondence, an $\ri$-postexpansion $C$ of $A$ in $B$ maps to $(B/C)^{\vee}$, a $\cl$-prereduction of $(B/A)^{\vee}$ in $B^{\vee}$.
\end{thm}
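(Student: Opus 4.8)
The plan is to build on the order-reversing bijection of Theorem~\ref{thm:clrediexp} between $\ri$-expansions of $A$ in $B$ and $\cl$-reductions of $(B/A)^{\vee}$ in $B^{\vee}$, and to show that this bijection restricts to one between the \emph{non}-expansions and the \emph{non}-reductions, i.e.\ between ${\bf C}_{\ri}^\prime(A,B)$ and ${\bf I}_{\cl}^\prime((B/A)^{\vee},B^{\vee})$, reversing the order. Concretely, the map $C\mapsto (B/C)^{\vee}$ is an inclusion-reversing bijection from the full poset of submodules $C$ with $A\subseteq C\subseteq B$ onto the full poset of submodules $L$ with $(0)\subseteq L\subseteq (B/A)^{\vee}$, since applying Matlis duality to the short exact sequence $0\to C/A\to B/A\to B/C\to 0$ identifies $(B/C)^{\vee}$ with a submodule of $(B/A)^{\vee}$, and this is a standard order-reversing correspondence with inverse $L\mapsto B/(B^{\vee}/L)^{\vee}$ (or, dually phrased, $L\mapsto$ the preimage in $B$ of the annihilator-type submodule corresponding to $L$). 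Under this correspondence, Theorem~\ref{thm:clrediexp} says precisely that $C$ is an $\ri$-expansion of $A$ iff $(B/C)^{\vee}$ is a $\cl$-reduction of $(B/A)^{\vee}$; taking contrapositives, $C\in{\bf C}_{\ri}^\prime(A,B)$ iff $(B/C)^{\vee}\in{\bf I}_{\cl}^\prime((B/A)^{\vee},B^{\vee})$.

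The key steps, in order: (1) Record that $C\mapsto(B/C)^{\vee}$ is an inclusion-reversing bijection between $\{C: A\subseteq C\subseteq B\}$ and $\{L: L\subseteq (B/A)^{\vee}\}$, with inverse inclusion-reversing as well; this is immediate from Matlis duality ($M^{\vee\vee}\cong M$) applied to short exact sequences, as in the setup of Theorem~\ref{thm:clrediexp}. (2) Invoke Theorem~\ref{thm:clrediexp} to conclude that $C$ is an $\ri$-expansion of $A$ in $B$ if and only if $(B/C)^{\vee}$ is a $\cl$-reduction of $(B/A)^{\vee}$ in $B^{\vee}$, hence that the bijection in (1) restricts to an order-reversing bijection ${\bf C}_{\ri}^\prime(A,B)\to{\bf I}_{\cl}^\prime((B/A)^{\vee},B^{\vee})$. (3) Translate the definition of $\ri$-postexpansion through this order-reversing bijection: $C$ is an $\ri$-postexpansion of $A$ in $B$ means $C$ is a \emph{minimal} element of ${\bf C}_{\ri}^\prime(A,B)$ (equivalently, $C$ is not an $\ri$-expansion but every $D$ with $A\subseteq D\subseteq C$ is), and since the bijection reverses order, minimal elements of ${\bf C}_{\ri}^\prime(A,B)$ correspond exactly to \emph{maximal} elements of ${\bf I}_{\cl}^\prime((B/A)^{\vee},B^{\vee})$, which are precisely the $\cl$-prereductions of $(B/A)^{\vee}$ in $B^{\vee}$. (4) Conclude that $C\mapsto(B/C)^{\vee}$ is the desired bijection between $\ri$-postexpansions of $A$ and $\cl$-prereductions of $(B/A)^{\vee}$; note the correspondence is no longer phrased as ``order-reversing'' in the statement simply because these are sets of extremal elements rather than posets, but the underlying map is the restriction of the order-reversing one.

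I expect step (3) to be the only subtle point — one must be careful that ``$C$ is not an $\ri$-expansion, and every $D$ strictly between $A$ and $C$ is an $\ri$-expansion'' dualizes correctly. The worry is edge cases: whether $C=A$ or $C=B$ can occur, and whether the condition ``for all $D$ with $A\subseteq D\subseteq C$'' (which includes $D=C$, already known not to be an expansion, so really the nontrivial content is $A\subseteq D\subsetneq C$) matches ``for all $K$ with $L\subseteq K\subseteq N$'' on the closure side after reversing inclusions. Unwinding: $A\subseteq D\subsetneq C$ corresponds under the bijection to $(B/C)^{\vee}\subsetneq (B/D)^{\vee}\subseteq (B/A)^{\vee}$, i.e.\ to submodules $K$ with $(B/C)^{\vee}\subsetneq K\subseteq (B/A)^{\vee}$, and ``$D$ an $\ri$-expansion'' matches ``$K$ a $\cl$-reduction'' by step (2); this is exactly the defining condition for $(B/C)^{\vee}$ to be a $\cl$-prereduction. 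So the match is clean, but it deserves to be spelled out rather than asserted. Everything else is a direct application of the already-established duality machinery, so no serious new obstacle arises; the main work is bookkeeping with inclusions and contrapositives.
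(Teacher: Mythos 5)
Your proposal is correct and takes essentially the same route as the paper: both hinge on Theorem~\ref{thm:clrediexp} and the order-reversing Matlis-duality bijection $C\mapsto(B/C)^\vee$, with the paper unwinding the three defining conditions of a postexpansion one at a time and you repackaging this as the observation that the bijection restricts to ${\bf C}_{\ri}^\prime(A,B)\to{\bf I}_{\cl}^\prime((B/A)^\vee,B^\vee)$ and carries minimal elements to maximal ones. One small slip in your parenthetical: the inverse map should be $L\mapsto(B^\vee/L)^\vee$, identified with a submodule of $B$ via $B\cong B^{\vee\vee}$, not $L\mapsto B/(B^\vee/L)^\vee$.
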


\begin{proof}
$C$ is an $\ri$-postexpansion of $A$ in $B$ if and only if $A\subseteq C \subseteq B$ and $A_{\ri}^B \subsetneq {\bf C}_{\ri}^B$ and for all submodules $D$ with $A\subseteq D \subseteq C$ we have $A_{\ri}^B=D_{\ri}^B$. 
First, $A\subseteq C \subseteq B$ if and only if $(B/C)^\vee \subseteq (B/A)^\vee \subseteq B^\vee$ by properties of Matlis duality.
Next, $A_{\ri}^B \subsetneq C_{\ri}^B$ occurs if and only if
$$\left(\frac{B^{\vee}}{((B/A)^{\vee})_{B^{\vee}}^{\cl}}\right)^{\vee}\subsetneq \left(\frac{B^{\vee}}{((B/C)^{\vee})_{B^{\vee}}^{\cl}}\right)^{\vee}.$$
Since the modules in question are Matlis-dualizable and $(B/C)^{\vee}\subseteq (B/A)^{\vee}$, this happens if and only if 
$$((B/C)^{\vee})_{B^{\vee}}^{\cl}\subsetneq ((B/A)^{\vee})_{B^{\vee}}^{\cl}.$$
If for all submodules $D$ with $A\subseteq D \subseteq C$ we have $A_{\ri}^B=D_{\ri}^B$ then by Theorem \ref{thm:clrediexp}, $D$ is an $\ri$-expansion of $A$ in $B$ and thus maps to $(B/D)^{\vee}$ a $\cl$-reduction of $(B/A)^{\vee}$ in $B^{\vee}$ and $(B/C)^{\vee} \subseteq (B/D)^{\vee} \subseteq (B/A)^{\vee}$.
Similarly if for all $(B/D)^\vee$ with $(B/C)^{\vee} \subseteq (B/D)^{\vee} \subseteq (B/A)^{\vee}$ we have $((B/C)^\vee)_{B^\vee}^{\cl}=((B/A)^\vee)_{B^\vee}^{\cl}$ then again by Theorem \ref{thm:clrediexp}, we have $A_{\ri}^B=D_{\ri}^B$.

Thus $C$ is an $\ri$-postexpansion of $A$ in $B$ if and only if $(B/C)^{\vee}$ is a $\cl$-prereduction of $(B/A)^{\vee}$ in $B^{\vee}$.  
\end{proof}

\begin{example}
Let $R=k[[x^2,x^5]]$ and $E=kx \oplus kx^3 \oplus \bigoplus_{i=1}^\infty kx^{-i}$ where the $R$-action on $E$ is given on the monomials of $R$ by $$x^j x^{-i}= 
\begin{cases}
0 & \text{if } j-i=0,2 \text{ or } j-i\geq 4 \\
x^{j-i} & \text{if } j-i=1,3 \text{ or } j-i\leq -1\\
\end{cases}$$
Let \begin{align*}
    M&=kx+kx^3+\bigoplus_{i=1}^n kx^{-i},\\
    N&=kx+kx^3+\bigoplus_{i=1}^n kx^{-i}+kx^{-(n+2)}, \text{ and}\\
    K&=kx+kx^3+\bigoplus_{i=1}^{n}kx^{-i}+kx^{-(n+2)}+kx^{-(n+4)}.\\
\end{align*}
Then $\textrm{Ann}_R(M)=(x^{n+4}, x^{n+5})$, $\textrm{Ann}_R(N)=(x^{n+4},x^{n+7})$, and $\textrm{Ann}_R(K)=(x^{n+4})$.
From Example \ref{ex:prered}, we can see that $\textrm{Ann}_R(N)$ is a reduction of $\textrm{Ann}_R(M)$ and $\textrm{Ann}_R(K)$ a prereduction of $\textrm{Ann}_R(N)$. Then both by duality and by Example \ref{ex:postexp}, $N$ is an expansion of $M$ in $E$ and $K$ is a postexpansion of $N$ in $E$.
\end{example}

\begin{thm} \label{thm:clsetiset}
Let $R$ be a Noetherian complete local ring. Let $\ri$ be a relative interior operation on pairs $A\subseteq B$ of $R$-modules that are Noetherian or Artinian, and let $\cl:=\ri^{\dual}$ be its dual closure operation. There exists an order reversing one-to-one correspondence between between the elements of ${\bf I}_{\cl}^\prime((B/A)^\vee,B^\vee)$ and the elements of ${\bf C}_{\ri}^\prime(A,B)$.
\end{thm}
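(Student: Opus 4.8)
The plan is to derive Theorem~\ref{thm:clsetiset} from the order-reversing bijection of Theorem~\ref{thm:clrediexp} by passing to complements inside the full submodule lattice, exactly as in the first half of the proof of Theorem~\ref{thm:clpreredipostexp}.

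First I would set up the ambient lattice bijection. Since $R$ is complete local and $A\subseteq B$ are Noetherian or Artinian, every module in sight is Matlis-dualizable, so $(-)^{\vee\vee}$ is the identity. Applying $(-)^{\vee}$ to $0\to C/A\to B/A\to B/C\to 0$ for each $C$ with $A\subseteq C\subseteq B$ shows that $C\mapsto (B/C)^{\vee}$ is an inclusion-reversing bijection from the set of submodules $C$ of $B$ containing $A$ onto the set of submodules of $B^{\vee}$ contained in $(B/A)^{\vee}$, with inverse sending a submodule $L\subseteq (B/A)^{\vee}$ to the unique such $C$. In particular $A$ itself is the unique $C$ with $(B/C)^{\vee}=(B/A)^{\vee}$, consistent with $A$ being an $\ri$-expansion of itself in $B$ and $(B/A)^{\vee}$ being a $\cl$-reduction of itself in $B^\vee$ (so neither lies in the relevant $^\prime$-set).

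Second I would invoke Theorem~\ref{thm:clrediexp}: the very same map $C\mapsto (B/C)^{\vee}$ restricts to an order-reversing one-to-one correspondence from the $\ri$-expansions of $A$ in $B$ onto the $\cl$-reductions of $(B/A)^{\vee}$ in $B^{\vee}$. Because $C\mapsto (B/C)^{\vee}$ is already a bijection on the whole submodule lattice and it carries the subset of $\ri$-expansions bijectively onto the subset of $\cl$-reductions, it must carry the complement of the former onto the complement of the latter: $C$ is \emph{not} an $\ri$-expansion of $A$ in $B$ if and only if $(B/C)^{\vee}$ is \emph{not} a $\cl$-reduction of $(B/A)^{\vee}$ in $B^{\vee}$. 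By definition this says $C\in {\bf C}_{\ri}^\prime(A,B)$ if and only if $(B/C)^{\vee}\in {\bf I}_{\cl}^\prime((B/A)^{\vee},B^{\vee})$, so $C\mapsto (B/C)^{\vee}$ restricts to a bijection ${\bf C}_{\ri}^\prime(A,B)\to {\bf I}_{\cl}^\prime((B/A)^{\vee},B^{\vee})$, which is order-reversing since Matlis duality is.

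The one point demanding care --- and the closest thing to an obstacle --- is being precise that Theorem~\ref{thm:clrediexp} and Lemma~\ref{lem:duality} genuinely apply in this setting (completeness of $R$, the Noetherian/Artinian hypothesis on $A\subseteq B$, and the identity $((B/C)^{\vee})^{\cl}_{B^{\vee}}=(B/C_{\ri}^{B})^{\vee}$ that comes from $\cl=\ri^{\dual}$), together with noting that ``passing to complements'' is legitimate precisely because the map is known \emph{in advance} to be a bijection of the ambient submodule lattices, not merely a map defined on expansions. No new computation beyond what already appears in the proof of Theorem~\ref{thm:clpreredipostexp} is needed; indeed this statement is just the ``ambient-set'' version of that theorem and follows from its first two paragraphs.
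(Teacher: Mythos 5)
Your proof is correct, and it takes a genuinely different and more streamlined route than the paper's. The paper proves Theorem~\ref{thm:clsetiset} by going through Theorem~\ref{thm:clpreredipostexp}: given $C\in{\bf C}_{\ri}^\prime(A,B)$, it splits into the case where $C$ is itself an $\ri$-postexpansion (handled by Theorem~\ref{thm:clpreredipostexp}) and the case where it is not, in which case it invokes Proposition~\ref{pr:postexp} to produce an $\ri$-postexpansion $D\subseteq C$, maps $D$ across by Theorem~\ref{thm:clpreredipostexp}, and then observes that $(B/C)^{\vee}$ cannot be a $\cl$-reduction since otherwise $C$ would be an $\ri$-expansion by Theorem~\ref{thm:clrediexp}. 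Your proof dispenses with Theorem~\ref{thm:clpreredipostexp} and the postexpansion machinery entirely: you first note that Matlis duality already gives an order-reversing bijection between the ambient interval $\{C : A\subseteq C\subseteq B\}$ in the submodule lattice of $B$ and the ambient interval $\{L : L\subseteq (B/A)^{\vee}\}$ in the submodule lattice of $B^{\vee}$, and then observe that since Theorem~\ref{thm:clrediexp} says this bijection restricts to a bijection between $\ri$-expansions and $\cl$-reductions, it must also restrict to a bijection between the complements, which are precisely ${\bf C}_{\ri}^\prime(A,B)$ and ${\bf I}_{\cl}^\prime((B/A)^{\vee},B^{\vee})$. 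This is cleaner: it makes the logical dependence transparent (only Theorem~\ref{thm:clrediexp} and the lattice bijection are needed), it avoids the case split, and the key sentence in the paper's proof — ``$(B/C)^{\vee}$ is not a $\cl$-reduction ... otherwise $C$ would be an $\ri$-expansion'' — is in fact doing exactly the complement-passing work you make explicit, so the excursion through a postexpansion $D$ contained in $C$ is a detour the paper could have skipped. Your approach also makes it manifest that the order statement follows for free.
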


\begin{proof}
Let $C\in {\bf C}_{\ri}^\prime(A,B)$. If $C$ is an $\ri$-postexpansion of $A$ in $B$ then by Theorem \ref{thm:clpreredipostexp}, we are done.
If $C$ is not an $\ri$-postexpansion of $A$ in $B$ then $A\subseteq C \subseteq B$ and by Proposition \ref{pr:postexp}\eqref{it:postexp1}, $C$ contains an $\ri$-postexpansion of $A$ in $B$. Let that $\ri$-postexpansion be $D$. Then by the previous theorem, $D$ maps one-to-one to $(B/D)^\vee$ a $\cl$-prereduction of $(B/A)^\vee$ in $B^\vee$. Since $(B/C)^\vee \subseteq (B/D)^\vee$ and $(B/C)^\vee$ is not a $\cl$-reduction of $(B/A)^\vee$ in $B^\vee$ (otherwise it would map to $C$ and $C$ would be an $\ri$-expansion of $A$ in $B$), we get that $(B/C)^\vee \in {\bf I}_{\cl}^\prime ((B/A)^\vee, B^\vee)$.
The correspondence is order reversing since $$C\subseteq D \textrm{ if and only if } (B/D)^{\vee} \subseteq (B/C)^{\vee}.$$
\end{proof}

The following lemma will be useful when proving results that use generating or cogenerating sets.

\begin{lemma} \label{lem:sumintdual}
\cite[Lemma 6.15]{ERGV-corehull} Let $R$ be a complete Noetherian ring. Let $B$ be an $R$-module such that it and all of its quotient modules are Matlis-dualizable. Let $\{C_i\}_{i\in I}$ a collection of submodules of $B$. Then 
$$\left(\frac{B}{\sum _i C_i}\right)^{\vee} \cong \bigcap_i (B/C_i)^{\vee}$$ and
$$\left( \frac{B}{\bigcap _i C_i}\right)^{\vee} \cong \sum_i (B/C_i)^{\vee}$$
where all the dualized modules are considered as submodules of $B^{\vee}$.
\end{lemma}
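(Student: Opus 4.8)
The statement to prove is Lemma~\ref{lem:sumintdual}, which is quoted from \cite[Lemma 6.15]{ERGV-corehull}, so a full self-contained argument is not strictly needed; but here is how I would prove it from scratch. The key input is that Matlis duality $(\_)^{\vee}=\Hom_R(\_,E)$ is exact and contravariant, and that for a Matlis-dualizable module $M$ the natural biduality map $M\to M^{\vee\vee}$ is an isomorphism. Throughout one fixes the ambient module $B$, applies $(\_)^{\vee}$ to inclusions $C_i\hookrightarrow B$, and identifies $(B/C_i)^{\vee}$ with a submodule of $B^{\vee}$ via the induced injection $(B/C_i)^{\vee}\hookrightarrow B^{\vee}$ (exactness of $(\_)^{\vee}$ makes this an injection, and its image is the annihilator-type submodule $\{\varphi\in B^{\vee}\mid \varphi|_{C_i}=0\}$).

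First I would prove the first isomorphism. Write $\sum_i C_i$ as the image of the canonical map $\bigoplus_i C_i\to B$, so there is an exact sequence $\bigoplus_i C_i\to B\to B/\sum_i C_i\to 0$. Applying the exact functor $(\_)^{\vee}$ reverses arrows and gives a left-exact sequence $0\to (B/\sum_i C_i)^{\vee}\to B^{\vee}\to \prod_i C_i^{\vee}$, so $(B/\sum_i C_i)^{\vee}$ is exactly the kernel of $B^{\vee}\to\prod_i C_i^{\vee}$. That kernel consists of the functionals on $B$ vanishing on every $C_i$, i.e.\ $\bigcap_i\ker(B^{\vee}\to C_i^{\vee})=\bigcap_i (B/C_i)^{\vee}$, where the last equality is the identification of $(B/C_i)^{\vee}$ as a submodule of $B^{\vee}$ described above. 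This yields the first displayed isomorphism.

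The second isomorphism I would deduce from the first by a biduality (double-dual) trick rather than redo the computation. Set $D_i:=(B/C_i)^{\vee}\subseteq B^{\vee}$; then by Matlis duality $(B^{\vee}/D_i)^{\vee}\cong C_i^{\vee\vee}\cong C_i$ inside $B^{\vee\vee}\cong B$, and one checks the identifications are compatible so that applying the first formula to the family $\{D_i\}$ in the ambient module $B^{\vee}$ gives $(B^{\vee}/\sum_i D_i)^{\vee}\cong\bigcap_i(B^{\vee}/D_i)^{\vee}\cong\bigcap_i C_i$. Dualizing once more and using $\sum_i D_i=\sum_i(B/C_i)^{\vee}$ together with $\left(\frac{B}{\bigcap_i C_i}\right)^{\vee}\cong\left(\bigcap_i C_i\right)^{\vee}{}^{\!\vee}{}^{\!\vee}$-style bookkeeping converts this into $\left(\frac{B}{\bigcap_i C_i}\right)^{\vee}\cong\sum_i(B/C_i)^{\vee}$. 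Alternatively one can argue directly: $\bigcap_i C_i$ is the kernel of $B\to\prod_i B/C_i$, so dualizing the exact sequence $0\to\bigcap_i C_i\to B\to\prod_i B/C_i$ and using that $(\_)^{\vee}$ turns the product into a sum (for the relevant finiteness/Matlis-dualizability hypotheses the natural map $\bigoplus_i(B/C_i)^{\vee}\to(\prod_i B/C_i)^{\vee}$ has the right image) identifies $\left(\frac{B}{\bigcap_i C_i}\right)^{\vee}$ with the image of $\sum_i(B/C_i)^{\vee}$ in $B^{\vee}$.

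The main obstacle is purely bookkeeping: making all the identifications of $(B/C_i)^{\vee}$ with literal submodules of $B^{\vee}$ simultaneously compatible, and handling the passage between an infinite direct sum and an infinite direct product under $(\_)^{\vee}$ — this is where the hypothesis that $B$ and all its quotients are Matlis-dualizable (equivalently, Noetherian or Artinian in the situations of interest) is essential, since $(\_)^{\vee}$ does not commute with arbitrary products in general. Since the result is cited verbatim from \cite[Lemma 6.15]{ERGV-corehull}, in the paper itself I would simply invoke that reference and not reproduce the argument.
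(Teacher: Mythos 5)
The paper does not prove this lemma; it is stated verbatim as a citation of \cite[Lemma 6.15]{ERGV-corehull}, exactly as you note at the start and end of your proposal, so there is no in-paper argument to compare against. Your sketch is nevertheless a sound way to prove it. The first isomorphism is essentially immediate from exactness of $(\_)^{\vee}$ together with the standard identification of $(B/C)^{\vee}$ with the submodule $\{\varphi\in B^{\vee}:\varphi|_{C}=0\}$, since a functional kills $\sum_i C_i$ if and only if it kills each $C_i$, and this works for arbitrary index sets.

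The one place that needs more care, and which you flag honestly, is the inclusion $\{\varphi\in B^{\vee}:\varphi|_{\bigcap_i C_i}=0\}\subseteq\sum_i(B/C_i)^{\vee}$ in the second formula. Your biduality trick implicitly needs the hypotheses to hold for $B^{\vee}$ and its quotients (not just for $B$ and its quotients), and your alternative argument via $(\prod_i B/C_i)^{\vee}$ versus $\bigoplus_i(B/C_i)^{\vee}$ is delicate for infinite $I$. These issues evaporate when $I$ is finite, which is the only case the paper uses (e.g.\ in Proposition \ref{prop:cover}); a clean finite-index argument is to set $D:=\sum_i(B/C_i)^{\vee}\subseteq B^{\vee}$, observe using Matlis-reflexivity of each $B/C_i$ that the common zero set of the functionals in $D$ is exactly $\bigcap_i C_i$, and then use reflexivity of the quotient $B/\bigcap_i C_i$ to conclude $D=(B/\bigcap_i C_i)^{\vee}$. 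So the proposal is correct in approach; just be aware that the infinite case requires a more careful statement of the dualizability hypotheses.
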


\section{Covers: $\cl$-prereductions and $\ri$-postexpansions}

In this section we discuss the relationship between covers of submodules with respect to closure operations $\cl$ and interior operations $\ri$.

\begin{defn}
 Let $(R,\cm)$ be a local Noetherian ring and let $K, N$ be submodules of $M$ in $R$. We say that $N$ \textit{covers} $K$ if $K\subsetneq N$ and $N/K\cong R/\cm$. We also say that $K$ is covered by $N$.
 \end{defn}

\begin{defn}
 Let $(R,\cm)$ be a local Noetherian ring, $\cM$  be a class of $R$-modules, and $\cP$ a class of pairs of modules in $\cM$. 
 
 Suppose $\cl$ is a closure operation on pairs of modules $(L,M), (N,M)$ in $\cP$. If $L$ is covered by $N$:
\begin{enumerate}
    \item We say that $N$ is a \textit{$\cl$-cover} of $L$ if $N_M^{\cl}=L_M^{\cl}$.
    \item We say that $N$ is a \textit{non-$\cl$-cover} of $L$ if $L_M^{\cl}\subsetneq N_M^{\cl}$.
\end{enumerate}

 Let $\ri$ be an interior operation on pairs of modules $(A,B), (C,B)$ in $\cP$. If $A$ is covered by $C$:
\begin{enumerate}
    \item We say that $C$ is a \textit{$\ri$-cover} of $A$ if $A_{\ri}^B={ C}_{\ri}^B$.
    \item We say that $C$ is a \textit{non-$\ri$-cover} of $A$ if $A^B_{\ri}\subsetneq C^B_{\ri}$.
\end{enumerate}
\end{defn}

\begin{rmkx}\label{rmk:sumint}  $N$ covers $K$ if and only if $N=K+xR$ for some $x\in N$ and $\cm x \subseteq K$. Equivalently, $N$ covers $K$ if and only if $N=K+xR$ and $\cm=(K:_R xR)$ for every $x\in N\backslash K$. If $N$ covers $K$ and $L$ is an arbitrary proper submodule then either $N\cap L$ covers $K\cap L$ and $N+L=K+L$ or $N\cap L= K\cap L$ and $N+L$ covers $K+L$.
\end{rmkx}

\begin{rmkx}
Let $(R, \m)$ be a Noetherian local ring, $\cM$ be the category of finitely generated $R$-modules and $\cP$ be the set of pairs $(L,M)$ with $L \subseteq M$ and $L,M \in \cM$.  Suppose $L \subseteq N$ are submodules of $M$ with $(L,M), (N,M) \in \cP$.  The following are equivalent:

\begin{enumerate}
\item $L+xR$ is a non-$\cl$-cover of $L$ for all $x \in N \setminus L$.
\item $\m N \subseteq L$ and $L_M^{\cl} \cap N =K$.
\end{enumerate}
\end{rmkx}

\begin{proof}
This follows directly from definiton of non-$\cl$ cover and Remark \ref{rmk:sumint}.
\end{proof}

The following proposition is a generalization of \cite[Theorem 4.5]{KRS-prered} for Nakayama closures $\cl$.

\begin{prop} \label{prop:preredcov}
Let $(R, \m)$ be a Noetherian local ring, $\cM$ be the class of finitely generated $R$-modules, $\cP$ be the class of pairs $(L,M)$ with $L \subseteq M$ and $L,M \in \cM$ and $\cl$  be a Nakayama closure on $\cP$.  Let $\cA$ be a $\cl$-prereduction of $L$ in $M$.  For every $x \in L \setminus \cA$:

\begin{enumerate}
\item \label{it:preredcov1} $\cA+xR$ is a non-$\cl$-cover of $\cA$.
\item \label{it:preredcov2} $\cA$ is a $\cl$-prereduction of $\cA+xR$.
\item \label{it:preredcov3} $\cA + xR$ is a $\cl$-reduction of $L$.
\end{enumerate}
\end{prop}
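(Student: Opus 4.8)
The plan is to exploit the fact that $\cA$, being a $\cl$-prereduction of $L$, is a maximal element of ${\bf I}'_{\cl}(L,M)$, and to invoke the structural facts already established in Proposition~\ref{pr:prered} and Corollary~\ref{co:closedprered}. The three claims are really three facets of the same observation, so I would prove them in the order \eqref{it:preredcov1}, \eqref{it:preredcov3}, \eqref{it:preredcov2}, since \eqref{it:preredcov1} sets up the cover structure, \eqref{it:preredcov3} is the easiest consequence of maximality, and \eqref{it:preredcov2} combines both.

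For \eqref{it:preredcov1}: fix $x\in L\setminus\cA$. By Proposition~\ref{pr:prered}\eqref{it:2aprered} we have $\cm L\subseteq\cA$, hence $\cm x\subseteq\cm L\subseteq\cA$, and since $x\notin\cA$, Remark~\ref{rmk:sumint} shows that $\cA+xR$ covers $\cA$. It remains to see the cover is a \emph{non}-$\cl$-cover, i.e.\ $\cA_M^{\cl}\subsetneq(\cA+xR)_M^{\cl}$. Suppose to the contrary that $\cA_M^{\cl}=(\cA+xR)_M^{\cl}$. By Proposition~\ref{pr:prered}\eqref{it:2bprered}, $\cA=\cA_M^{\cl}\cap L$ (or $\cA$ is $\cl$-closed in $L$, which is the same statement via the second alternative); in either case $\cA_M^{\cl}\cap L=\cA$. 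But $\cA+xR\subseteq L$, so $(\cA+xR)_M^{\cl}\cap L\subseteq\cA_M^{\cl}\cap L=\cA$, forcing $x\in\cA$, a contradiction. Hence $\cA+xR$ is a non-$\cl$-cover of $\cA$.

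For \eqref{it:preredcov3}: since $\cA$ is a $\cl$-prereduction of $L$, by definition every submodule $K$ with $\cA\subseteq K\subseteq L$ is a $\cl$-reduction of $L$ in $M$; applying this with $K=\cA+xR$ (which indeed satisfies $\cA\subseteq\cA+xR\subseteq L$ because $x\in L$) gives that $\cA+xR$ is a $\cl$-reduction of $L$, i.e.\ $(\cA+xR)_M^{\cl}=L_M^{\cl}$.

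For \eqref{it:preredcov2}: put $K=\cA+xR$. By \eqref{it:preredcov1}, $\cA_M^{\cl}\subsetneq K_M^{\cl}$, so $\cA$ is not a $\cl$-reduction of $K$; thus $\cA\in{\bf I}'_{\cl}(K,M)$. Now I must check that $\cA$ is \emph{maximal} in ${\bf I}'_{\cl}(K,M)$, equivalently that every submodule strictly between $\cA$ and $K$ is a $\cl$-reduction of $K$. But $K$ covers $\cA$, so $K/\cA\cong R/\cm$ is simple, and there are \emph{no} submodules strictly between $\cA$ and $K$; vacuously, $\cA$ is a $\cl$-prereduction of $K=\cA+xR$. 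Alternatively one can cite Proposition~\ref{pr:preredcontainment}: $\cA$ is a $\cl$-prereduction of $L$ and $\cA\subseteq\cA+xR\subseteq L$, hence $\cA$ is a $\cl$-prereduction of $\cA+xR$.

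The only genuine subtlety is the non-triviality of the cover in \eqref{it:preredcov1}; everything else is immediate from maximality of $\cA$ and the simplicity of the quotient $K/\cA$. The argument ruling out $\cA_M^{\cl}=(\cA+xR)_M^{\cl}$ via $\cA_M^{\cl}\cap L=\cA$ is the crux, and it is exactly the content of Proposition~\ref{pr:prered}\eqref{it:2bprered} applied with care to whichever of its two alternatives holds.
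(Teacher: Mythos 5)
Your proof is correct and follows essentially the same route as the paper: (1) via Proposition~\ref{pr:prered}\eqref{it:2aprered} and \eqref{it:2bprered} together with Remark~\ref{rmk:sumint}, (3) directly from the definition of $\cl$-prereduction, and (2) from Proposition~\ref{pr:preredcontainment} (your alternative ``vacuous'' argument using simplicity of $(\cA+xR)/\cA$ is also fine and a nice shortcut, but the citation you give is exactly what the paper uses). The only cosmetic difference is that in (1) the paper concludes directly that $x\notin\cA_M^{\cl}$ while you phrase the same fact as a proof by contradiction.
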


\begin{proof}
\eqref{it:preredcov1}  It follows from Proposition \ref{pr:prered}\eqref{it:2aprered} that $\m L \subseteq \cA$ and from Remark \ref{rmk:sumint} that $\cA+xR$ is a cover of $\cA$.  Also $\cA_M^{\cl} \cap L=\cA$ follows from Proposition \ref{pr:prered}\eqref{it:2bprered}, so $x \notin \cA_M^{\cl}$.  Thus $\cA+xR$ is a non-$\cl$-cover of $\cA$.

\eqref{it:preredcov2} $\cA$ is a $\cl$-prereduction of $\cA+xR$ by Proposition \ref{pr:preredcontainment} since $ \cA \subseteq \cA+xR \subseteq L$.

\eqref{it:preredcov3} $\cA+xR$ is a $\cl$-reduction of $L$ in $M$ by the definition of $\cl$-prereduction of $L$.
\end{proof}

\begin{rmkx} \label{rmk:basic}
Let $(R,\cm)$ be a Noetherian local ring.
\begin{enumerate}
    \item \label{it:clbasic2} If $L$ is $\cl$-basic ($L$ is the only $\cl$-reduction inside itself), then $L$ is a non-$\cl$-cover of each $\cl$-prereduction of itself.
    \item  \label{it:clbasic3} If $K$ and $L$ are submodules of $M$ and $L$ is a non-$\cl$-cover of $K$, then $K$ is a $\cl$-prereduction of $L$.
\end{enumerate}
\end{rmkx}

\begin{proof}
\eqref{it:clbasic2} Since $L$ is $\cl$-basic and $\cA$ is a $\cl$-prereduction of $L$, then Proposition \ref{prop:preredcov}\eqref{it:preredcov3} shows that for all $x \in L \setminus \cA$, $\cA+xR$ is a reduction of $L$.  But this implies that $\cA+xR=L$ for all $x \in L\setminus \cA$.

\eqref{it:clbasic3} If $I$ is a non-$\cl$-cover of $J$, then $J$ is not a $\cl$-reduction of $I$.  Also by Remark \ref{rmk:sumint},  $J+(x)=I$ for all $x \in I \setminus J$.  Since $I$ is a reduction of $I$, then $J$ is a $\cl$-prereduction of $I$. 
\end{proof}

\begin{prop} \label{pr:clindep}
 Let $(R,\cm)$ be a Noetherian local ring and $N$ be a strongly $\cl$-independent submodule in $M$ with $\cl$-spread equal to $k\geq 1$ elements. Then every $\cl$-prereduction of $N$ in $M$ has the form $(y_1,y_2,...,y_{k-1})+y_k\cm$ where $y_1,...,y_k$ are a minimal generating set for $N$.
\end{prop}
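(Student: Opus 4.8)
The plan is to translate the problem into linear algebra over the residue field $R/\cm$, after first pinning down two facts about $N$. Since $N$ is strongly $\cl$-independent and is trivially a $\cl$-reduction of itself, $N$ is a minimal $\cl$-reduction of itself, so every minimal generating set of $N$ has cardinality $k$ (the $\cl$-spread); equivalently $\dim_{R/\cm}(N/\cm N)=k$. I would then show that $N$ is in fact $\cl$-basic. Suppose $L\subsetneq N$ were a proper $\cl$-reduction of $N$; by Nakayama's lemma $L+\cm N\subsetneq N$, so the image of $L+\cm N$ in $V:=N/\cm N$ is a proper subspace. Choosing a minimal generating set $y_1,\dots,y_k$ of $N$ whose first (at most $k-1$) elements have images spanning that subspace gives $L\subseteq (y_1,\dots,y_{k-1})+\cm N\subseteq N$; applying $\cl$ and using $L_M^{\cl}=N_M^{\cl}$ forces $((y_1,\dots,y_{k-1})+\cm N)_M^{\cl}=N_M^{\cl}$, whence the Nakayama closure property yields $(y_1,\dots,y_{k-1})_M^{\cl}=N_M^{\cl}$ and so $y_k\in(y_1,\dots,y_{k-1})_M^{\cl}$, contradicting the $\cl$-independence of the minimal generating set $y_1,\dots,y_k$.

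Next I would take an arbitrary $\cl$-prereduction $\cA$ of $N$ in $M$ (one exists, since $N\neq 0$ is $\cl$-basic). By Proposition \ref{pr:prered}\eqref{it:2aprered} we have $\cm N\subseteq\cA$, and $\cA\subsetneq N$ because $\cA$ is not a $\cl$-reduction of $N$ while $N$ is; hence $\overline{\cA}:=\cA/\cm N$ is a proper subspace of the $k$-dimensional space $V$. The crux is to prove $\dim_{R/\cm}\overline{\cA}=k-1$. If instead $\dim_{R/\cm}\overline{\cA}\le k-2$, I would choose a subspace $W$ with $\overline{\cA}\subsetneq W\subsetneq V$ and let $K$ be its preimage under $N\to V$, so that $\cA\subsetneq K\subsetneq N$. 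Since $\cA$ is a $\cl$-prereduction, $K$ would then have to be a $\cl$-reduction of $N$; but $N$ is $\cl$-basic, which forces $K=N$, a contradiction. Hence $\overline{\cA}$ is a hyperplane in $V$.

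To finish, I would choose $y_1,\dots,y_{k-1}\in N$ whose images form a basis of $\overline{\cA}$ and extend to a minimal generating set $y_1,\dots,y_k$ of $N$ (any lift of a basis of $V$ minimally generates $N$, by Nakayama's lemma). Writing $\pi\colon N\to V$ for the quotient map, both $\cA$ and $(y_1,\dots,y_{k-1})+\cm N$ contain $\ker\pi=\cm N$ and have image $\overline{\cA}$, so $\cA=(y_1,\dots,y_{k-1})+\cm N$; and since $\cm N=\cm y_1+\cdots+\cm y_k$ with $\cm y_i\subseteq(y_i)\subseteq(y_1,\dots,y_{k-1})$ for $i<k$, this is exactly $(y_1,\dots,y_{k-1})+y_k\cm$, the asserted form. (The reverse containment, that every such submodule is a $\cl$-prereduction of $N$, follows the same linear-algebra picture but is not needed here.)

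I expect the main obstacle to be the hyperplane claim, and more precisely securing that $N$ is $\cl$-basic: the result of Epstein quoted earlier only yields that a strongly $\cl$-independent $N$ is a \emph{minimal} $\cl$-reduction of itself, not that it is its \emph{only} $\cl$-reduction, so the Nakayama-closure argument in the first paragraph — rather than a direct citation — is where the real content lies. Once $\cl$-basicness is in hand, the remainder is routine linear algebra over $R/\cm$ together with bookkeeping modulo $\cm N$.
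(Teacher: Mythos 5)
Your argument is correct, and it takes a genuinely different route from the paper's. The paper first proves the converse of the stated claim --- that for \emph{any} minimal generating set $y_1,\dots,y_k$ of $N$ the submodule $(y_1,\dots,y_{k-1})+y_k\cm$ is a $\cl$-prereduction, via Remark \ref{rmk:basic}\eqref{it:clbasic3} on non-$\cl$-covers --- and then, given an arbitrary $\cl$-prereduction $\cA$, picks $x_k\notin\cA$, uses $N=\cA+(x_k)$ to produce a generating set $\{a_1,\dots,a_{k-1},x_k\}$ with $a_i\in\cA$, observes $(a_1,\dots,a_{k-1})+x_k\cm\subseteq\cA$, and concludes by noting that two nested $\cl$-prereductions (i.e., two nested maximal elements of ${\bf I}'_{\cl}(N,M)$) must coincide. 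You instead work entirely in $V=N/\cm N$: after $\cm N\subseteq\cA$ from Proposition \ref{pr:prered}\eqref{it:2aprered}, a short sandwiching argument shows $\overline{\cA}$ has codimension exactly one (anything strictly between $\cA$ and $N$ would be a proper $\cl$-reduction of the $\cl$-basic module $N$), and the description then falls out by lifting a basis. This avoids proving the forward direction and is arguably cleaner. One small correction to your closing remark: Epstein's equivalence between minimal $\cl$-reductions and strongly $\cl$-independent reductions already gives $\cl$-basicness directly, since by definition a minimal $\cl$-reduction $L$ of $N$ admits no proper submodule with the same closure; applied to $L=N$ this says exactly that $N$ has no proper $\cl$-reduction. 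Your self-contained Nakayama-closure derivation of $\cl$-basicness is correct and does no harm, but it is re-proving something the cited result already delivers, so the ``real content'' you flag there is actually standard.
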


\begin{proof}
Let $y_1,...,y_k$ be a minimal generating set for $N$ and let $\cA=(y_1,...,y_{k-1})+y_k\cm$. Then $\cA+(y_k)=(y_1,...,y_k)=N$ and $y_k \cm \subseteq \cA$, so $N$ is a cover of $\cA$ by Remark \ref{rmk:sumint}. Also, the $y_i$ are strongly $\cl$-independent, so $N$ is $\cl$-basic and $N$ is the only $\cl$-reduction of itself. Thus $N$ is a non-$\cl$-cover of $\cA$. hence, Remark \ref{rmk:basic}\eqref{it:clbasic3} implies that $\cA$ is a $\cl$-prereduction of $N$ in $M$.

Suppose $\cA$ is an arbitrary $\cl$-prereduction of $N=(x_1,...,x_k)$. Then there exists an $1\leq i \leq k$ such that $x_i\notin \cA$. Let us assume $i=k$, then $x_k \notin \cA$. Note that since $\cA$ is a $\cl$-prereduction of $N$ in $M$ then for any $x\in N\backslash\cA$, $\cA+(x)$ is a $\cl$-reduction of $N$ in $M$. However, since $N$ is $\cl$-basic, this implies that $N=\cA+(x)$ for any $x\in N\backslash \cA$. In particular, $N=\cA + (x_k)$. Thus for $1\leq i \leq k-1$, there exists $a_i\in \cA$ and $b_i\in R$ such that $x_i=a_i+b_ix_k$ and 
$$\{a_1+b_1x_k,...,a_{k-1}+b_{k-1}x_k,x_k\}$$
is a minimal generating set of $N$. Thus
$$\{a_1,...,a_{k-1},x_k\}$$
is also a minimal generating set of $N$. Since $\cm N \subseteq \cA$ by Proposition \ref{pr:prered}\eqref{it:2aprered}, we have
$$(a_1,...,a_{k-1})+x_k\cm \subseteq (a_1,...,a_{k-1})+\cm N \subseteq \cA.$$
Since $\cA$ is a $\cl$-prereduction of $N$ in $M$ and $a_1,...,a_{k-1})+x_k\cm$ is a $\cl$-prereduction of $N$ in $M$, we see that $\cA=(a_1,...,a_{k-1})+x_k\cm$.
\end{proof}

\begin{examplex}
Let $R=k[[x^2,x^5]]$ and $I=(x^6,x^7)$.  By definition of $\m$bf-independent, $x^6$ and $x^7$ are $\m$bf-independent since $x^6 \notin (x^7)^{\m{\rm bf}}=(x^7,x^{10})$ and  $x^7 \notin (x^6)^{\m{\rm bf}}=(x^6,x^{9})$.  Note that $(x^6)+\m(x^7)=(x^6,x^9)$ is an $\m$bf-prereduction and $(x^7)+\m(x^6)=(x^7,x^8)$ is an $\m$bf-prereduction. 
\end{examplex}

\begin{defn}
\cite[Definition 6.6]{ERGV-corehull} Let $R$ be a Noetherian local ring, $L$ an $R$-module, and $g_1,...,g_t\in L^\vee$. We say that the \textit{quotient of $L$ cogenerated by $g_1,...g_t$} is $L/(\bigcap_i \ker(g_i))$. 
We say that $L$ is \textit{cogenerated by $g_1,...g_t$} if $\bigcap_i\ker(g_i)=0$.
We say that a cogenerating set for $L$ is \textit{minimal} if it is irredundant, i.e., for all $1\leq j \leq t$, $\bigcap_{i\neq j} \ker(g_i)\neq 0$.
\end{defn}

We can dualize the notion of strongly $\cl$-independent generating set to that of a strongly $\ri$-independent cogenerating set as follows:

\begin{defn}
Let $R$ be a Noetherian local ring, $L \subseteq M$ $R$-modules, $\pi:M \rightarrow M/L$ the canonical projection, and $\ri$ an interior operation defined on $R$-modules. We say that $g_1,\ldots, g_k \in (M/L)^\vee$ are an \textit{$\ri$-independent cogenerating set} of $M/L$ if $\pi^{-1}({\rm ker}(g_i)) \not\supseteq (\pi^{-1}(\bigcap\limits_{r \neq i}{\rm ker} (g_r)))^M_{\ri}$ for any $1 \leq i \leq k$.  We say that $L$ is \textit{strongly $\ri$-independent} if any minimal set of cogenerators of $M/L$ is $\ri$-independent.
\end{defn}

\begin{example}
    Let $R=k[[x^2,x^5]]$.  Note that $(x^{11},x^{12})=(x^6) \cap (x^7)$ and $(x^{11},x^{14})=(x^6) \cap (x^9)$.  Define $g_i: R \rightarrow E$ to be the homomorphism defined by $g_i(1)=x^{-i}$ for every $i$ in the semigroup $\langle 2, 5\rangle$. Note that $g_i$ has \[{\rm ker}(g_i)=(x^i) \text{ and }
    {\rm im} (g_i)=kx^3+kx+\sum\limits_{j=1}^{i-4}x^{-j}+kx^{-i+2}+kx^{-i}\] for $i > 4$.  Since \[(R/(x^{11},x^{12}))^\vee \cong kx^3+kx+\sum\limits_{j=1}^{7}x^{-j} \text{ and } (R/(x^{11},x^{14}))^\vee \cong kx^3+kx+\sum\limits_{j=1}^{7}x^{-j}+kx^{-9},\] it is easy to see that  $R/(x^{11},x^{12})$ is cogenerated by the functions $g_6$ and $g_7$ whereas $R/(x^{11},x^{14})$ is cogenerated by $g_6$ and $g_9$.  
     
     Note that ${\rm ker}(g_6) \cap {\rm ker}(g_9)= (x^{11},x^{14})$ and ${\rm ker}(g_9)=(x^9)$ is a $\m$be expansion of $(x^{11},x^{14})$. Thus ${\rm ker}(g_6) \supseteq {\rm ker}(g_9)_{\m{\rm be}}$ implying that $g_6$ and $g_9$ are not strongly $\m$be-independent.  
     
     However, $g_6$ and $g_7$ will be strongly $\m$be-independent since ${\rm ker}(g_6) \not\supseteq (x^9, x^{12})=({\rm ker}(g_7))_{\m{\rm be}}$. 
\end{example}

\begin{prop} \label{prop:postexpcov}
Let $(R, \m)$ be a complete Noetherian local ring, $\cM$ be the class of Artinian $R$-modules, $\cP$ be the class of pairs $(A,B)$ with $A \subseteq B$ and $A,B \in \cM$, $\pi:B \rightarrow B/A$ the canonical surjection and $\ri$  be a Nakayama interior on $\cP$.  Let $\cA$ be an $\ri$-postexpansion of $A$ in $B$.  For every $g \in (B/A)^\vee$ such that $\pi^{-1}({\rm ker}(g)) \not\supseteq \cA$:

\begin{enumerate}
\item \label{it:postexpcov1} $\cA$ is a non-$\ri$-cover of $\cA \cap \pi^{-1}({\rm ker}(g))$.
\item \label{it:postexpcov2} $\cA$ is an $\ri$-postexpansion of $\cA\cap \pi^{-1}({\rm ker}(g))$.
\item \label{it:postexpcov3} $\cA \cap \pi^{-1}({\rm ker}(g))$ is an $\ri$-expansion of $A$.
\end{enumerate}
\end{prop}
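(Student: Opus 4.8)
The plan is to run the Matlis dual of the argument for Proposition~\ref{prop:preredcov}. Set $C:=\cA\cap\pi^{-1}(\mathrm{ker}(g))$. Under the correspondence of Theorem~\ref{thm:clpreredipostexp}, the module $\cA$ here plays the role that the $\cl$-reduction $\cA+xR$ played there, while $C$ plays the role of the $\cl$-prereduction $\cA$; the hypothesis $\pi^{-1}(\mathrm{ker}(g))\not\supseteq\cA$ is the analogue of choosing $x\in L\setminus\cA$, and the cocyclic quotient $B/\pi^{-1}(\mathrm{ker}(g))$ is the Matlis dual of the cyclic module $xR$. Since $\pi(A)=0$ we have $A\subseteq\pi^{-1}(\mathrm{ker}(g))$, hence $A\subseteq C\subseteq\cA$, and all three assertions will follow once we know that $\cA$ covers $C$.

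So the first and main step is to show $\cA$ covers $C$; this is the only place where the structure of $E$ enters, the rest being formal. By Proposition~\ref{pr:postexp}\eqref{it:postexp2a} we have $\cA\subseteq(A:_B\cm)$, i.e.\ $\cm\cA\subseteq A$. Observe that $C$ is exactly the kernel of the $R$-linear map $h\colon\cA\to E$ obtained by composing the inclusion $\cA\hookrightarrow B$, the surjection $\pi\colon B\to B/A$, and $g\colon B/A\to E$; hence $\cA/C\cong\mathrm{im}(h)\subseteq E$. Since $\cm\cA\subseteq A=\mathrm{ker}(\pi)$, the map $h$ kills $\cm\cA$, so $\cm\cdot\mathrm{im}(h)=h(\cm\cA)=0$ and $\mathrm{im}(h)\subseteq(0:_E\cm)=\mathrm{Soc}_R(E)$, which is simple and isomorphic to $R/\cm$ because $E=E_R(k)$ is the injective hull of the residue field. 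The hypothesis $\cA\not\subseteq\pi^{-1}(\mathrm{ker}(g))=\mathrm{ker}(h)$ says precisely that $\mathrm{im}(h)\neq0$, so $\mathrm{im}(h)=\mathrm{Soc}_R(E)$ and $\cA/C\cong R/\cm$; in particular $C\subsetneq\cA$ and $\cA$ covers $C$.

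Granting this, I would finish quickly. For~\eqref{it:postexpcov1} it remains to check that $\cA$ is a \emph{non}-$\ri$-cover of $C$, i.e.\ $C_{\ri}^B\subsetneq\cA_{\ri}^B$: if instead $C_{\ri}^B=\cA_{\ri}^B$, then by Proposition~\ref{pr:postexp}\eqref{it:postexp2b} (which gives $\cA=\cA_{\ri}^B$ or $\cA=\cA_{\ri}^B+A$, so in either case $\cA\subseteq\cA_{\ri}^B+A$) together with $C_{\ri}^B\subseteq C$ and $A\subseteq C$ we obtain $\cA\subseteq\cA_{\ri}^B+A=C_{\ri}^B+A\subseteq C$, contradicting $C\subsetneq\cA$; since $C\subseteq\cA$ already forces $C_{\ri}^B\subseteq\cA_{\ri}^B$, the containment is strict. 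For~\eqref{it:postexpcov2}, apply Proposition~\ref{pr:postexpcontain} to the chain $A\subseteq C\subseteq\cA$ to conclude that $\cA$ is an $\ri$-postexpansion of $C$ in $B$. For~\eqref{it:postexpcov3}, since $\cA$ is an $\ri$-postexpansion of $A$ in $B$ and $A\subseteq C\subsetneq\cA$, the defining property of an $\ri$-postexpansion gives that $C$ is an $\ri$-expansion of $A$ in $B$. The main obstacle is the covering computation of the second paragraph, which forces us to pin down $\mathrm{Soc}_R(E)$; the remainder is bookkeeping with Propositions~\ref{pr:postexp} and~\ref{pr:postexpcontain}.
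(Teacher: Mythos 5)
Your proof is correct and follows essentially the same approach as the paper: you use Proposition~\ref{pr:postexp}\eqref{it:postexp2b} to get the strict containment $C_{\ri}^B\subsetneq\cA_{\ri}^B$ in part~(1) (the paper splits into the two cases $\cA=\cA_{\ri}^B$ and $\cA=\cA_{\ri}^B+A$ and uses a modular-law computation in the second, whereas you run a single contradiction argument from $\cA\subseteq\cA_{\ri}^B+A$, but both routes hinge on the same lemma), Proposition~\ref{pr:postexpcontain} for part~(2), and the definition of $\ri$-postexpansion for part~(3). The one substantive addition in your write-up is the socle argument showing $\cA/C\cong R/\m$, which establishes that $\cA$ genuinely covers $C=\cA\cap\pi^{-1}(\ker(g))$; since a non-$\ri$-cover is by definition a cover with strict interior containment, and this covering fact requires the hypothesis $\m\cA\subseteq A$ from Proposition~\ref{pr:postexp}\eqref{it:postexp2a} (it is not automatic from $\pi^{-1}(\ker(g))\not\supseteq\cA$ alone), your proof is actually a bit more complete than the paper's, which only checks the strict interior containment and leaves the cover implicit.
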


\begin{proof}
\eqref{it:postexpcov1}  We need to show that $(\cA \cap \pi^{-1}({\rm ker}(g)))_{\ri}^B \subsetneq \cA_{\ri}^B$.  By Proposition \ref{pr:postexp}\eqref{it:postexp2b}, since $\cA$ is an $\ri$-postexpansion of $A$ in $B$ either $\cA$ is $\ri$-open in $B$ or $\cA_{\ri}^B+A=\cA$. In the first case, we have $\cA_{\ri}^B=\cA \supsetneq \cA \cap \pi^{-1}({\rm ker}(g)) \supseteq (\cA \cap \pi^{-1}({\rm ker}(g)))_{\ri}^B$. So $\cA$ is a non-$\ri$-cover of $\cA \cap \pi^{-1}({\rm ker}(g))$. In the other case, we have $\cA_{\ri}^B+A=\cA$. Then again we get \[(\cA \cap \pi^{-1}({\rm ker}(g)))_{\ri}^B = ((\cA_{\ri}^B+A) \cap \pi^{-1}({\rm ker}(g)))_{\ri}^B = ((\cA_{\ri}^B \cap \pi^{-1}({\rm ker}(g)))+(A\cap \pi^{-1}({\rm ker}(g))))_{\ri}^B \subsetneq \cA_{\ri}^B.\]

\eqref{it:postexpcov2} $\cA$ is an $\ri$-postexpansion of $\cA \cap \pi^{-1}({\rm ker}(g))$ by Proposition \ref{pr:postexpcontain} since $ A \subseteq \cA \cap \pi^{-1}({\rm ker}(g)) \subseteq \cA$.

\eqref{it:postexpcov3} $\cA \cap \pi^{-1}({\rm ker}(g))$ is an $\ri$-expansion of $A$ in $B$ by the definition of $\ri$-postexpansion of $A$.
\end{proof}

\begin{prop}
\cite[Proposition 6.14]{ERGV-corehull} Let $(R,\cm)$ be a Noetherian local ring and $\ri$ a Nakayama interior on Artinian $R$-modules. Let $A\subseteq B$ Artinian $R$-modules. Suppose that $C\subseteq D$ are $\ri$-expansions of $A$ in $B$, with $D$ a maximal $\ri$-expansion. Then any minimal cogenerating set of $B/D$ extends to a minimal cogenerating set for $B/C$.
\end{prop}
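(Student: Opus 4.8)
The plan is to use Matlis duality to reduce the conclusion to a single module–containment, $(D:_B\cm)\subseteq (A:_B\cm)+D$, and then to prove that containment directly from the maximality of the $\ri$-expansion $D$ (working over the completion of $R$ if necessary, which affects none of the data). For the reduction, recall the standard dictionary: for an Artinian $R$-module $L$ a minimal cogenerating set of $L$ is the same datum as a minimal generating set of the Noetherian module $L^{\vee}$, and $(L^{\vee}/\cm L^{\vee})^{\vee}=(0:_L\cm)$. Applying this to $L=B/D$ and $L=B/C$ together with the inclusion $(B/D)^{\vee}\subseteq (B/C)^{\vee}$, a minimal cogenerating set of $B/D$ extends to a minimal cogenerating set of $B/C$ exactly when the map $(B/D)^{\vee}/\cm(B/D)^{\vee}\to (B/C)^{\vee}/\cm(B/C)^{\vee}$ is injective; dualizing, this is precisely the surjectivity of the restriction to socles of the canonical surjection $B/C\to B/D$, i.e. the equality $(D:_B\cm)=(C:_B\cm)+D$. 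The inclusion $\supseteq$ is automatic from $C\subseteq D$, and since $A\subseteq C$ gives $(A:_B\cm)\subseteq(C:_B\cm)$, it suffices to prove $(D:_B\cm)\subseteq (A:_B\cm)+D$.

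\textbf{The key containment.} Let $x\in B$ with $\cm x\subseteq D$, and assume $x\notin D$. Set $D':=D+Rx$, so $D\subsetneq D'$ and $\cm D'\subseteq D$. Since $D$ is a maximal $\ri$-expansion of $A$ and $D'$ properly contains $D$, the module $D'$ cannot be an $\ri$-expansion of $A$, so (as $A\subseteq D'$) we have $(D')^{B}_{\ri}\not\subseteq A$, and in particular $(D')^{B}_{\ri}\neq A^{B}_{\ri}$. Feeding $A\subseteq D'\subseteq B$ into the Nakayama interior axiom in contrapositive form, the inequality $(D')^{B}_{\ri}\neq A^{B}_{\ri}$ forces $(A:_{D'}\cm)^{B}_{\ri}\not\subseteq A$. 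On the other hand, if $(A:_{D'}\cm)$ were contained in $D$ it would equal $(A:_D\cm)$, and then $(A:_{D'}\cm)^{B}_{\ri}\subseteq D^{B}_{\ri}\subseteq A$ (the last inclusion because $D$ is an $\ri$-expansion of $A$), a contradiction. Hence some $z\in (A:_{D'}\cm)$ lies outside $D$; writing $z=d+rx$ with $d\in D$ and $r\in R$, the coefficient $r$ must be a unit (otherwise $rx\in\cm x\subseteq D$ and $z\in D$). Then $x=r^{-1}z-r^{-1}d$ with $r^{-1}z\in (A:_B\cm)$ (because $\cm z\subseteq A$ and $(A:_B\cm)$ is an $R$-submodule of $B$) and $r^{-1}d\in D$, so $x\in (A:_B\cm)+D$. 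This establishes the containment, and unwinding the reduction proves the proposition; when the socle map is surjective one extends the corresponding $k$-linearly independent set in $(0:_{B/C}\cm)^{\vee}$ to a basis and lifts, yielding the required extension of cogenerating sets.

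\textbf{Obstacle and alternative.} The step carrying the real weight is the containment $(D:_B\cm)\subseteq (A:_B\cm)+D$: it is the only place the hypotheses on $D$ are used, and it rests on pairing ``$D$ is a maximal $\ri$-expansion'' with the contrapositive of the Nakayama interior axiom applied to the cover $D+Rx$ of $D$. I expect the routine-but-error-prone part to be the opening Matlis-duality bookkeeping, where one must dualize exactly the right map in order to land on the socles and on the colon modules $(C:_B\cm)/C$ and $(D:_B\cm)/D$. An alternative route is to pass entirely to $B^{\vee}$: with $\cl:=\ri^{\dual}$, Theorem \ref{thm:clrediexp} turns $(B/D)^{\vee}\subseteq (B/C)^{\vee}$ into $\cl$-reductions of $(B/A)^{\vee}$ in $B^{\vee}$ with $(B/D)^{\vee}$ a \emph{minimal} $\cl$-reduction, and one then invokes that a minimal $\cl$-reduction is strongly $\cl$-independent, so that its minimal generating sets extend to minimal generating sets of any larger $\cl$-reduction; this is essentially \cite[Proposition 6.14]{ERGV-corehull}.
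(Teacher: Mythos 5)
The paper does not prove this proposition — it is cited verbatim from \cite[Proposition~6.14]{ERGV-corehull} — so there is no in-paper argument to compare against. Your proof, however, is correct and self-contained: the Matlis-duality bookkeeping is right (a minimal cogenerating set of $L$ is exactly a minimal generating set of $L^{\vee}$; the extendability criterion dualizes to surjectivity of the socle map $(C:_B\cm)/C\to (D:_B\cm)/D$, i.e.\ to $(D:_B\cm)\subseteq (C:_B\cm)+D$; and since $A\subseteq C$ it suffices to prove $(D:_B\cm)\subseteq (A:_B\cm)+D$). The heart of the argument — forming the cover $D'=D+Rx$ for $x\in(D:_B\cm)\setminus D$, deducing from maximality of $D$ that $(D')_{\ri}^{B}\neq A_{\ri}^{B}$, invoking the contrapositive of the Nakayama interior axiom to get $(A:_{D'}\cm)_{\ri}^{B}\not\subseteq A$, and then the colon comparison $(A:_{D'}\cm)\cap D=(A:_{D}\cm)$ together with $D_{\ri}^{B}\subseteq A$ to force an element $z\in(A:_{D'}\cm)\setminus D$ — is exactly where the hypotheses are earned, and the unit-coefficient extraction $z=d+rx$ correctly produces $x\in (A:_B\cm)+D$. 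Your alternative sketch (pass to $B^{\vee}$, reduce to: a minimal $\cl$-reduction $L$ of $N$ inside a larger reduction $K$ has $L/\cm L\hookrightarrow K/\cm K$, which follows from the Nakayama closure property by peeling off a generator) is also valid and is the natural dual formulation; it trades the socle computation for the generator-level Nakayama argument, and the two are of essentially equal length. One small caveat worth flagging: the paper's stated definition of ``maximal $\ri$-expansion'' appears to omit the condition $D\supsetneq C$; your proof uses the intended reading (no proper overmodule of $D$ is an $\ri$-expansion of $A$), which is the correct one.
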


\begin{defn}
\cite[Definition 7.18]{ERGV-corehull} Let $(R,\cm)$ be a Noetherian local ring. Let $\ri$ be an interior operation defined on a class of Artinian $R$-modules $\cM$. Let $A\subseteq B$ be Artinian $R$-modules. We define the \textit{$\ri$-cospread} $\ell_{\ri}^B(A)$ of $A$ to be the minimal number of cogenerators of $B/C$ of any maximal $\ri$-expansion $C$ of $A$, if this number exists.
\end{defn}

\begin{prop}
Let $(R,\cm)$ be a Noetherian local ring, $\cl$ be a Nakayama closure operation on $R$-modules, and $\ri$ the interior operation dual to $\cl$ defined on a class of Artinian $R$-modules. Let $N\subseteq M$ and $L$ a $\cl$-prereduction of $N$ in $M$. If the $\cl$-spread $\ell_M^{\cl}(L)$ of $L$ in $M$ exists, then the $\cl$-spread $\ell_M^{\cl}(N)$ exists and $$\ell_M^{\cl}(N)=\ell_M^{\cl}(L)+1.$$
Let $A\subseteq B$ be Artinian $R$-modules, $C$ an $\ri$-postexpansion of $A$ in $B$, $M=B^{\vee}$, $N=(B/A)^{\vee}$, and $L=(B/C)^{\vee}$. Then the $\ri$-cospread $\ell_{\ri}^B(A)$ exists, the $\ri$-cospread $\ell_{\ri}^B(C)$ exists, and $$\ell_{\ri}^B(C)=\ell_{\ri}^B(A)+1.$$
\end{prop}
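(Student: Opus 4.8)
The heart of the first assertion is a single codimension-one generator count; granting that, the second assertion follows formally from the first by Matlis duality. Write $\mu(-)$ for the minimal number of generators of a finitely generated module. For the first assertion, fix $x\in N\setminus L$. Since $L$ is a $\cl$-prereduction of $N$ in $M$, Proposition \ref{prop:preredcov}\eqref{it:preredcov3} gives that $L+xR$ is a $\cl$-reduction of $N$ in $M$, and Proposition \ref{prop:preredcov}\eqref{it:preredcov1} that $L+xR$ is a non-$\cl$-cover of $L$; so $L+xR$ covers $L$ (whence $\cm x\subseteq L$, by Remark \ref{rmk:sumint}) and $x\notin L_M^{\cl}$. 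Since a $\cl$-reduction and the module it reduces have the same minimal $\cl$-reductions, and hence the same $\cl$-spread whenever either exists, it suffices to treat the case $N=P:=L+xR$: if $P$ covers $L$ with $x\notin L_M^{\cl}$ --- equivalently, $L$ is a $\cl$-prereduction of $P$, by Remark \ref{rmk:basic}\eqref{it:clbasic3} --- then $\ell_M^{\cl}(P)$ exists and equals $\ell_M^{\cl}(L)+1$.

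\emph{The codimension-one count.} Choose a minimal $\cl$-reduction $K$ of $L$; one exists because $\cl$ is Nakayama on finitely generated modules, and by the characterization of minimal $\cl$-reductions for Nakayama closures (a reduction is minimal iff it is strongly $\cl$-independent) we have $\mu(K)=\ell_M^{\cl}(L)$ with $K$ strongly $\cl$-independent. From $(K+xR)_M^{\cl}\supseteq K_M^{\cl}\cup xR=L_M^{\cl}\cup xR\supseteq P$ one sees $K+xR$ is a $\cl$-reduction of $P$. The two substantive claims are: (i) $\mu(K+xR)=\mu(K)+1$, a minimal generating set being obtained by adjoining a representative of $x$ to one for $K$ --- here one uses $\cm(K+xR)=\cm K+\cm x\subseteq K+L=L$ while $x\notin L$, so $x$ is not redundant, and uses $x\notin L_M^{\cl}$ together with strong $\cl$-independence of $K$ so that no generator of $K$ is absorbed; and (ii) this generating set is again $\cl$-independent (the $\cl$-independence of $K$, $x\notin K_M^{\cl}$, and order-preservation of $\cl$), so $K+xR$ is strongly $\cl$-independent, hence a minimal $\cl$-reduction of $P$. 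Claims (i)--(ii) give $\ell_M^{\cl}(P)\le\ell_M^{\cl}(L)+1$. For existence together with the reverse inequality, let $K'$ be an arbitrary minimal $\cl$-reduction of $P$; since $(K')_M^{\cl}=P_M^{\cl}\supsetneq L_M^{\cl}$ we have $K'\not\subseteq L$, so the covering dichotomy of Remark \ref{rmk:sumint} (applied to $P$ covering $L$, with $K'$) forces $K'$ to cover $K'\cap L$ with $K'+L=P$; hence $\mu(K')=\mu(K'\cap L)+1$, $K'\cap L$ is strongly $\cl$-independent, and one verifies $K'\cap L$ is a $\cl$-reduction of $L$, so a minimal one, giving $\mu(K'\cap L)=\ell_M^{\cl}(L)$. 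Thus every minimal $\cl$-reduction of $P$ has exactly $\ell_M^{\cl}(L)+1$ generators: $\ell_M^{\cl}(P)$ exists and equals $\ell_M^{\cl}(L)+1$, and therefore $\ell_M^{\cl}(N)=\ell_M^{\cl}(L)+1$.

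\emph{The second assertion.} By Theorem \ref{thm:clpreredipostexp}, $L=(B/C)^\vee$ is a $\cl$-prereduction of $N=(B/A)^\vee$ in $M=B^\vee$, so the first assertion gives $\ell_M^{\cl}(N)=\ell_M^{\cl}(L)+1$. It remains to transcribe this into cospreads. By Theorem \ref{thm:clrediexp}, the order-reversing bijection $D\mapsto(B/D)^\vee$ carries the maximal $\ri$-expansions of $A$ (resp.\ of $C$) in $B$ onto the minimal $\cl$-reductions of $N$ (resp.\ of $L$) in $M$; and by Lemma \ref{lem:sumintdual}, together with the fact that Matlis duality sends a minimal generating set of $(B/D)^\vee$ to a minimal cogenerating set of $B/D$, it carries the minimal number of generators of such a minimal $\cl$-reduction to the minimal number of cogenerators of the corresponding quotient $B/D$. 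Hence $\ell_{\ri}^B(A)$ and $\ell_{\ri}^B(C)$ both exist, and under this dictionary the equality $\ell_M^{\cl}(N)=\ell_M^{\cl}(L)+1$ reads as the asserted relation $\ell_{\ri}^B(C)=\ell_{\ri}^B(A)+1$.

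\textbf{Main obstacle.} The one non-formal ingredient is the codimension-one generator count: that adjoining $x$ to $L$ (and to a minimal $\cl$-reduction of $L$) increases the minimal number of generators by exactly one, preserves minimality of the reduction, and does so uniformly over all minimal $\cl$-reductions of $L+xR$. This is precisely where the non-$\cl$-cover hypothesis ($x\notin L_M^{\cl}$) and strong $\cl$-independence are indispensable; once it is secured, the duality half of the statement is pure bookkeeping.
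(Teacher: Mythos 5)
Your overall architecture is the same as the paper's: show that a minimal $\cl$-reduction of $N$ has exactly one more generator than a minimal $\cl$-reduction of its prereduction $L$, then transport the statement through Matlis duality. (Your duality half is fine; where the paper simply invokes \cite[Proposition 7.19]{ERGV-corehull} for the existence of the cospreads, you rederive it from Theorems \ref{thm:clrediexp} and \ref{thm:clpreredipostexp} and Lemma \ref{lem:sumintdual}, which is acceptable bookkeeping.) The first genuine gap is the reduction to the case $N=P:=L+xR$. It is true that every minimal $\cl$-reduction of $P$ is a minimal $\cl$-reduction of $N$, but the converse fails: a minimal $\cl$-reduction $\tilde K$ of $N$ need not be contained in $P$, hence need not be a reduction of $P$ at all (distinct minimal reductions of a submodule typically do not all sit inside one fixed proper reduction). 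Since $\ell_M^{\cl}(N)$ exists only if \emph{all} minimal $\cl$-reductions of $N$ have the same number of generators, proving the count for the minimal reductions of $P$ does not establish existence of $\ell_M^{\cl}(N)$, which is part of the assertion. This is exactly why the paper's proof, terse as it is, begins with an \emph{arbitrary} minimal $\cl$-reduction $\tilde K$ of $N$.

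The second gap is that the two claims you label "substantive" are in fact the entire content of the proposition and are asserted rather than proved. That $K'$ covers $K'\cap L$ gives only $\mu(K')\le\mu(K'\cap L)+1$; a submodule of colength one can need strictly more generators than the module covering it (compare $\m\subseteq R$), so $\mu(K')=\mu(K'\cap L)+1$ requires an argument. More seriously, "one verifies $K'\cap L$ is a $\cl$-reduction of $L$" is the crux: you must produce $(K'\cap L)_M^{\cl}\supseteq L$, and nothing in the Nakayama axioms or in Propositions \ref{pr:nonreds}--\ref{pr:preredcontainment} supplies this; it is precisely the point at which the paper's own proof also compresses to a one-line assertion. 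Likewise, minimality of $K+xR$ as a reduction of $P$ requires showing that the enlarged generating set is still $\cl$-independent, i.e.\ that no $k_i$ lies in $(k_1,\dots,\hat k_i,\dots,k_s,x)_M^{\cl}$; order-preservation and $x\notin K_M^{\cl}$ do not give this (note that even in the special-part setting Proposition \ref{minclsp} has to do real work to rule out such absorption). Until the non-cover hypothesis $x\notin L_M^{\cl}$ and strong $\cl$-independence are actually deployed to close these steps, the codimension-one count --- and with it the first assertion --- is not established.
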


\begin{proof}
Suppose the $\cl$-spread $\ell_M^{\cl}(L)$ of $L$ in $M$ exists and $L$ a $\cl$-prereduction of $N$ in $M$. So for all $K$ such that $L\subseteq K \subseteq N$, $K$ is a $\cl$-reduction of $N$ in $M$. Let $J$ be a minimal $\cl$-reduction of $L$ in $M$ and $\tilde{K}$ be a minimal $\cl$-reduction of $N$ in $M$. Then $J \subseteq L \subseteq J_M^{\cl} \subseteq \tilde{K} \subseteq N \subseteq \tilde{K}_M^{\cl}$. Since $\tilde{K}$ is a minimal $\cl$-reduction of $N$ and $J$ is a minimal $\cl$-reduction of a $\cl$-prereduction of $N$, $J_M^{\cl}\subsetneq \tilde{K}_M^{\cl}$ and the minimal number of generators of $J$ is exactly 1 less than the minimal number of generators of $\tilde{K}$. Since minimal reductions of $L$ all have the same number of minimal generators and $\tilde{K}$ was an arbitrary minimal $\cl$-reduction of $N$ in $M$,
$$\ell_M^{\cl}(N)=\ell_M^{\cl}(L)+1.$$
By \cite[Proposition 7.19]{ERGV-corehull}, we know that since $\cl$-spread $\ell_M^{\cl}(L)$ and $\ell_M^{\cl}(N)$ exists, then the $\ri$-cospreads $\ell_{\ri}^B(A)$ and $\ell_{ri}^B(C)$ exists. Because $C$ is an $\ri$-postexpansion of $A$, we have
$\ell_{\ri}^B(C)=\ell_{\ri}^B(A)+1.$
\end{proof}

\begin{prop} \label{prop:cover}
Let $(R,\cm)$ be a complete Noetherian local ring.
\begin{enumerate}
    \item \label{it:cover1} 
    Suppose $C \subseteq A \subseteq B$ and $\pi: B \rightarrow B/C$ is the canonical surjection. $(B/C)^\vee$ covers $(B/A)^\vee$ if and only if $C=A\cap \pi^{-1}({\rm ker}(g))$ for some $g\in (B/C)^\vee$ and $(\pi^{-1}({\rm ker} (g)):_{B} \cm)\supseteq A$.  
    \item \label{it:cover2} If $C$ is $\ri$-cobasic then every $\ri$-postexpansion $\cA$ of $C$ is a non-$\ri$-cover of $C$. 
    \item \label{it:cover3} If $A$ and $C$ are submodules of $B$ and $C$ is a non-$\ri$-cover of $A$, then $C$ is an $\ri$-postexpansion of $A$.
    \end{enumerate}
\end{prop}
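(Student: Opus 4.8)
Proof proposal.
The plan is to prove \eqref{it:cover1} first, since it is the essential Matlis-duality computation, then deduce \eqref{it:cover2} from it (mirroring the proof of Remark~\ref{rmk:basic}\eqref{it:clbasic2}, with Proposition~\ref{prop:postexpcov} playing the role of Proposition~\ref{prop:preredcov}), while \eqref{it:cover3} will be a direct dualization of Remark~\ref{rmk:basic}\eqref{it:clbasic3}. Throughout, $\pi\colon B\to B/C$ (resp.\ $B\to B/A$) is the canonical surjection, and we write $k=R/\cm$.

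For \eqref{it:cover1} I would start from the short exact sequence $0\to A/C\to B/C\to B/A\to 0$, which under $(-)^{\vee}$ becomes $0\to (B/A)^{\vee}\to (B/C)^{\vee}\to (A/C)^{\vee}\to 0$; regarding all of these as submodules of $B^{\vee}$ (as in Lemma~\ref{lem:sumintdual}) gives $(B/C)^{\vee}/(B/A)^{\vee}\cong (A/C)^{\vee}$. Since $k$ is Matlis self-dual, $(B/C)^{\vee}$ covers $(B/A)^{\vee}$ exactly when $C\subsetneq A$ and $A/C\cong k$, so it remains to check that the right-hand condition of \eqref{it:cover1} is equivalent to $A/C\cong k$ (we understand $C\subsetneq A$, the covering hypothesis). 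For $g\in (B/C)^{\vee}$ one has $\pi^{-1}(\ker g)\supseteq C$ with $\pi^{-1}(\ker g)/C=\ker g$, hence $(A\cap \pi^{-1}(\ker g))/C=(A/C)\cap \ker g=\ker\bigl(g|_{A/C}\bigr)$; thus $C=A\cap \pi^{-1}(\ker g)$ iff $g|_{A/C}$ is injective, i.e.\ $A/C$ embeds in $E$. Moreover $(\pi^{-1}(\ker g):_B\cm)\supseteq A$ says $\cm A\subseteq \pi^{-1}(\ker g)$, which together with $\cm A\subseteq A$ forces $\cm A\subseteq A\cap \pi^{-1}(\ker g)=C$, i.e.\ $\cm(A/C)=0$. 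So $A/C$ is a $k$-subspace of $E$ lying in the socle $(0:_E\cm)$, which is one-dimensional; hence $\dim_k A/C\le 1$, and with $C\subsetneq A$ this gives $A/C\cong k$. Conversely, if $A/C\cong k$ then $\cm(A/C)=0$ and $A/C$ embeds in $(0:_E\cm)\subseteq E$; injectivity of $E$ extends this to some $g\colon B/C\to E$, for which both conditions then hold.

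For \eqref{it:cover2}, let $\cA$ be an $\ri$-postexpansion of $C$ in $B$ (here "$\ri$-cobasic" is used in the sense that $C$ is its only $\ri$-expansion in $B$, the Matlis dual of $\cl$-basic; this is the reading under which the statement parallels Remark~\ref{rmk:basic}\eqref{it:clbasic2}). Since $C$ is an $\ri$-expansion of itself but $\cA$ is not, $C\subsetneq \cA$, so there is $g\in (B/C)^{\vee}$ with $\pi^{-1}(\ker g)\not\supseteq \cA$. By Proposition~\ref{prop:postexpcov}\eqref{it:postexpcov3}, $\cA\cap \pi^{-1}(\ker g)$ is an $\ri$-expansion of $C$, hence equals $C$; and by Proposition~\ref{pr:postexp}\eqref{it:postexp2a}, $\cm\cA\subseteq C\subseteq \pi^{-1}(\ker g)$, so $(\pi^{-1}(\ker g):_B\cm)\supseteq \cA$. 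Part \eqref{it:cover1} then yields that $(B/C)^{\vee}$ covers $(B/\cA)^{\vee}$, equivalently $\cA$ covers $C$. Finally, $\cA$ covers $C$ but is not an $\ri$-expansion of $C$; were it an $\ri$-cover, then $\cA_{\ri}^B=C_{\ri}^B\subseteq C\subseteq \cA$ would make $\cA$ an $\ri$-expansion of $C$, a contradiction. Hence $\cA$ is a non-$\ri$-cover of $C$. For \eqref{it:cover3}, suppose $C$ is a non-$\ri$-cover of $A$, so $C$ covers $A$ — in particular $A\subsetneq C$ and $C/A$ is simple — and $A_{\ri}^B\subsetneq C_{\ri}^B$. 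Then $C$ is not an $\ri$-expansion of $A$: otherwise $C_{\ri}^B\subseteq A$ would give $C_{\ri}^B=(C_{\ri}^B)_{\ri}^B\subseteq A_{\ri}^B\subseteq C_{\ri}^B$, forcing equality. Since $C/A$ is simple, the only submodule $D$ with $A\subseteq D\subsetneq C$ is $D=A$, which is trivially an $\ri$-expansion of $A$ by intensivity; hence $C$ is an $\ri$-postexpansion of $A$ in $B$.

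The main obstacle is the bookkeeping in \eqref{it:cover1}: transporting the covering relation across Matlis duality, identifying $(A\cap\pi^{-1}(\ker g))/C$ with $\ker(g|_{A/C})$, and extracting $A/C\cong k$ from injectivity of $g|_{A/C}$ together with $\cm(A/C)=0$ via one-dimensionality of the socle of $E$. Once \eqref{it:cover1} is established, \eqref{it:cover2} and \eqref{it:cover3} are routine; the only points to flag are the implicit $C\subsetneq A$ in \eqref{it:cover1} and the precise sense of "$\ri$-cobasic" needed in \eqref{it:cover2}.
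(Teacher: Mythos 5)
Your proposal is correct, and for part \eqref{it:cover1} it takes a cleaner route than the paper. The paper proves \eqref{it:cover1} by invoking Remark~\ref{rmk:sumint} (to write $(B/C)^\vee=(B/A)^\vee+(g)$), a cited isomorphism $(g)\cong((B/C)/\ker g)^\vee$ from \cite{ERGV-nonres}, the third isomorphism theorem, and Lemma~\ref{lem:sumintdual} (duality of sums and intersections). You instead dualize the short exact sequence $0\to A/C\to B/C\to B/A\to 0$ directly, reduce the covering condition to $A/C\cong k$, and then characterize the right-hand conditions of \eqref{it:cover1} via the socle $(0:_E\cm)\cong k$ and injectivity of $E$. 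Both arguments are valid; yours avoids Lemma~\ref{lem:sumintdual} and the cited $(g)\cong((B/C)/\ker g)^\vee$ altogether, at the cost of making explicit the socle computation. Your flag about the implicit hypothesis $C\subsetneq A$ in \eqref{it:cover1} is well taken: the stated ``if and only if'' is false as written when $C=A$, since the right-hand conditions hold vacuously (take $g=0$) while $(B/C)^\vee$ does not cover itself.

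For \eqref{it:cover2}, you correctly identify a genuine wrinkle: the paper's Definition 4.1 says $C$ is $\ri$-cobasic if $C_{\ri}^B=C$, i.e.\ $\ri$-open, but the proof of \eqref{it:cover2} (both yours and the paper's) actually uses the stronger hypothesis that $C$ is its only $\ri$-expansion in $B$ — the honest Matlis dual of $\cl$-basic. These two notions are not equivalent (just as $\cl$-closed does not imply $\cl$-basic), so the reading you adopt is the one that makes the proposition true and parallel to Remark~\ref{rmk:basic}\eqref{it:clbasic2}. Your argument for \eqref{it:cover2} otherwise matches the paper's intent, with Proposition~\ref{prop:postexpcov}\eqref{it:postexpcov3} and Proposition~\ref{pr:postexp}\eqref{it:postexp2a} supplying the conditions fed into \eqref{it:cover1}, and is stated more cleanly than the paper's version, which confuses $A$ and $C$ in several places. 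For \eqref{it:cover3}, you give a direct module-theoretic argument (simplicity of $C/A$ plus intensivity of $\ri$ shows $A$ is the only proper intermediate submodule, hence $C$ is a postexpansion once one rules out $C$ being an $\ri$-expansion), whereas the paper's proof tries to route through \eqref{it:cover1} and \eqref{it:cover2}; your version is both simpler and logically tighter.
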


\begin{proof} \eqref{it:cover1}  Suppose $(B/C)^\vee$ covers $(B/A)^\vee$. Then by Remark \ref{rmk:sumint}, $(B/C)^\vee=(B/A)^\vee+(g)$ for some $g\in (B/C)^\vee$ and $\cm g \subseteq (B/A)^\vee$. 
By \cite[Lemma 5.4]{ERGV-nonres} $(g) \cong ((B/C)/({\rm ker}(g)))^\vee$.  By the third isomorphism theorem $(B/C)/({\rm ker}(g)) \cong B/(\pi^{-1}({\rm ker}(g))$. 
Then by Lemma \ref{lem:sumintdual} we see that 
\[(B/A)^\vee+(B/\pi^{-1}({\rm ker}(g)))^\vee \cong (B/(A\cap \pi^{-1}({\rm ker}(g))))^\vee\] or $(B/C)^\vee \cong (B/(A\cap \pi^{-1}({\rm ker}(g))))^\vee$. Thus $C=A\cap \pi^{-1}({\rm ker}(g))$ for some $g\in (B/C)^\vee$. Furthermore, since $\cm g \subseteq (B/A)^\vee$ then 
\[A= \left(\displaystyle\frac{(B/C)^\vee}{ (B/A)^\vee}\right)^\vee \subseteq \left(\displaystyle\frac{(B/C)^\vee}{\cm g}\right)^\vee=\left(\displaystyle\frac{B^\vee}{\cm (g \circ \pi^\vee)}\right)^\vee  = (\pi^{-1}({\rm ker}(g)):_{B} \cm).\] 
     
Suppose $C=A\cap \pi^{-1}({\rm ker}(g))$ for some $g\in (B/C)^\vee$ and $\pi^{-1}({\rm ker} g:_{B/C} \cm)\supseteq A$. Then by Lemma \ref{lem:sumintdual} $$(B/C)^\vee = \left(\frac{B}{A\cap \pi^{-1}({\rm ker}(g)}\right)^\vee \cong (B/A)^\vee+(B/\pi^{-1}({\rm ker}(g)))^\vee$$ and since $(g) \cong ((B/C)/({\rm ker}(g)))^\vee \cong (B/(\pi^{-1}({\rm ker}(g)))^\vee$, we see that $(B/C)^\vee = (B/A)^\vee +(g)$.
    
Since $(\pi^{-1}({\rm ker} (g)):_{B} \cm)\supseteq A$, we have 
\[(B/A)^\vee \supseteq \left(\displaystyle\frac{B}{(\pi^{-1}({\rm ker} (g)):_{B} \cm)} \right) =\left(\displaystyle\frac{B/C}{({\rm ker} (g)):_{B/C} \cm)}\right)^\vee=\m g\]
or $\cm g \subseteq (B/A)^\vee$ and by Remark \ref{rmk:sumint} $(B/C)^\vee$ covers $(B/A)^\vee$.

   \eqref{it:cover2} 
    Let $\cA$ be an $\ri$-postexpansion of $C$. Since $C$ is $\ri$-cobasic, then for all $g\in (B/C)^\vee$ such that  $\pi^{-1}({\rm ker}(g)) \not\supseteq \cA$, $\cA\cap \pi^{-1}({\rm ker}(g))$ is an $\ri$-expansion of $C$. But this implies that $\cA\cap \pi^{-1}({\rm ker}(g))= C$ for all $g\in (B/C)^\vee$ such that  $\pi^{-1}({\rm ker}(g)) \not\supseteq \cA$. By Proposition \ref{pr:postexp}\eqref{it:postexp2a}, $(A:_B \cm)\supseteq \cA$ and \eqref{it:cover1} gives that $(B/\cA\cap \pi^{-1}({\rm ker}(g)))^{\vee}$ is a cover of $(B/\cA)^\vee$. Also by Proposition \ref{pr:postexp}\eqref{it:postexp2b}, $\cA_{\ri}^B+A=\cA$. So ${\rm ker}(g) \notin \cA_{\ri}^B$. Thus $(B/\cA\cap {\rm ker}(g))^\vee$ is a non-$\ri$-cover of $(B/\cA)^\vee$.

   \eqref{it:cover3} If $C$ is a non-$\ri$-cover of $A$, then $C$ is not an $\ri$-expansion of $A$. By \eqref{it:cover1}, $B/(C\cap (g))=B/A$ there exists a $g \in (B/A)^\vee$ with $A= C \cap \pi^{-1}({\rm ker}(g))$ and $(\pi^{-1}({\rm ker}(g)):_{B} \m) \supseteq C$. Since $C$ is an $\ri$-expansion of $C$, then $C$ is an $\ri$-postexpansion of $A$ by \eqref{it:cover2}. 
    \end{proof}

\begin{prop} \label{pr:cogenpost}
 Let $(R,\cm)$ be a Noetherian complete local ring and $A$ be a strongly $\ri$-independent submodule in $B$ with $\ri$-cospread equal to $k\geq 1$ elements and $\pi: B \rightarrow B/A$ is the canonical surjection. Then any $\ri$-postexpansion of $A$ in $B$ has the form $$\bigcap\limits_{r \neq i}\pi^{-1}({\rm ker} (g_r)) \cap (\pi^{-1}({\rm ker}(g_i)):_{B} \m).$$
\end{prop}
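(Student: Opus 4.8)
The plan is to reduce the statement to its closure-side counterpart, Proposition~\ref{pr:clindep}, by Matlis duality. As in the rest of this section, take $\ri$ to be a Nakayama interior on the Artinian $R$-modules, so that $\cl:=\ri^{\dual}$ is a Nakayama closure, and set $M:=B^{\vee}$ and $N:=(B/A)^{\vee}\subseteq M$. The two facts I need to establish are: (a) $N$ is strongly $\cl$-independent in $M$, and (b) the $\cl$-spread $\ell_M^{\cl}(N)$ is defined and equals the $\ri$-cospread $k$ of $A$. Granting these, Proposition~\ref{pr:clindep} says every $\cl$-prereduction of $N$ in $M$ has the form $(y_1,\dots,y_{k-1})+y_k\cm$ for some minimal generating set $y_1,\dots,y_k$ of $N$, and Theorem~\ref{thm:clpreredipostexp} converts this back into a statement about $\ri$-postexpansions of $A$ in $B$.

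To prove (a) and (b) I would first record the dictionary: for a Matlis-dualizable module, a family $g_1,\dots,g_k$ of elements of $(B/A)^{\vee}$ is a (minimal) generating set of $N=(B/A)^{\vee}$ exactly when it is a (minimal) cogenerating set of $B/A$, since surjectivity of $R^{k}\to (B/A)^{\vee}$ dualizes to injectivity of $B/A\to E^{k}$ and $^{\vee}$ reverses the submodule lattice. Writing $Q_i:=\pi^{-1}(\ker g_i)$ and $P_i:=\pi^{-1}\bigl(\bigcap_{r\neq i}\ker g_r\bigr)$ for the associated submodules of $B$, reflexivity of $^{\vee}$ gives $Rg_i=(B/Q_i)^{\vee}$ and $\sum_{r\neq i}Rg_r=(B/P_i)^{\vee}$ as submodules of $M$, and Lemma~\ref{lem:duality} yields $\bigl(\sum_{r\neq i}Rg_r\bigr)_M^{\cl}=(B/(P_i)_{\ri}^{B})^{\vee}$. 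Since $^{\vee}$ reverses inclusions, $g_i\in\bigl(\sum_{r\neq i}Rg_r\bigr)_M^{\cl}$ if and only if $(P_i)_{\ri}^{B}\subseteq Q_i$, i.e.\ exactly when the $\ri$-independence condition at $i$ fails; so a minimal cogenerating set of $B/A$ is $\ri$-independent iff the corresponding minimal generating set of $N$ is $\cl$-independent, which gives (a). For (b), Theorem~\ref{thm:clrediexp} matches the minimal $\cl$-reductions of $N$ in $M$ with the modules $(B/C)^{\vee}$ for $C$ a maximal $\ri$-expansion of $A$ in $B$ (the correspondence being order-reversing), and the minimal number of generators of $(B/C)^{\vee}$ equals the minimal number of cogenerators of $B/C$; hence $\ell_M^{\cl}(N)$ is defined and equals $\ell_{\ri}^{B}(A)=k$.

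Finally, given an $\ri$-postexpansion $\cA$ of $A$ in $B$, Theorem~\ref{thm:clpreredipostexp} makes $(B/\cA)^{\vee}$ a $\cl$-prereduction of $N$ in $M$, so by Proposition~\ref{pr:clindep} there is a minimal generating set $y_1,\dots,y_k$ of $N=(B/A)^{\vee}$ with $(B/\cA)^{\vee}=(y_1,\dots,y_{k-1})+y_k\cm$; relabel $g_r:=y_r$, which by the dictionary above is a minimal cogenerating set of $B/A$. Applying the (inclusion-reversing) annihilator-in-$B$ operation $S\mapsto S^{\circ}:=\{b\in B:f(b)=0\ \text{for all}\ f\in S\}$, which is inverse to $P\mapsto (B/P)^{\vee}$ and sends sums to intersections, and using $(Rg_r)^{\circ}=\pi^{-1}(\ker g_r)$ and $(\cm g_k)^{\circ}=(\pi^{-1}(\ker g_k):_B\cm)$, one obtains $\cA=\bigcap_{r\neq k}\pi^{-1}(\ker g_r)\cap(\pi^{-1}(\ker g_k):_B\cm)$, the asserted form with $i=k$. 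A duality-free alternative mimics the proof of Proposition~\ref{pr:clindep}: one shows the displayed module is a non-$\ri$-cover of $A$, using Proposition~\ref{prop:cover}\eqref{it:cover1} for the covering property and the dual of Epstein's fact that a strongly $\ri$-independent submodule is $\ri$-cobasic for the non-cover part, so Proposition~\ref{prop:cover}\eqref{it:cover3} makes it an $\ri$-postexpansion, and then one runs the same exchange argument on cogenerators. I expect the main obstacle to be not any single step but the cumulative Matlis-duality bookkeeping: consistently matching submodules of $B^{\vee}$ with quotients of $B$ under the order-reversing correspondence, translating $Rg$ to $(B/\ker g)^{\vee}$ and $\cm g$ to $(B/(\ker g:_B\cm))^{\vee}$, and checking that the $\ri$-independence condition as literally written lines up with $\cl$-independence of the dual generating set.
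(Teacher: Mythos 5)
Your primary (duality) route is correct and is genuinely different from the paper's own proof, which argues directly on the interior side: the paper first uses Proposition~\ref{prop:cover}\eqref{it:cover1} and the observation that a strongly $\ri$-independent $A$ is $\ri$-cobasic to show that $\bigcap_{r\neq i}\pi^{-1}(\ker g_r)\cap(\pi^{-1}(\ker g_i):_B\cm)$ is a non-$\ri$-cover and hence an $\ri$-postexpansion of $A$; then, for an arbitrary $\ri$-postexpansion $\cA$, it picks $g_k$ with $\pi^{-1}(\ker g_k)\not\supseteq\cA$, deduces $A=\cA\cap\pi^{-1}(\ker g_k)$ from $\ri$-cobasicity, and only then passes to $(B/A)^{\vee}$ to run the same ``exchange of generators'' argument as in Proposition~\ref{pr:clindep} (producing $h_i$ with $g_i=h_i+b_ig_k$) before pulling back to $B$ via Proposition~\ref{pr:postexp}\eqref{it:postexp2a}. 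So the published proof is really a hybrid: it reproves the $\cl$-side combinatorics in situ rather than quoting Proposition~\ref{pr:clindep}, and your ``duality-free alternative'' at the end is precisely that proof. What your duality-first version buys is economy and symmetry: once you have checked the three translation facts (minimal cogenerating sets of $B/A$ $\leftrightarrow$ minimal generating sets of $(B/A)^{\vee}$; $\ri$-independent cogenerators $\leftrightarrow$ $\cl$-independent generators via Lemma~\ref{lem:duality}; $\ri$-cospread of the $\ri$-cobasic $A$ $=$ $\cl$-spread of the $\cl$-basic $N$), you cite Theorem~\ref{thm:clpreredipostexp} and Proposition~\ref{pr:clindep} and are done, and the translation lemmas are reusable. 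Two small things to tighten if you write it up: say explicitly that $\cl:=\ri^{\dual}$ is a Nakayama closure (needed for Proposition~\ref{pr:clindep} to apply), and in step (b) it is cleanest to note that both modules are basic/cobasic, so the spread and cospread are each literally the minimal number of (co)generators of $(B/A)^{\vee}$ and of $B/A$, which agree.
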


\begin{proof}
 Let $g_1,...,g_k$ be a minimal cogenerating set for $B/A$ and let
$$\cA=\bigcap\limits_{r \neq i}\pi^{-1}({\rm ker} (g_r)) \cap (\pi^{-1}({\rm ker}(g_i)):_{(B/A)} \m).$$ Then $\cA\cap \pi^{-1}({\rm ker} (g_k))= \bigcap\limits_{i=1}^k \pi^{-1}({\rm ker}(g_i))=A$ and $(\pi^{-1}({\rm ker}(g_k)):_{B/A} \cm) \supseteq A$, so $\cA$ is a cover of $A$ by Proposition \ref{prop:cover}\eqref{it:cover1}. Also, the $g_i$ are strongly $\ri$-independent, so $A$ is $\ri$-cobasic and $A$ is the only $\ri$-expansion of itself. Thus $\cA$ is a non-$\ri$-cover of $A$. Hence, Remark \ref{prop:cover}\eqref{it:cover2} implies that $\cA$ is an $\ri$-postexpansion of $A$ in $B$.

Suppose $\cA$ is an arbitrary $\ri$-postexpansion of $A$ where $B/A=B/(\bigcap\limits_{i=1}^k \pi^{-1}({\rm ker}(g_i)))$. Then there exists an $1\leq i \leq k$ such that $\pi^{-1}({\rm ker}(g_i))\not\supseteq \cA$. Let us assume $i=k$, then $\pi^{-1}({\rm ker}(g_k))\not\supseteq \cA$. Note that since $\cA$ is an $\ri$-postexpansion of $A$ in $B$, then for any $g\in (B/A)^\vee \setminus (B/\cA)^\vee$, $\cA\cap \pi^{-1}({\rm ker} (g))$ is an $\ri$-expansion of $A$ in $B$. However, since $A$ is $\ri$-cobasic, this implies that $A=\cA\cap \pi^{-1}({\rm ker} (g))$ for any $g\in (B/A)^\vee\setminus (B/\cA)^\vee$. In particular, $A=\cA \cap \pi^{-1}({\rm ker}(g_k))$.

   Thus for $1\leq i \leq k-1$, there exists $h_i\in (B/A)^\vee$ and $b_i\in R$ such that $g_i=h_i+b_ig_k$ and 
$$\{h_1+b_1g_k,...,h_{k-1}+b_{k-1}g_k,g_k\}$$
is a minimal generating set of $(B/A)^\vee$. Thus
$$\{h_1,...,h_{k-1},g_k\}$$
is also a minimal generating set of $(B/A)^\vee$ and hence a minimal cogenerating set for $B/A$. Since $(A:_{B} \m) \supseteq \cA$ by Proposition \ref{pr:postexp}\eqref{it:postexp2a}, we have
$$\bigcap\limits_{i=1}^{k-1} \pi^{-1}({\rm ker}(h_i)) \cap (\pi^{-1}({\rm ker}(g_k)):_{B}\cm) \supseteq \bigcap\limits_{i=1}^{k-1} \pi^{-1}({\rm ker}(h_i)) \cap (A:_{B}\cm) \supseteq \cA.$$
Since $\cA$ is an $\ri$-postexpansion of $A$ in $B$ and $\bigcap\limits_{i=1}^{k-1} {\rm ker}(h_i) \cap (\pi^{-1}({\rm ker}(g_k)):_{B}\cm)$ is an $\ri$-postexpansion of $A$ in $B$, we see that $\cA=\bigcap\limits_{i=1}^{k-1} \pi^{-1}({\rm ker}(h_i)) \cap (\pi^{-1}({\rm ker}(g_k)):_{B}\cm)$.  
\end{proof}

\section{Pre-core and Post-hull}

Because minimal reductions are not unique in a Noetherian local ring, Rees and Sally defined the core of an ideal as $\textrm{core}(I)=\bigcap_{J\subset I} J$, where $J$ is an integral reduction of $I$ \cite{RS-reductions}. In this section, we will generalize the notions of $\cl$-core and $\ri$-hull, but instead of defining them in terms of $\cl$-reductions and $\ri$-expansions, we will intersect and sum the prereductions or the postexpansions for these new constructs.

\begin{defn}
\cite[Definition 2.12]{ERGV-nonres} If $(N,M)\in\cP$ the \textit{$\cl$-core} of $N$ with respect to $M$ is the intersection of all $\cl$-reductions of $N$ in $M$, or
$${\clcore{M}{N}}:=\bigcap\{L \mid L\subseteq N\subseteq L_M^{\cl} \subseteq M \textrm{ and } (L,M)\in\cP\}$$
\end{defn}

\begin{rmkx} \label{rmk:coreprered}
As long as $N$ is not $\cl$-basic in $M$, then the $\cl$-core of $N$ in $M$ will be contained in some $\cl$-prereduction of $N$ in $M$.
\end{rmkx}
\begin{proof}
Let $N$ be a submodule of $M$ that is not $\cl$-basic. Then there exists some $L\neq N$ such that $L$ is a $\cl$-reduction of $N$. Thus ${\clcore{M}{N}} = \bigcap \{ L \mid L \subseteq N \subseteq L_M^{\cl} \text{ and } (L,M) \in \cP\} \neq N$ and ${\clcore{M}{N}} \in {\bf I}_{\cl}^\prime(N,M)$. So by Proposition \ref{pr:nonreds}\eqref{it:2nonred}, there exists a $\cl$-prereduction of $N$ which contains ${\clcore{M}{N}}$.
\end{proof}

\begin{defn}
\cite[Definition 6.1]{ERGV-nonres} If $(A,B)\in\cP$, the \textit{$\ri$-hull} of a submodule $A$ with respect to $B$ is the sum of all $\ri$-expansions of $A$ in $B$, or
$${\rihull{B}{A}}:=\sum \{C\mid {\bf C}_{\ri}^B \subseteq A \subseteq C \subseteq B \textrm{ and } (C,B)\in\cP\}$$
\end{defn}

\begin{rmkx} \label{rmk:hullpostexp}
As long as $A$ is not $\ri$-cobasic in $B$, then the $\ri$-hull of $A$ in $B$ will contain some $\ri$-postexpansion of $A$ in $B$.
\end{rmkx}
\begin{proof}
Let $A$ be a submodule of $B$ that is not $\ri$-cobasic. Then there exists some $C\neq A$ such that $C$ is an $\ri$-expansion of $A$ in $B$. Thus ${\rihull{B}{A}} = \sum \{C\mid {\bf C}_{\ri}^B \subseteq A \subseteq C \subseteq B \textrm{ and } (C,B)\in\cP\} \neq A$ and ${\rihull{B}{A}}\in {\bf C}_{\ri}^\prime(A,B)$. So by Proposition \ref{pr:nonexp}\eqref{it:nonexp2}, there exists an $\ri$-postexpansion of $A$ containing ${\rihull{B}{A}}$.
\end{proof}

\begin{thm}
\cite[Theorem 6.6]{ERGV-nonres} Let $R$ be a complete Noetherian local ring. Let $A\subseteq B$ be Artinian $R$-modules and let $\ri$ be a relative Nakayama interior defined on Artinian $R$-modules. Then the $\ri$-hull of $A$ in $B$ is dual to the $\cl$-core of $(B/A)^{\vee}$ in $B^{\vee}$, where $\cl$ is the closure operation dual to $\ri$.
\end{thm}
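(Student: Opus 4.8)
The plan is to apply Matlis duality directly to the submodule $\rihull{B}{A}$ of $B$ and identify its dual, the submodule $\bigl(B/\rihull{B}{A}\bigr)^{\vee}$ of $B^{\vee}$, with $\clcore{B^\vee}{(B/A)^\vee}$, using Lemma~\ref{lem:sumintdual} to trade the sum defining the $\ri$-hull for an intersection of Matlis duals, and Theorem~\ref{thm:clrediexp} to see that this intersection ranges precisely over the $\cl$-reductions of $(B/A)^\vee$ in $B^\vee$, where $\cl:=\ri^{\dual}$.

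First I would unwind the definitions. Since $R$ is complete local and $A\subseteq B$ are Artinian, $B$ and all of its quotient modules are Matlis-dualizable, so Lemma~\ref{lem:sumintdual} is available. Writing $\rihull{B}{A}=\sum_C C$, where $C$ runs over all $\ri$-expansions of $A$ in $B$ (that is, over all submodules $C$ with $(C,B)\in\cP$ and ${\bf C}_{\ri}^B\subseteq A\subseteq C\subseteq B$; this is a genuine set since $B$ is Artinian), the first isomorphism of Lemma~\ref{lem:sumintdual} gives
\[
\left(\frac{B}{\rihull{B}{A}}\right)^{\vee}\;\cong\;\bigcap_C (B/C)^{\vee}
\]
as submodules of $B^{\vee}$, the intersection taken over the same index set of $\ri$-expansions.

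Next I would invoke Theorem~\ref{thm:clrediexp}, whose hypotheses are met here ($\ri$ a relative Nakayama interior on Artinian $R$-modules over a complete local ring): it supplies a one-to-one correspondence $C\mapsto (B/C)^{\vee}$ between the $\ri$-expansions of $A$ in $B$ and the $\cl$-reductions of $(B/A)^{\vee}$ in $B^{\vee}$. Because this correspondence is \emph{onto}, the family $\{(B/C)^{\vee}\mid C\text{ an }\ri\text{-expansion of }A\text{ in }B\}$ is exactly the family of $\cl$-reductions $L$ of $(B/A)^{\vee}$ in $B^{\vee}$, i.e.\ of submodules $L$ with $L\subseteq (B/A)^{\vee}\subseteq L_{B^{\vee}}^{\cl}$. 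Substituting this into the displayed intersection yields
\[
\left(\frac{B}{\rihull{B}{A}}\right)^{\vee}\;\cong\;\bigcap\bigl\{\,L \mid L\subseteq (B/A)^{\vee}\subseteq L_{B^{\vee}}^{\cl}\,\bigr\}\;=\;\clcore{B^\vee}{(B/A)^\vee},
\]
which is the asserted duality. If one prefers the statement in the opposite direction, apply $\vee$ again, using $M^{\vee\vee}\cong M$, the second isomorphism of Lemma~\ref{lem:sumintdual} applied to the intersection defining the $\cl$-core, and the standard identification $B^{\vee}/(B/C)^{\vee}\cong C^{\vee}$; this recovers $\rihull{B}{A}\cong\bigl(B^{\vee}/\clcore{B^\vee}{(B/A)^\vee}\bigr)^{\vee}$.

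The routine bookkeeping I would still need to carry out is checking that every module appearing in the argument---each $\ri$-expansion $C$, each dual $(B/C)^{\vee}$, and the intervening quotients---lies in the relevant class $\cP$ (resp.\ $\cP^{\dual}$), so that Lemma~\ref{lem:sumintdual} and Theorem~\ref{thm:clrediexp} apply termwise; for Artinian modules over a complete local ring this is automatic but should be recorded. The one genuine point of care, and where I expect to spend the most effort, is making sure that the correspondence of Theorem~\ref{thm:clrediexp} is used as a bijection of \emph{sets of submodules}---in particular that it is surjective onto the set of \emph{all} $\cl$-reductions of $(B/A)^{\vee}$ in $B^{\vee}$---since the whole identification collapses if some $\cl$-reduction fails to be of the form $(B/C)^{\vee}$. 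This surjectivity is part of the content of Theorem~\ref{thm:clrediexp}, so the argument reduces to citing it correctly; I would also note explicitly that the degenerate cases (for instance $A$ being $\ri$-cobasic, cf.\ Remark~\ref{rmk:hullpostexp}, or $(B/A)^{\vee}$ being $\cl$-basic) are handled uniformly by this same correspondence and need no separate treatment.
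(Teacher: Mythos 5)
This theorem is stated in the paper by citation only---it is \cite[Theorem 6.6]{ERGV-nonres} and the paper supplies no proof of its own---so there is no in-paper argument to compare against. Your proof is nonetheless correct, and it is exactly the argument one would want: Lemma~\ref{lem:sumintdual} converts the sum defining $\rihull{B}{A}$ into an intersection of duals $(B/C)^{\vee}$, and Theorem~\ref{thm:clrediexp} identifies the family $\{(B/C)^{\vee}\mid C \text{ an } \ri\text{-expansion}\}$ bijectively with the family of $\cl$-reductions of $(B/A)^{\vee}$ in $B^{\vee}$, which pins the intersection down as $\clcore{B^\vee}{(B/A)^\vee}$. Your emphasis on using surjectivity of the correspondence is the right point of care; it is indeed part of what Theorem~\ref{thm:clrediexp} asserts, so the step is sound. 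It is also worth noting that the paper proves its own analogous duality (the $\ri$-postcore/$\cl$-prehull and $\ri$-posthull/$\cl$-precore theorem later in Section 8) by precisely this pattern---citing Lemma~\ref{lem:sumintdual} and the relevant bijection (there, Theorem~\ref{thm:clsetiset}) and declaring it ``follows from the definitions''---so your argument is consistent in spirit and level of detail with the authors' own practice.
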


\begin{defn}
The \textit{$\cl$-prehull of $N$ with respect to $M$} is the sum of the $\cl$-prereductions of $N$ in $M$, or
$${\clprehull{M}{N}}:=\sum \{L \mid L \textrm{ a } {\cl}\textrm{-}prereduction \textrm{ of } N \textrm{ in } M\}.$$
The \textit{$\ri$-postcore of $A$ with respect to $B$} is the intersection of the $\ri$-postexpansions of $A$ in $B$, or
$${\ripostcore{B}{A}}:=\bigcap\{C \mid  C \textrm{ a } {\ri}\textrm{-}postexpansion \textrm{ of } A \textrm{ in } B\}.$$
\end{defn}

\begin{rmkx}
$${\clprehull{M}{N}}=\sum\{L \mid L \textrm{ maximal in } {\bf I}_{\cl}^\prime(N,M)\}$$
$${\ripostcore{B}{A}}=\bigcap\{C \mid  C \textrm{ minimal in } {\bf C}_{\ri}^\prime(A,B)\}$$
\end{rmkx}
\begin{proof}
This follows immediately from the maximal elements of ${\bf I}_{\cl}^\prime(N,M)$ being $\cl$-prereductions and the minimal elements of ${\bf C}_{\ri}^\prime(A,B)$ being $\ri$-postexpanions.
\end{proof}

\begin{defn}
The \textit{$\cl$-precore} of $N$ with respect to $M$ is the intersection of all $\cl$-prereductions of $N$ in $M$,
$${\clprecore{M}{N}}=\bigcap\{L \mid L \textrm{ a $\cl$-prereduction of }  N \textrm{ in } M\}.$$
The \textit{$\ri$-posthull} of $A$ with respect to $B$ is the sum of the $\ri$-postexpansions of $A$ in $B$,
$${\riposthull{B}{A}}:=\sum\{C \mid  C \textrm{ an $\ri$-postexpansion of}  A \textrm{ in } B\}.$$
\end{defn}

The following proposition gives some upper and lower bounds on the $\cl$-core in terms of the $\cl$-precore and the $\cl$-prehull.  

\begin{prop} \label{pr:corecomp}
Let $(R,\cm)$ be a Noetherian local ring, $\cl$ a Nakayama closure, and $N$ a submodule of $M$. 
\begin{enumerate}
    \item \label{it:precorecorecomp} If $N$ is $\cl$-basic or every $\cl$-prereduction is contained in some minimal reduction, then $$\clprecore{M}{N} \subseteq \clcore{M}{N}.$$
    \item \label{it:coreprehullcomp} If $N=\clprehull{M}{N}$ or $N$ has a unique $\cl$-prereduction in $M$, then $$\clcore{M}{N} \subseteq \clprehull{M}{N}.$$
\end{enumerate}

\end{prop}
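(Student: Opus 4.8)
\emph{Plan.} I would prove the two inclusions separately, and in each part split along the two stated hypotheses; in both parts one sub-case is essentially formal while the other needs the structural material of Sections 3 and 7.

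For part \eqref{it:precorecorecomp}: if $N$ is $\cl$-basic then $N$ is its only $\cl$-reduction, so $\clcore{M}{N}=N$, and since every $\cl$-prereduction of $N$ lies in $N$ we get $\clprecore{M}{N}\subseteq N=\clcore{M}{N}$. Under the other hypothesis I would first invoke that, $\cl$ being a Nakayama closure, minimal $\cl$-reductions of $N$ exist and $\clcore{M}{N}=\bigcap\{L\mid L\text{ a minimal }\cl\text{-reduction of }N\text{ in }M\}$, so it suffices to show $\clprecore{M}{N}\subseteq L$ for each such $L$. Fixing such an $L$: since $L$ is strongly $\cl$-independent, Proposition \ref{pr:clindep} says the $\cl$-prereductions of $L$ are exactly the submodules $(y_1,\dots,\widehat{y_i},\dots,y_k)+y_i\cm$ over minimal generating sets $y_1,\dots,y_k$ of $L$, and working in the vector space $L/\cm L$ over the residue field (intersecting the coordinate hyperplanes) one sees that the intersection of all $\cl$-prereductions of $L$ equals $\cm L$. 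Then, by Proposition \ref{pr:preredcompare}\eqref{it:preredcomp3}, every $\cl$-prereduction of $L$ has the form $\cB\cap L$ for some $\cl$-prereduction $\cB$ of $N$, which gives $\clprecore{M}{N}\cap L\subseteq\cm L$; and by Proposition \ref{pr:preredcontainment} any $\cl$-prereduction of $N$ contained in $L$ is itself a $\cl$-prereduction of $L$. Feeding in the hypothesis --- every $\cl$-prereduction $\cB$ of $N$ lies in some minimal $\cl$-reduction $L(\cB)$, whence $\clprecore{M}{N}=\clprecore{M}{N}\cap L(\cB)\subseteq\cm L(\cB)$ --- is what I would use to place, for the given $L$, enough $\cl$-prereductions of $N$ inside $L$ and so upgrade $\clprecore{M}{N}\cap L\subseteq\cm L$ to $\clprecore{M}{N}\subseteq L$.

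For part \eqref{it:coreprehullcomp}: if $N=\clprehull{M}{N}$, then, since $N$ is a $\cl$-reduction of itself, $\clcore{M}{N}\subseteq N=\clprehull{M}{N}$. If instead $N$ has a unique $\cl$-prereduction $\cA$ (so that $\clprehull{M}{N}=\cA$) and $N$ is not $\cl$-basic, then Remark \ref{rmk:coreprered} produces a $\cl$-prereduction of $N$ containing $\clcore{M}{N}$, which must be $\cA$, giving $\clcore{M}{N}\subseteq\cA=\clprehull{M}{N}$.

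\emph{Main obstacle.} The delicate point is the last step of part \eqref{it:precorecorecomp}: Proposition \ref{pr:preredcompare}\eqref{it:preredcomp3} only yields $\clprecore{M}{N}\cap L\subseteq\cm L$ for each minimal $\cl$-reduction $L$, and turning this into the genuine containment $\clprecore{M}{N}\subseteq L$ for \emph{every} such $L$ is exactly where the hypothesis ``every $\cl$-prereduction of $N$ lies in a minimal $\cl$-reduction'' has to be used; the bookkeeping needed to guarantee that, for a fixed $L$, there are enough $\cl$-prereductions of $N$ sitting inside $L$ --- equivalently, that $\cl$-prereductions of $L$ remain $\cl$-prereductions of $N$ --- is the crux of the argument, and is where the hypothesis is genuinely needed rather than a mere convenience.
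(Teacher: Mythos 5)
Parts (1), first case, and (2), both cases, track the paper's argument closely, and your extra caveat in part (2) --- requiring $N$ not $\cl$-basic before invoking Remark \ref{rmk:coreprered} --- is a reasonable precaution (that remark only applies when $N$ is not $\cl$-basic).

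The divergence, and the gap you yourself flag, is in part (1), second case. The paper's proof here is three sentences and does not try to establish $\clprecore{M}{N}\subseteq L$ for each minimal $\cl$-reduction $L$ individually, which is the program you set up. Instead the paper reasons entirely from the side of prereductions: minimal $\cl$-reductions exist (so $\clcore{M}{N}$ is the intersection of the minimal ones); $\clprecore{M}{N}$ is by definition contained in every $\cl$-prereduction; and by hypothesis every $\cl$-prereduction sits inside some minimal $\cl$-reduction. The paper then reads off the containment from this chain. Your route, by contrast, goes through Proposition \ref{pr:clindep} to describe the $\cl$-prereductions of each minimal reduction $L$ and tries to pull $\clprecore{M}{N}$ into $L$ via Propositions \ref{pr:preredcompare}\eqref{it:preredcomp3} and \ref{pr:preredcontainment}. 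That machinery is not used in the paper's proof, and it is precisely there that your argument stalls: you only get $\clprecore{M}{N}\cap L\subseteq\cm L$, and you have no mechanism to drop the intersection with $L$. So what you describe is not a proof but an unfinished (and substantially more involved) alternative strategy. If you want to follow the paper's line, abandon the per-$L$ reduction entirely and argue directly from the hypothesis: fix a prereduction $\cA$ with $\clprecore{M}{N}\subseteq\cA$, use the hypothesis to place $\cA$ inside a minimal reduction, and combine with the formula $\clcore{M}{N}=\bigcap\{L\mid L\text{ minimal }\cl\text{-reduction}\}$. Note that even the paper's argument silently uses that the minimal reductions arising this way are enough to cut out the core, so the delicate point you identified --- which minimal reductions are ``reached'' by prereductions --- is real; but the paper handles it without any appeal to the structure theory of Section 7, and your detour through Proposition \ref{pr:clindep} does not resolve it either.
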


\begin{proof}
\eqref{it:precorecorecomp} First, if $N$ is $\cl$-basic, then the only $\cl$-reduction of $N$ in $M$ is $N$ itself. So ${\clcore{M}{N}}=N$. Since by definition, every $\cl$-prereduction is contained in $N$, we have ${\clprecore{N}{M}}\subseteq N$. Thus ${\clprecore{M}{N}} \subseteq \clcore{M}{N}$.

Next, suppose every $\cl$-prereduction is contained in some minimal $\cl$-reduction. Then minimal $\cl$-reductions exist and we know that $${\clcore{M}{N}}=\bigcap \{ L \mid L \text{ a minimal {\cl}-reduction of } N \text{ and } (L,M) \in \cP\}.$$
Since the intersection of $\cl$-prereductions is contained in every $\cl$-prereduction and every $\cl$-prereduction is contained in a minimal $\cl$-reduction, we see that $\clprecore{M}{N} \subseteq \clcore{M}{N}$.

\eqref{it:coreprehullcomp} If $N={\clprehull{M}{N}}=\sum \{L \mid L \textrm{ a } {\cl}\textrm{-prereduction} \textrm{ of } N \textrm{ in } M\}$. Since by definition, every $\cl$-reduction is contained in $N$, we have ${\clcore{N}{M}}\subseteq N$. Thus $\clcore{M}{N} \subseteq \clprehull{M}{N}$.

If $L$ is the unique $\cl$-prereduction of $N$ in $M$, by Remark \ref{rmk:coreprered} we know that $${\clcore{M}{N}}\subseteq L = {\clprehull{M}{N}}.$$
\end{proof}

The following proposition gives some upper and lower bounds on the $\ri$-hull in terms of the $\ri$-postcore and the $\ri$-posthull.  

\begin{prop} \label{pr:hullcomp}
Let $(R,\cm)$ be a Noetherian local ring, $\ri$ a Nakayama interior, and $A$ a submodule of $B$.
\begin{enumerate}
    \item \label{it:postcorehullcomp} If $A=\ripostcore{B}{A}$ or $A$ has a unique $\ri$-postexpansion in $B$, then $$\ripostcore{B}{A} \subseteq \rihull{B}{A}.$$ 
    \item \label{it:hullposthullcomp} If $A$ is $\ri$-cobasic or every $\ri$-postexpansions contains some maximal expansion, then $$ \rihull{B}{A} \subseteq \riposthull{B}{A}.$$
\end{enumerate}
\end{prop}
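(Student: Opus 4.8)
The plan is to prove \eqref{it:postcorehullcomp} and \eqref{it:hullposthullcomp} directly, following the pattern of Proposition \ref{pr:corecomp}: indeed \eqref{it:postcorehullcomp} is the statement dual to Proposition \ref{pr:corecomp}\eqref{it:coreprehullcomp} and \eqref{it:hullposthullcomp} is dual to Proposition \ref{pr:corecomp}\eqref{it:precorecorecomp}, with $\ripostcore{B}{A}$ playing the role dual to $\clprehull{M}{N}$ and $\riposthull{B}{A}$ the role dual to $\clprecore{M}{N}$. (When $R$ is complete one could instead deduce \eqref{it:postcorehullcomp} from Proposition \ref{pr:corecomp}\eqref{it:coreprehullcomp} through Matlis duality, but since completeness is not assumed in the hypotheses I would give the direct argument.)

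For \eqref{it:postcorehullcomp}, I would first treat the case $A = \ripostcore{B}{A}$: since $A_{\ri}^B \subseteq A \subseteq A$, the module $A$ is an $\ri$-expansion of itself and hence one of the submodules summed to form $\rihull{B}{A}$, so $\ripostcore{B}{A} = A \subseteq \rihull{B}{A}$. In the case that $A$ has a unique $\ri$-postexpansion $C$ in $B$, I would note $\ripostcore{B}{A} = C$ and invoke Remark \ref{rmk:hullpostexp}, which says $\rihull{B}{A}$ contains some $\ri$-postexpansion of $A$ in $B$; the only candidate is $C$, so $\ripostcore{B}{A} = C \subseteq \rihull{B}{A}$.

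For \eqref{it:hullposthullcomp}, I would first handle the $\ri$-cobasic case: then $A$ is its only $\ri$-expansion in $B$, so $\rihull{B}{A} = A$, and since every $\ri$-postexpansion of $A$ in $B$ contains $A$, the sum $\riposthull{B}{A}$ contains $A = \rihull{B}{A}$. For the remaining case, assuming every $\ri$-postexpansion of $A$ in $B$ contains a maximal $\ri$-expansion of $A$ in $B$, I would observe that maximal $\ri$-expansions then exist, so by the discussion following the definition of a Nakayama interior $\rihull{B}{A}$ equals the sum of the maximal $\ri$-expansions of $A$ in $B$; each maximal $\ri$-expansion lies inside some $\ri$-postexpansion of $A$ in $B$ and hence inside $\riposthull{B}{A}$, and summing over all of them gives $\rihull{B}{A} \subseteq \riposthull{B}{A}$.

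The step that I expect to require the most care is the final case of \eqref{it:hullposthullcomp}. One must justify writing $\rihull{B}{A}$ as the sum of its maximal $\ri$-expansions from the stated hypothesis — that is, verify that ``every $\ri$-postexpansion contains a maximal $\ri$-expansion'' really puts us in the regime where maximal $\ri$-expansions exist in the sense needed for the $\ri$-hull formula recalled in Section 4 — and one must separately dispose of the degenerate situation in which $A$ admits no $\ri$-postexpansion at all, which forces $A$ to be $\ri$-cobasic and thus returns us to the first case. The remaining verifications are formal and exactly parallel to those in Proposition \ref{pr:corecomp}.
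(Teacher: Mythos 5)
Your proposal takes essentially the same route as the paper's own proof: the same two subcases in each part, the same use of Remark~\ref{rmk:hullpostexp} for the unique-postexpansion case, the same observation that $\ri$-cobasic forces $\rihull{B}{A}=A$, and the same appeal to $\rihull{B}{A}$ being the sum of maximal $\ri$-expansions for the final case. Two comments on the places where you deviate from or add to the paper.

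First, the added remark at the end is wrong. Having no $\ri$-postexpansion of $A$ in $B$ does not force $A$ to be $\ri$-cobasic. In the Artinian setting, ${\bf C}_{\ri}'(A,B)=\emptyset$ means every $C$ with $A\subseteq C\subseteq B$ is an $\ri$-expansion of $A$ in $B$ (in particular $B$ itself is), so $\rihull{B}{A}=B$; unless $A=B$, this is essentially the opposite of $A$ being its only $\ri$-expansion, which is how the paper uses ``cobasic'' in this proof. So you cannot fold that degenerate situation into the first case. (The paper does not address this case either, and the verbatim statement of \eqref{it:hullposthullcomp} is in fact awkward there, since the second alternative of the hypothesis is vacuously satisfied while $\riposthull{B}{A}$ would be an empty sum.)

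Second, you are right to flag the crux of the final case. The hypothesis says every $\ri$-postexpansion contains some maximal $\ri$-expansion, but what the sum argument needs is the converse-flavored statement you write down, that each maximal $\ri$-expansion is contained in some $\ri$-postexpansion (or at least in $\riposthull{B}{A}$). Neither you nor the paper supplies a justification for passing from one to the other; the paper simply asserts the containment of sums in one sentence. Since you are following the paper's argument and explicitly marking this as the step needing care rather than claiming to have closed it, I would not count it as a new gap in your write-up — but it is worth knowing that the point you identify as delicate is delicate in the source as well.
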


\begin{proof}
\eqref{it:postcorehullcomp} If $A={\ripostcore{B}{A}} = \bigcap\{C \mid  C \textrm{ a } {\ri}\textrm{-postexpansion} \textrm{ of } A \textrm{ in } B\}.$ Since by definition, $A$ is contained in every $\ri$-expansion and the hull is the sum of all expansions, we have $A\subseteq \rihull{B}{A}$. Thus $\ripostcore{B}{A} \subseteq \rihull{B}{A}$.

If $C$ is the unique $\ri$-postexpansion of $A$ in $B$, then by Remark \ref{rmk:hullpostexp} we know that $${\ripostcore{B}{A}}=C\subseteq {\rihull{B}{A}}.$$

\eqref{it:hullposthullcomp} First, if $A$ is $\ri$-cobasic, then the only $\ri$-expansion of $A$ in $B$ is $A$ itself. So $A=\rihull{A}{B}$. Since $A$ is contained in every $\ri$-postexpansion of $A$ in $B$, we have $A\subseteq \riposthull{B}{A}$. Thus $\rihull{B}{A} \subseteq \riposthull{B}{A}$.

Next, suppose every $\ri$-postexpansion contains some maximal $\ri$-expansion. Then maximal $\ri$-expansions exist and we know that 
$${\rihull{B}{A}}=\sum \{ C \mid C \text{ a maximal {\ri}-expansion of } A \text{ and } (C,B) \in \cP\}.$$
Since every $\ri$-postexpansion contains a maximal $\ri$-expansion, the sum of all maximal $\ri$-expansions is contained in the sum of all $\ri$-postexpansions. Thus $$ \rihull{B}{A} \subseteq \riposthull{B}{A}.$$ 
\end{proof}

We include two examples motivating the bounds given in Proposition \ref{pr:corecomp}

\begin{examplex}
 Let $R=k[[x^2,x^5]]$.  It may be helpful to refer to Example \ref{ex:prered}. 
 
 Consider the ideal $(x^4)$.  The only $\m$bf-prereduction of $(x^4)$ is $(x^6, x^9)$ by Proposition \ref{pr:clindep}.  Thus \[(x^6,x^9)=\m{\rm bf\textrm{-}precore}_R(x^4)\subsetneq \m{\rm bf\textrm{-}core}_R(x^4)=(x^4) \not\subseteq \m{\rm bf\textrm{-}prehull}_R(x^4)=(x^6,x^9)\] which gives an example that the $\cl$-precore of a submodule could be properly contained in the $\cl$-core and the $\cl$-core is not contained in the $\cl$-prehull.

 Consider the ideal $(x^4,x^7)$.  For every $a \in k$, the ideal $(x^4+ax^7)$ is a minimal $\m$bf-reductions of $(x^4,x^7)$.  Hence, $\m$bf-core$(x^4,x^7)=\bigcap\limits_{a \in k} (x^4+ax^7)=(x^6,x^9)$.  Since the ideals $I$ of $R$ contained in $(x^4,x^7)$ are of the form $(x^n+ax^{n+3})$ or $(x^n+ax^{n+3},x^{n+5})$ for $n=4$ or $n \geq 6$ and $a \in k$ or $(x^n+ax^{n+1}+bx^{n+3})$,  $(x^n+ax^{n+1}, x^{n+3})$ or  $(x^n,x^{n+1})$ for $n \geq 6$ and $a,b \in k$.  Note the only ideals $I$ which have $(x^4,x^7)$ as a cover are $(x^4+ax^7)$ or $(x^6,x^7)$.  Since $(x^4+ax^7)$ are $\m$bf-reductions of $(x^4,x^7)$, then $(x^6,x^7)$ is the only $\m$bf-prereduction of $(x^4,x^7)$ by Proposition \ref{prop:preredcov}.  Thus \[(x^6,x^7)=\m{\rm bf\textrm{-}precore}_R(x^4,x^7)\not\subseteq \m{\rm bf\textrm{-}core}_R(x^4,x^7)=(x^6,x^9) \subsetneq \m{\rm bf\textrm{-}prehull}_R(x^4,x^7)=(x^6,x^7)\] gives an example where the $\cl$-core is properly contained in the $\cl$-prehull of a submodule and the $\cl$-precore is incomparable with the $\cl$-core of a submodule.

 Consider the ideal $(x^4,x^5)$.  Note that $(x^4,x^5)^{\m{\rm bf}}=(x^4,x^5)$ and in fact is the only ideal $I$ with $I^{\m{\rm bf}}=(x^4,x^5)$.  Thus, $\m$bf-core$(x^4,x^5)=(x^4,x^5)$.  As in Example \ref{ex:prered}, the ideals $(x^4+ax^5,x^7)$ and $(x^5,x^6)$ are $\m$bf-prereductions of $(x^4,x^5)$.  Since $\bigcap\limits_{a \in k}(x^4+ax^5,x^7) \cap (x^5,x^6)=(x^6,x^7)$ and $\sum\limits_{a \in k}(x^4+ax^5,x^7) + (x^5,x^6)=(x^4,x^5)$, this gives and example where
 \[\m{\rm bf\textrm{-}precore}_R(I)\subsetneq \m{\rm bf\textrm{-}core}_R(I) = \m{\rm bf\textrm{-}prehull}_R(I)=I.\]
\end{examplex}

\begin{examplex}
    Let $k$ be a field of characteristic $p>0$, $R=k[x,y,z]/(xy,xz)$ and $\m=(x,y,z)$. Note that $I=(x+y,x+z)$ is a minimal $*$-reduction of $\m$.  Thus, by \cite[Theorem 3.10]{FVV-tightcore}, 
    \[*{\rm -core}_R(\m)=((x+y,x+z):\m)\m=\m^2.\] Now let us consider the $*$-prereductions of $\m$. By Proposition \ref{clpreredcl}, the $*$-prereductions of $\m$ are of the form $(a)+\m^{*{\rm sp}}$ where $(a,b)$ is a minimal $*$-reduction of $\m$. Note that $(x,y^2,yz,z^2)=I^{*{\rm sp}}$.  Since $(x+cy,x+dz)$ are minimal $*$-reductions for $c,d \in k$, then $(x+cy)+(x,y^2,yz,z^2)=(x,y,z^2)$ and $(x+dz)+(x,y^2,yz,z^2)=(x,y^2,z)$ are prereductions of $\m$.  Since $(x,y,z^2)+(x,y^2,z)=\m \subseteq *{\rm - prehull}_R(\m )$ we see that \[*{\rm -core}_R(\m) \subsetneq *{\rm - prehull}_R(\m ).\]  Note that $x \in I^{*{\rm sp}}$ will be in all $*$-prereductions of $\m$, thus $x \in *{\rm -precore}_R(\m) \not\subseteq  *{\rm -core}_R(\m)$.

    $I=(x+y,x+z)$ is a $*$-basic ideal.  Hence $*{\rm -core}_R(I)=I$.  Note that $(x+y,z^2)$ and $(x+z,y^2)$ are $*$-prereductions of $I$ and $*{\rm -precore}_R(I) \subseteq (x+y,z^2) \cap (x+z,y^2)=\m^2 \subseteq *{\rm -core}_R(I)=I$.
    \end{examplex}

As with the duality of the $\cl$-core and $\ri$-hull when $\ri=\cl^\dual$, we have a duality between the $\cl$-precore and the $\ri$-posthull and the $\cl$-prehull and the $\ri$-postcore.

\begin{thm}
Let $R$ be a complete Noetherian local ring. Let $A\subseteq B$ be Artinian $R$-modules, and let $\ri$ be a relative Nakayama interior defined on Artininian $R$-modules. Then the $\ri$-postcore of $A$ in $B$ is dual to the $\cl$-prehull of $(B/A)^{\vee}$ in $B^{\vee}$ and the $\ri$-posthull of $A$ in $B$ is dual to the $\cl$-precore of $(B/A)^\vee$ in $B^\vee$, where $\cl$ is the closure operation dual to $\ri$.
\end{thm}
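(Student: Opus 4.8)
The plan is to reduce both statements to the one-to-one correspondence of Theorem~\ref{thm:clpreredipostexp} combined with the Matlis-duality formulas of Lemma~\ref{lem:sumintdual}. Throughout write $\cl=\ri^{\dual}$. Since $R$ is complete local and $B$ is Artinian, $B$ and every one of its quotient modules is Matlis-dualizable, so Lemma~\ref{lem:sumintdual} is available for arbitrary families of submodules of $B$, and I identify $B^{\vee\vee}$ with $B$ throughout. Recall also that, as in the $\cl$-core/$\ri$-hull duality of \cite[Theorem~6.6]{ERGV-nonres}, a submodule $X\subseteq B$ is \emph{dual to} a submodule $Y\subseteq B^{\vee}$ precisely when $(B/X)^{\vee}\cong Y$ as submodules of $B^{\vee}$ (via the surjection $B\to B/X$).

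The first step is to invoke Theorem~\ref{thm:clpreredipostexp}: the assignment $C\mapsto (B/C)^{\vee}$ is a bijection from the set $\mathcal{S}$ of all $\ri$-postexpansions of $A$ in $B$ onto the set of all $\cl$-prereductions of $(B/A)^{\vee}$ in $B^{\vee}$. Consequently $\{(B/C)^{\vee}\mid C\in\mathcal{S}\}$ is exactly the family whose sum defines $\clprehull{B^{\vee}}{(B/A)^{\vee}}$ and whose intersection defines $\clprecore{B^{\vee}}{(B/A)^{\vee}}$. For the first assertion, apply $^{\vee}$ to the defining equality $\ripostcore{B}{A}=\bigcap_{C\in\mathcal{S}}C$ and use the second identity of Lemma~\ref{lem:sumintdual}:
\[
\left(\frac{B}{\ripostcore{B}{A}}\right)^{\vee}\ \cong\ \sum_{C\in\mathcal{S}}(B/C)^{\vee}\ =\ \clprehull{B^{\vee}}{(B/A)^{\vee}},
\]
so $\ripostcore{B}{A}$ is dual to $\clprehull{B^{\vee}}{(B/A)^{\vee}}$. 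For the second assertion, apply $^{\vee}$ to $\riposthull{B}{A}=\sum_{C\in\mathcal{S}}C$ and use the first identity of Lemma~\ref{lem:sumintdual}:
\[
\left(\frac{B}{\riposthull{B}{A}}\right)^{\vee}\ \cong\ \bigcap_{C\in\mathcal{S}}(B/C)^{\vee}\ =\ \clprecore{B^{\vee}}{(B/A)^{\vee}},
\]
so $\riposthull{B}{A}$ is dual to $\clprecore{B^{\vee}}{(B/A)^{\vee}}$.

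Finally I would dispose of the degenerate case $\mathcal{S}=\emptyset$: then by the bijection $(B/A)^{\vee}$ has no $\cl$-prereductions in $B^{\vee}$; the empty intersection gives $\ripostcore{B}{A}=B$, which dualizes to $0$, the empty sum $\clprehull{B^{\vee}}{(B/A)^{\vee}}$, and the empty sum gives $\riposthull{B}{A}=0$, which dualizes to $B^{\vee}$, the empty intersection $\clprecore{B^{\vee}}{(B/A)^{\vee}}$; so both equalities persist. I do not expect a genuine obstacle here, since all the substance is already packaged in Theorem~\ref{thm:clpreredipostexp} and Lemma~\ref{lem:sumintdual}; the one point needing care is the clean identification of the indexing family, i.e.\ confirming that the correspondence carries the \emph{entire} set of $\ri$-postexpansions bijectively onto the \emph{entire} set of $\cl$-prereductions (rather than only an order-compatible injection), so that the sums and intersections on the two sides match term for term.
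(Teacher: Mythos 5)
Your proof is correct and takes essentially the same route the paper intends: the paper's proof consists only of the reduction to showing $(M/\clprehull{M}{N})^{\vee}=\ripostcore{B}{A}$ and $(M/\clprecore{M}{N})^\vee = \riposthull{B}{A}$ followed by ``These follows from the definitions.'' You have simply made explicit what that remark is hiding, namely the bijection of Theorem~\ref{thm:clpreredipostexp} between $\ri$-postexpansions and $\cl$-prereductions together with the sum/intersection duality of Lemma~\ref{lem:sumintdual}.
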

\begin{proof}
Let $M=B^{\vee}$ and $N=(B/A)^{\vee}$. We need to show that $$(M/\clprehull{M}{N})^{\vee}=\ripostcore{B}{A}$$
and
$$(M/\clprecore{M}{N})^\vee = \riposthull{B}{A}.$$
These follows from the definitions.
\end{proof}

\begin{prop}
\cite[Proposition 7.3]{ERGV-corehull} Let $R$ be a local ring and $\cl_1\leq \cl_2$ be closure operations defined on the class of finitely generated $R$-modules $\cM$ with $\cl_2$ Nakayama. If $N\subseteq M$ are $R$-modules in $\cM$, then $\clcore[\rm cl_2]{M}{N}\subseteq \clcore[\rm cl_1]{M}{N}$.
\end{prop}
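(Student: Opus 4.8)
The plan is to unwind both sides to the defining description of the $\cl$-core as the intersection of \emph{all} $\cl$-reductions, and to observe that the hypothesis $\cl_1\le\cl_2$ forces every $\cl_1$-reduction of $N$ in $M$ to be a $\cl_2$-reduction of $N$ in $M$ as well. Intersecting over the larger family can only shrink the result, and this is exactly the stated containment. In particular, the Nakayama hypothesis on $\cl_2$ is not really needed for this direction; it is only relevant if one prefers the alternative description of the core as the intersection of the \emph{minimal} $\cl_2$-reductions.

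The key step is the following elementary observation. Suppose $L$ is a $\cl_1$-reduction of $N$ in $M$, i.e.\ $(L,M)\in\cP$ and $L\subseteq N\subseteq L_M^{\cl_1}$. Since $\cl_1\le\cl_2$ we have $L_M^{\cl_1}\subseteq L_M^{\cl_2}$, so $L\subseteq N\subseteq L_M^{\cl_2}$ and $L$ is a $\cl_2$-reduction of $N$ in $M$. (Both families are nonempty, since $N$ itself is a $\cl$-reduction of $N$ for any closure $\cl$, so both cores are well-defined submodules of $N$.) Hence
\[
\{L\mid L\subseteq N\subseteq L_M^{\cl_1}\subseteq M,\ (L,M)\in\cP\}\ \subseteq\ \{L\mid L\subseteq N\subseteq L_M^{\cl_2}\subseteq M,\ (L,M)\in\cP\}.
\]

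Finally I would conclude by intersecting over these two families: since the family defining $\clcore[\rm cl_2]{M}{N}$ contains the family defining $\clcore[\rm cl_1]{M}{N}$, the intersection over the former is contained in the intersection over the latter, that is,
\[
\clcore[\rm cl_2]{M}{N}\ \subseteq\ \clcore[\rm cl_1]{M}{N}.
\]
There is no genuine obstacle here; the only point requiring care is to run the argument with the ``intersection of all reductions'' definition of the core, where the inclusion of reduction-families is immediate, rather than with the ``minimal reductions'' description, for which a minimal $\cl_1$-reduction need not be a minimal $\cl_2$-reduction.
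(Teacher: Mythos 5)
Your argument is correct. The paper itself does not reproduce a proof of this proposition---it is cited directly from \cite[Proposition 7.3]{ERGV-corehull}---but your reasoning is sound and self-contained: if $L$ is a $\cl_1$-reduction of $N$ in $M$ then $L\subseteq N\subseteq L_M^{\cl_1}\subseteq L_M^{\cl_2}$, so $L$ is also a $\cl_2$-reduction of $N$ in $M$; hence the family of $\cl_1$-reductions sits inside the family of $\cl_2$-reductions and the intersection over the larger family is smaller. Your observation that the Nakayama hypothesis on $\cl_2$ is superfluous when one works directly with the ``intersection of all reductions'' definition is worth noting: the stated proposition carries the Nakayama assumption because the original source presumably phrases the core in terms of minimal reductions (where Nakayama guarantees those exist), but the containment here is more elementary than that and holds for arbitrary closure operations with $\cl_1\le\cl_2$.
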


\begin{prop}
Let $(R,\cm)$ be a Noetherian local ring and $\cl_1\leq \cl_2$ Nakayama closures defined on $\cP$. If $(N,M)\in\cP$, then $\clprehull[\rm cl_2]{M}{N} \subseteq \clprehull[\rm cl_1]{M}{N}$.
\end{prop}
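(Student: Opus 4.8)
The plan is to deduce the containment directly from the prereduction comparison already established in Proposition~\ref{pr:comparepre}, so that the statement becomes essentially a corollary of that result after passing to sums. Since $\clprehull[\rm cl_2]{M}{N}$ is by definition the sum of all $\cl_2$-prereductions of $N$ in $M$, it suffices to check that every $\cl_2$-prereduction $\cA$ of $N$ in $M$ satisfies $\cA \subseteq \clprehull[\rm cl_1]{M}{N}$; summing over all such $\cA$ then yields the result.

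Concretely, the first step is to fix a $\cl_2$-prereduction $\cA$ of $N$ in $M$. By Proposition~\ref{pr:comparepre}\eqref{it:comparepre1}, there exists a $\cl_1$-prereduction $\cB$ of $N$ in $M$ with $\cA \subseteq \cB$. Because $\cB$ is one of the summands defining $\clprehull[\rm cl_1]{M}{N}$, we obtain $\cA \subseteq \cB \subseteq \clprehull[\rm cl_1]{M}{N}$, which is exactly what is needed. Taking the sum over all $\cl_2$-prereductions $\cA$ gives $\clprehull[\rm cl_2]{M}{N} \subseteq \clprehull[\rm cl_1]{M}{N}$.

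The only point requiring care is the degenerate case ${\bf I}'_{\cl_2}(N,M) = \emptyset$: then there are no $\cl_2$-prereductions, $\clprehull[\rm cl_2]{M}{N}$ is the empty sum, i.e.\ the zero submodule, and the inclusion is immediate. When $\cl_2$-prereductions do exist, Proposition~\ref{pr:compclsets}\eqref{it:compclsets2} ensures ${\bf I}'_{\cl_1}(N,M) \neq \emptyset$ as well, so $\clprehull[\rm cl_1]{M}{N}$ is itself a genuine sum of $\cl_1$-prereductions and the argument above applies verbatim. I do not anticipate any real obstacle here: all the substantive work was carried out in Proposition~\ref{pr:comparepre}, and the remaining task is only the bookkeeping of when the prereduction sets are nonempty and the routine passage from element-wise containment to containment of sums.
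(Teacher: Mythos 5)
Your proof is correct and follows essentially the same route as the paper's: reduce the containment of prehulls to the fact that each $\cl_2$-prereduction is contained in some $\cl_1$-prereduction, then pass to sums. If anything, you are slightly more careful than the paper, which cites only the set containment ${\bf I}'_{\cl_2}(N,M)\subseteq{\bf I}'_{\cl_1}(N,M)$ from Proposition~\ref{pr:compclsets}\eqref{it:compclsets1} and leaves the ``maximal element lands under a maximal element'' step implicit, whereas you invoke Proposition~\ref{pr:comparepre}\eqref{it:comparepre1} directly, which is exactly the needed statement.
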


\begin{proof}
Let $L$ be a $\cl_2$-prereduction of $N$ in $M$. Then \[L\in {\clprehull[\rm cl_2]{M}{N}}= \sum \{L \mid L \textrm{ maximal in } {\bf I}_{\cl_2}^\prime(N,M)\}.\] By Proposition \ref{pr:compclsets}\eqref{it:compclsets1}, ${\bf I}_{\cl_2}^\prime(N,M) \subseteq {\bf I}_{\cl_1}^\prime(N,M)$ so \[\sum \{L \mid L \textrm{ maximal in } {\bf I}_{\cl_2}^\prime(N,M)\} \subseteq \sum \{L \mid L \textrm{ maximal in } {\bf I}_{\cl_1}^\prime(N,M)\} \\= \clprehull[\rm cl_1]{M}{N}.\]
\end{proof}

\begin{prop}
\cite[Proposition 7.12]{ERGV-corehull} Let $R$ be an associative (ie not necessarily commutative) ring and $\ri_1\leq \ri_2$ interior operations on a class $\cM$ of (left) $R$-modules. Let $A\subseteq B$ be $R$-modules such that $\ri_1$ and $\ri_2$ are defined on all $R$-modules between $A$ and $B$. Then $\rihull[\rm i_2]{B}{A}\subseteq \rihull[\rm i_1]{B}{A}$.
\end{prop}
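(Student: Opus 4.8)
The plan is to reduce the asserted inclusion of $\ri$-hulls to a purely set-theoretic containment between the collections of expansions that define them, exactly mirroring the argument that gives $\clcore[\rm cl_2]{M}{N}\subseteq\clcore[\rm cl_1]{M}{N}$ in the closure case. By definition $\rihull[\rm i_2]{B}{A}$ is the sum of all $\ri_2$-expansions $C$ of $A$ in $B$, and $\rihull[\rm i_1]{B}{A}$ is the sum of all $\ri_1$-expansions of $A$ in $B$; so it is enough to show that every $\ri_2$-expansion of $A$ in $B$ is also an $\ri_1$-expansion of $A$ in $B$. Once that is established, the index set of the sum defining $\rihull[\rm i_2]{B}{A}$ is a subset of the index set defining $\rihull[\rm i_1]{B}{A}$, and the inclusion of the sums is immediate.

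So first I would fix an $\ri_2$-expansion $C$ of $A$ in $B$, i.e. a pair $(C,B)\in\cP$ with $C_{\ri_2}^B\subseteq A\subseteq C\subseteq B$. The hypothesis that $\ri_1$ and $\ri_2$ are both defined on all $R$-modules between $A$ and $B$ makes $C_{\ri_1}^B$ meaningful, and $\ri_1\leq\ri_2$ gives $C_{\ri_1}^B\subseteq C_{\ri_2}^B\subseteq A$. Combining this with the given inclusion $A\subseteq C$ shows $C_{\ri_1}^B\subseteq A\subseteq C$, which is precisely the defining condition for $C$ to be an $\ri_1$-expansion of $A$ in $B$. This is the whole content of the reduction, and the proof concludes.

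There is no real obstacle here: the argument invokes only the pointwise inequality $\ri_1\leq\ri_2$ together with the definition of the $\ri$-hull as a sum over expansions, and the intensive/idempotent/order-preserving structure is not even needed. An essentially equivalent route would pass through the sets ${\bf C}_{\ri}'(A,B)$: Proposition \ref{pr:comparerisets}\eqref{it:compareriset1} already records ${\bf C}_{\ri_1}'(A,B)\subseteq {\bf C}_{\ri_2}'(A,B)$, i.e. the non-expansions of $A$ grow as the interior grows, which is the complementary form of the statement that the expansions shrink. The only point worth a line of care is the implicit claim that $\ri_1$ is defined wherever $\ri_2$ is, which is exactly what the hypothesis that both interiors are defined on all $R$-modules between $A$ and $B$ supplies.
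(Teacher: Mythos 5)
The paper does not actually prove this proposition; it is imported verbatim as a citation of \cite[Proposition 7.12]{ERGV-corehull}, so there is no internal proof to compare against. That said, your argument is correct and is the natural one: from $\ri_1\leq\ri_2$ one gets $C_{\ri_1}^B\subseteq C_{\ri_2}^B\subseteq A\subseteq C$ for any $\ri_2$-expansion $C$, so the collection of $\ri_2$-expansions is a subcollection of the $\ri_1$-expansions, and the containment of the defining sums follows immediately; this matches the style of argument the authors use for the neighboring new propositions on $\cl$-prehulls and $\ri$-postcores, which reduce to the analogous containments ${\bf I}_{\cl_2}'\subseteq{\bf I}_{\cl_1}'$ and ${\bf C}_{\ri_1}'\subseteq{\bf C}_{\ri_2}'$.
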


\begin{prop}
Let $(R,\cm)$ be a Noetherian local ring and $\cP$ be Artinian $R$-modules. Let $\ri_1\leq \ri_2$ be Nakayama interiors on $\cP$. Then $\ripostcore[\ri_1]{B}{A}\subseteq \ripostcore[\rm i_2]{B}{A}$.
\end{prop}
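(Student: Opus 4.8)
The plan is to give a short direct argument and to cross-check it against the preceding proposition on $\cl$-prehulls through the Matlis duality developed earlier; neither step is long. For the direct argument, recall that in the Artinian setting Proposition~\ref{pr:nonexp}\eqref{it:nonexp2} guarantees that every member of ${\bf C}_{\ri}^\prime(A,B)$ contains a minimal member, i.e.\ an $\ri$-postexpansion of $A$ in $B$; hence, as long as ${\bf C}_{\ri}^\prime(A,B)\neq\emptyset$ (the only case in which the $\ri$-postcore carries any information), $\ripostcore{B}{A}=\bigcap{\bf C}_{\ri}^\prime(A,B)$. I would then quote Proposition~\ref{pr:comparerisets}\eqref{it:compareriset1}, which says ${\bf C}_{\ri_1}^\prime(A,B)\subseteq{\bf C}_{\ri_2}^\prime(A,B)$, and conclude with the elementary fact that the intersection of a larger family of submodules is contained in the intersection of a smaller one.

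For the duality cross-check, set $\cl_j:=\ri_j^\dual$ for $j=1,2$. Lemma~\ref{lem:duality} shows that the operation $(\cdot)^\dual$ reverses the order $\leq$, so $\ri_1\leq\ri_2$ forces $\cl_2\leq\cl_1$. Feeding $\cl_2\leq\cl_1$ into the $\cl$-prehull comparison proposition produces $\clprehull[\cl_1]{M}{N}\subseteq\clprehull[\cl_2]{M}{N}$ with $M=B^\vee$ and $N=(B/A)^\vee$. Theorem~\ref{thm:clpreredipostexp} together with Lemma~\ref{lem:sumintdual} then identifies $\clprehull[\cl_j]{M}{N}$ with $(B/\ripostcore[\ri_j]{B}{A})^\vee$, and dualizing the displayed inclusion --- an order-reversing operation on submodules of $M$ --- carries it back to a containment of $\ri$-postcores, in agreement with the direct argument.

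The only real difficulty is a matter of direction, and I would resolve it by correcting the displayed containment rather than trying to salvage it. Both routes above --- the direct one through ${\bf C}_{\ri_1}^\prime\subseteq{\bf C}_{\ri_2}^\prime$ and the dual one through the $\cl$-prehull proposition --- yield $\ripostcore[\ri_2]{B}{A}\subseteq\ripostcore[\ri_1]{B}{A}$, which is the \emph{reverse} of the inclusion as currently printed. That reverse is the only containment consistent with duality against $\clprehull[\cl_2]{M}{N}\subseteq\clprehull[\cl_1]{M}{N}$, and it parallels the shape of the corresponding $\ri$-hull statement $\rihull[\ri_2]{B}{A}\subseteq\rihull[\ri_1]{B}{A}$: a larger interior yields a smaller postcore. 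With the proposition restated as $\ripostcore[\ri_2]{B}{A}\subseteq\ripostcore[\ri_1]{B}{A}$, the first paragraph is already a complete proof. If the printed direction is insisted upon it genuinely fails --- already for comparable interiors such as the $\m$-basically empty interior and the identity interior, witnessed via duality by any pair $\cl_1\leq\cl_2$ whose $\cl$-prehulls differ strictly, as in the examples above --- so it would have to be weakened by additional hypotheses (such as some openness requirement on $A$, cf.\ Proposition~\ref{pr:comparepost}), which narrows its scope considerably.
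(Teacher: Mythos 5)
Your analysis is correct, and the most important part of it is the part you flag at the end: the containment in the proposition as printed is backwards, and the paper's own proof commits precisely the error you identify. The paper's argument uses the same two ingredients as your first paragraph --- the identification of $\ripostcore{B}{A}$ with the intersection of the minimal elements of ${\bf C}_{\ri}^\prime(A,B)$ (equivalently, by Proposition~\ref{pr:nonexp}\eqref{it:nonexp2}, with $\bigcap{\bf C}_{\ri}^\prime(A,B)$), and the containment ${\bf C}_{\ri_1}^\prime(A,B)\subseteq{\bf C}_{\ri_2}^\prime(A,B)$ --- but then concludes
\[
\bigcap\{C \mid C \textrm{ minimal in } {\bf C}_{\ri_1}^\prime(A,B)\}\subseteq \bigcap\{C \mid C \textrm{ minimal in } {\bf C}_{\ri_2}^\prime(A,B)\},
\]
treating the intersection as monotone increasing in the indexing family. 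That inference is valid for the \emph{sum} in the companion $\cl$-prehull proposition (a larger family has a larger sum), but for intersections it runs the other way: a larger family has a smaller intersection, so what actually follows is $\ripostcore[\ri_2]{B}{A}\subseteq\ripostcore[\ri_1]{B}{A}$. (Equivalently, Proposition~\ref{pr:comparepost}\eqref{it:comparepost1} says every $\ri_1$-postexpansion contains an $\ri_2$-postexpansion, which gives the same reversed inclusion directly.) Your duality cross-check is also sound: dualizing reverses $\leq$, so $\ri_1\leq\ri_2$ gives $\cl_2\leq\cl_1$ for the dual closures, the prehull comparison then reads $\clprehull[\cl_1]{M}{N}\subseteq\clprehull[\cl_2]{M}{N}$, and Matlis duality turns this back into $\ripostcore[\ri_2]{B}{A}\subseteq\ripostcore[\ri_1]{B}{A}$, in harmony with $\rihull[\ri_2]{B}{A}\subseteq\rihull[\ri_1]{B}{A}$. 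Your observation that the printed direction genuinely fails is also right: taking $\ri_2$ to be the identity interior (the largest interior), the $\ri_2$-postexpansions of $A$ are exactly its covers, whose intersection is $A$ whenever the socle of $B/A$ has dimension at least two, while $\ripostcore[\ri_1]{B}{A}$ can properly contain $A$. So with the statement corrected to $\ripostcore[\ri_2]{B}{A}\subseteq\ripostcore[\ri_1]{B}{A}$, your first paragraph is a complete proof by essentially the paper's intended route, with the monotonicity applied in the correct direction; the only other repair the paper needs is that the set containment should be cited from Proposition~\ref{pr:comparerisets}\eqref{it:compareriset1} rather than from the closure-side Proposition~\ref{pr:comparepre}.
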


\begin{proof}
Let $C\in {\ripostcore[\ri_1]{B}{A}}=\bigcap\{C \mid  C \textrm{ minimal in } {\bf C}_{\ri_1}^\prime(A,B)\}$. So $C$ is an $\ri_1$-postexpansion. By Proposition \ref{pr:comparepre}\eqref{it:comparepre1}, we know that ${\bf C}_{\ri_1}^\prime(A,B) \subseteq {\bf C}_{\ri_2}^\prime(A,B)$ so \[\bigcap\{C \mid  C \textrm{ minimal in } {\bf C}_{\ri_1}^\prime(A,B)\} \subseteq \bigcap\{C \mid  C \textrm{ minimal in } {\bf C}_{\ri_2}^\prime(A,B)\}  =\ripostcore[\ri_2]{B}{A}.\]
\end{proof}

\section{Closures $\cl$ with a special part and $\cl$-prereductions}

Vraciu first introduced the special part of tight closure in \cite{Vr-*ind} and later showed with Huneke \cite{HuVr-sp*} that in excellent normal rings with perfect residue field that the tight closure of an ideal $I$ has a special part decomposition in terms of a minimal reduction of $I$ and its special part. (See Remark \ref{rmk:sppart}(6) below.)  We will make use of this decomposition to determine all the $*$-prereductions of a tightly closed ideal. 

\begin{defn}
Let $(R, \m)$ be a Noetherian local ring of characteristic $p>0$ and $I$ be an ideal.  Let $R^o$ be the set of elements that are not in any minimal prime of $R$.  We say $x \in R$ is in the \textit{special part} of the tight closure of $I$ ($x \in I^{*{\rm sp}}$) if there exists a $c \in R^o$ such that $cx^q \in \m ^{q/q_0}I^{[q]}$ for all $q \geq q_0$. 
\end{defn}

We collect a few facts about the special part of the tight closure below:

\begin{rmk}\label{rmk:sppart}
Let $(R,\m)$ be a local ring of characteristic $p>0$ with a weak test element $c$.
\begin{enumerate}
\item An alternate description of the elements $x$ in the special part of the tight closure is:  $x \in I^{*{\rm sp}}$ if and only if there exists a $q_0$ such that $x^{q_0} \in (\m I^{[q_0]})^*$.
\item For any ideal $I$, $(I^*)^{*{\rm sp}}=I^{*{\rm sp}}=(I^{*{\rm sp}})^*$ and  if $J \subseteq I$ and $J^*=I^*$ then $J^{*{\rm sp}}=I^{*{\rm sp}}$ \cite[Lemma 3.4]{eps-spread}.
\item If $I \subseteq I^{*{\rm sp}}$ then $I$ is nilpotent.  \cite[Lemma 3.1]{eps-spread}.
\item $\m I \subseteq I^{*{\rm sp}} \cap I$ and equality holds if $I$ is a $*$-basic ideal. \cite[Lemma 3.2]{eps-spread}.
\item If $x_1, \ldots, x_k$ are $*$-independent elements in $I$, then they are also $*$-independent modulo $I^{*{\rm sp}}$. In particular, if $J=(x_1, \ldots, x_k)$ is a minimal $*$-reduction of $I$ and $x \in I^{*{\rm sp}}$, then \[J'=(x_1, \ldots, x_{i-1},x, x_{i+1}, \ldots, x_k)\] is not a minimal $*$-reduction of $I$ for any choice of $i$ \cite[Proposition 1.12]{Vr-chains}.
\item If  $R$ is further excellent and normal with perfect residue field, then for every ideal $I$ of an excellent normal ring of positive characteristic, $I^*=I+I^{*{\rm sp}}$ \cite[Theorem 2.1]{HuVr-sp*}.
\end{enumerate}
\end{rmk}

The following is a generalization of \cite[Corollary 7.2]{KRS-prered} for tight closure.

\begin{prop}
Let $(R, \m)$ be a Noetherian local ring of characteristic $p>0$ containing a weak test element.  Suppose that $I$ is a proper  principal ideal which is not nilpotent, then the unique $*$-prereduction of $I$ is $\m I$.
\end{prop}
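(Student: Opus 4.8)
The plan is to show that $I = (a)$ has exactly one $*$-prereduction, namely $\m I$. First I would verify that $\m I \in {\bf I}'_*(I)$, i.e., that $\m I$ is not a $*$-reduction of $I$. If it were, then $I \subseteq I^* = (\m I)^*$, and since $\m I \subseteq I$ this forces $I^* = (\m I)^*$; but then by Remark \ref{rmk:sppart}(4) (using that $*$-reductions share the same special part, Remark \ref{rmk:sppart}(2)) we would get $\m I = I^{*\mathrm{sp}} \cap I \supseteq$ — more directly, I would argue via Nakayama: since $*$ is a Nakayama closure, $I \subseteq (\m I)^* = ((0) + \m I)^*_{\text{rel to } I}$ would give $(0)^* = I^*$, forcing $I \subseteq I^* = (0)^* = $ nilradical, contradicting that $I$ is not nilpotent. (Alternatively one can cite Remark \ref{rmk:sppart}(3): $\m I = I$ would give $I \subseteq \m I \subseteq I^{*\mathrm{sp}}$, hence $I$ nilpotent.) So ${\bf I}'_*(I) \neq \emptyset$ and $I$ has at least one $*$-prereduction.

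Next I would show $\m I$ is itself a $*$-prereduction, i.e., maximal in ${\bf I}'_*(I)$. Since $I = (a)$ is principal, any submodule $K$ with $\m I \subsetneq K \subseteq I$ satisfies $K/\m I \neq 0$ inside the cyclic module $I/\m I \cong R/(\m : a) $ — and since $I$ is principal, $I/\m I$ is generated by the single element $\bar a$, so $I/\m I \cong R/\m$ is simple (here I use that $(\m I : a) = \m$, which holds because $a$ is a nonzerodivisor-free-ish generator — more carefully, $(\m I :_R a) \supseteq \m$ always, and if it were all of $R$ then $a \in \m I = \m(a)$, forcing $a \in \m a$, hence $a = 0$ by Nakayama). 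Therefore $\m I$ is covered by $I$, and by the first paragraph this is a \emph{non-$*$-cover}; by Remark \ref{rmk:basic}\eqref{it:clbasic3}, $\m I$ is a $*$-prereduction of $I$.

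For uniqueness, let $\cA$ be any $*$-prereduction of $I$. By Proposition \ref{pr:prered}\eqref{it:2aprered}, $\m I \subseteq \cA \subseteq I$. Since $I/\m I \cong R/\m$ is simple, either $\cA = \m I$ or $\cA = I$; but $\cA = I$ is impossible since $I$ is a $*$-reduction of itself (so $I \notin {\bf I}'_*(I)$). Hence $\cA = \m I$, proving uniqueness.

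The main obstacle is the very first step — confirming $\m I$ is not a $*$-reduction of $I$ — since this is precisely where the non-nilpotence hypothesis is used and where one must invoke the right tool (the Nakayama property of tight closure, or equivalently Remark \ref{rmk:sppart}(3)/(4)). Everything else is a short structural argument once one knows that $I/\m I$ is a simple module, which itself rests on the standard Nakayama observation $(\m I :_R a) = \m$ for a principal non-nilpotent ideal $(a)$.
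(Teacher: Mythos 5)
Your proof is correct, and it takes a genuinely different route from the paper's in both halves of the argument. To rule out $\m I$ being a $*$-reduction of $I$, you invoke the Nakayama property of tight closure directly (taking $L=(0)$, $N=I$) together with the fact that $(0)^*$ is the nilradical, whereas the paper chains through the special-part facts: from Remark~\ref{rmk:sppart}(2,4) it gets $I\subseteq I^* = (\m I)^* \subseteq (I^{*{\rm sp}})^* = I^{*{\rm sp}}$ and then applies Remark~\ref{rmk:sppart}(3). For existence and uniqueness you argue structurally: $I/\m I \cong R/\m$ is simple, so $I$ covers $\m I$, and since it is a non-$*$-cover, Remark~\ref{rmk:basic}\eqref{it:clbasic3} makes $\m I$ a prereduction; then Proposition~\ref{pr:prered}\eqref{it:2aprered} forces every $*$-prereduction to sit between $\m I$ and $I$, leaving $\m I$ as the only possibility. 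The paper instead appeals to the classification of prereductions of strongly $\cl$-independent submodules (Proposition~\ref{pr:clindep}; the citation of Proposition~\ref{prop:niltight} there appears to be a misreference). Your route is more elementary and self-contained; the paper's emphasizes the special-part machinery it develops later.

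One small flaw worth flagging: your parenthetical alternative for the first step (``$\m I = I$ would give $I\subseteq \m I\subseteq I^{*{\rm sp}}$, hence $I$ nilpotent'') conflates the ideal equality $\m I = I$ (which Nakayama already forbids for $I\neq 0$) with the equality of tight closures $(\m I)^* = I^*$, which is what actually needs to be excluded. As written it does not establish the needed claim. This is only an aside, though --- your main line via the Nakayama property and $(0)^* = \sqrt{(0)}$ is correct and suffices.
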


\begin{proof}
Let $I=(x)$ where $x$ is not nilpotent.  Suppose $x$ is not $*$-independent.  Note that $(x)$ is a cover of $\m (x)$ since $(x)/(\m (x)) \cong R/\m$.  Suppose that $(x)$ is a $*$-cover of $\m (x)$.  Then $I  \subseteq I^*=(\m I)^* \subseteq (I^{* {\rm sp}})^*=I^{*{\rm sp}}$ by Remark \ref{rmk:sppart}(2,4).  However now we see that $I$ is nilpotent  by Remark \ref{rmk:sppart}(3).  Thus $x$ is $*$-independent and the set of $*$-prereductions of $I$ is $\{\m I\}$ by Proposition~\ref{prop:niltight}.  
\end{proof}

As with $*$-independence, Vraicu has noted in \cite[Observation 1.5]{Vr-chains} that if one set of generators of an ideal $K=(J,x_1, \ldots, x_k)$ are $*$-independent modulo $J$ then any set of generators of $K$ modulo $J$ are $*$-independent.  Vraciu also defined the following set in \cite{Vr-chains}.

\begin{defn}
Let $J \subseteq I$ be tightly closed ideals.  Let $\mathcal{F}(J,I)$ be the set of all tightly closed ideals $K$ such that $J \subseteq K \subseteq I$ and $\lambda(I/K)=1$.
\end{defn}

\begin{thm} \cite[Theorem 2.2]{Vr-chains}\label{thm:vrch1}
Let $(R,\m)$ be a local excellent normal ring of characteristic $p>0$ and $J \subseteq I$ be tightly closed ideals. Then $\mathcal{F}(J,I)$ is equal to the set
\[
\{(J,x_1, \ldots, x_{k-1})+I^{*{\rm sp}} \mid (J,x_1, \ldots, x_{k-1}, x_k) \text{ is a minimal } *\text{-reduction of }I \text{ modulo } J \text{ for some } x_k\}.
\]
\end{thm}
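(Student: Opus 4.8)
The plan is to prove the two inclusions separately, the whole argument running on the special-part calculus of Remark~\ref{rmk:sppart}: the decomposition $L^{*}=L+L^{*{\rm sp}}$ (item (6)), the persistence $L_{0}\subseteq L,\ L_{0}^{*}=L^{*}\Rightarrow L_{0}^{*{\rm sp}}=L^{*{\rm sp}}$ together with $(L^{*})^{*{\rm sp}}=L^{*{\rm sp}}=(L^{*{\rm sp}})^{*}$ (item (2)), the containment $\m L\subseteq L^{*{\rm sp}}$ (item (4)), and the relative form of item (5) --- elements $*$-independent modulo $J$ stay $*$-independent modulo $J+I^{*{\rm sp}}$. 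I also use that $L\mapsto L^{*{\rm sp}}$ is order preserving, that $*$ is a Nakayama closure for which the relative $*$-spread $\ell^{*}(-\bmod J)$ exists and is additive along prereductions (the ``modulo $J$'' analogue of the $\cl$-spread comparison proved above, applied to $*$), and \cite[Observation~1.5]{Vr-chains}, that being a minimal $*$-reduction of $I$ modulo $J$ is a property of the ideal, not of its generators.

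\emph{The inclusion $\supseteq$.} Fix a minimal $*$-reduction $(J,x_{1},\dots,x_{k})$ of $I$ modulo $J$ and set $K:=(J,x_{1},\dots,x_{k-1})+I^{*{\rm sp}}$. Then $K$ is tightly closed: by (6), $K^{*}=K+K^{*{\rm sp}}$, and $K\subseteq I$ gives $K^{*{\rm sp}}\subseteq I^{*{\rm sp}}\subseteq K$, so $K^{*}=K$. Also $J\subseteq K\subseteq I$: each $x_{i}\in I$, and $I^{*{\rm sp}}\subseteq I^{*}=I$ since $I$ is tightly closed (via (6)). By (6) and (2), $I=I^{*}=(J,x_{1},\dots,x_{k})^{*}=(J,x_{1},\dots,x_{k})+I^{*{\rm sp}}=K+x_{k}R$, so $\lambda(I/K)\le1$; and $x_{k}\notin K$ because $x_{1},\dots,x_{k}$ are $*$-independent modulo $J$, hence modulo $J+I^{*{\rm sp}}$, which says precisely $x_{k}\notin(J,x_{1},\dots,x_{k-1})+I^{*{\rm sp}}=K$. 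Since also $\m x_{k}\subseteq\m I\subseteq I^{*{\rm sp}}\subseteq K$, we get $I/K\cong R/\m$ and $K\in\mathcal{F}(J,I)$.

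\emph{The inclusion $\subseteq$.} Let $K\in\mathcal{F}(J,I)$ and pick $x_{k}\in I\setminus K$; then $I=K+x_{k}R$ and $\m x_{k}\subseteq K$. As $K=K^{*}\neq I^{*}=I$, $K$ is not a $*$-reduction of $I$, and the only ideal strictly between $K$ and $I$ is $I$, a $*$-reduction; so $K$ is a $*$-prereduction of $I$ modulo $J$, and the additivity of the relative $*$-spread gives $\ell^{*}(K\bmod J)=k-1$ where $k:=\ell^{*}(I\bmod J)$. Fix a minimal $*$-reduction $(J,x_{1},\dots,x_{k-1})$ of $K$ modulo $J$, so $(J,x_{1},\dots,x_{k-1})^{*}=K$ with $x_{1},\dots,x_{k-1}$ $*$-independent modulo $J$. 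The crucial point is $I^{*{\rm sp}}\subseteq K$. The ideal $K+I^{*{\rm sp}}$ lies between $K$ and $I$, hence equals $K$ or $I$; suppose it equals $I$. Since $\m I\subseteq I^{*{\rm sp}}$, the quotient $V:=I/(J+I^{*{\rm sp}})$ is an $R/\m$-vector space, and for any minimal $*$-reduction $(J,y_{1},\dots,y_{k})$ of $I$ modulo $J$ we have $I=(J,y_{1},\dots,y_{k})+I^{*{\rm sp}}$ by (6) and (2); by the relative form of (5) the classes of $y_{1},\dots,y_{k}$ form a minimal generating set, hence a basis, of $V$, so $\dim_{R/\m}V=k$. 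But $K=(J,x_{1},\dots,x_{k-1})^{*}=(J,x_{1},\dots,x_{k-1})+(J,x_{1},\dots,x_{k-1})^{*{\rm sp}}\subseteq(J,x_{1},\dots,x_{k-1})+I^{*{\rm sp}}$, so the image of $K$ in $V$ is spanned by the $k-1$ classes of $x_{1},\dots,x_{k-1}$, while $K+I^{*{\rm sp}}=I$ would force $K$ to surject onto $V$ --- impossible since $\dim_{R/\m}V=k$. Hence $I^{*{\rm sp}}\subseteq K$, and combining with the last display, $K=(J,x_{1},\dots,x_{k-1})+I^{*{\rm sp}}$. Finally $(J,x_{1},\dots,x_{k-1},x_{k})^{*}\supseteq(J,x_{1},\dots,x_{k-1})^{*}+x_{k}R=K+x_{k}R=I$ and $\subseteq I^{*}=I$, so $(J,x_{1},\dots,x_{k-1},x_{k})$ is a $*$-reduction of $I$ modulo $J$; it is minimal, since deleting $x_{k}$ leaves the non-$*$-reduction $K$, and deleting any $x_{i}$ with $i<k$ would exhibit a $*$-reduction of $I$ modulo $J$ with only $k-1$ new generators, forcing $\ell^{*}(I\bmod J)\le k-1$. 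This puts $K$ in the required form.

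The main obstacle is the containment $I^{*{\rm sp}}\subseteq K$ in the second inclusion. This is where the excellent-normal hypothesis is really used --- through the special-part decomposition of Remark~\ref{rmk:sppart}(6) (which as stated also wants a perfect residue field), together with the existence and additivity of the relative $*$-spread --- and it needs the ``modulo $J$'' form of Remark~\ref{rmk:sppart}(5) in order to identify $\dim_{R/\m}\!\bigl(I/(J+I^{*{\rm sp}})\bigr)$ with $\ell^{*}(I\bmod J)$. Everything else is bookkeeping with the special part, provided ``minimal $*$-reduction of $I$ modulo $J$'' is treated consistently as a property of the ideal, as in \cite[Observation~1.5]{Vr-chains}.
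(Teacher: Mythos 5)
Your proof is essentially correct, but it takes a genuinely different route from the one the paper models on Vraciu's original argument (the paper reproduces that argument in the generalized form of Theorem~\ref{thm:vrch1cl}).  The substantive differences are two.  First, for the inclusion~$\supseteq$, you show $K=(J,x_1,\dots,x_{k-1})+I^{*{\rm sp}}$ is tightly closed by applying the Huneke--Vraciu decomposition $K^{*}=K+K^{*{\rm sp}}$ directly to $K$ and then absorbing $K^{*{\rm sp}}\subseteq I^{*{\rm sp}}\subseteq K$.  The paper instead argues lattice-theoretically: $x_k\notin K^{\cl}$ by Proposition~\ref{minclsp}, $K^{\cl}\subseteq I$, and every element of $I\setminus K$ is a unit multiple of $x_k$ modulo $K$ because $\lambda(I/K)=1$, so $K^{\cl}=K$.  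The paper's route is important for its generalization, because there the decomposition $L^{\cl}=L+L^{\rm clsp}$ is only assumed for strongly $\cl$-independent $L$, and $K$ need not be such; your use of the decomposition for $K$ is fine in the tight-closure setting (where Remark~\ref{rmk:sppart}(6) covers every ideal), but note this is exactly what stops your argument from generalizing.  Second, and more fundamentally, for the crucial containment $I^{*{\rm sp}}\subseteq K$ you run a dimension count on $V=I/(J+I^{*{\rm sp}})$: $\dim_{R/\m}V=k$ using a minimal $*$-reduction of $I$ modulo $J$, while $K$ maps into a $(k-1)$-dimensional subspace, so $K+I^{*{\rm sp}}\neq I$.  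The paper instead picks $z\in I^{*{\rm sp}}\setminus K$, observes $z\equiv ux_k\pmod K$, deduces $I=(K+(z))^{*}\subseteq(K+I^{*{\rm sp}})^{*}$, and invokes the special-part axiom of Definition~\ref{def:clsp}(4) to conclude $I\subseteq K^{*}=K$, a contradiction.  The paper's argument is a one-line Nakayama-type deduction that only uses axiom~(4); your dimension count avoids that axiom but trades it for extra infrastructure: you need the relative $*$-spread to exist and be additive along prereductions, and the ``modulo $J$'' version of Remark~\ref{rmk:sppart}(5), neither of which you prove (you acknowledge this; the paper handles the analogous facts via Propositions~\ref{minclsp} and~\ref{pr:Vrcor1.14}).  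Both approaches are valid; the paper's is slicker and abstracts cleanly to closures with a special part, while yours is more elementary but tied to the full strength of the tight-closure decomposition.  One small slip: ``the only ideal strictly between $K$ and $I$ is $I$'' should read that there are no ideals strictly between $K$ and $I$, i.e. the only ideal properly containing $K$ and contained in $I$ is $I$ itself.
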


Tight closure is not the only closure which has a special part, in fact both the Frobenius closure and integral closure have a special part.  See \cite{nme-sp}.

\begin{defn}\label{def:clsp}
Let $(R, \m)$ be a Noetherian local ring and $\cl$ be a closure operation on the ideals of $R$. Then {\rm clsp} is a \textit{special part} of $\cl$ if the following four axioms hold for ideals $I \subseteq R$.
\begin{enumerate}
\item $I^{\rm clsp}$ is an ideal of $R$.
\item $\m I \subseteq I^{\rm clsp} \subseteq I^{\cl}$.
\item $(I^{\cl})^{\rm clsp}=I^{\rm clsp}=(I^{\rm clsp})^{\cl}$.
\item If $J \subseteq I \subseteq (J+I^{\rm clsp})^{\cl}$, then $I \subseteq J^{\cl}$.
\end{enumerate}
\end{defn}

Epstein showed \cite[Lemma 2.2]{nme-sp} that a closure $\cl$ with a special part is necessarily Nakayama, if $J \subseteq I$ then $J^{\rm clsp} \subseteq I^{\rm clsp}$, and if $I$ is $\cl$-independent then $\m I= I \cap I^{\rm clsp}$. 

Note that for all the known closures $\cl$ with a special part, an element $z \in I^{\rm clsp}$ if there exists some increasing function $f: \mathbb{N} \rightarrow \mathbb{N}$ and some $n \in \mathbb{N}$ such that $z^{f(n)} \in (\m I^{f(n)})^{\cl}$ or $z^{f(n)} \in (\m I^{[f(n)]})^{\cl}$. For the special part of integral closure $f(n)=n$ as $z \in I^{\rm -sp}$ if $z^n \in (\m I^n)^-$.  For the special part of tight closure and the special part of Frobenius closure, $f(n)=p^n$ where $p$ is the characteristic of the ring and $z \in I^{*{\rm sp}}$  if $z^{p^n} \in (\m I^{[p^n]})^*$ and $z \in  I^{F{\rm sp}}$ if $z^{p^n} \in (\m I^{[p^n]})^F$.   Note that in all three cases, the function $f$ defines a descending chain of ideals $I^{\{f(n)\}}\supseteq I^{\{f(m)\}}$ for  $m \geq n$, where $I^{\{f(n)\}}$ is the appropriate $f(n)$-th "power" associated to the closure $\cl$.  We will say that the special part of  $I$ with respect to the closure $\cl$ is defined by the function $f$, values $n \geq n_0 \geq 0$ and the ideals $\{I^{\{f(n)\}}\}_{n \geq n_0}$ if $z \in I^{\rm clsp}$ when 
$z^{f(n)} \in (\m I^{\{f(n)\}})^{\cl}$.  Although, we don't need this description for the proofs below, it is an interesting point that may be useful in the future.

We say that an ideal has a {\em $\cl$-special part decomposition on $R$} if $I^{\cl}=I+I^{\rm clsp}$.  Note the key ingredient to generalize Theorem~\ref{thm:vrch1} is  that strongly $\cl$-independent ideals have a $\cl$-special part decomposition.  
\begin{rmk}\label{decompdirectsum}
As in the comment after \cite[Theorem 2.1]{HuVr-sp*}, when $I$ is generated by $\cl$-independent elements and $I^{\cl}=I+I^{\rm clsp}$,  we have 
\[
\displaystyle\frac{I^{\cl}}{\m I}=\displaystyle\frac{I}{\m I} \oplus \displaystyle\frac{I^{\rm clsp}}{\m I}.
\]
\end{rmk}

First we need to generalize the following results of \cite{Vr-chains}.

\begin{prop} \label{minclsp}
(See \cite[Proposition 1.12]{Vr-chains})
Let $(R, \m)$ be a Noetherian local ring and $\cl$ a closure defined on the ideals of $R$.  Suppose that all strongly $\cl$-independent ideals $I \subseteq R$ have a $\cl$-special part decomposition.  If $x_1, \ldots, x_k$ are $\cl$-independent, then they are also $\cl$-independent modulo $I^{\rm clsp}$. In particular, if $J=(x_1, \ldots, x_k)$ is a minimal $\cl$-reduction of $I$ and $x \in I^{\rm clsp}$, then \[J'=(x_1, \ldots, x_{i-1},x, x_{i+1}, \ldots, x_k)\] is not a minimal $\cl$-reduction of $I$ for any choice of $i$.
\end{prop}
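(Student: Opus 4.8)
The plan is to derive the independence statement directly from axiom~(4) in the definition of a special part (Definition~\ref{def:clsp}) and then obtain the ``in particular'' clause from it. Throughout I would use that a closure with a special part is automatically Nakayama, by \cite[Lemma 2.2]{nme-sp}, so that the earlier machinery applies: minimal $\cl$-reductions exist and, by Epstein's characterization recalled above, they are exactly the $\cl$-independent (equivalently, strongly $\cl$-independent) $\cl$-reductions; also $J\subseteq I$ implies $J^{\rm clsp}\subseteq I^{\rm clsp}$.

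First I would prove that $x_1,\dots,x_k$ are $\cl$-independent modulo $I^{\rm clsp}$, reading this as: with $I:=(x_1,\dots,x_k)$ (which is the relevant ideal, and is minimally generated by the $x_i$ since they are $\cl$-independent), one has $x_i\notin\bigl((x_1,\dots,\widehat{x_i},\dots,x_k)+I^{\rm clsp}\bigr)^{\cl}$ for each $i$. Fix $i$ and relabel so $i=k$; set $J_0=(x_1,\dots,x_{k-1})$. If $x_k\in(J_0+I^{\rm clsp})^{\cl}$, then since $(J_0+I^{\rm clsp})^{\cl}$ is an ideal containing $J_0$ and $x_k$ we get $I=J_0+x_kR\subseteq(J_0+I^{\rm clsp})^{\cl}$; as $J_0\subseteq I$, axiom~(4) yields $I\subseteq J_0^{\cl}$, i.e.\ $x_k\in(x_1,\dots,x_{k-1})^{\cl}$, contradicting $\cl$-independence. (The same with $J_0=(0)$, or just axiom~(3), also gives $x_i\notin I^{\rm clsp}$, the analogue of Remark~\ref{rmk:sppart}(3).) This step uses only the four special-part axioms, not the standing hypothesis that strongly $\cl$-independent ideals admit a $\cl$-special part decomposition; that hypothesis and Remark~\ref{decompdirectsum} are what will be needed when generalizing Theorem~\ref{thm:vrch1}.

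Then I would treat the ``in particular'' clause. Let $J=(x_1,\dots,x_k)$ be a minimal $\cl$-reduction of $I$ and $x\in I^{\rm clsp}$; relabel so the replaced generator is $x_k$ and put $J'=(x_1,\dots,x_{k-1},x)$. Being a minimal $\cl$-reduction, $J$ is strongly $\cl$-independent, so its minimal generating set $x_1,\dots,x_k$ is $\cl$-independent; and $J^{\cl}=I^{\cl}$ gives $J^{\rm clsp}=(J^{\cl})^{\rm clsp}=(I^{\cl})^{\rm clsp}=I^{\rm clsp}$ by axiom~(3). Now suppose $J'$ were a $\cl$-reduction of $I$. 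Then $x\in J'\subseteq I$ and $(J')^{\cl}=I^{\cl}$, so $x_k\in J\subseteq I\subseteq(J')^{\cl}=(x_1,\dots,x_{k-1},x)^{\cl}$; since $x\in J^{\rm clsp}$, order-preservation of $\cl$ gives $(x_1,\dots,x_{k-1},x)^{\cl}\subseteq\bigl((x_1,\dots,x_{k-1})+J^{\rm clsp}\bigr)^{\cl}$, hence $x_k\in\bigl((x_1,\dots,x_{k-1})+J^{\rm clsp}\bigr)^{\cl}$. Since $J^{\rm clsp}=I^{\rm clsp}$ and $I=(x_1,\dots,x_k)$ here, this contradicts the independence statement just proved. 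Therefore $J'$ is not a $\cl$-reduction of $I$, and in particular not a minimal one, for every choice of the replaced index.

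I expect the only real difficulty to be bookkeeping rather than substance: the core of the argument is the single application of axiom~(4), but one must keep careful track of which ideal is playing the role of ``$I$'' in axiom~(4) and in the phrase ``modulo $I^{\rm clsp}$''. The two points to spell out are that in the independence statement the relevant ideal is $(x_1,\dots,x_k)$ itself, so that in the ``in particular'' clause one first has to record $J^{\rm clsp}=I^{\rm clsp}$ before invoking it, and the convention that the displayed generators of a minimal $\cl$-reduction form a minimal generating set, which is what lets strong $\cl$-independence of $J$ deliver the $\cl$-independence of $x_1,\dots,x_k$.
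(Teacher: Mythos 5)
Your proof is correct, and it takes a genuinely different and cleaner route than the paper. The paper's argument supposes $x_i\in(I^{\rm clsp}+(x_1,\dots,\widehat{x_i},\dots,x_k))^{\cl}$, extracts a minimal $\cl$-reduction $J'$ from $I^{\rm clsp}+(x_1,\dots,\widehat{x_i},\dots,x_k)$, observes that since $x_1,\dots,\widehat{x_i},\dots,x_k$ alone cannot give a $\cl$-reduction some minimal generator of $J'$ must lie in $I^{\rm clsp}$, and then derives a contradiction from $I^{\rm clsp}\cap J'\subseteq\m J'$ (Epstein's fact that $\m J'=J'\cap(J')^{\rm clsp}$ for a $\cl$-independent $J'$, together with $(J')^{\rm clsp}=I^{\rm clsp}$). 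Your approach is leaner: the single containment $(x_1,\dots,x_k)\subseteq (J_0+I^{\rm clsp})^{\cl}$ fed into axiom~(4) of Definition~\ref{def:clsp} gives $(x_1,\dots,x_k)\subseteq J_0^{\cl}$ at once, with no appeal to the existence or internal structure of minimal reductions. The bookkeeping you flag---recording $J^{\rm clsp}=I^{\rm clsp}$ via axiom~(3) before invoking the first part inside the ``in particular'' clause---is exactly the right thing to spell out. Your observation that the standing hypothesis on special-part decompositions is not actually used for this proposition is also consistent with what the paper's own proof invokes; that hypothesis only becomes essential in the subsequent results (Theorem~\ref{thm:vrch1cl} and onward).
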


\begin{proof}
For each $i \in {1, . . . , k}$, we need to show that
$f_i \notin(I^{\rm clsp}, f_1, \ldots , f_{i-1}, f_{i+1}, \ldots, fl )^{\cl}$.
Otherwise, one could extract a minimal $\cl$-reduction $J'$ generated by
some of  the $f_j$, $ j \neq i$  and some of the elements in $I^{\rm clsp}$. Since  $f_1, \ldots, f_{i-1}, f_{i+1}, \ldots , f_k$ alone cannot generate a \cl-reduction, it follows that
we must have elements in $I^{\rm clsp}$ among the minimal generators of $J'$. But $I^{\rm clsp} \bigcap J' \subseteq \m I $, and this is a contradiction.
\end{proof}

The proof of the following Proposition is similar to the proof of \cite[Corollary 1.14]{Vr-chains}; thus, we omit the proof.

\begin{prop}\label{pr:Vrcor1.14}
    Let $(R,\m)$ be a Noetherian local ring and $J \subseteq I$ be ideals of $R$ with $I=I^{\cl}$ and assume that the $\cl$-spread of $I/J$ is $k$.  Let $g_1, \ldots g_i$ be part of a minimal system of generators of $I/J$ and let $K=(g_1, \ldots,g_i)+J$.  Then $K$ can be extended to a minimal $\cl$-reducgtion of $I/J$ if and only if $I^{\cl{\rm sp}} \cap K \subseteq \m I +J$.
\end{prop}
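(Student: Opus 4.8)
The plan is to follow the template of Vraciu's proof of \cite[Corollary 1.14]{Vr-chains}, working modulo $J$ and using Proposition \ref{minclsp} together with the hypothesis that strongly $\cl$-independent ideals have a $\cl$-special part decomposition. First I would reduce to the case $J=0$ by passing to the ring $R/J$, or equivalently by working with the module $I/J$ throughout; all the hypotheses ($I=I^\cl$, the $\cl$-spread equals $k$, and the special-part axioms of Definition \ref{def:clsp}) are preserved in the obvious way, so I would just carry the "$+J$" along and write $K=(g_1,\ldots,g_i)+J$. The key structural input is Remark \ref{decompdirectsum}: when $I$ is generated by $\cl$-independent elements and $I^\cl=I+I^{\rm clsp}$, one has the internal direct sum decomposition $I^\cl/\m I = I/\m I \oplus I^{\rm clsp}/\m I$.

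For the forward direction, suppose $K$ extends to a minimal $\cl$-reduction $\widetilde K=(g_1,\ldots,g_i,g_{i+1},\ldots,g_k)+J$ of $I$ modulo $J$. By Epstein's result (\cite[Lemma 2.2]{nme-sp}, quoted after Definition \ref{def:clsp}), a minimal $\cl$-reduction is strongly $\cl$-independent, so $\widetilde K \cap \widetilde K^{\rm clsp} = \m\widetilde K$, and since $\widetilde K^\cl = I^\cl = I$ we get $\widetilde K^{\rm clsp}=I^{\rm clsp}$ by axiom (3) of Definition \ref{def:clsp} (using $J\subseteq\widetilde K$ monotonicity of clsp, also from \cite[Lemma 2.2]{nme-sp}). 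Then $I^{\rm clsp}\cap K \subseteq I^{\rm clsp}\cap \widetilde K = \m\widetilde K \subseteq \m I + J$ — here I must be slightly careful: I want $I^{\rm clsp}\cap K\subseteq \m I+J$, and $\m\widetilde K\subseteq \m I$ since $\widetilde K\subseteq I$; combined with $J\subseteq K$ and $\m K\subseteq \m I+J$ trivially, this gives the containment. So the forward direction is essentially bookkeeping once the right facts are assembled.

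For the converse, suppose $I^{\rm clsp}\cap K \subseteq \m I + J$. I would show the images of $g_1,\ldots,g_i$ in $I^\cl/\m I$ (mod $J$) are $\cl$-independent and, using the direct-sum decomposition of Remark \ref{decompdirectsum} together with Proposition \ref{minclsp} (which says $\cl$-independent elements stay $\cl$-independent modulo $I^{\rm clsp}$), extend them to a full minimal generating set of $I/(\m I+J)$ by $g_1,\ldots,g_i,g_{i+1},\ldots,g_k$ whose images are $\cl$-independent modulo $I^{\rm clsp}$; the hypothesis $I^{\rm clsp}\cap K\subseteq \m I+J$ is exactly what guarantees that the $g_1,\dots,g_i$ contribute nothing to the $I^{\rm clsp}/\m I$ summand, so the lift can be completed to a genuine minimal $\cl$-reduction $\widetilde K$ of $I$ modulo $J$ with $\widetilde K^\cl=I$. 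The main obstacle I anticipate is the converse direction: specifically, verifying that the condition $I^{\rm clsp}\cap K\subseteq \m I+J$ really does let one extend a partial generating set to a minimal $\cl$-reduction rather than merely to a generating set of $I/(\m I + J)$ that might have too many or too few $\cl$-independent elements. This is where the equality "$\cl$-spread of $I/J$ equals $k$" is used to pin down the cardinality, and where Proposition \ref{minclsp} and the direct-sum decomposition must be combined carefully — this is precisely the step that is "similar to the proof of \cite[Corollary 1.14]{Vr-chains}" and so I would lean on that argument, checking only that each use of a tight-closure-specific fact there has been replaced by its clsp-axiom or Proposition \ref{minclsp} counterpart.
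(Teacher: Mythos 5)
The paper omits its own proof, citing similarity to Vraciu's Corollary 1.14, so there is no direct comparison to make; I will assess the proposal on its own merits.

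Your forward direction has a concrete gap. You assert that since $\widetilde K$ is a minimal $\cl$-reduction, it is strongly $\cl$-independent and hence $\widetilde K \cap \widetilde K^{\rm clsp} = \m\widetilde K$. But $\widetilde K = (g_1,\dots,g_k)+J$ is a minimal $\cl$-reduction \emph{of $I$ modulo $J$}: the $g_j$ are $\cl$-independent modulo $J$, meaning $g_j \notin ((g_1,\dots,\hat g_j,\dots,g_k)+J)^{\cl}$. This does not make $\widetilde K$ a strongly $\cl$-independent ideal of $R$, because a minimal generating set of $\widetilde K$ as an $R$-ideal includes generators of $J$, and $J$ need not have $\cl$-independent generators. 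So the Epstein identity $\m X = X \cap X^{\rm clsp}$ (which requires $X$ to be $\cl$-independent) does not apply to $\widetilde K$, and the chain $I^{\rm clsp}\cap K \subseteq I^{\rm clsp}\cap\widetilde K = \m\widetilde K$ is unjustified. The correct route for this direction is by contradiction via axiom (4) of Definition~\ref{def:clsp}: if $z \in I^{\rm clsp}\cap K$ with $z\notin\m I+J$, write $z = \sum a_\ell g_\ell + j$ and observe some $a_j$ is a unit, so $\widetilde K = (g_1,\dots,\hat g_j,\dots,g_k,z)+J$; then $I = \widetilde K^{\cl} \subseteq ((g_1,\dots,\hat g_j,\dots,g_k)+J+I^{\rm clsp})^{\cl}$, whence axiom (4) forces $I \subseteq ((g_1,\dots,\hat g_j,\dots,g_k)+J)^{\cl}$, contradicting $\cl$-spread $k$. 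Note this is essentially the same manipulation the paper carries out explicitly inside the proof of Theorem~\ref{thm:vrch1cl}.

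A second issue is your opening reduction "to the case $J=0$ by passing to $R/J$." The special-part operation $\rm clsp$ is defined in $R$, and the statement's hypothesis and conclusion involve $I^{\rm clsp}$ computed in $R$, not $(I/J)^{\rm clsp}$ computed in $R/J$; nothing in Definition~\ref{def:clsp} guarantees these agree or are comparably related. All the quantities in the statement (the $\cl$-spread of $I/J$, the set $K$, and $\m I + J$) should be handled inside $R$ with the "$+J$" carried explicitly, as the paper does in Theorem~\ref{thm:vrch1cl}, rather than by a quotient-ring reduction that would need its own justification. Your converse direction is only a sketch deferring to Vraciu; the outline (turn the hypothesis into the vanishing of $\bar K \cap W$ in the vector space $V = I/(\m I + J)$, where $W$ is the image of $I^{\rm clsp}$, then use the decomposition of Remark~\ref{decompdirectsum} adapted modulo $J$ and extend linearly) is the right shape, but the key verification — that the completed tuple is actually $\cl$-independent modulo $J$, not merely a basis of $I/(\m I+J)$ avoiding $W$ — is left unaddressed and is exactly where Proposition~\ref{minclsp} and the special-part decomposition hypothesis must be invoked.
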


The following set is modeled after a set defined by Vraciu in \cite{Vr-chains} for tight closure.

\begin{defn}
Let $J \subseteq I$ be a $\cl$-closed ideals.  Let $\mathcal{F}_{\cl}(J,I)$ be the set of all $\cl$-closed ideals $K$ such that $J \subseteq K \subseteq I$ and $\lambda(I/K)=1$.
\end{defn}

The proof of the following Theorem is modelled after Vraciu's proof of Theorem~\ref{thm:vrch1}.

\begin{thm} \label{thm:vrch1cl}
Let $(R,\m)$ be a Noetherian local ring and $\cl$ a Nakayama closure operation on $R$.  Suppose  all strongly $\cl$-independent ideals $I \subseteq R$ have a $\cl$-special part decomposition and $J \subseteq I$ be $\cl$-closed ideals.  $\mathcal{F}_{\cl}(J,I)$ is equal to the set
\[
\{(J,x_1, \ldots, x_{k-1})+I^{\rm clsp} \mid (J,x_1, \ldots, x_{k-1}, x_k) \text{ is a minimal } {\cl}\text{-reduction of }I \text{ modulo } J \text{ for some } x_k\}.
\]
\end{thm}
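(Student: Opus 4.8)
The plan is to prove the set equality by establishing the two inclusions, adapting Vraciu's proof of Theorem~\ref{thm:vrch1} with $\cl$ in place of tight closure and with Propositions~\ref{minclsp} and \ref{pr:Vrcor1.14} and Remark~\ref{decompdirectsum} playing the roles of their tight-closure analogues. Throughout, statements ``modulo $J$'' are read in $R/J$, where the standing hypothesis (strongly $\cl$-independent ideals have a $\cl$-special part decomposition) stays available: a $\cl$-reduction of $I$ modulo $J$ generated by $\cl$-independent elements modulo $J$ has all of its minimal generating sets modulo $J$ again $\cl$-independent, so the corresponding ideal of $R/J$ is strongly $\cl$-independent. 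We also record, once, the consequences of Definition~\ref{def:clsp} that we will lean on: axiom (2) gives $\m I\subseteq I^{\rm clsp}\subseteq I^{\cl}$; axiom (3) gives $L^{\rm clsp}=(L^{\cl})^{\rm clsp}$, so any $\cl$-reduction $L$ of $I$ satisfies $L^{\rm clsp}=I^{\rm clsp}$; axiom (4) is the Nakayama-type tool; and, by \cite[Lemma~2.2]{nme-sp}, a $\cl$-independent ideal $I'$ satisfies $\m I'=I'\cap (I')^{\rm clsp}$, which makes the decomposition of Remark~\ref{decompdirectsum} an internal direct sum.

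For the inclusion $\supseteq$, fix a minimal $\cl$-reduction $(J,x_1,\dots,x_k)$ of $I$ modulo $J$ and set $K=(J,x_1,\dots,x_{k-1})+I^{\rm clsp}$. Then $J\subseteq K$ is immediate, and $K\subseteq I$ since each $x_i\in I$ and $I^{\rm clsp}\subseteq I^{\cl}=I$. To see $\lambda(I/K)=1$, I would argue in three steps: first, Proposition~\ref{minclsp} (applied modulo $J$) shows $x_1,\dots,x_k$ remain $\cl$-independent modulo $J+I^{\rm clsp}$, so $x_k\notin K$; second, $\m I\subseteq I^{\rm clsp}\subseteq K$; third, the $\cl$-special part decomposition of $(J,x_1,\dots,x_k)$ modulo $J$, combined with $(J,x_1,\dots,x_k)^{\cl}=I$ and $(J,x_1,\dots,x_k)^{\rm clsp}=I^{\rm clsp}$, gives $I=(J,x_1,\dots,x_k)+I^{\rm clsp}=K+x_kR$. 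Hence $\lambda(I/K)=1$. It remains to check that $K$ is $\cl$-closed: since $K\subseteq K^{\cl}\subseteq I^{\cl}=I$ and $\lambda(I/K)=1$, either $K^{\cl}=K$ or $K^{\cl}=I$; in the latter case $I\subseteq\bigl((J,x_1,\dots,x_{k-1})+I^{\rm clsp}\bigr)^{\cl}$, so axiom (4) of Definition~\ref{def:clsp} yields $I\subseteq (J,x_1,\dots,x_{k-1})^{\cl}$, contradicting the $\cl$-independence of $x_1,\dots,x_k$ modulo $J$. Thus $K\in\mathcal{F}_{\cl}(J,I)$.

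For the inclusion $\subseteq$, take $K\in\mathcal{F}_{\cl}(J,I)$, so $J\subseteq K\subseteq I$, $K=K^{\cl}$, and $\lambda(I/K)=1$. The first step is to show $I^{\rm clsp}\subseteq K$: if not, $K+I^{\rm clsp}$ strictly contains $K$, hence equals $I$ by the length hypothesis, and then $I\subseteq K+I^{\rm clsp}\subseteq (K+I^{\rm clsp})^{\cl}$, so axiom (4) forces $I\subseteq K^{\cl}=K$, a contradiction. With $J+I^{\rm clsp}\subseteq K$ in hand, I would use a minimal $\cl$-reduction of $I$ modulo $J$ together with its special part decomposition and the equality $\m I'=I'\cap (I')^{\rm clsp}$ to conclude that $I/(J+I^{\rm clsp})$ is a $k$-dimensional $R/\m$-vector space, so $\lambda\bigl(K/(J+I^{\rm clsp})\bigr)=k-1$. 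Choosing $x_1,\dots,x_{k-1}\in K$ whose images form a basis of $K/(J+I^{\rm clsp})$, we get $K=(J,x_1,\dots,x_{k-1})+I^{\rm clsp}$; and a short computation (linear independence of the images forces any coefficients witnessing membership in $J+I^{\rm clsp}$ into $\m$) gives $(J,x_1,\dots,x_{k-1})\cap I^{\rm clsp}\subseteq \m I+J$. Then Proposition~\ref{pr:Vrcor1.14} with $i=k-1$ shows $(J,x_1,\dots,x_{k-1})$ extends to a minimal $\cl$-reduction $(J,x_1,\dots,x_{k-1},x_k)$ of $I$ modulo $J$, so $K$ belongs to the right-hand set.

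The step I expect to be the main obstacle is the ``modulo $J$'' bookkeeping: making precise, and justifying, the relative forms of the special part decomposition hypothesis, of Proposition~\ref{minclsp}, and of Remark~\ref{decompdirectsum} in $R/J$, while keeping $I^{\rm clsp}$ computed in $R$ as in the statement of Proposition~\ref{pr:Vrcor1.14}; and, tied to this, verifying that the $\cl$-spread of $I/J$ is well defined and equals $\dim_{R/\m} I/(J+I^{\rm clsp})$, since it is this identification that legitimizes the dimension counts ($k$ and $k-1$) used in both inclusions. The remaining manipulations with the closure axioms are routine once this relative framework is set up, and the overall architecture mirrors Vraciu's argument closely enough that I expect it to go through.
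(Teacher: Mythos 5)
Your proof is correct, and both it and the paper's proof adapt Vraciu's argument for Theorem~\ref{thm:vrch1}; the two diverge mostly in the $\subseteq$ direction. The paper first fixes an arbitrary minimal $\cl$-reduction $L=(J,x_1,\ldots,x_k)$, uses $\lambda(I/K)=1$ to show $K+L=I$ and hence that one may choose the generators of $L/J$ so that $(J,x_1,\ldots,x_{k-1},\m x_k)\subseteq K$ with $x_k\notin K$, and only afterward establishes $I^{\rm clsp}\subseteq K$ via a chain $I=(J,x_1,\ldots,x_k)^{\cl}=(K+(z))^{\cl}\subseteq(K+I^{\rm clsp})^{\cl}$ (for $z\in I^{\rm clsp}\setminus K$ congruent to a unit multiple of $x_k$) followed by axiom~(4). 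You instead dispatch $I^{\rm clsp}\subseteq K$ first — $K+I^{\rm clsp}$ is sandwiched between $K$ and $I$, so equals $I$ or $K$, and axiom~(4) rules out the former — and then recover $x_1,\ldots,x_{k-1}$ as a lift of a basis of $K/(J+I^{\rm clsp})$, verifying the hypothesis $(J,x_1,\ldots,x_{k-1})\cap I^{\rm clsp}\subseteq\m I+J$ of Proposition~\ref{pr:Vrcor1.14} by the linear-independence-modulo-$\m$ computation you indicate. Your ordering is a touch cleaner, since it isolates axiom~(4) as a single step rather than routing through the chain of closures, and it makes the dimension count $\dim_{R/\m}I/(J+I^{\rm clsp})=k$ do all the combinatorial work. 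Your $\supseteq$ direction also departs slightly: the paper gets $\cl$-closedness of $(J,x_1,\ldots,x_{k-1})+I^{\rm clsp}$ from $x_k\notin K^{\cl}$ via Proposition~\ref{minclsp}, while you argue $K^{\cl}\in\{K,I\}$ and rule out $K^{\cl}=I$ by axiom~(4) plus $\cl$-independence; both are valid. Finally, you flag as an honest concern the ``modulo $J$'' bookkeeping — in particular the identity $I=(J,x_1,\ldots,x_k)+I^{\rm clsp}$ when the special-part hypothesis is stated for strongly $\cl$-independent ideals in $R$ rather than in $R/J$; the paper's proof uses the same identity without dwelling on this, so this is a shared implicit assumption rather than a gap in your argument relative to theirs.
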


\begin{proof}
First we show that any ideal $K$ of the form $(J,x_1, \ldots, x_{k-1})+I^{\rm clsp}$ is in $\mathcal{F}_{\cl}(J,I)$.  Since $I=(J,x_1, \ldots,x_k)+I^{\rm clsp}$ and $\m I \subseteq I^{\rm clsp}$, it follows that $\lambda(I/K)=1$ and in fact, $I/K$ is spanned by the image of $x_k$.  In order to see that $K$ is $\cl$-closed, note that we have $x_k \notin K^{\cl}$ by Proposition~\ref{minclsp}.  On the other hand $K^{\cl} \subseteq I^{\cl}=I$, and every element in $I \setminus K$ is congruent modulo $K$ to a unit multiple of $x_k$.  This shows that $K$ is $\cl$-closed.  

Conversely, we need to show that every ideal $K \subseteq \mathcal{F}_{\cl}(J,I)$ has the given form.  We need only show that there exist $x_1, \ldots, x_{k-1}$ such that $(J, x_1, \ldots, x_k)$ is a minimal $\cl$-reduction of $I$ for some choice of $x_k$, and $(Jx_1, \ldots, x_{k-1})+I^{\rm clsp} \subseteq  K$.  With this inclusion, the equality will follow as

\[
{\rm dim} \left( \displaystyle\frac{K}{\m I +J}\right)={\rm dim}\left( \displaystyle\frac{I}{\m I +J}\right)-1=k-1+\left( \displaystyle\frac{I^{\rm clsp}+J}{\m I +J}\right)
\]
and Proposition \ref{pr:Vrcor1.14} shows that this is equal to the dimension of
\[
\displaystyle\frac{(J,x_1, \ldots, x_{k-1})+I^{\rm clsp}}{\m I +J}.
\]

Choose $L=(J,x_1,\ldots, x_k)$ an arbitrary minimal $\cl$-reduction of $I$ modulo $J$.  Since $K$ is $\cl$-closed, $L \nsubseteq K$.  Since $\lambda(I/K)=1$, it follows that $K+L=I$, and therefor $\lambda(L/(K \cap L)=\lambda((K+L)/K)=1$.  Hence, we can choose generators for $L/J$ such that 
\[
(J,x_1, \ldots x_{k-1},\m x_k) \subseteq K, \quad x_k \notin K.
\]
Hence, the image of $x_k$ generates $I/K$.

We must still show that $I^{\rm clsp} \subseteq K$.  Let $z \in I^{\rm clsp}$.  We may assume that $z \notin K$ and search for a contradiction.  Since $z \notin K$, then $z \equiv ux_k {\rm mod } K$, where $u$ is a unit in $R$.  Observe that
\begin{align*}
I &= (J,x_1, \ldots, x_k)^{\cl}\\
				&= (K+ (J,x_1, \ldots, x_k))^{\cl} \text{ (Since } (J,x_1, \ldots x_{k-1},\m x_k) \subseteq K \subseteq I \text{),}  \\
				&=  (K+(z))^{\cl} \text{ (Since } z \equiv ux_k {\rm mod} K \text{),}\\
				&\subseteq (K+I^{\rm clsp})^{\cl} \text{ (Since } z \in I^{\rm clsp} \text{).}
\end{align*}
Now by Definition~\ref{def:clsp}(4), we see that $I \subseteq K^{\cl}=K$ which is a contradiction.
\end{proof}

\begin{thm}\label{clpreredcl}
Let $(R,\m)$ be  a Noetherian local ring, $\cl$ a Nakayama closure which satisfies the property that all strongly $\cl$-independent ideals have a $\cl$-special part decomposition in $R$.  Let $I$ be a $\cl$-closed ideal.  Suppose that the $\cl$-spread of $I$ is $k$.  Then the set of all $\cl$-prereductions of $I$ is
\[
{\bf P}_{\cl}(I)=\{(y_1, \ldots, y_{k-1})+I^{{\rm clsp}} \mid (y_1, \ldots, y_{k}) \text{ a minimal } \cl\text{-reduction of } I\}.
\]
\end{thm}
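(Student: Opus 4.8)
The plan is to prove the claimed equality by two containments, using the special-part machinery of this section (Definition~\ref{def:clsp}, Proposition~\ref{minclsp}) together with the basic facts about $\cl$-prereductions from Sections~3 and~7; this is essentially the argument behind Theorem~\ref{thm:vrch1cl}, carried out absolutely rather than relative to an ideal $J$. We may assume $k\ge 1$ as in Proposition~\ref{pr:clindep}; if $k=0$ then $(0)^{\cl}=I$, every ideal contained in $I$ is a $\cl$-reduction of $I$, and both sides degenerate.

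For the containment $\supseteq$, fix a minimal $\cl$-reduction $(y_1,\dots,y_k)$ of $I$ and set $\cA=(y_1,\dots,y_{k-1})+I^{\rm clsp}$. Since $\cl$ is Nakayama, Epstein's criterion makes $(y_1,\dots,y_k)$ strongly $\cl$-independent, hence (by hypothesis) it has a $\cl$-special part decomposition; together with $(y_1,\dots,y_k)^{\cl}=I$ and Definition~\ref{def:clsp}(3) this gives $I=(y_1,\dots,y_k)+I^{\rm clsp}=\cA+(y_k)$, so $I/\cA$ is cyclic, and it is annihilated by $\m$ because $\m I\subseteq I^{\rm clsp}\subseteq\cA$ by Definition~\ref{def:clsp}(2). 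Proposition~\ref{minclsp} gives $y_k\notin((y_1,\dots,y_{k-1})+I^{\rm clsp})^{\cl}=\cA^{\cl}$, so $y_k\notin\cA$ (hence $\cA\subsetneq I$ and $\lambda(I/\cA)=1$) and $\cA^{\cl}\subsetneq I=I^{\cl}$ (hence $\cA$ is not a $\cl$-reduction of $I$). As $\lambda(I/\cA)=1$, there is no ideal strictly between $\cA$ and $I$, so every $K$ with $\cA\subsetneq K\subseteq I$ equals $I$; thus $\cA$ is a $\cl$-prereduction of $I$.

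For the containment $\subseteq$, let $\cA$ be any $\cl$-prereduction of $I$; by Corollary~\ref{co:closedprered} we have $\cA=\cA^{\cl}$ and $\m I\subseteq\cA$. I would first prove $I^{\rm clsp}\subseteq\cA$. If some $z\in I^{\rm clsp}\setminus\cA$, then $\cA+zR$ is a $\cl$-reduction of $I$ by Proposition~\ref{prop:preredcov}(3), so $(\cA+zR)^{\cl}=I$; a minimal $\cl$-reduction $L$ of $\cA+zR$ (these exist, $\cl$ being Nakayama) then satisfies $L^{\cl}=I=I^{\cl}$ and is also a minimal $\cl$-reduction of $I$, hence minimally generated by exactly $k$ elements. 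Expressing generators of $L$ inside $\cA+zR$ and performing $R/\m$-linear column operations inside the one-dimensional quotient $I/\cA$ (legitimate because $\m I\subseteq\cA$), I can arrange a minimal generating set $u_1,\dots,u_{k-1},u_k$ of $L$ with $u_1,\dots,u_{k-1}\in\cA$ and, after scaling $u_k$ by a unit, $u_k=a+z$ for some $a\in\cA$. Then $L\subseteq\cA+I^{\rm clsp}$, so $I=L^{\cl}\subseteq(\cA+I^{\rm clsp})^{\cl}$, and Definition~\ref{def:clsp}(4) forces $I\subseteq\cA^{\cl}=\cA$, contradicting $\cA\subsetneq I$. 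Hence $I^{\rm clsp}\subseteq\cA$. Now pick $x\in I\setminus\cA$; the same column-operation argument applied to a minimal $\cl$-reduction of $I$ contained in $\cA+xR$ produces a minimal $\cl$-reduction $(w_1,\dots,w_k)$ of $I$ with $w_1,\dots,w_{k-1}\in\cA$. Put $\cB=(w_1,\dots,w_{k-1})+I^{\rm clsp}$. By the containment $\supseteq$ already proved, $\cB$ is a $\cl$-prereduction of $I$, i.e.\ a maximal element of ${\bf I}'_{\cl}(I)$; since $I^{\rm clsp}\subseteq\cA$ gives $\cB\subseteq\cA$ and $\cA\in{\bf I}'_{\cl}(I)$, maximality of $\cB$ forces $\cB=\cA$, which has the asserted form.

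The main obstacle, appearing twice in the second containment, is the column-reduction step: one must check that a minimal $\cl$-reduction of $\cA+xR$ really is a minimal $\cl$-reduction of $I$ (so that it has exactly $k$ minimal generators, which is where the $\cl$-spread hypothesis is used), that the column operations preserve ``minimal generating set of a fixed ideal'' over the local ring $R$, and that the unit coefficient of $u_k$ in $I/\cA\cong R/\m$ can be cleared so that $u_k=a+z$ with $a\in\cA$. Everything else is routine manipulation with the special-part axioms and with the characterization of $\cl$-prereductions as maximal elements of ${\bf I}'_{\cl}(I)$.
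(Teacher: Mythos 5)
Your proof is correct and establishes the theorem, but by a somewhat different route than the paper, especially in the containment $\subseteq$. In the direction $\supseteq$, the paper invokes Theorem~\ref{thm:vrch1cl} to see that $(y_1,\dots,y_{k-1})+I^{\rm clsp}$ is $\cl$-closed of colength one in $I$ and then Remark~\ref{rmk:basic}; you instead derive the decomposition $I=(y_1,\dots,y_k)+I^{\rm clsp}$ directly from the hypothesis together with Definition~\ref{def:clsp}(3), obtain $\lambda(I/\cA)=1$ from $\m I\subseteq I^{\rm clsp}$, and use Proposition~\ref{minclsp} to see $\cA^{\cl}\subsetneq I$. Both are fine, and yours sidesteps the paper's slightly miscited preliminary lemma. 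The genuine difference is in the direction $\subseteq$: the paper first argues that a minimal $\cl$-reduction of $\ca$ has exactly $k-1$ generators (via a two-sided inequality whose ``$\ell\le k-1$'' step is conditioned on a hypothesis that is never fully justified) and then rules out $x\in I^{\rm clsp}\setminus\ca$ by showing that adjoining $x$ to that $(k-1)$-element reduction would contradict Proposition~\ref{minclsp}. You instead pick a minimal $\cl$-reduction $L$ of $\cA+zR$, observe it is also a minimal $\cl$-reduction of $I$ (hence has $k$ generators), and by invertible column operations over the local ring arrange $k-1$ generators in $\cA$ and one of the form $a+z$; feeding $L\subseteq\cA+I^{\rm clsp}$ into Definition~\ref{def:clsp}(4) immediately forces $I\subseteq\cA^{\cl}=\cA$, a contradiction. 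You then close the argument by producing $\cB=(w_1,\dots,w_{k-1})+I^{\rm clsp}\subseteq\cA$ of the required form via the same column-operation trick and using maximality of $\cl$-prereductions in ${\bf I}'_{\cl}(I)$ to conclude $\cB=\cA$. This is cleaner: it avoids computing the $\cl$-spread of $\ca$ separately, gives a fully explicit termination of the argument that the paper leaves implicit, and localizes all the closure-theoretic work in the single axiom (4) and Proposition~\ref{minclsp}. What the paper's route buys is a structural statement about minimal reductions of prereductions (their spread is $k-1$), which may be of independent interest but is not logically needed for the theorem.
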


\begin{proof}
First we will show that if $J=(x_1, \ldots, x_k)$ is a minmal $\cl$-reduction of $I$ and $y_1, \ldots, y_k$ any minimal generating set of $J$, then $(y_1, \ldots, y_{k-1})+I^{{\rm clsp}}$ is a $\cl$-prereduction.  
Note that \[(y_1, \ldots, y_{k-1})+y_k\m\] is a $\cl$-prereduction of $J$ by Proposition \ref{prop:niltight}.  By Defintion \ref{def:clsp} (2), we have $y_k\m \subseteq \m I \subseteq I^{{\rm clsp}}$ implying that
\[(y_1, \ldots, y_{k-1})+y_k\m \subseteq (y_1, \ldots, y_{k-1})+I^{*{\rm sp}}.\] By Theorem \ref{thm:vrch1cl}, $(y_1, \ldots, y_{k-1})+I^{{\rm clsp}}$ is $\cl$-closed with \[\lambda (I/((y_1, \ldots, y_{k-1})+I^{{\rm clsp}}) =1.\]  In otherwords, $I$ is a non-$\cl$-cover of $(y_1, \ldots, y_{k-1})+I^{{\rm clsp}}$.  Since \[(y_1, \ldots, y_{k-1})+I^{{\rm clsp}}+(y_k)=I\] and $I$ is $\cl$-closed then by Proposition \ref{prop:preredcov} (2), $(y_1, \ldots, y_{k-1})+I^{{\rm clsp}}$ is a $\cl$-prereduction of $I$.

Now suppose that $\ca$ is a $\cl$-prereduction of $I$.  We need to show that $\ca=(x_1, \ldots, x_{k-1}) +I^{{\rm clsp}}$ where $(x_1, \ldots, x_k)$ is a minimal $\cl$-reduction of $I$.  First we show that a minimal $\cl$-reduction of $\ca$ has $k-1$ generators.  Since $\ca$ is a $\cl$-prereduction of $I$,  $\ca^{\cl} \subsetneq I^{\cl}=I$, but for any $x \in I \setminus \ca$, $(\ca,x)^{\cl}=I$.  Suppose $J=(x_1, \ldots, x_{\ell})$ is a minimal $\cl$-reduction of $\ca$. Since for any $x \in I \setminus \ca$, 
\[I=(\ca,x)^{\cl}=(J^{\cl}+(x))^{\cl}=(J+(x))^{\cl}=(x_1, \ldots, x_{\ell},x)^{\cl}\] then $(x_1, \ldots, x_{\ell},x)$ is a $\cl$-reduction of $I$ for any $x \in I \setminus \ca$.  Since the $\cl$-spread of $I$ is $k$, then $\ell \geq k-1$.  

Note that $(x_1, \ldots, \hat{x_i},\ldots, x_{\ell})$ is not a $\cl$-reduction of $\ca$ for any $i$.  Suppose that $\ell > k-1$ and \[(x_1, \ldots, \hat{x_i},\ldots, x_{\ell},x) \text{ is a  {\cl}-reduction of } I\] for every $x \in I \setminus \ca$ and some $i$, then $(x_1, \ldots, \hat{x_i},\ldots, x_{\ell})^{\cl}$ is a $\cl$-prereduction of $I$.  However, since $(x_1, \ldots, \hat{x_i},\ldots, x_{\ell})^{\cl} \subseteq \ca$, then it must be the case that $(x_1, \ldots, \hat{x_i},\ldots, x_{\ell})^{\cl} =\ca$ as both $(x_1, \ldots, \hat{x_i},\ldots, x_{\ell})^{\cl}$ and $\ca$ are elements of ${\bf I}'_{\cl}(I)$ and they both must be maximal in ${\bf I}'_{\cl}(I)$.  This contradicts $J$ being a minimal $\cl$-reduction of $\ca$.  Hence $\ell \leq k-1$.

Now we need to show that $I^{{\rm clsp}} \subseteq \ca$.  Suppose $x \in I^{{\rm clsp}} \setminus \ca$.  Since $\ca+(x)$ is a $\cl$-reduction of $I$ by definition of $\cl$-prereduction, then for any minimal $\cl$-reduction $(x_1, \ldots, x_{k-1})$ of $\ca$, we have that $(x_1, \ldots, x_{k-1},x)^{\cl}=(\ca+(x))^{\cl}=I$.  However, this implies that $(x_1, \ldots, x_{k-1},x)$ is a minimal $\cl$-reduction of $I$ which is a contradiction by Proposition~\ref{minclsp}.
\end{proof}

The following two corollaries follow directly from Theorem \ref{clpreredcl}.

\begin{cor}
Let $(R,\m)$ be  a local excellent normal ring of characteristic $p>0$ and $I$ be a tightly closed ideal.  Suppose that the $*$-spread of $I$ is $k$.  Then the set of all $*$-prereductions of $I$ is
\[
{\bf P}_*(I)=\{(y_1, \ldots, y_{k-1})+I^{*{\rm sp}} \mid (y_1, \ldots, y_{k}) \text{ a minimal } *\text{-reduction of } I\}.
\]
\end{cor}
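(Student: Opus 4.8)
The plan is to deduce this corollary directly from Theorem~\ref{clpreredcl} by checking that its two standing hypotheses hold when $\cl$ is tight closure $*$ on a local excellent normal ring $(R,\m)$ of characteristic $p>0$. Theorem~\ref{clpreredcl} requires that $\cl$ be a Nakayama closure and that every strongly $\cl$-independent ideal of $R$ admit a $\cl$-special part decomposition; once both are verified, taking the special part to be $*{\rm sp}$ and reading off the conclusion of Theorem~\ref{clpreredcl} produces exactly the displayed description of ${\bf P}_*(I)$, since there $I^{\rm clsp}$ becomes $I^{*{\rm sp}}$.

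First I would record that $*{\rm sp}$ is a special part of $*$ in the sense of Definition~\ref{def:clsp}. Axioms (1)--(3) follow from the facts collected in Remark~\ref{rmk:sppart}: $I^{*{\rm sp}}$ is an ideal, $\m I \subseteq I^{*{\rm sp}} \subseteq I^*$ (the left containment is Remark~\ref{rmk:sppart}(4) and the right is immediate from the definition of $I^{*{\rm sp}}$), and $(I^*)^{*{\rm sp}} = I^{*{\rm sp}} = (I^{*{\rm sp}})^*$ is Remark~\ref{rmk:sppart}(2). Axiom (4) is the special-part form of the Nakayama property for tight closure, established in \cite{nme-sp} (with roots in \cite{Vr-*ind} and \cite{Vr-chains}). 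By Epstein's observation quoted just before Definition~\ref{def:clsp} (from \cite[Lemma~2.2]{nme-sp}), any closure possessing a special part is automatically Nakayama, so $*$ is a Nakayama closure, giving the first hypothesis of Theorem~\ref{clpreredcl}.

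Next I would invoke Remark~\ref{rmk:sppart}(6): because $R$ is excellent, normal, of positive characteristic, and (as in the hypotheses making the cited Huneke--Vraciu theorem applicable) has perfect residue field, \emph{every} ideal $I$ of $R$ satisfies $I^* = I + I^{*{\rm sp}}$. In particular, every strongly $*$-independent ideal of $R$ has a $*$-special part decomposition, which is the second hypothesis of Theorem~\ref{clpreredcl}.

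With both hypotheses verified, Theorem~\ref{clpreredcl} applies verbatim with $\cl = *$: for a tightly closed ideal $I$ of $*$-spread $k$, the $*$-prereductions of $I$ are precisely the ideals $(y_1,\dots,y_{k-1}) + I^{*{\rm sp}}$ as $(y_1,\dots,y_k)$ runs over minimal $*$-reductions of $I$, which is the asserted formula for ${\bf P}_*(I)$. I do not expect a genuine obstacle here; the proof is essentially a specialization. The only point needing care is the residue-field hypothesis required to apply Remark~\ref{rmk:sppart}(6), and perhaps a sentence confirming that the $*$-spread appearing in the statement coincides with the invariant used in Theorem~\ref{clpreredcl}, i.e.\ the common cardinality of a minimal generating set of any minimal $*$-reduction of $I$.
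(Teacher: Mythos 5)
Your proof is correct and takes the same route as the paper, which simply states that the corollary ``follows directly from Theorem~\ref{clpreredcl}''; you usefully spell out the verification that $*$ satisfies Definition~\ref{def:clsp}, hence is Nakayama by \cite[Lemma~2.2]{nme-sp}, and that Remark~\ref{rmk:sppart}(6) supplies the required $*$-special part decomposition. Your caveat about the residue field is well taken: the Huneke--Vraciu decomposition $I^* = I + I^{*{\rm sp}}$ as quoted in Remark~\ref{rmk:sppart}(6) assumes a perfect residue field, so strictly speaking that hypothesis should be added to the corollary (the paper appears to take it as implicit).
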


\begin{cor}
Let $(R,\m)$ be  a local excellent normal ring of characteristic $p>0$ and $I$ be a Frobenius closed ideal.  Suppose that the $F$-spread of $I$ is $k$.  Then the set of all $F$-prereductions of $I$ is
\[
{\bf P}_F(I)=\{(y_1, \ldots, y_{k-1})+I^{F{\rm sp}} \mid (y_1, \ldots, y_{k}) \text{ a minimal } F\text{-reduction of } I\}.
\]
\end{cor}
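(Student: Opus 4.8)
The plan is to obtain this corollary as a direct application of Theorem~\ref{clpreredcl} with $\cl$ taken to be the Frobenius closure $F$. Thus the work consists entirely of checking that $F$ satisfies the two standing hypotheses of that theorem in the ring $R$: namely that $F$ is a Nakayama closure, and that every strongly $F$-independent ideal of $R$ has an $F$-special part decomposition.

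First I would recall, following \cite{nme-sp}, that the Frobenius closure carries a special part $F{\rm sp}$, where $z\in I^{F{\rm sp}}$ precisely when $z^{p^n}\in(\m I^{[p^n]})^F$ for some $n$, and that this operation satisfies the four axioms of Definition~\ref{def:clsp}. Since, by \cite[Lemma 2.2]{nme-sp}, any closure possessing a special part is automatically a Nakayama closure, $F$ is Nakayama; in particular minimal $F$-reductions of $I$ exist and the $F$-spread of $I$ is a well-defined integer, here assumed equal to $k$. Because $I$ is assumed Frobenius closed, the only hypothesis of Theorem~\ref{clpreredcl} not yet in place is the $F$-special part decomposition of strongly $F$-independent ideals.

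To supply this, I would invoke the Frobenius analogue of the Huneke--Vraciu decomposition \cite{HuVr-sp*}: in a local excellent normal ring of characteristic $p>0$ one has $I^F=I+I^{F{\rm sp}}$ for every ideal $I$ (see \cite{nme-sp}); in particular this holds for strongly $F$-independent ideals. With all hypotheses of Theorem~\ref{clpreredcl} verified for $\cl=F$ and $I^{{\rm clsp}}=I^{F{\rm sp}}$, the theorem yields
\[
{\bf P}_F(I)=\{(y_1, \ldots, y_{k-1})+I^{F{\rm sp}} \mid (y_1, \ldots, y_{k}) \text{ a minimal } F\text{-reduction of } I\},
\]
which is exactly the claimed description.

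The only genuine obstacle is this second hypothesis, the $F$-special part decomposition for strongly $F$-independent ideals. If a citable statement is not available, one would prove it directly, mirroring the argument of Huneke and Vraciu: an element of $I^F$ not already lying in $I+I^{F{\rm sp}}$ would, via the criterion of Proposition~\ref{minclsp} (equivalently axiom~(4) of Definition~\ref{def:clsp}), contradict the $F$-independence of a minimal generating set. Everything else — the control of $\cl$-closed ideals of colength one through Theorem~\ref{thm:vrch1cl}, and the identification of the prereductions — is already packaged inside Theorem~\ref{clpreredcl}, so no further computation is needed.
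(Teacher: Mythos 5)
Your approach matches the paper's exactly: the paper also obtains this corollary as a direct application of Theorem~\ref{clpreredcl} with $\cl = F$, relying on the facts that $F$ carries a special part (hence is Nakayama) and that the $F$-special part decomposition $I^F = I + I^{F\mathrm{sp}}$ holds in local excellent normal rings. One small warning about your contingency plan: the fallback argument you sketch for establishing the decomposition is circular, since Proposition~\ref{minclsp} itself \emph{assumes} that strongly $\cl$-independent ideals admit a $\cl$-special part decomposition, and axiom~(4) of Definition~\ref{def:clsp} tells you how the closure interacts with an existing special part rather than giving you the decomposition; if the decomposition were not already on record for $F$, one would have to adapt the actual argument of Huneke--Vraciu from \cite{HuVr-sp*} rather than deduce it from the machinery that presupposes it.
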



\providecommand{\bysame}{\leavevmode\hbox to3em{\hrulefill}\thinspace}
\providecommand{\MR}{\relax\ifhmode\unskip\space\fi MR }
\providecommand{\MRhref}[2]{%
  \href{http://www.ams.org/mathscinet-getitem?mr=#1}{#2}
}
\providecommand{\href}[2]{#2}

\end{document}